\newtheorem{thm}{Theorem}[section]
\newtheorem{la}[thm]{Lemma}
\newtheorem{Defn}[thm]{Definition}
\newtheorem{Remark}[thm]{Remark}
\newtheorem{Note}[thm]{Note}
\newtheorem{prop}[thm]{Proposition}
\newtheorem{cor}[thm]{Corollary}
\newtheorem{Example}[thm]{Example}
\newtheorem{Examples}[thm]{Examples}
\newtheorem{Problems}[thm]{Problems}
\newtheorem{Problem}[thm]{Problem}
\newtheorem{Convention}[thm]{Convention}
\newtheorem{Number}[thm]{\!\!}
\newenvironment{defn}{\begin{Defn}\rm}{\end{Defn}}
\newenvironment{example}{\begin{Example}\rm}{\end{Example}}
\newenvironment{rem}{\begin{Remark}\rm}{\end{Remark}}
\newenvironment{numba}{\begin{Number}\rm}{\end{Number}}
\newenvironment{proof}{{\noindent\bf Proof.}}%
                  {\nopagebreak\hspace*{\fill}$\Box$\medskip\medskip\par}   
\newcommand{\Punkt}{\nopagebreak\hspace*{\fill}$\Box$}
\newcommand{\wb}{\overline}
\newcommand{\ve}{\varepsilon}
\newcommand{\wt}{\widetilde}
\newcommand{\tensor}{\otimes}
\newcommand{\impl}{\Rightarrow}
\newcommand{\mto}{\mapsto}
\newcommand{\N}{{\mathbb N}}
\newcommand{\R}{{\mathbb R}}
\newcommand{\bO}{{\mathbb O}}
\newcommand{\K}{{\mathbb K}}
\newcommand{\Q}{{\mathbb Q}}
\newcommand{\Z}{{\mathbb Z}}
\newcommand{\C}{{\mathbb C}}
\newcommand{\cU}{{\cal U}}
\newcommand{\cO}{{\cal O}}
\newcommand{\cV}{{\cal V}}
\newcommand{\cW}{{\cal W}}
\newcommand{\pl}{{\displaystyle \lim_{\longleftarrow}}}
\newcommand{\one}{{\bf 1}}
\newcommand{\sub}{\subseteq}
\DeclareMathOperator{\im}{im}
\DeclareMathOperator{\pr}{pr}
\DeclareMathOperator{\id}{id}
\newcommand{\cF}{{\cal F}}
\newcommand{\cS}{{\cal S}}
\newcommand{\cK}{{\cal K}}
\newcommand{\cL}{{\cal L}}
\newcommand{\cT}{{\cal T}}
\newcommand{\cX}{{\cal X}}
\DeclareMathOperator{\ev}{ev}
\newcommand{\sbull}{{\scriptscriptstyle \bullet}}
\DeclareMathOperator{\LocPol}{LocPol}
\DeclareMathOperator{\Pol}{Pol}
\newcommand{\aeq}{\Leftrightarrow}
\begin{document}
\renewcommand{\thefootnote}{\fnsymbol{footnote}}
$\;$\\[-27mm]
\begin{center}
{\Large\bf Exponential laws for ultrametric partially\\[2.4mm]
differentiable functions and applications}\\[6mm]
{\bf Helge Gl\"{o}ckner}\vspace{2.5mm}
\end{center}
\renewcommand{\thefootnote}{\arabic{footnote}}
\setcounter{footnote}{0}
\begin{abstract}\vspace{1mm}
\hspace*{-8.3 mm}
We establish exponential laws for certain spaces of
differentiable\linebreak
functions
%in several variables
over a valued field $\K$.
For example, we show that
\[
C^{(\alpha,\beta)}(U\times V,E) \, \cong\,  C^\alpha(U,C^\beta(V,E))
\]
if $\alpha\in (\N_0\cup\{\infty\})^n$,
$\beta\in (\N_0\cup\{\infty\})^m$,
$U\sub \K^n$ and $V\sub \K^m$ are open (or suitable more general)
subsets, and $E$ is a topological vector space.
As a first application, we study the density of locally polynomial functions in spaces of partially differentiable functions over an ultrametric field (thus solving an open problem by Enno Nagel), and also global approximations by polynomial functions.
As a second application, we obtain a new proof for the characterization of $C^r$-functions on $(\Z_p)^n$
in terms of the decay of their Mahler expansions.
In both applications, the exponential laws enable simple inductive proofs via a reduction to the one-dimensional, vector-valued case.\vspace{2.6mm}
\end{abstract}
{\footnotesize {\em Classification}:
26E20;
%Calculus of functions taking values in infinite-dimensional spaces
26E30
%Non-Archimedean analysis
(primary)
12J10;
% Valued fields
12J25;
%Non-Archimedean valued  fields
26B05;
% functions of several variables: Continuity and differentiation questions
%
30G06;
% Non-Archimedean function theory
32P05;
% Non-Archimedean analysis
%
54D50\\[2mm]
% k-spaces
%
%
{\em Key words}:
Exponential law, function space, partial differential, finite order, product, ultrametric field, topological field,
non-archimedean analysis, Mahler expansion, differentiability, metrizability, valued field, compactly generated space, k-space, polynomial, locally polynomial function, density, approximation, Stone-Weierstrass Theorem}\\[11mm]
{\Large\bf Introduction and statement of results}\\[4mm]
Let $\alpha=(\alpha_1,\ldots,\alpha_n)$ with $\alpha_1,\ldots,\alpha_n\in \N_0\cup\{\infty\}$.
A function $f\colon U\to \R$ on an open set $U\sub \R^n$
is called $C^\alpha$ if it admits continuous partial derivatives
$\partial^\beta f\colon U\to \R$
for all multi-indices $\beta\in (\N_0)^n$
such that $\beta\leq\alpha$
(cf.\ \cite{Alz}).
In this article, we study
an analogous notion of $C^\alpha$-map
$f\colon U\to E$ defined with the help of continuous extensions
to certain partial difference quotient maps,
for $E$ a topological vector space over a
topological field~$\K$ and open subset $U\sub \K^n$.
In fact, beyond open domains we consider
$C^\alpha$-functions on subsets $U\sub \K^n$ which are \emph{locally cartesian}
in the sense that each point in $U$
has a relatively open neighbourhood $V\sub U$
of the form $V=V_1\times\cdots\times V_n$
for subsets $V_1,\ldots, V_n\sub \K$ without isolated points (see Definition~\ref{defSDS}).
For $\K$ a complete ultrametric field and $E$ an ultrametric Banach space,
such functions have recently been introduced by E. Nagel \cite{Nag}.
Our (equivalent) definition differs from his in detail
and avoids the use of spaces of linear operators
(mimicking, instead, an approach to multi-variable $C^r$-maps
pursued by De Smedt \cite{DSm}, Schikhof \cite[\S84]{Sch}
and the author~\cite[Definition 1.13]{CMP}).\\[2.4mm]
We endow the space $C^\alpha(U,E)$
of all $E$-valued $C^\alpha$-maps on~$U$
with its natural vector topology (see Definition~\ref{coalpha}).
The main result of this article is the following exponential law.\\[3.5mm]
{\bf Theorem A.}
\emph{Let $\K$ be a topological field,
$n,m\in \N$,
$U\sub \K^n$ and $V\sub \K^m$ be locally cartesian subsets,
and $\alpha\in (\N_0\cup\{\infty\})^n$,
$\beta\in (\N_0\cup\{\infty\})^m$.
Then $f(x,\sbull)\in C^\beta(V,E)$
for each $f \in C^{(\alpha,\beta)}(U\times V,E)$,
the map}
\[
f^\vee\colon U \to C^\beta(V,E),\quad
x\mto f(x,\sbull)
\]
\emph{is $C^\alpha$, and the mapping}
\[
\Phi\colon
C^{(\alpha,\beta)}(U\times V,E)\to C^\alpha(U,C^\beta(V,E)),\quad
f \mto f^\vee
\]
\emph{is $\K$-linear and a topological embedding.
If $\K$ is metrizable} (\emph{e.g., if $\K$
is a valued field}), \emph{then $\Phi$ is an isomorphism
of topological $\K$-vector spaces.}\\[3.5mm]
Let $n\in \N$, $\alpha\in (\N_0\cup\{\infty\})^n$, $\K$ be a field
and $E$ be a $\K$-vector space.
A function $p \colon U \to E$ on a subset $U\sub \K^n$
is called a \emph{polynomial function of multidegree $\leq \alpha$}
if there exist $a_\beta\in E$ for multi-indices $\beta\in (\N_0)^n$
with $\beta\leq\alpha$ such that $a_\beta=0$
for all but finitely many $\beta$ and
\[
p(x)=\sum_{\beta\leq\alpha}x^\beta a_\beta\quad\mbox{for all $\,x=(x_1,\ldots, x_n)\in U$,}
\]
with $x^\beta:=x_1^{\beta_1}\cdot\ldots\cdot x_n^{\beta_n}$.
We write $\Pol(U,E)$ for the space of all $E$-valued polynomial functions
on~$U$.
If $(\K,|.|)$ is a valued field,
$U\sub \K^n$ a subset and
$E$ a topological $\K$-vector space, we say that a function
$f \colon U\to E$
is \emph{locally polynomial} of multidegree $\leq \alpha$
if each $x\in U$ has an open neighbourhood $V$ in~$U$
such that $f|_V=p$
for some polynomial function $p\colon \K^n\supseteq V \to E$
of multidegree $\leq \alpha$.
We write $\text{LocPol}_{\leq\alpha}(U,E)$
for the space of all locally polynomial $E$-valued
functions of multidegree $\leq\alpha$ on~$U$.
Using Theorem~A to perform a reduction
to the one-dimensional case
(settled for scalar-valued functions
as a special case of \cite[Proposition~II.40]{Nag}),
we obtain:\\[3.5mm]
{\bf Theorem B.}
\emph{For every  complete ultrametric field $\K$,
compact cartesian subset $U\sub \K^n$,
locally convex topological $\K$-vector space~$E$
and $\alpha\in (\N_0\cup\{\infty\})^n$,
the space $\LocPol_{\leq \alpha}(U,E)$
of $E$-valued locally polynomial functions of
multidegree $\leq\alpha$ is dense
in $C^\alpha(U,E)$.}\\[3.5mm]
For $E=\K$ and $n\geq 2$, this answers a question raised
by E. Nagel in the case of integer exponents
(he also considered fractional differentiability; the case of $\alpha\in [0,\infty[^n\setminus \N^n_0$
remains open).\footnote{Personal communication at the 12th International Conference on $p$-Adic Functional Analysis, Winnipeg, July 2012.} \\[2.4mm]
Likewise, the exponential law
can be used to reduce questions concerning
the Mahler expansions of functions of several variables
to the familiar case of single-variable functions.
%\footnote{This is an old idea by the author,
%first described in a proposal for an ERC-grant
%from April 2007 (as project 1.2) and then (for functions on $\Z_p\times \Z_p$,
%with a sketch of proof)
%as Project~3 of the proposal for the DFG-grant GL~357/8-1 from
%September 2008, where also the definition of $C^{r,s}$-functions
%on $U\times V$ for open subsets $U\sub \K^n$, $V\sub \K^m$
%was already outlined (cf.\
%{\tt http://gepris.dfg.de/gepris/OCTOPUS/?module=gepris\&{}task=showDetail\&{}context\linebreak
%=projekt\&{}id=118701543}).}
%
%
We obtain the following result (the scalar-valued case of which was published earlier
in the independent work \cite{Nag},
based on a different strategy of proof which exploits topological tensor products):\footnote{Compare also
\cite[theorem on p.\,140]{DSm} for the case of scalar-valued $C^1$-functions on $\Z_p\times\Z_p$, established by direct calculations.}\\[3.5mm]
{\bf Theorem C.}
\emph{Let $E$ be a sequentially complete locally convex space over
$\Q_p$ and $f\colon (\Z_p)^n\to E$ be a continuous function, where
$n\in \N$. Let\vspace{-1mm}
\[
f(x)=\sum_{\nu\in \N_0^n}\left(
\begin{array}{c}
x\\
\nu
\end{array}
\right)a_\nu\vspace{-1mm}
\]
be the Mahler expansion of $f$ with the Mahler coefficients
$a_\nu\in E$.
Let $\alpha\in \N_0^n$.
Then $f$ is $C^\alpha$ if and only if}
\begin{equation}\label{decaycond}
q(a_\nu) \nu^\alpha\to 0\quad\mbox{as $\,|\nu|\to\infty$}
\end{equation}
\emph{for each continuous ultrametric seminorm $q$ on~$E$,
where $|\nu|:=\nu_1+\cdots+\nu_n$.
Given $r\in \N_0$, the map $f$ is $C^r$ if and only if}
\[
q(a_\nu) |\nu|^r \to 0\quad\mbox{as $\,|\nu|\to\infty$,}
\]
\emph{for each $q$ as before.}\vspace{3mm}\pagebreak

\noindent
Passage to the Mahler coefficients allows $C^\alpha(\Z_p^n,E)$
to be identified with a suitable weighted $E$-valued $c_0$-space,
whose weight is modelling the decay condition~(\ref{decaycond})
(see Proposition~\ref{isweightedf} for details).\\[2.4mm]
Here, we use that a function $f \colon \K^n\supseteq U\to E$
is $C^r$ if and only if it is $C^\alpha$ for all $\alpha\in \N_0^n$
such that $|\alpha|\leq r$.
To generalize this idea,
let us return to a topological field $\K$
and topological $\K$-vector space $E$.
Let $n_1,\ldots, n_\ell\in \N$,
$n:=n_1+\cdots+n_\ell$ and $\alpha\in (\N_0\cup\{\infty\})^\ell$.
Agreeing to consider $\K^n$ as the direct product
$\K^{n_1}\times\cdots\times \K^{n_\ell}$,
we say that a function $f\colon U\to E$
on a locally cartesian subset $U\sub \K^n$
is $C^\alpha$ if it is
$C^\beta$ for all $\beta\in \N_0^n$
such that $|\beta_j|\leq\alpha_j$ for all $j\in \{1,\ldots, \ell\}$,
where we wrote $\beta=(\beta_1,\ldots,\beta_\ell)$
according to the decomposition $\N_0^n=\N_0^{n_1}\times\cdots\times\N_0^{n_\ell}$.
Then again an exponential law holds.\\[4mm]
{\bf Theorem D.}
\emph{Let $\K$ be a Hausdorff topological field,
$n,m\in \N$
and $U\sub \K^n$ as well as $V\sub \K^m$ be locally cartesian subsets.
Fix decompositions $n=n_1+\cdots+ n_\ell$ and $m=m_1+\cdots+m_k$
with $n_1,\ldots, n_\ell,m_1,\ldots, m_k\in \N$.
Let $\alpha\in (\N_0\cup\{\infty\})^\ell$
and $\beta\in (\N_0\cup\{\infty\})^k$.
Then $f(x,\sbull)\in C^\beta(V,E)$
for each $f\in C^{(\alpha,\beta)}(U\times V,E)$,
the map}
\[
f^\vee\colon U \to C^\beta(V,E),\quad
x\mto f(x,\sbull)
\]
\emph{is $C^\alpha$, and the mapping}
\[
\Phi\colon
C^{(\alpha,\beta)}(U\times V,E)\to C^\alpha(U,C^\beta(V,E)),\quad
f \mto f^\vee
\]
\emph{is $\K$-linear and a topological embedding.
If $\K$ is metrizable} (\emph{e.g., if $\K$
is a valued field}), \emph{then $\Phi$ is an isomorphism
of topological $\K$-vector spaces.}\\[4mm]
As a consequence, reducing to the case of $C^r$-maps on compact cartesian sets treated
(in the scalar-valued case) in \cite[Proposition~II.40]{Nag},
we obtain the density of suitable
locally polynomial functions in $C^\alpha(U,E)$
for all locally closed, locally cartesian sets $U\sub \K^{n_1}\times\cdots\times \K^{n_\ell}$
and $\alpha\in \N_0^\ell$ (Proposition~\ref{betterdensloc}).
Generalizing the case of scalar-valued functions
on compact cartesian sets treated in \cite[Corollary II.42]{Nag},
we prove the density of polynomial functions
in many cases. Taking $\ell=1$ and $\ell=n$, respectively,
our Proposition~\ref{denseglobpo} subsumes:\\[4mm]
{\bf Theorem E.} \emph{Let $\K$ be a complete
ultrametric field,
%totally disconnected,
%locally compact topological field,
$E$ be a locally convex topological $\K$-vector space,
$n\in \N$ and $U\sub \K^n$ be
a locally closed, locally cartesian subset.
Then the space $\Pol(U,E)$ of all $E$-valued polynomial functions on~$U$
is dense in $C^r(U,E)$, for each $r\in \N_0\cup\{\infty\}$.
Moreover,
$\Pol(U,E)$ is dense in $C^\alpha(U,E)$, for each $\alpha\in (\N_0\cup\{\infty\})^n$.}\\[4mm]
Every open subset $U\sub \K^n$
satisfies the hypotheses of Theorem~E,
and also every locally cartesian subset which is closed or locally compact.\\[2.4mm]
Polynomial approximations to functions of a single variable
were already studied in~\cite{ArS}.\\[2.4mm]
As a special case, taking $\ell=2$ the preceding approach subsumes
a notion of $C^{(r,s)}$-maps on locally cartesian subsets
$U\sub \K^n\times \K^m$.
For a corresponding notion of $C^{(r,s)}$-maps
on open subsets $U\sub \R^n\times\R^m$
(or, more generally, on open subset $U\sub E_1\times E_2$ with real locally convex spaces
$E_1$ and $E_2$),
the reader is referred to the earlier works \cite{AaS} and \cite{Alz},
where also exponential laws for the corresponding function spaces
(similar to Theorem~D) can be found.
Special cases (and variants) of such $C^{(r,s)}$-maps have been
used in many parts of analysis (see, e.g.,
\cite{Ama} for analogues of $C^{(0,r)}$-maps
on real Banach spaces based on continuous Fr\'{e}chet differentiability;
\cite[1.4]{GCX} for $C^{(0,r)}$-maps between real locally convex spaces;
\cite{DIF} for $C^{(r,s)}$-maps on finite-dimensional real domains; and
\cite[p.\,135]{FaK} for certain $\mbox{Lip}^{(r,s)}$-maps
in the convenient setting of ana\-lysis
(a backbone of which are exponential laws for spaces of smooth functions,
see also \cite{KaM}).
We mention that exponential laws for suitable spaces of \emph{smooth} functions
over locally compact topological fields (also in infinite dimensions)
were already established in \cite[Propositions 12.2 and 12.6]{ZOO}.
The possibility of exponential laws for $C^{r,s}$-maps
was conjectured there~\cite[p.\,10]{ZOO}.\\[3mm]
\emph{Acknowledgement and remark.}
The questions concerning exponential laws
for $C^{r,s}$-functions
and their application to Mahler expansions
were part of the DFG-project GL~357/8-1
(Project 3 in the proposal from October 2008).
%For functions on $\Z_p\times \Z_p$,
%the idea was also described as project 1.2
%in an (unsuccessful) ERC starting grant proposal
%from April 2007.
%
%
%Open problems:
%Notion of $C^\alpha$-map on open subsets of
%tvs (analogous to treatment of $C^r$-maps in \cite{Ber})
%remains to be developed which gives rise to results
%like ?? also in the inf-dim case.
%Later; E. Nagel drew my attention to the fact that
%$C^{\alpha,0}$ etc then $C...$
% also in real case using Fourier series
% also over other topological fields
% or without completeness properties?
%
% WEITERE ANHAENGE, SPAETER:
% (a) Reeller Fall und co C...-Topologien selbes dann
% (b) Aequivaleny zur Nagelschen Def
%
%
%
%
%
%
%
%
%
%
%
\section{Preliminaries and notation}\label{secnot}
%
%\ma{secnot}
%
%
%
All topological fields occurring in this article
are assumed Hausdorff and non-discrete;
all topological vector spaces are assumed Hausdorff.
%Given a field~$\K$, as usual
%we write $\K^\times:=\K\setminus \{0\}$
%for its group of invertible elements.
A~\emph{valued field} is a field~$\K$,
equipped with an absolute value
$|.|\colon \K\to [0,\infty[$
which defines a non-discrete topology on~$\K$.
If $|.|$ satisfies the ultrametric
inequality, we call $(\K,|.|)$
an \emph{ultrametric field}.
In this case, we call $\bO:=\{z\in \K\colon |z|\leq 1\}$
the \emph{valuation ring} of $\K$.
A topological vector space $E$ over
an ultrametric field $(\K,|.|)$
is called \emph{locally convex}
if the set of all open $\bO$-submodules of~$E$
is a basis for the filter of $0$-neighbourhoods in~$E$
(we then simply call $E$ a \emph{locally convex space}).
Equivalently, $E$ is locally convex
if its topology is defined
by a set of seminorms $q\colon E\to [0,\infty[$
which are ultrametric in the sense that
$q(x+y)\leq\max\{q(x),q(y)\}$ for all $x,y\in E$
(see \cite{Mon}, \cite{PSn} and \cite{vRo}
for further information on such spaces).
A valued field $(\K,|.|)$ is called \emph{complete} if
the metric defined via $d(x,y):=|x-y|$
makes $\K$ a complete metric space.
If $X$ is a topological space, $Y\sub X$ a subset and $U\sub Y$
a relatively open subset, we usually simply say that
``$U$ is open in $Y$'' or
``$U\sub Y$ is open''.
In the rare cases when
$U$ is intended to by open in~$X$,
we shall say so explicitly.
As usual, by a ``clopen'' set we mean a set which is both closed and open.
If $\K$ is a topological field, we always
endow $\K^n$ with the product topology.
A subset $Y$ of a topological space~$X$ is called \emph{locally closed}
if, for each $x\in Y$ and neighbourhood $U\sub Y$ (with respect to the induced topology)
there exists a neighbourhood $V\sub U$ of~$x$
which is closed in~$X$.
For example, every closed subset of a regular topological space (like $\K^n$)
is locally closed, and also every open subset.
Locally compact subsets of a topological space are locally closed
as well.
If $(X,d)$ is a metric space, $x\in X$ and $r>0$,
we write $B^d_r(x):=\{y\in X\colon d(x,y)<r\}$
and $\wb{B}^d_r(x):=\{y\in X\colon d(x,y)\leq r\}$
for the open and closed balls, respectively.
Given a non-empty subset $Y\sub X$,
we set $B_r^d(Y):=\{x\in X\colon (\exists y\in Y)\, d(x,y)<r\}
=\bigcup_{y\in Y}B^d_r(y)$.
If $(E,\|.\|)$ is a normed space over
a valued field $(\K,|.|)$,
$x\in\K$ and $r>0$,
we use the notation $B^E_r(x):=\{y\in E \colon |x-y|)<r\}$.
%
%Totally disconnected, locally compact
%topological fields will be referred to as \emph{local
%fields}. It is well known
%that each locally compact field
%admits an absolute value defining its
%topology. We fix such an absolute value,
%and thus consider~$\K$ as a valued field.
%On $\R$ and $\C$, we shall always use the
%usual absolute value.
We write $\N=\{1,2,\ldots\}$
and $\N_0:=\N\cup\{0\}$.\\[2.4mm]
If $X$ is a set and
$(f_j)_{j\in J}$ a family of mappings $f_j\colon X\to X_j$ to topological spaces~$X_j$,
then the set of all pre-images $f_j^{-1}(U)$ (with $j\in J$ and $U$ open in~$X_j$)
is a subbasis for a topology~$\cO$ on~$X$, called the \emph{initial topology}
with respect to the family $(f_j)_{j\in J}$.
For any topological space~$Y$,
a map
\[
f \colon Y\to (X,\cO)
\]
is continuous
if and only if $f_j\circ f$ is continuous for each $j\in J$.
The topology $\cO$ is unchanged if we take $U$ only in a basis
of open subsets of~$X_j$, or in a subbasis. This readily implies
the following well-known fact
(the ``transitivity of initial topologies''),
which will be used frequently:
\begin{la}
Let $X$ be a topological space whose topology $\cO$
is initial with respect to a family $(f_j)_{j\in J}$ of mappings
$f_j\colon X\to X_j$ to topological spaces~$X_j$.
Assume that the topology on $X_j$ is initial with respect to a family
$(f_{i,j})_{i\in I_j}$ of mappings $f_{i,j}\colon X_j\to X_{i,j}$
to topological spaces~$X_{i,j}$.
Then $\cO$ is also the initial topology with respect to the
family $(f_{i,j}\circ f_j)_{j\in J,i\in I_j}$ of compositions
$f_{i,j}\circ f_j\colon X\to X_{i,j}$.\,\Punkt
\end{la}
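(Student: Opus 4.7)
The plan is to exploit the subbasis description of the initial topology that was recalled in the paragraph immediately preceding the lemma, together with the remark there that the initial topology is unchanged if the generating open sets are taken only from a subbasis of each target space. This reduces the lemma to a purely set-theoretic manipulation of preimages.

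First I would fix, for each $j\in J$ and each $i\in I_j$, the subbasis $\mathcal{S}_{i,j}:=\{f_{i,j}^{-1}(W)\colon W\text{ open in }X_{i,j}\}$ of the topology on~$X_j$ obtained from the initial topology assumption on~$X_j$; taking the union $\mathcal{S}_j:=\bigcup_{i\in I_j}\mathcal{S}_{i,j}$ then yields a subbasis for the topology of~$X_j$. Next, invoking the remark cited above, the initial topology $\cO$ on~$X$ with respect to $(f_j)_{j\in J}$ has as a subbasis the collection of preimages $f_j^{-1}(S)$ for $j\in J$ and $S\in \mathcal{S}_j$. Since $f_j^{-1}\bigl(f_{i,j}^{-1}(W)\bigr)=(f_{i,j}\circ f_j)^{-1}(W)$, this subbasis is exactly
\[
\bigl\{(f_{i,j}\circ f_j)^{-1}(W)\colon j\in J,\ i\in I_j,\ W\text{ open in }X_{i,j}\bigr\},
\]
which is precisely a subbasis for the initial topology with respect to the family of compositions $(f_{i,j}\circ f_j)_{j\in J,\,i\in I_j}$. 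Two topologies with the same subbasis coincide, so the two initial topologies agree.

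There is essentially no obstacle here: the argument is a one-line identity on preimages once the subbasis characterisation has been invoked twice. An alternative, equally short route would be to check that both topologies satisfy the same universal property — namely, a map $g\colon Y\to X$ from any topological space $Y$ is continuous for $\cO$ iff each $f_j\circ g$ is continuous, iff (by the initial topology property of each $X_j$) each $f_{i,j}\circ f_j\circ g$ is continuous — and then apply the uniqueness of the topology determined by this universal property (taking $Y=X$ and $g=\id_X$). I would choose the subbasis approach in the write-up since it directly reuses the observation just proved in the paper and avoids re-deriving the universal property.
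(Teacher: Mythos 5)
Your proposal is correct and follows exactly the route the paper intends: the lemma is stated there without a written proof, being presented as an immediate consequence of the preceding remark that the initial topology is unchanged when the generating open sets are taken from a subbasis of each target space, combined with the preimage identity $f_j^{-1}\bigl(f_{i,j}^{-1}(W)\bigr)=(f_{i,j}\circ f_j)^{-1}(W)$ — which is precisely your argument.
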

A mapping $f\colon X\to Y$ between topological
spaces is called a \emph{topological embedding} (or simply: an embedding)
if it is a homeomorphism onto its image.
This holds if and only if $f$ is injective and the topology on~$X$
is initial with respect to~$f$.
\section{{\boldmath $C^\alpha$}-maps on subsets of {\boldmath $\K^n$}}
We define $C^\alpha$-maps and record
elementary properties of such maps and their domains
of definition.
The discussion follows the treatment of $C^k$-maps
in \cite[2.5--2.16]{CMP} so closely,
that some proofs can be replaced by references to~\cite{CMP}.
%
%the proofs of our Lemmas \ref{presymm}, \ref{interpr},
%\ref{partsymm} and \ref{extmore}
%are identical to those
%of Lemmas~2.11, 2.12, 2.14 and 2.15, respectively, in \cite{CMP}
%(replacing open sets by cartesian or locally cartesian sets
%does not change the arguments).
%We can therefore safely omit the proofs.
%
%
\begin{numba}
Throughout this section,
let $\K$ be a topological field,
$E$ be a topological $\K$-vector space,
$n\in \N$ and $U\sub \K^n$
be a subset
(where $\K^n$ is equipped with the product
topology).
\end{numba}
\begin{numba}
We say that $U$ \emph{cartesian}
if it is of the form $U=U_1\times\cdots\times U_n$,
for some subsets $U_1,\ldots, U_n$
of $\K$ without isolated points.\footnote{Compare \cite[Remark~2.16]{CMP}.}
If every point in $U$ has a relatively open neighbourhood
in $U$ which is cartesian,
then $U$ is called \emph{locally cartesian}.\footnote{In the case of a complete ultrametric
field, such a set is called a  ``locally cartesian set whose factors contain no isolated points''
in \cite{Nag}.}
\end{numba}
\begin{numba}\label{nicernbhd}
Observe that if $U\sub \K^n$ is a locally cartesian subset, then each
$x\in U$ has an open cartesian neighbourhood $W\sub U$
of the form $W=U\cap Q$, for some open \emph{cartesian} subset $Q\sub \K^n$
(which is sometimes useful).\\[2.4mm]
[Indeed, $x=(x_1,\ldots, x_n)\in U$
has a relatively open neighbourhood $V\sub U$ of the form
$V=V_1\times\cdots\times V_n$,
for some subsets $V_1,\ldots, V_n\sub \K$
without isolated points.
Thus, there is an open subset $Y\sub \K^n$
such that $U\cap Y=V_1\times\cdots \times V_n$.
If $Q_1,Q_2,\ldots, Q_n$ are open neighbourhoods of
$x_1,\ldots, x_n$ in $\K$,
respectively, such that $Q:=Q_1\times\cdots\times Q_n\sub Y$,
then
\begin{equation}\label{hlpone}
U\cap Q=U\cap Y\cap Q=V\cap Q=(Q_1\times V_1)\times\cdots\times (Q_n\cap V_n),
\end{equation}
where $V_1\cap Q_1$, $\ldots$, $V_n\cap Q_n$
do not have isolated points.]\\[2.4mm]
Note that,
if $P\sub U$ is a given neighbourhood of~$x$,
we can choose $Q$ so small that $U\cap Q\sub P$.
\end{numba}
\begin{numba}
As usual, for $i\in \{1,\ldots, n\}$
we set $e_i:=(0,\ldots,0,1,0,\ldots, 0)\in \K^n$,
with $i$-th entry~$1$.
Given a ``multi-index'' $\alpha=(\alpha_1,\ldots,\alpha_n)
\in \N_0^n$,
we write $|\alpha|:=\sum_{i=1}^n \alpha_i$
and $x^\alpha:=x_1^{\alpha_1}\cdots x_n^{\alpha_n}$ for $x=(x_1,\ldots, x_n)\in \K^n$.
If $\alpha,\beta\in (\N_0\cup\{\infty\})^n$,
we say that $\alpha\leq \beta$ if $\alpha_j\leq\beta_j$ for all $j\in\{1,\ldots, n\}$.
We define $\alpha+\beta$ component-wise, with $r+\infty=\infty+ r:=\infty$
for all $r\in \N_0\cup\{\infty\}$.
\end{numba}
%
%
%\ma{notconv}
\begin{numba}\label{notconv}
For $\alpha\in (\N_0\cup\{\infty\})^n$, our definition of a $C^\alpha$-map
$f\colon U\to E$ on a cartesian set~$U$
will involve a certain continuous extension
$f^{<\beta>}$ to an iterated partial difference
quotient map $f^{>\beta<}$ corresponding to
each multi-index $\beta\in \N_0^n$
such that $\beta \leq \alpha$.
It is convenient to define
the domains $U^{<\beta>}$ and $U^{>\beta<}$
of these mappings first.
They will be subsets
of $\K^{n+|\beta|}$.
It is useful to write elements
$x\in \K^{n+|\beta|}$ in the form
$x=(x^{(1)}, x^{(2)},\ldots, x^{(n)})$,
where $x^{(i)}\in \K^{1+\beta_i}$
for $i\in \{1,\ldots, n\}$.
We write $x^{(i)}=
(x^{(i)}_0,x^{(i)}_1,\ldots, x^{(i)}_{\beta_i})$
with $x^{(i)}_j\in \K$
for $j\in \{0,\ldots,\beta_i\}$.
\end{numba}
\begin{numba}
Let $U\sub \K^n$ be locally cartesian.
Given $\beta\in \N_0^n$, we define
$U^{<\beta>}$ as the set of all
$x\in \K^{n+|\beta|}$
such that, for all
$i_1\in \{0,1,\ldots,\beta_1\},\ldots,
i_n \in \{0, 1,\ldots,\beta_n\}$,
we have
\[
(x^{(1)}_{i_1},\ldots,
x^{(n)}_{i_n})\in U\, .
\]
We let $U^{>\beta<}$ be the set of all
$x\in U^{<\beta>}$ such that,
for all $i\in \{1,\ldots, n\}$
and $0\leq j<k\leq \beta_i$,
we have $x^{(i)}_j\not=x^{(i)}_k$.
\end{numba}
%
%
%
%\ma{mimpex}
\begin{example}\label{mimpex}
If $U\sub \K^n$ is cartesian,
say $U=U_1\times\cdots\times U_n$
with subsets $U_i\sub \K$,
then simply
%
%\ma{spcas}
\begin{equation}\label{spcas}
U^{<\beta>}\; =\; U_1^{1+\alpha_1}\times
U_2^{1+\beta_2}\times\cdots
\times U_n^{1+\beta_n}\,.
\end{equation}
\end{example}
\begin{numba}
It is easy to see that $U^{>\beta<}$
is an open subset of
$U^{<\beta>}$.
If $U$ is cartesian, it is also easy to check
that $U^{>\beta<}$ is dense in $U^{<\beta>}$.
\end{numba}
\begin{rem}
Unfortunately,
$U^{>\beta<}$ need \emph{not} be dense in $U^{<\beta>}$
if $U\sub \K^n$ is merely locally cartesian.
For this reason,
we shall define $C^\alpha$-maps
by a reduction to the case
of cartesian subsets.
Also the definition of the topology on function spaces
is made slightly more complicated by the fact
that continuous extensions $f^{<\beta>}$
cannot be defined globally in general (only after restriction of $f$
to cartesian subsets).
\end{rem}
\begin{example}\label{exa1}
Let $p\geq 3$ be a prime and
$V_2$ and $W_2$ be compact, open, non-empty, disjoint subsets of $\Q_p$.
Pick $v\in V_2$, $w\in W_2$.
Let
\[
V_1:=\left\{\sum_{k=0}^\infty a_k p^k\colon (a_k)_{k\in \N_0}\in \{0,1\}^{\N_0}\right\}
\]
and
$W_1:=\{\sum_{k=0}^\infty a_k p^k\colon (a_k)_{k\in \N_0}\in \{0,2\}^{\N_0}\}$.
Then $V_1$ and $W_1$ are compact subsets of $\Q_p$
without isolated points, and thus
\[
U:=(V_1\times V_2)\cup (W_1\times W_2)
\]
is a compact, locally cartesian subset of $\Q_p \times \Q_p$.
Note that $(0,0,v,w)
\in U^{<(1,1)>}$ (because $(0,v),(0,w)\in U$),
and that
\[
Q:=(\Q_p\times \Q_p\times V_2 \times W_2)\cap U^{<1,1>}
\]
is an open neighbourhood of $(0,0,v,w)$ in $U^{<1,1>}$.
One can show that $Q\cap U^{>(1,1)<}=\emptyset$.
Hence $(0,0,v,w)$ is not in the closure of $U^{>(1,1)<}$.\\[2.4mm]
[For the proof, we use that
$(V_1\times V_2)\cap (W_1\times W_2)=\emptyset$
and $V_1\cap W_1=\{0\}$.
Suppose there exists $(x,y,v_2,w_2)\in
Q\cap U^{>(1,1)<}$.
Then $x\not= y$, $v_2 \not=w_2$
and $v_2\in V_2$, $w_2\in W_2$.
Note that $(x,v_2),(x,w_2)\in U$
forces $x\in V_1\cap W_1=\{0\}$.
Likewise, $(y,v_2),(y,w_2)\in U$
entails that $y=0$. Hence $x=y$,
contradiction.
Hence $(x,y,v_2,w_2)$ cannot exist.]
\end{example}
\begin{rem}
%EVTL OBSOLET
A simple induction on $|\beta|$ shows that
the sets $U^{<\beta>}$ can be defined
alternatively by recursion on $|\beta|$,
as follows: Set $U^{<0>}:=U$.
Given $\beta\in \N_0^n$ such that
$|\beta|\geq 1$, pick $\gamma\in \N_0^n$
such that $\beta=\gamma+e_i$
for some $i\in \{1,\ldots, n\}$.
Then
$U^{<\beta>}$
is the set of all elements $x\in \K^{n+|\beta|}$
such that\linebreak
$(x^{(1)},\ldots, x^{(i-1)},
x^{(i)}_0,x^{(i)}_1,\ldots,x^{(i)}_{\beta_i-1},
x^{(i+1)},\ldots, x^{(n)})\in U^{<\gamma>}$
holds as well as\linebreak
$(x^{(1)},\ldots, x^{(i-1)},
x^{(i)}_{\beta_i},x^{(i)}_1,\ldots,x^{(i)}_{\beta_i-1},
x^{(i+1)},\ldots, x^{(n)})\in U^{<\gamma>}$.
\end{rem}
We now define certain mappings
$f^{>\beta<}\colon U^{>\beta<}\to E$ and show afterwards
that they can be interpreted
as partial difference
quotient maps.
\begin{defn}
We set $f^{>0<}:=f$.
Given a multi-index $\beta\in \N_0^n$
such that $|\beta|\geq 1$,
we define $f^{>\beta<}(x)$ as the sum
%
%\ma{badbd}
\begin{equation}\label{badbd}
\sum_{j_1=0}^{\beta_1}\cdots
\sum_{j_n=0}^{\beta_n}
\left(\prod_{k_1\not=j_1} \frac{1}{x^{(1)}_{j_1}-x^{(1)}_{k_1}}
\cdot\ldots\cdot
\prod_{k_n\not=j_n}\frac{1}{x^{(n)}_{j_n}-x^{(n)}_{k_n}}\right)
f(x^{(1)}_{j_1},\ldots, x^{(n)}_{j_n})
\end{equation}
for $x\in U^{>\beta<}$,
using the notational conventions
from {\bf\ref{notconv}}.
The products are taken over all
$k_\ell\in \{0,\ldots, \beta_\ell\}$
such that $k_\ell\not=j_\ell$,
for $\ell\in \{1,\ldots, n\}$
(and empty products are defined as the element $1\in\K$).
\end{defn}
The map $f^{>\beta<}$ has
important symmetry properties.
%
%
%\ma{presymm}
\begin{la}\label{presymm}
Assume that $\beta\in \N_0^n$,
$i\in \{1,\ldots, n\}$
and $\pi$ is a permutation of\linebreak
$\{0,1,\ldots, \beta_i\}$.
Then
$(x^{(1)},\ldots, x^{(i-1)},
x^{(i)}_{\pi(0)},\ldots, x^{(i)}_{\pi(\beta_i)},
x^{(i+1)},\ldots, x^{(n)})\in U^{>\beta<}$
for each $x\in U^{>\beta<}$,
and
%
%\ma{preeqsmm}
\begin{equation}\label{preeqsmm}
f^{>\beta<}(x^{(1)},\ldots, x^{(i-1)},
x^{(i)}_{\pi(0)},\ldots, x^{(i)}_{\pi(\beta_i)},
x^{(i+1)},\ldots, x^{(n)})\;=\; f^{>\beta<}(x)\, .
\end{equation}
\end{la}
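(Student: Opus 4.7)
The proof should be essentially a bookkeeping/reindexing argument exploiting the product structure of the formula (\ref{badbd}). First, the claim that the permuted point lies in $U^{>\beta<}$ is immediate from the definitions: both the condition that all tuples $(x^{(1)}_{i_1},\ldots,x^{(n)}_{i_n})$ lie in $U$ and the distinctness condition $x^{(i)}_j\neq x^{(i)}_k$ depend only on the \emph{unordered} finite set $\{x^{(i)}_0,\ldots,x^{(i)}_{\beta_i}\}$ (together with the fixed blocks $x^{(\ell)}$ for $\ell\neq i$), and hence are stable under permutation of the labels within the $i$-th block.

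For the identity (\ref{preeqsmm}), the plan is to reindex the sum (\ref{badbd}) evaluated at the permuted argument. I would perform the substitution $j_i\mapsto \pi^{-1}(j_i)$ in the outer summation and, inside the product over $k_i$, the substitution $k_i\mapsto \pi^{-1}(k_i)$. Since $\pi$ is a bijection of $\{0,\ldots,\beta_i\}$, both substitutions merely relabel the indexing sets, and the constraint $k_i\neq j_i$ transforms into $\pi^{-1}(k_i)\neq\pi^{-1}(j_i)$, i.e., $k_i\neq j_i$ again. After the two substitutions, the denominator $x^{(i)}_{\pi(\pi^{-1}(j_i))}-x^{(i)}_{\pi(\pi^{-1}(k_i))}$ collapses to $x^{(i)}_{j_i}-x^{(i)}_{k_i}$, the argument of $f$ in each summand becomes $(x^{(1)}_{j_1},\ldots,x^{(n)}_{j_n})$, and the factors indexed by $\ell\neq i$ are left untouched. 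The resulting expression is, term by term, the original sum (\ref{badbd}) for $f^{>\beta<}(x)$.

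The underlying reason is a tensor-product observation: the formula (\ref{badbd}) is a product of $n$ independent Lagrange-interpolation-type kernels, one per block, multiplied by a single evaluation of $f$ at the selected nodes; permuting the labels in one block merely permutes the summation indices contributed by that block. There is no serious obstacle here; the only point requiring attention is to track the constraint $k_i\neq j_i$ through the substitution, which is handled by the observation above.
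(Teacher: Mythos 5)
Your argument is correct: the membership claim follows because both defining conditions of $U^{>\beta<}$ are invariant under relabelling the entries of the $i$-th block, and the identity (\ref{preeqsmm}) follows from the substitution $j_i\mapsto\pi^{-1}(j_i)$, $k_i\mapsto\pi^{-1}(k_i)$ in (\ref{badbd}), which preserves the constraint $k_i\neq j_i$ and turns each summand of the permuted expression into the corresponding summand for $x$. This is essentially the same reindexing argument the paper invokes by citing the verbatim proof of Lemma~2.11 in \cite{CMP}, so your proposal simply makes explicit what the paper outsources to that reference.
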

\begin{proof}
The proof of
\cite[Lemma~2.11]{CMP}
can be repeated verbatim.
\end{proof}
The following lemma
shows that $f^{>\beta<}$ can indeed be
interpreted as a partial difference quotient map.
%
%
%\ma{interpr}
\begin{la}\label{interpr}
For each $i\in \{1,\ldots, d\}$ and $x\in U^{>e_i<}$,
the element $f^{>e_i<}(x)$ is given by
\[
\frac{f(x^{(1)},\ldots, x^{(i-1)},x^{(i)}_0, x^{(i+1)},\ldots, x^{(n)})
-
f(x^{(1)},\ldots, x^{(i-1)},x^{(i)}_1,
x^{(i+1)},\ldots, x^{(n)})}{x^{(i)}_0-x^{(i)}_1}.
\]
If $\beta\in \N_0^n$ such that $|\beta|\geq 2$,
let
$\gamma\in \N_0^n$ be a multi-index such that
$\beta=\gamma+e_i$
for some
$i\in \{1,\ldots,n\}$. Then $f^{>\beta<}(x)$ is given by
%
%\ma{schlmm}
\begin{eqnarray}
\hspace*{-5mm}\lefteqn{\frac{1}{x^{(i)}_0-x^{(i)}_{\beta_i}}\cdot
\Big(f^{>\gamma<}(x^{(1)},\ldots, x^{(i-1)},
x^{(i)}_0, x^{(i)}_1, \ldots, x^{(i)}_{\beta_i-1},
x^{(i+1)},\ldots, x^{(n)})}\quad\quad\quad \notag\\
& & \;\;\; -
f^{>\gamma<}(x^{(1)},\ldots, x^{(i-1)},
x^{(i)}_{\gamma_i}, x^{(i)}_1, \ldots, x^{(i)}_{\beta_i-1},
x^{(i+1)},\ldots, x^{(n)})\Big)
\label{schlmm}
\end{eqnarray}
for all $x\in U^{>\beta<}$.
\end{la}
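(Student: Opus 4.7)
The lemma makes two assertions: (a) for $\beta=e_i$, the value $f^{>e_i<}(x)$ is the usual partial difference quotient in the $i$-th coordinate, and (b) for $\beta=\gamma+e_i$ with $|\beta|\geq 2$, a recursion expressing $f^{>\beta<}$ as a difference quotient of $f^{>\gamma<}$-values in the $i$-th block. Both parts can be read off directly from the defining sum (\ref{badbd}); the main tool is a classical one-dimensional divided-difference identity applied in the $i$-th coordinate block, combined with the symmetry afforded by Lemma~\ref{presymm}.

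First I would dispose of the base case $\beta=e_i$. Substituting into (\ref{badbd}), for every $\ell\neq i$ the constraint $\beta_\ell=0$ forces $j_\ell=0$ and renders the corresponding product empty, so it contributes a factor of~$1$. Only the product with $\ell=i$ survives, and it reduces to the single factor $1/(x^{(i)}_{j_i}-x^{(i)}_{1-j_i})$; adding the $j_i=0$ and $j_i=1$ contributions and cleaning up the signs gives precisely the stated partial difference quotient.

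For the inductive/recursive step I would fix the outer indices $(j_\ell)_{\ell\neq i}$ and isolate the inner sum $\sum_{j_i=0}^{\beta_i}$. The key algebraic identity---valid for any pairwise distinct $y_0,\ldots,y_{\beta_i}\in\K$ and any $j\in\{0,\ldots,\beta_i\}$---is the telescoping formula
\[
\prod_{\substack{0\leq k\leq\beta_i\\ k\neq j}}\frac{1}{y_j-y_k}\;=\;\frac{1}{y_0-y_{\beta_i}}\left(\prod_{\substack{0\leq k\leq\beta_i-1\\ k\neq j}}\frac{1}{y_j-y_k}\;-\;\prod_{\substack{1\leq k\leq\beta_i\\ k\neq j}}\frac{1}{y_j-y_k}\right),
\]
with the convention that empty products equal~$1$ and that the first product on the right is treated as absent when $j=\beta_i$ and the second when $j=0$. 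Substituting this into the $i$-th product-factor in (\ref{badbd}) and pulling the common factor $1/(x^{(i)}_0-x^{(i)}_{\beta_i})$ out front splits the inner sum into a difference of two sums that---after reindexing $j_i$ over $\{0,\ldots,\beta_i-1\}$ and $\{1,\ldots,\beta_i\}$, respectively---match term-by-term the definition of $f^{>\gamma<}$ at the two arguments displayed on the right-hand side of~(\ref{schlmm}).

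The main obstacle is the combinatorial bookkeeping required to see that, after the split, each half reproduces verbatim the defining sum (\ref{badbd}) for $f^{>\gamma<}$ at the correct evaluation point; the reindexing inside the $i$-th block is the only delicate step. Since the whole calculation takes place in a single coordinate block, it is essentially one-dimensional and is the direct analogue of the single-variable argument used for Lemma~\ref{presymm} (whose proof was borrowed verbatim from \cite[Lemma~2.11]{CMP}); an analogous reference to \cite{CMP} is likely all that is needed here. Finally, the symmetry statement of Lemma~\ref{presymm} guarantees that there is no loss in singling out the indices~$0$ and~$\beta_i$ of the $i$-th block when performing the recursion.
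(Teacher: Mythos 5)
Your argument is correct and is essentially the proof the paper has in mind: the paper simply cites \cite[Lemma~2.12]{CMP}, and what that citation stands for is exactly your computation — the base case read off from (\ref{badbd}), and the standard divided-difference recursion obtained by splitting the $i$-th product via your telescoping identity (with the "absent term" conventions handled as you state) and invoking the symmetry of Lemma~\ref{presymm} to reorder the nodes in the second argument. Note that your derivation yields $x^{(i)}_{\beta_i}$ in the first slot of the second $f^{>\gamma<}$-term, which is evidently what the paper intends in (\ref{schlmm}) (the printed $x^{(i)}_{\gamma_i}$ is a typo; compare Lemma~\ref{extmore}).
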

\begin{proof}
The proof of \cite[Lemma~2.12]{CMP}
carries over verbatim.
\end{proof}
%
%
%\ma{defSDS}
\begin{defn}\label{defSDS}
Let $\alpha\in (\N_0\cup\{\infty\})^n$,
$E$ be a topological $\K$-vector space and
$U\sub \K^n$ be a subset.
\begin{itemize}
\item[(a)]
If $U$ is cartesian,
we say that a function $f\colon U\to E$ is $C^\alpha$
if the mapping $f^{>\beta<}\colon U^{>\beta<}\to E$
admits a continuous extension
$f^{<\beta>}\colon U^{<\beta>}\to E$,
for each $\beta\in \N_0^n$
such that $\beta\leq \alpha$.
\item[(b)]
If $U$ is locally cartesian,
we say that $f$ is $C^\alpha$
if $f|_V$ is $C^\alpha$, for each cartesian relatively open subset
$V\sub U$.
\end{itemize}
\end{defn}
\begin{rem}
Since $U^{>\beta<}$ is dense in
$U^{<\beta>}$, the continuous extension
$f^{<\beta>}$ of~$f^{>\beta<}$ in
Definition~\ref{defSDS}\,(a)
is unique whenever it exists.
\end{rem}
\begin{rem}
Assume that $U$ is cartesian and $f\colon U\to E$
is $C^\alpha$ in the sense
of Definition~\ref{defSDS}\,(a).
If $V\sub U$ is a cartesian open subset,
then $f^{<\beta>}|_{V^{<\beta>}}$ provides a continuous
extension for $(f|_V)^{>\beta<}$.
Hence $f$ is $C^\alpha$ also in the sense
of Definition~\ref{defSDS}\,(b),
with $(f|_V)^{<\beta>}=f^{<\beta>}|_{V^{<\beta>}}$.
\end{rem}
\begin{rem}\label{alphacts}
If $f\colon U\to E$ is $C^\alpha$, then $f$ is continuous,\\[2.4mm]
[In fact, $f|_V=(f|_V)^{<0>}$ is continuous for each cartesian
open subset $V\sub U$. As the latter form an open
cover of~$U$, the assertion follows.]
\end{rem}
\begin{rem}
If $f\colon U\to E$ is $C^\alpha$,
then $f$ is also $C^\beta$ for
each $\beta\in (\N_0\cup\{\infty\})^n$ such that $\beta\leq\alpha$.
This is immediate from the definition.
\end{rem}
We readily deduce from Lemma~\ref{presymm}:
%
%
%\ma{partsymm}
\begin{la}\label{partsymm}
Let $f\colon U\to E$ be a $C^\alpha$-map
on a cartesian subset $U\sub\K^n$,
$\beta\in \N_0^n$ with $\beta\leq\alpha$,
$i\in \{1,\ldots, n\}$
and $\pi$ be a permutation of
$\{0,1,\ldots, \beta_i\}$.
Then
%
%\ma{nweq}
\begin{equation}\label{nweq}
(x^{(1)},\ldots, x^{(i-1)},
x^{(i)}_{\pi(0)},\ldots, x^{(i)}_{\pi(\beta_i)},
x^{(i+1)},\ldots, x^{(n)})\; \in \; U^{<\beta>}
\end{equation}
for each $x\in U^{<\beta>}$, and
%
%\ma{eqsmm}
\begin{equation}\label{eqsmm}
f^{<\beta>}(x^{(1)},\ldots, x^{(i-1)},
x^{(i)}_{\pi(0)},\ldots, x^{(i)}_{\pi(\beta_i)},
x^{(i+1)},\ldots, x^{(n)})\;=\; f^{<\beta>}(x)\, .
\end{equation}
\end{la}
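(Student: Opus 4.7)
The plan is to deduce Lemma~\ref{partsymm} from Lemma~\ref{presymm} by a standard density-plus-continuity argument, using that $U^{>\beta<}$ is dense in $U^{<\beta>}$ whenever $U$ is cartesian.

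First I would verify the set-theoretic claim~(\ref{nweq}). Since $U=U_1\times\cdots\times U_n$ is cartesian, Example~\ref{mimpex} tells us that
\[
U^{<\beta>}\;=\;U_1^{1+\beta_1}\times\cdots\times U_n^{1+\beta_n},
\]
so the set is invariant under any permutation of the coordinates within a single block $x^{(i)}=(x^{(i)}_0,\ldots,x^{(i)}_{\beta_i})$. Hence, for every $x\in U^{<\beta>}$ and every permutation $\pi$ of $\{0,1,\ldots,\beta_i\}$, the permuted tuple lies in $U^{<\beta>}$.

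Next I would establish~(\ref{eqsmm}). Define the coordinate permutation
\[
\sigma\colon U^{<\beta>}\to U^{<\beta>},\quad
x\mapsto(x^{(1)},\ldots, x^{(i-1)},
x^{(i)}_{\pi(0)},\ldots, x^{(i)}_{\pi(\beta_i)},
x^{(i+1)},\ldots, x^{(n)}),
\]
which is a homeomorphism. Both $f^{<\beta>}$ and $f^{<\beta>}\circ\sigma$ are then continuous maps $U^{<\beta>}\to E$. By Lemma~\ref{presymm}, $\sigma$ restricts to a self-map of $U^{>\beta<}$, and on this subset
\[
f^{<\beta>}(\sigma(x))\;=\;f^{>\beta<}(\sigma(x))\;=\;f^{>\beta<}(x)\;=\;f^{<\beta>}(x).
\]
Since $U^{>\beta<}$ is dense in $U^{<\beta>}$ (as noted before Remark~\ref{alphacts}) and $E$ is Hausdorff, the two continuous maps $f^{<\beta>}$ and $f^{<\beta>}\circ\sigma$ agreeing on a dense subset must coincide on all of $U^{<\beta>}$, giving~(\ref{eqsmm}).

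There is no substantive obstacle: the main point is just to notice that $\sigma$ preserves both $U^{<\beta>}$ (by the product description) and $U^{>\beta<}$ (by Lemma~\ref{presymm}), and then density takes care of the rest. The only mild care needed is to keep straight the distinction between the ``strict'' domain $U^{>\beta<}$ (where the difference-quotient formula is valid) and the ``closed'' domain $U^{<\beta>}$ (where the extension $f^{<\beta>}$ lives), which is precisely why the cartesian hypothesis on $U$ is used to guarantee density.
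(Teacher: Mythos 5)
Your proof is correct and is essentially the argument the paper intends: the paper simply defers to \cite[Lemma~2.14]{CMP}, which is proved by exactly this route — use Lemma~\ref{presymm} on the dense subset $U^{>\beta<}$ (dense because $U$ is cartesian), observe the permutation is a self-homeomorphism of $U^{<\beta>}$ preserving the product description (\ref{spcas}), and conclude by continuity of $f^{<\beta>}$ and Hausdorffness of~$E$.
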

\begin{proof}
The proof of \cite[Lemma~2.14]{CMP}
can be repeated verbatim.
\end{proof}
The following variant of our Lemma~\ref{interpr}
is available for~$f^{<\beta>}$:
%
%
%\ma{extmore}
\begin{la}\label{extmore}
Let $f\colon U\to E$ be a $C^\alpha$-map
on a cartesian subset $U\sub \K^n$
and $\beta,\gamma \in \N_0^n$ such that
$\beta=\gamma+e_i$ for some
$i\in \{1,\ldots, n\}$, and $\beta\leq\alpha$.
Then $f^{<\beta>}(x)$ is given by
%
%
%\ma{schlmm2}
\begin{eqnarray}
\hspace*{-5mm}
\lefteqn{\frac{1}{x^{(i)}_0-x^{(i)}_{\beta_i}}
\cdot \Big(f^{<\gamma>}(x^{(1)},\ldots, x^{(i-1)},
x^{(i)}_0, x^{(i)}_1, \ldots, x^{(i)}_{\beta_i-1},
x^{(i+1)},\ldots, x^{(n)})}\qquad\quad \notag\\
& & \;\;\; -
f^{<\gamma>}(x^{(1)},\ldots, x^{(i-1)},
x^{(i)}_{\beta_i}, x^{(i)}_1, \ldots, x^{(i)}_{\beta_i-1},
x^{(i+1)},\ldots, x^{(n)})\Big)\label{schlmm2}
\end{eqnarray}
for all $x\in U^{<\beta>}$
such that $x^{(i)}_0\not=x^{(i)}_{\beta_i}$.
\end{la}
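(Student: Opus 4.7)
The plan is to reduce to Lemma~\ref{interpr} by a continuity-and-density argument. Write $S:=\{x\in U^{<\beta>}\colon x^{(i)}_0\neq x^{(i)}_{\beta_i}\}$; this is open in $U^{<\beta>}$. I will show that both sides of (\ref{schlmm2}) are continuous on $S$ and agree on the dense subset $U^{>\beta<}\cap S$, whence they agree throughout $S$. The left-hand side $f^{<\beta>}|_S$ is continuous by the definition of $C^\alpha$. For the right-hand side, I first need to check that, for every $x\in U^{<\beta>}$, the two arguments
\[
y:=(x^{(1)},\ldots,x^{(i-1)},x^{(i)}_0,x^{(i)}_1,\ldots,x^{(i)}_{\beta_i-1},x^{(i+1)},\ldots,x^{(n)})
\]
and
\[
z:=(x^{(1)},\ldots,x^{(i-1)},x^{(i)}_{\beta_i},x^{(i)}_1,\ldots,x^{(i)}_{\beta_i-1},x^{(i+1)},\ldots,x^{(n)})
\]
lie in $U^{<\gamma>}$.

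This is a direct verification against the definition: since $\gamma_\ell=\beta_\ell$ for $\ell\neq i$ and $\gamma_i=\beta_i-1$, every tuple $(y^{(1)}_{j_1},\ldots,y^{(n)}_{j_n})$ with $j_\ell\in\{0,\ldots,\gamma_\ell\}$ is actually a tuple $(x^{(1)}_{j_1},\ldots,x^{(n)}_{j_n})$ with indices in the ranges $\{0,\ldots,\beta_\ell\}$, and hence lies in~$U$ by the assumption $x\in U^{<\beta>}$; the analogous statement for~$z$ uses the indices $\beta_i,1,\ldots,\beta_i-1\in\{0,\ldots,\beta_i\}$ in the $i$-th slot. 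Because $\gamma\leq\alpha$, the continuous extension $f^{<\gamma>}$ exists on $U^{<\gamma>}$, so the right-hand side of~(\ref{schlmm2}) is a continuous function of $x$ on~$S$ (the scalar factor $(x^{(i)}_0-x^{(i)}_{\beta_i})^{-1}$ being continuous on~$S$).

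Next I would verify that (\ref{schlmm2}) holds pointwise on $U^{>\beta<}$. For $x\in U^{>\beta<}$ the entries $x^{(i)}_0,\ldots,x^{(i)}_{\beta_i}$ are pairwise distinct, so both $y$ and $z$ actually lie in $U^{>\gamma<}$; on that set $f^{<\gamma>}$ coincides with $f^{>\gamma<}$. Lemma~\ref{interpr} then expresses $f^{>\beta<}(x)$ by exactly the formula on the right of~(\ref{schlmm2}) (with $f^{<\gamma>}$ in place of $f^{>\gamma<}$, which is legitimate by the preceding sentence), and by definition $f^{<\beta>}(x)=f^{>\beta<}(x)$ on $U^{>\beta<}$. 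Finally, $U^{>\beta<}$ is open and dense in $U^{<\beta>}$ (as $U$ is cartesian), and intersecting an open dense set with the open set~$S$ yields a dense subset of~$S$; continuity then gives the identity on all of~$S$. The only real piece of bookkeeping is the containment $y,z\in U^{<\gamma>}$ established above, which I anticipate as the main (though very minor) obstacle; the remainder is the same density/continuity scheme used in \cite[Lemma~2.15]{CMP}.
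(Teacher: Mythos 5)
Your argument is correct and is exactly the density-and-continuity scheme the paper invokes by referring to \cite[Lemma~2.15]{CMP}: one checks that the two shifted points lie in $U^{<\gamma>}$ (this is even immediate from the recursive description of $U^{<\beta>}$ given in the paper), verifies the identity on $U^{>\beta<}$ via Lemma~\ref{interpr}, and extends it to all of $\{x\in U^{<\beta>}\colon x^{(i)}_0\neq x^{(i)}_{\beta_i}\}$ by continuity of both sides and density of $U^{>\beta<}$ in $U^{<\beta>}$ for cartesian $U$. No gaps; this matches the intended proof.
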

\begin{proof}
The proof of \cite[Lemma~2.15]{CMP}
can be repeated verbatim.
\end{proof}
\begin{rem}
If $U\sub \K^n$ is locally cartesian
and $P\sub U$ an open subset,
then also $P$ is locally cartesian.\\[2.4mm]
[Proof.
Let $x\in U$. We have seen at the end of \ref{nicernbhd}
that there exists an open cartesian neighbourhood $U\cap Q$
of~$x$ in~$U$ that is contained in~$P$.]
\end{rem}
\begin{rem}\label{passtosub}
Let $U\sub \K^n$ be locally cartesian
and $W\sub U$ be an open subset.
Since every cartesian open subset of~$W$ is also
a cartesian open subset of~$U$,
it is clear that $f|_W$ is $C^\alpha$
for each $C^\alpha$-map $f\colon U\to E$.
\end{rem}
Frequently, differentiability
properties that are defined via global continuous extensions
of difference quotient maps are, nonetheless,
local properties.\footnote{See \cite[Lemma~4.9]{BGN}
for the paradigmatic case of $C^r$-functions on open sets,
\cite[Remark~II.4.\,(i) and Prop.~II.5]{Nag}
for $C^r$-functions on locally cartesian sets in Nagel's setting.}
Also being $C^\alpha$ is a local property.
\begin{la}\label{islocal}
Let $f\colon U\to E$ be a mapping on a locally cartesian subset
$U\sub \K^n$.
Assume that each $x\in U$ has an open cartesian neighbourhood $W$
in $U$ such that $f|_W$ is $C^\alpha$.
Then $f$ is $C^\alpha$.
\end{la}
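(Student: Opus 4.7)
My plan is to verify Definition~\ref{defSDS}(b) for an arbitrary cartesian open $V\sub U$. Using \ref{nicernbhd}, I may replace the cartesian neighbourhoods from the hypothesis by smaller cartesian open subsets of~$V$, so that each $x\in V$ has a cartesian $W_x\sub V$ with $f|_{W_x}$ being $C^\alpha$. Fixing $\beta\leq\alpha$, I proceed by induction on $|\beta|$ to construct a continuous extension $(f|_V)^{<\beta>}$ of $f^{>\beta<}$ to~$V^{<\beta>}$. The base case $\beta=0$ reduces to continuity of $f|_V$, which holds because each $f|_{W_x}$ is continuous (Remark~\ref{alphacts}) and the $W_x$'s cover~$V$.

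For the inductive step, assuming $(f|_V)^{<\gamma>}$ exists for all $\gamma\leq\alpha$ with $|\gamma|<|\beta|$, I partition $V^{<\beta>}$ into the off-diagonal open set $\Omega$ of those $x$ for which $x^{(i)}_j\neq x^{(i)}_l$ for some $i\in\{1,\ldots,n\}$ and some $j\neq l$ in $\{0,\ldots,\beta_i\}$, and its full-diagonal complement $D$ (in which every block $x^{(i)}$ is a constant tuple). On each open piece $\Omega_{i,j,l}:=\{x\in V^{<\beta>}:x^{(i)}_j\neq x^{(i)}_l\}$ (with $\beta_i\geq 1$ and $j\neq l$), the obvious analogue of the formula of Lemma~\ref{extmore} with $(j,l)$ in place of $(0,\beta_i)$---justified by combining the symmetry Lemma~\ref{presymm} with Lemma~\ref{interpr}---defines a continuous map in terms of the inductively available $(f|_V)^{<\beta-e_i>}$. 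These local definitions all extend $f^{>\beta<}$ continuously on their respective open pieces; since $V^{>\beta<}$ is dense in $V^{<\beta>}$ ($V$ being cartesian), uniqueness of continuous extensions on dense subsets forces agreement on overlaps, and the definitions glue to a continuous map on all of $\Omega$.

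To extend across a point $z\in D$, observe that every block $z^{(i)}$ is a constant tuple $(q_i,\ldots,q_i)$, and hence all sample points of~$z$ coincide at the single point $q:=(q_1,\ldots,q_n)\in V$. The hypothesis and \ref{nicernbhd} provide a cartesian $W_q=W_{q,1}\times\cdots\times W_{q,n}\sub V$ containing~$q$ with $f|_{W_q}$ being $C^\alpha$, so $(f|_{W_q})^{<\beta>}$ exists on the open neighbourhood $W_q^{<\beta>}=\prod_k W_{q,k}^{1+\beta_k}$ of~$z$ in~$V^{<\beta>}$, and I set $(f|_V)^{<\beta>}(z):=(f|_{W_q})^{<\beta>}(z)$. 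Uniqueness of continuous extensions again forces agreement with the $\Omega$-definition on $\Omega\cap W_q^{<\beta>}$, so the combined map is well-defined on $V^{<\beta>}$; it is continuous at~$z$ because $(f|_{W_q})^{<\beta>}$ is continuous on the full neighbourhood $W_q^{<\beta>}$. The chief delicacy is the bookkeeping across these multiple gluings, handled uniformly by repeated appeals to the uniqueness of continuous extensions on the dense set $V^{>\beta<}$.
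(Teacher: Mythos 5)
Your proposal is correct and follows essentially the same route as the paper's proof: induction on $|\beta|$, covering $V^{<\beta>}$ by the open sets $\{x\colon x^{(i)}_j\neq x^{(i)}_k\}$ (where the inductively constructed $(f|_V)^{<\beta-e_i>}$ yields the extension via the difference-quotient formula of Lemmas~\ref{presymm}/\ref{interpr}) together with the sets $W^{<\beta>}$ for cartesian neighbourhoods of the diagonal points, and gluing by uniqueness of continuous extensions on the dense set $V^{>\beta<}$. The only difference is presentational; the paper handles the well-definedness of all overlaps (including $W_t$-with-$W_{t'}$ pairs) in one uniform step, which is exactly the bookkeeping you flag at the end.
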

\begin{proof}
The proof is presented in a form which can be re-used later.
We show that $f|_V$
is $C^\alpha$ for each open cartesian subset $V\sub U$.
%In view of Remark~\ref{passtosub},
%after replacing $U$ with $V$ we may assume that $U=V$ is cartesian
%and $f\colon V\to E$.

\emph{Construction of $(f|_V)^{<\beta>}$.}
We show that $(f|_V)^{>\beta<}$ has a continuous
extension $(f|_V)^{<\beta>}$, for each $\beta\in \N_0^n$ such that $\beta\leq\alpha$.
The proof is by induction on~$|\beta|$.
If $\beta=0$ and each point has a neighbourhood on which $f$
is continuous, then $(f|_V)^{<0>}=f|_V$ is continuous.
Now assume that $|\beta|\geq 1$.
Set $I_\beta:=\{i\in \{1,\ldots, n\}\colon \beta_i>0\}$.
For $i\in I_\beta$
and $j<k$ in $\{0,1,\ldots, \beta_i\}$,
define
\[
D_{\beta,i,j.k}:=\{x\in
V^{<\beta>}
\colon x^{(i)}_j\not=x^{(i)}_k\}\,.
\]
Then $V^{>\beta<}\sub D_{\beta,i,j,k}$, and
$D_{\beta,i,j.k}$ is an open and
dense set in $V^{<\beta>}$.
Let $\beta_i':=\beta-e_i$.
By induction, continuous maps
$(f|_V)^{<\gamma>}\colon
V^{<\gamma>}\to E$ extending $(f|_V)^{>\gamma<}$
exist for
all $\gamma\in \N_0^n$ such that $\gamma\leq\alpha$
and $|\gamma|<|\beta|$.
In particular, $(f|_V)^{<\beta_i'>}$ exists,
enabling us to define
continuous maps $f_{\beta,i,j,k}\colon D_{\beta,i,j,k}\to E$
sending $x\in D_{\beta,i,j,k}$ to {\small
\begin{eqnarray}
\hspace*{-5mm}\lefteqn{\frac{1}{x^{(i)}_j-x^{(i)}_k}
\big((f|_V)^{<\gamma>}(x^{(1)},\ldots, x^{(i-1)},
x^{(i)}_0, \ldots, x^{(i)}_{k-1},x^{(i)}_{k+1}, \ldots, x^{(i)}_{\gamma_i},
x^{(i+1)}\!,\ldots, x^{(n)})}\qquad\quad \notag\\
& & \hspace*{-15mm}-
(f|_V)^{<\gamma>}(x^{(1)},\ldots, x^{(i-1)},
x^{(i)}_0, \ldots, x^{(i)}_{j-1},x^{(i)}_{j+1}, \ldots, x^{(i)}_{\gamma_i},
x^{(i+1)},\ldots, x^{(n)})\big),\label{varschlmm}
\end{eqnarray}}
\hspace*{-1mm}where we abbreviated $\gamma:=(\gamma_1,\ldots,\gamma_n):=\beta'_i$.
Let $D_\beta:=\bigcup_{i,j,k} D_{\beta,i,j,k}$.
Then $V^{<\beta>}\setminus D_\beta$
is the set of all $x(t)\in \K^{n+|\beta|}$
of the form $x(t)=(x^{(1)},\ldots, x^{(n)})$
with $x^{(i)}=(t_i,\ldots, t_i)$ for $i\in \{1,\ldots, n\}$,
with $t=(t_1,\ldots, t_n)\in V$.
Let $W_t\sub V$ be an open cartesian neighbourhood
of $t$ such that $f|_{W_t}$ is $C^\alpha$
(cf.\ \ref{nicernbhd} and Remark~\ref{passtosub}).
Since $W_t$ is open in~$V$, its $i$-th component $W_{t,i}\sub\K$ is open in the $i$-th component
$V_i$ of the cartesian set~$V$,
for $i\in \{1,\ldots, n\}$.~Now
\[
V^{<\beta>}=D_\beta \cup \bigcup_{t\in V} (W_t)^{<\beta>},
\]
where $D_\beta$ and each $W_t^{<\beta>}=W_{t,1}^{1+\beta_1}\times\cdots\times
W_{t,n}^{1+\beta_n}$
is open in $V^{<\beta>}=V_1^{1+\beta_1}\times\cdots\times
V_n^{1+\beta_n}$.
We define
$(f|_V)^{<\beta>}\colon V^{<\beta>}\to E$ via
\begin{equation}\label{nfortop}
(f|_V)^{<\beta>}(x):=
\left\{
\begin{array}{cc}
f_{\beta,i,j,k}(x) & \mbox{if $x\in D_{\beta,i,j,k}$\,;}\\
(f|_{W_t})^{<\beta>}(x) & \mbox{if $x\in W_t^{<\beta>}$.}
\end{array}
\right.
\end{equation}
To see that this mapping is well defined, let $g$ and $h$ be two
of the maps used in the piecewise
definition.
Let $G$ and $H$ be the domain of $h$ and $g$, respectively.
Then $G$ and $H$ are open in $V^{<\alpha>}$,
hence also $G\cap H$.
Thus $V^{>\beta<}\cap (G\cap H)$ is dense
in $G\cap H$. Since $g$ and $h$ are continuous and coincide
on $V^{>\beta<}\cap (G\cap H)$
(where they coincide with $(f|_V)^{>\beta<}$,
using Lemmas~\ref{presymm} and \ref{interpr}),
it follows that $g|_{G\cap H}= h|_{G\cap H}$.\\[2.4mm]
Next, because each $g\colon G\to E$ coincides
on $G\cap V^{>\beta<}$ with $(f|_V)^{>\beta<}$,
we see that $(f|_V)^{<\beta>}|_{V^{>\beta<}}=
(f|_V)^{>\beta<}$.
This completes the proof.
\end{proof}
%
%If $d=1$, we also write
%$f^{<j>}$ in place of
%$f^{<j e_1>}$, as in~\cite[\S\,6]{BGN}.
%
%
\begin{rem}
Assume that $U\sub \K^n$ is open.
Then $U$ is locally cartesian
and $U^{>\beta<}$ is dense in
$U^{<\beta>}$
for each $\beta\in \N_0^n$
(as is easy to see).
Moreover, if $f\colon U\to E$
is $C^\alpha$, then
$f^{>\beta<}$ admits a continuous extension
$f^{<\beta>}\colon
U^{<\beta>}\to E$ to all of $U^{<\beta>}$,
for each $\beta\in \N_0^n$
such that $\beta\leq\alpha$
(let $V:=U$ and construct continuous maps
$f^{<\beta>}=(f|_V)^{<\beta>}$
verbatim as in the proof of Lemma~\ref{islocal}).
\end{rem}
We record a simple version of the Chain Rule.
\begin{la}\label{linchain}
Let $f\colon \K^n\supseteq U\to E$ be a $C^\alpha$-map 
and $\lambda\colon E\to F$ be a continuous linear map between topological $\K$-vector spaces.
Then $\lambda\circ f$ is $C^\alpha$, and
\begin{equation}\label{eqlincha}
(\lambda\circ f|_V)^{<\beta>}=
\lambda\circ (f|_V)^{<\beta>}
\end{equation}
holds for
each $\beta\in \N_0^n$ such that $\beta\leq\alpha$ and
each open cartesian subset $V\sub U$.
\end{la}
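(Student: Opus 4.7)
The plan is to reduce to the cartesian case and then exploit the $\K$-linearity of $\lambda$ directly in the defining formula (\ref{badbd}) for the difference quotient maps.

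First, by Definition~\ref{defSDS}\,(b) together with Lemma~\ref{islocal} (or the remark right after it), to show that $\lambda\circ f$ is $C^\alpha$ it suffices to show that $(\lambda\circ f)|_V = \lambda\circ (f|_V)$ is $C^\alpha$ for each cartesian open subset $V\sub U$. Thus both assertions of the lemma will follow once we prove, for each such $V$ and each $\beta\in\N_0^n$ with $\beta\leq\alpha$, that $(\lambda\circ f|_V)^{>\beta<}$ admits the continuous extension $\lambda\circ (f|_V)^{<\beta>}$ to $V^{<\beta>}$.

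Next, I read off from formula (\ref{badbd}) that $f|_V^{>\beta<}(x)$ is a finite $\K$-linear combination of values of $f|_V$, with coefficients depending only on the points $x^{(i)}_j$. Since $\lambda$ is $\K$-linear and preserves finite sums, it follows immediately that
\[
(\lambda\circ f|_V)^{>\beta<}\; =\; \lambda \circ (f|_V)^{>\beta<}\quad\text{on}\quad V^{>\beta<}.
\]
Now $(f|_V)^{<\beta>}\colon V^{<\beta>}\to E$ exists and is continuous because $f|_V$ is $C^\alpha$, and $\lambda\colon E\to F$ is continuous, so the composition $\lambda\circ (f|_V)^{<\beta>}\colon V^{<\beta>}\to F$ is continuous. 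By the displayed identity, this composition agrees with $(\lambda\circ f|_V)^{>\beta<}$ on the dense subset $V^{>\beta<}\sub V^{<\beta>}$. By the uniqueness of continuous extensions from a dense subset into the Hausdorff space $F$, the map $\lambda\circ(f|_V)^{<\beta>}$ is the (unique) continuous extension $(\lambda\circ f|_V)^{<\beta>}$, yielding (\ref{eqlincha}).

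There is no real obstacle here: the only ingredients are the $\K$-linearity of $\lambda$ applied to the explicit formula (\ref{badbd}), the continuity of~$\lambda$, and the density of $V^{>\beta<}$ in $V^{<\beta>}$ for cartesian~$V$ noted earlier. The only mild point to watch is that the statement is formulated for open $U\sub \K^n$ (which is automatically locally cartesian), so the reduction via Lemma~\ref{islocal} is legitimate and no extra argument for locally cartesian, non-open domains is required.
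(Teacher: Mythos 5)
Your proof is correct and follows essentially the same route as the paper: linearity of $\lambda$ gives $(\lambda\circ f|_V)^{>\beta<}=\lambda\circ (f|_V)^{>\beta<}$, and continuity of $\lambda\circ (f|_V)^{<\beta>}$ together with density of $V^{>\beta<}$ in $V^{<\beta>}$ for cartesian $V$ identifies it as the required extension. (Your closing remark misreads "$\K^n\supseteq U$" as saying $U$ is open — it merely means $U\sub\K^n$, with $U$ locally cartesian as in the section's standing hypotheses — but this is immaterial, since Definition~\ref{defSDS}\,(b) already reduces everything to cartesian open subsets $V\sub U$.)
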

\begin{proof}
The linearity of $\lambda$ implies that
$(\lambda\circ f|_V)^{>\beta<}=
\lambda\circ (f|_V)^{>\beta<}$.\linebreak
Because the right-hand side admits the continuous extension
$\lambda\circ (f|_V)^{<\beta>}$,
the assertions follow.
\end{proof}
The next three lemmas are analogues of
\cite[Lemmas 10.1--10.3]{BGN}.
\begin{la}\label{inprodu}
Let $(E_j)_{j\in J}$ be a family of topological $\K$-vector spaces
and $E:=\prod_{j\in J}E_j$ be the direct product.
Let $\pr_j\colon E\to E_j$ be the canonical projections, 
$\alpha\in (\N_0\cup\{\infty\})^n$
and $U\sub \K^n$ be a locally cartesian subset.
Then a map $f=(f_j)_{j\in J}\colon U\to E$ is $C^\alpha$
if and only if all of its components~$f_j$ are~$C^\alpha$.
In this case,
\begin{equation}\label{difqprod}
(f|_V)^{<\beta>}=((f_j|_V)^{<\beta>})_{j\in J}
\end{equation}
for all open cartesian subsets $V\sub U$ and
all $\beta\in \N_0^n$ such that $\beta\leq\alpha$.
\end{la}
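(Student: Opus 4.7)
The plan is to exploit two features of the set-up: that the projections $\pr_j\colon E\to E_j$ are continuous $\K$-linear maps, and that the formula (\ref{badbd}) defining $f^{>\beta<}$ is a $\K$-linear combination of values of $f$, hence respects the componentwise structure of a direct product.

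For the ``only if'' direction, I would simply apply Lemma~\ref{linchain} to each projection $\pr_j$. Since $\pr_j$ is continuous and $\K$-linear, the lemma gives that $f_j=\pr_j\circ f$ is $C^\alpha$ and that $(f_j|_V)^{<\beta>}=\pr_j\circ (f|_V)^{<\beta>}$ for every open cartesian $V\sub U$ and every $\beta\le\alpha$. Collecting these identities over $j\in J$ yields formula (\ref{difqprod}).

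For the ``if'' direction, assume every $f_j$ is $C^\alpha$ and fix an open cartesian subset $V\sub U$. For each $\beta\in\N_0^n$ with $\beta\le\alpha$, the continuous extension $(f_j|_V)^{<\beta>}\colon V^{<\beta>}\to E_j$ exists by hypothesis. I would define
\[
g_\beta\colon V^{<\beta>}\to E,\quad x\mto \bigl((f_j|_V)^{<\beta>}(x)\bigr)_{j\in J}\,,
\]
which is continuous by the universal property of the product topology on $E=\prod_{j\in J}E_j$. Inspecting (\ref{badbd}) shows that $(f|_V)^{>\beta<}$ is a $\K$-linear combination of values of $f|_V$ with scalar coefficients, so it is computed componentwise: $(f|_V)^{>\beta<}(x)=((f_j|_V)^{>\beta<}(x))_{j\in J}$ for $x\in V^{>\beta<}$. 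Hence $g_\beta$ agrees with $(f|_V)^{>\beta<}$ on $V^{>\beta<}$ and provides the required continuous extension. Thus $f|_V$ is $C^\alpha$ for every open cartesian $V\sub U$, i.e., $f$ is $C^\alpha$, and (\ref{difqprod}) follows by uniqueness of the continuous extension on the dense subset $V^{>\beta<}\sub V^{<\beta>}$.

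There is no real obstacle here; the only small point requiring care is the observation that the difference quotient operation $f\mto f^{>\beta<}$ commutes with arbitrary $\K$-linear operations on the target, which is immediate from the shape of (\ref{badbd}). Neither Lemma~\ref{islocal} nor a reduction argument is needed, because Definition~\ref{defSDS}(b) already reduces being $C^\alpha$ on a locally cartesian domain to being $C^\alpha$ on each open cartesian subset.
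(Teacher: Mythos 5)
Your proof is correct and follows essentially the same route as the paper: the forward direction via Lemma~\ref{linchain} applied to the projections $\pr_j$, and the converse by observing that the formula (\ref{badbd}) is computed componentwise, so that $((f_j|_V)^{<\beta>})_{j\in J}$ is a continuous extension of $(f|_V)^{>\beta<}$. The only difference is that you spell out the density/uniqueness point explicitly, which the paper leaves implicit.
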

\begin{proof}
If $f$ is $C^\alpha$, then also $f_j=\pr_j\circ f$ is
$C^\alpha$, by Lemma~\ref{linchain}.
Conversely, assume that each $f_j$ is $C^\alpha$.
For $V$ and $\beta$ as above,
we have $(f|_V)^{>\beta<}=((f_j|_V)^{>\beta<})_{j\in J}$.
Because $((f_j|_V)^{<\beta>})_{j\in J}$
is a continuous extension for the right-hand side, we deduce
that $f$ is $C^\alpha$ and (\ref{difqprod}) holds.
\end{proof}
\begin{la}\label{difinsub}
Let $E$ be a topological $\K$-vector space,
$E_0\sub E$ be a closed vector subspace,
$U\sub \K^n$ be a locally cartesian subset
and $f\colon U\to E_0$ be a map.
Then $f$ is $C^\alpha$ as a map to~$E_0$
if and only if $f$ is $C^\alpha$ as a map to~$E$.
\end{la}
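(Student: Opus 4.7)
The plan is to handle the two implications separately, exploiting that $E_0$ is closed in $E$ and carries the subspace topology.

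The forward direction is immediate: if $f\colon U\to E_0$ is $C^\alpha$, then the inclusion $\iota\colon E_0\emb E$ is continuous and $\K$-linear, so Lemma~\ref{linchain} yields that $\iota\circ f\colon U\to E$ is $C^\alpha$; this is just $f$ viewed as an $E$-valued map.

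For the converse, assume $f\colon U\to E$ is $C^\alpha$ with $f(U)\sub E_0$. Fix a cartesian relatively open subset $V\sub U$ and a multi-index $\beta\in\N_0^n$ with $\beta\leq\alpha$; it suffices to exhibit a continuous extension of $(f|_V)^{>\beta<}$ to $V^{<\beta>}$ which takes values in $E_0$. Formula~(\ref{badbd}) displays $(f|_V)^{>\beta<}(x)$ as a $\K$-linear combination of values $f(x^{(1)}_{j_1},\ldots,x^{(n)}_{j_n})\in E_0$, so this difference-quotient map already maps into the vector subspace $E_0$. By hypothesis, it admits a continuous extension $(f|_V)^{<\beta>}\colon V^{<\beta>}\to E$. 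Because $V$ is cartesian, $V^{>\beta<}$ is dense in $V^{<\beta>}$, and because $E_0$ is closed in~$E$, this extension must take values in $\wb{E_0}=E_0$.

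Finally, as $E_0$ carries the subspace topology, a map with image in $E_0$ is continuous into $E_0$ if and only if it is continuous into~$E$. Thus the co-restricted map $(f|_V)^{<\beta>}\colon V^{<\beta>}\to E_0$ is continuous and extends $(f|_V)^{>\beta<}$, viewed as an $E_0$-valued map. Since $V$ and $\beta\leq\alpha$ were arbitrary, $f\colon U\to E_0$ is $C^\alpha$. No genuine obstacle arises; the only point requiring attention is the combined use of density of $V^{>\beta<}$ in $V^{<\beta>}$ (available since $V$ is cartesian) and closedness of $E_0$, in order to transfer the $E_0$-valued property from the dense set to the entire extension.
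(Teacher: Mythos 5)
Your proof is correct and follows essentially the same route as the paper: the forward direction via Lemma~\ref{linchain} applied to the inclusion, and the converse by observing that $(f|_V)^{>\beta<}$ is $E_0$-valued, then using density of $V^{>\beta<}$ in $V^{<\beta>}$ together with closedness of $E_0$ to conclude that the continuous extension takes values in $E_0$ (the paper just phrases this with an explicit net). The remark that $E_0$ carries the subspace topology, so the co-restriction stays continuous, is the same implicit step the paper uses.
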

\begin{proof}
The inclusion map $\iota\colon E_0\to E$ is continuous and linear.
Hence, if $f\colon U\to E_0$ is $C^\alpha$, then also
$\iota\circ f\colon U\to E$ is $C^\alpha$, by Lemma~\ref{linchain}.
Conversely, assume that $\iota\circ f$ is $C^\alpha$.
Let $V\sub U$ be an open cartesian set and $\beta\in \N_0^n$ with $\beta\leq\alpha$.
Let $x\in V^{<\beta>}$.
Since $V^{>\beta<}$
is dense in $V^{<\beta>}$,
there is a net $(x_a)_{a\in A}$ in
$V^{>\beta<}$ such that $x_a\to x$.
Then $(\iota\circ f|_V)^{<\beta>}(x)=\lim
(\iota\circ f|_V)^{<\beta>}(x_a)$,
where
$(\iota\circ f|_V)^{<\beta>}(x_a)=
(\iota\circ f|_V)^{>\beta<}(x_a)=
(f|_V)^{>\beta<}(x_a)\in E_0$
for each $a\in A$. Since $E_0$ is closed,
we deduce that $(\iota\circ f|_V)^{<\beta>}(x)\in E_0$.
Thus $(\iota\circ f|_V)^{<\beta>}\colon V^{<\beta>}\to E_0$
is a continuous extension to $(f|_V)^{>\beta<}$
and we deduce that~$f$ is~$C^\alpha$.
\end{proof}
If $\K$ is metrizable,
then it suffices to assume that~$E_0$ is sequentially closed
in Lemma~\ref{difinsub}
(because the net $(x_a)_{a\in A}$
can be replaced with a sequence then).
\begin{la}\label{plfirst}
Let $(J,\leq )$ be a directed set,
$\cS:=((E_j)_{j\in J},(\phi_{i,j})_{i\leq j})$ be a projective
system of topological $\K$-vector spaces~$E_j$,
with continuous linear maps $\phi_{i,j}\colon E_j\to E_i$ satisfying $\phi_{i,j}\circ\phi_{j,k}=\phi_{i,k}$
for $i\leq j\leq k$ in~$J$, and $\phi_{i,i}=\id_{E_i}$.
Let $(E,(\phi_i)_{i\in I})$ be a projective limit of
$\cS$, with the continuous linear maps $\phi_j\colon E\to E_j$
such that $\phi_{j,k}\circ \phi_k=\phi_j$.
Then a map $f\colon \K^n\supseteq U\to E$ is $C^\alpha$
if and only if $\phi_j\circ f$ is $C^\alpha$ for each $j\in J$.
\end{la}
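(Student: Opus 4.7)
The plan is a reduction to the already-established product and closed-subspace results (Lemmas \ref{inprodu} and \ref{difinsub}). The forward direction is immediate from Lemma \ref{linchain}: each $\phi_j\colon E\to E_j$ is continuous and $\K$-linear, so if $f$ is $C^\alpha$, then $\phi_j\circ f$ is $C^\alpha$ for every $j\in J$.

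For the converse, I would realize $E$ as a closed vector subspace of $\prod_{j\in J}E_j$. Consider the continuous $\K$-linear map
\[
\iota\colon E\to \prod_{j\in J}E_j,\quad x\mto (\phi_j(x))_{j\in J}\,.
\]
By the universal property characterizing $(E,(\phi_j)_{j\in J})$ as a projective limit, the family $(\phi_j)_{j\in J}$ is jointly injective and the topology of~$E$ is the initial topology with respect to this family; equivalently, $\iota$ is a topological embedding. Its image is
\[
\iota(E)=\bigcap_{i\leq j\text{ in }J}\big\{(y_k)_{k\in J}\in {\textstyle\prod_{k\in J}}E_k\colon \phi_{i,j}(y_j)=y_i\big\},
\]
which is a closed subspace of $\prod_{j\in J}E_j$, since each set in the intersection is the zero-set of a continuous linear map into the Hausdorff space $E_i$.

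Now suppose $\phi_j\circ f$ is $C^\alpha$ for every $j\in J$. By Lemma \ref{inprodu}, the map $\iota\circ f=(\phi_j\circ f)_{j\in J}\colon U\to\prod_{j\in J}E_j$ is $C^\alpha$. Since its values lie in the closed subspace $\iota(E)$, Lemma \ref{difinsub} shows that $\iota\circ f$ is also $C^\alpha$ as a map into $\iota(E)$. The corestriction $\iota\colon E\to \iota(E)$ is an isomorphism of topological $\K$-vector spaces, so its inverse is continuous and $\K$-linear; applying Lemma \ref{linchain} to this inverse composed with $\iota\circ f$ then yields that $f=\iota^{-1}\circ (\iota\circ f)$ is $C^\alpha$. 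The only non-bookkeeping point is the closedness of $\iota(E)$, which relies on the paper's blanket assumption that all topological vector spaces are Hausdorff; everything else is a direct combination of the earlier lemmas.
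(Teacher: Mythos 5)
Your proposal is correct and follows essentially the same route as the paper: realize $E$ (up to an isomorphism of topological vector spaces) as the closed subspace of $\prod_{j\in J}E_j$ of compatible families and then combine Lemmas~\ref{inprodu} and \ref{difinsub} (with Lemma~\ref{linchain} for the easy direction). You merely spell out the embedding $\iota=(\phi_j)_{j\in J}$, the closedness of its image, and the final transport of $C^\alpha$ through $\iota^{-1}$, which the paper leaves implicit.
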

\begin{proof}
After applying an isomorphism of topological vector spaces,
we may assume that $E$ is realized in the usual form
as a closed vector subspace of the direct product
$\prod_{j\in J}E_j$, and $\phi_j=\pr_j|_E$.
The assertion now follows from Lemmas~\ref{inprodu}
and \ref{difinsub}.
\end{proof}
\section{The compact-open {\boldmath$C^\alpha$}-topology}\label{secthetop}
Throughout this section,
$\K$ is a topological field, $E$ a topological $\K$-vector space,
$n\in \N$, $U\sub \K^n$ a locally cartesian subset and
$\alpha\in (\N_0\cup\{\infty\})^n$.
\begin{defn}\label{coalpha}
We endow that space $C^\alpha(U,E)$ of all $E$-valued $C^\alpha$-maps on~$U$
with the initial topology $\cO$ with respect to the mappings
\begin{equation}\label{pamps}
\Delta_{\beta,V} \colon C^\alpha(U,E)\to C(V^{<\beta>},E),\quad f\mto (f|_V)^{<\beta>},
\end{equation}
for all $\beta\in \N_0^n$ with $\beta\leq\alpha$
and open cartesian subsets $V\sub U$;
the spaces $C(V^{<\beta>},E)$ on the right-hand side
are endowed with the compact-open topology.
We call $\cO$ the \emph{compact-open $C^\alpha$-topology}.
\end{defn}
\begin{rem}\label{compaco}
\begin{itemize}
\item[(a)]
Since each $C(V^{<\beta>},E)$ is a topological vector space and the mappings in
(\ref{pamps}) are linear (and separate points), also $C^\alpha(U,E)$ is a topological
vector space.
\item[(b)]
If $\alpha=0$, our definition yields
the compact-open topology on~$C^0(U,E)$,
by Lemma~\ref{coveremb}.
\item[(c)]
If $\alpha'\in (\N_0\cup\{\infty\})^n$ with $\alpha'\leq\alpha$,
then the inclusion mapping\linebreak
$\iota\colon C^\alpha(U,E)\to C^{\alpha'}(U,E)$
is continuous.\\[2.4mm]
[For $\beta\in \N_0^n$ such that $\beta\leq \alpha'$, let
the mapping $\Delta_{\beta,V}$ be as in (\ref{pamps})
and $\Delta_{\beta,V}'\colon C^{\alpha'}(U,E)\to C(V^{<\beta>},E)$
be the analogous map. Then $\Delta_{\beta,V}'\circ\iota=\Delta_{\beta,V}$ is
continuous, from which the assertion follows.]
\item[(d)]
In particular, the compact-open $C^\alpha$-topology on $C^\alpha(U,E)$
is always finer than the compact-open topology (induced from $C(U,E)$).
\end{itemize}
\end{rem}
\begin{la}\label{covcart}
Let $(V_a)_{a\in A}$ be a cover of $U$
by open cartesian subsets $V_a\sub U$.
Then the compact-open $C^\alpha$-topology on $C^\alpha(U,E)$
is the initial topology $\cT$
with respect to the maps $\Delta_{\beta,V_a}$,
for $\beta\in \N_0^n$ such that $\beta\leq\alpha$ and $a\in A$.
\end{la}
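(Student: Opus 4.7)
The inclusion $\cT\subseteq\cO$ is immediate, since each $\Delta_{\beta,V_a}$ is one of the maps defining $\cO$. For $\cO\subseteq\cT$, I must show that $\Delta_{\beta,V}\colon C^\alpha(U,E)\to C(V^{<\beta>},E)$ is $\cT$-continuous for every open cartesian $V\subseteq U$ and every $\beta\in\N_0^n$ with $\beta\leq\alpha$. I would do this by induction on $|\beta|$, using at each step the standard fact that for a Hausdorff space $Y$, the compact-open topology on $C(Y,E)$ is the initial topology with respect to the restriction maps $C(Y,E)\to C(Y_j,E)$ associated with any open cover $(Y_j)_j$ of $Y$ (proved by shrinking a finite subcover of an arbitrary compact set via normality of compact Hausdorff spaces).

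For the base case $|\beta|=0$, the map $\Delta_{0,V}$ is $f\mapsto f|_V$, and the open cover $(V\cap V_a)_a$ of $V$ together with the factorization $f|_{V\cap V_a}=(f|_{V_a})|_{V\cap V_a}$ gives the claim. For the inductive step $|\beta|\geq 1$, I would employ exactly the open cover of $V^{<\beta>}$ used in the proof of Lemma~\ref{islocal}, namely the sets $D_{\beta,i,j,k}$ where $x^{(i)}_j\neq x^{(i)}_k$, together with sets $W_t^{<\beta>}$ indexed by $t\in V$, where $W_t\subseteq V$ is chosen to be an open cartesian neighborhood of $t$ that is moreover contained in some $V_a$ from the cover; such a $W_t$ exists by Remark~\ref{nicernbhd}, since the $V_a$'s cover $U$. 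On each $W_t^{<\beta>}$, uniqueness of continuous extensions gives $(f|_V)^{<\beta>}|_{W_t^{<\beta>}}=(f|_{V_a})^{<\beta>}|_{W_t^{<\beta>}}$, so the restriction of $\Delta_{\beta,V}$ to $W_t^{<\beta>}$ factors through $\Delta_{\beta,V_a}$ followed by ordinary restriction to $W_t^{<\beta>}$, hence is $\cT$-continuous. On each $D_{\beta,i,j,k}$, the formula from Lemma~\ref{extmore} (combined with the symmetry of Lemma~\ref{partsymm} to move $j,k$ into the positions $0,\beta_i$) expresses $(f|_V)^{<\beta>}$ as the product of the continuous $\K$-valued function $1/(x^{(i)}_j-x^{(i)}_k)$ with the difference of two pullbacks of $(f|_V)^{<\gamma>}$ along continuous rearrangement maps $D_{\beta,i,j,k}\to V^{<\gamma>}$, where $\gamma=\beta-e_i$; by the inductive hypothesis $\Delta_{\gamma,V}$ is already $\cT$-continuous.

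The main obstacle is verifying in the inductive step that the two algebraic operations appearing in this factorization are continuous in the compact-open topology with values in a merely topological $\K$-vector space. Precomposition by a continuous map induces a continuous linear map between the corresponding compact-open function spaces, which is standard. Multiplication by a fixed continuous $\K$-valued function $\mu$ is more delicate: for a compact subset $L$ of the domain and a $0$-neighborhood $W\subseteq E$, compactness of $\mu(L)\subseteq\K$ together with joint continuity of the scalar multiplication $\K\times E\to E$ yields a $0$-neighborhood $V_0\subseteq E$ with $\mu(L)\cdot V_0\subseteq W$, which gives the required continuity of $g\mapsto\mu g$ in the compact-open topology. Once these two continuity facts are in hand, the restrictions of $\Delta_{\beta,V}$ to the pieces of the cover are all $\cT$-continuous, and the open-cover description of the compact-open topology on $C(V^{<\beta>},E)$ then delivers $\cT$-continuity of $\Delta_{\beta,V}$ itself, completing the induction.
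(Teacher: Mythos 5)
Your proposal is correct and follows essentially the same route as the paper: the same induction on $|\beta|$, the same cover of $V^{<\beta>}$ by the sets $D_{\beta,i,j,k}$ and $W_t^{<\beta>}$ with $W_t$ inside some $V_a$, factorization through $\Delta_{\beta,V_a}$ on the $W_t$-pieces and through $\Delta_{\gamma,V}$ (pulled back along the rearrangement maps and multiplied by $1/(x^{(i)}_j-x^{(i)}_k)$) on the $D$-pieces, concluding via the open-cover description of the compact-open topology (the paper's Lemma~\ref{coveremb}). The only cosmetic differences are that you re-derive the continuity of multiplication by a fixed scalar-valued function, which the paper cites as Lemma~\ref{sammelsu}\,(f), and that you invoke Lemmas~\ref{extmore} and \ref{partsymm} where the paper points to the formulas (\ref{varschlmm}) and (\ref{nfortop}) from the proof of Lemma~\ref{islocal}.
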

\begin{proof}
By definition, the compact-open $C^\alpha$-topology $\cO$ makes each of the maps
$\Delta_{\beta,V_a}$ continuous. Hence $\cT\sub \cO$.
To see that $\cO\sub \cT$, we have to show
that $\Delta_{\beta,V}$ is continuous as a map on $(C^\alpha(U,E),\cT)$,
for each $\beta\in \N_0^n$ such that $\beta\leq\alpha$ and
cartesian open subset $V\sub U$.\\[2.4mm]
Given $\beta$ and $V$,
pick open cartesian subsets $W_t\sub V$
for $t\in V$ such that $W_t\sub V_{a_t}$
for some $a_t\in A$.
Define $I_\beta$ and open sets $(W_t)^{<\beta>}$
and $D_{\beta,i,j,k}$
as in the proof of Lemma~\ref{islocal},
for $i\in I_\beta$ and $j<k$ in $\{0,1,\ldots,\beta_i\}$.
Let
\[
\rho_t\colon
C(V^{<\beta>},E)\to C((W_t)^{<\beta>},E),\quad
\sigma_t\colon C((V_{a_t})^{<\beta>},E)\to C((W_t)^{<\beta>},E)
\]
and $\rho_{i,j,k}\colon C(V^{<\beta>},E)\to C(D_{\beta,i,j,k},E)$
be the respective restriction maps.
Since $V^{<\beta>}$ is the union
of the open sets $(W_t)^{<\beta>}$ and $D_{\beta,i,j,k}$,
we deduce with Lemma~\ref{coveremb} that~$\cT$ will make $\Delta_{\beta,V}$
continuous if it makes all of the maps
$\rho_t\circ \Delta_{\beta,V}$ and $\rho_{i,j,k}\circ \Delta_{\beta,V}$
continuous.
We first note that $\cT$ makes
$\rho_t\circ \Delta_{\beta,V}=\sigma_t\circ \Delta_{\beta,V_{a_t}}$
continuous. If $\beta=0$, this implies the continuity of the restriction map
$\Delta_{0,V}$ (since $V=V^{<0>}$ is covered by the sets $W_t=(W_t)^{<0>}$).
Now assume that $|\beta|\geq 1$ and assume,
by induction, that $\cT$ makes $\Delta_{\gamma,V}$
continuous for all $\gamma\in \N_0^n$ such that $\gamma\leq\alpha$
and $|\gamma|<|\beta|$.
Fix $i\in I_\beta$ and $j<k$ in $\{0,1,\ldots,\beta_i\}$.
Abbreviate $\gamma:=\beta- e_i$.
The map
\[
h\colon D_{\beta,i,j,k}\to \K,\quad
x=(x^{(1)},\ldots, x^{(n)})\mto \frac{1}{x^{(i)}_j-x^{(i)}_k}
\]
is continuous (where $D_{\beta,i,j,k}$
is as in the proof of Lemma~\ref{islocal}).
Hence also the multiplication operator
\[
m_h\colon C(D_{\beta,i,j,k},E)\to C(D_{\beta,i,j,k},E),\quad g\mto h\cdot g
\]
is continuous, by Lemma~\ref{sammelsu}\,(f).
Next, let
$g_1,g_2\colon D_{\beta,i,j,k}\to V^{<\gamma>}$
be the mappings
taking $x=(x^{(1)},\ldots, x^{(n)})$
to
\[
x^{(1)},\ldots, x^{(i-1)},
x^{(i)}_0, \ldots, x^{(i)}_{k-1},x^{(i)}_{k+1}, \ldots, x^{(i)}_{\gamma_i},
x^{(i+1)}\!,\ldots, x^{(n)})
\]
and
$x^{(1)},\ldots, x^{(i-1)},
x^{(i)}_0, \ldots, x^{(i)}_{j-1},x^{(i)}_{j+1}, \ldots, x^{(i)}_{\gamma_i},
x^{(i+1)},\ldots, x^{(n)})$,
respectively.
Then $g_1$ and $g_2$ are continuous,
entailing that also the pullbacks $C(g_1,E)$ and $C(g_2,E)$ are continuous
as mappings $C(V^{<\gamma>},E)\to C(D_{\beta,i,j,k},E)$.
By (\ref{varschlmm}) and (\ref{nfortop}), we have
\[
\rho_{i,j,k}(\Delta_{\beta,V}(f))=(f|_V)^{<\beta>}|_{D_{\beta,i,j,k}}=f_{\beta,i,j,k}
= h\cdot ((f|_V)^{<\gamma>}\circ g_1 -
(f|_V)^{<\gamma>}\circ g_2).
\]
Hence
$\rho_{i,j,k}\circ \Delta_{\beta,V}=
m_h\circ (C(g_1,E)\circ \Delta_{\gamma,V}-C(g_2,E)\circ \Delta_{\gamma,V})$.
The right-hand side is composed of continuous maps
and hence continuous.
Thus $\rho_{i,j,k}\circ \Delta_{\beta,V}$ is continuous on $(C^\alpha(U,E),\cT)$.
\end{proof}
\begin{la}
If $U\sub \K^n$ is open or cartesian, then the
compact-open $C^\alpha$-topology on $C^\alpha(U,E)$
is the initial topology
with respect to the maps
\[
\Delta_\beta
\colon C^\alpha(U,E)\to C(U^{<\beta>},E),\quad f\mto f^{<\beta>},
\]
for $\beta\in \N_0^n$ such that $\beta\leq\alpha$.
\end{la}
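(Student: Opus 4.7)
The plan is to handle the two cases (cartesian and open) by the same two-way comparison of initial topologies, using the identity $(f|_V)^{<\beta>}=f^{<\beta>}|_{V^{<\beta>}}$ (available because both sides are continuous extensions of $(f|_V)^{>\beta<}$, which is unique by density of $V^{>\beta<}$ in $V^{<\beta>}$) to factor the defining maps $\Delta_{\beta,V}$ of the compact-open $C^\alpha$-topology through the global maps $\Delta_\beta$. Note first that $\Delta_\beta$ is well-defined: for $U$ cartesian this is just Definition~\ref{defSDS}\,(a), and for $U$ open it is the Remark following Lemma~\ref{islocal}, which constructs $f^{<\beta>}$ globally on $U^{<\beta>}$. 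Write $\cT$ for the initial topology with respect to the family $(\Delta_\beta)$ and $\cO$ for the compact-open $C^\alpha$-topology.

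For the inclusion $\cO\sub\cT$, I would let $V\sub U$ be a cartesian open subset and $\beta\leq\alpha$. The restriction map
\[
\rho_V\colon C(U^{<\beta>},E)\to C(V^{<\beta>},E),\qquad g\mto g|_{V^{<\beta>}}
\]
is continuous for the compact-open topologies (since $V^{<\beta>}\sub U^{<\beta>}$ is open). By the identity above, $\Delta_{\beta,V}=\rho_V\circ\Delta_\beta$, hence $\Delta_{\beta,V}$ is continuous on $(C^\alpha(U,E),\cT)$. Thus $\cO\sub\cT$.

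For the reverse inclusion $\cT\sub\cO$, the cartesian case is immediate, since then $V:=U$ is itself a cartesian open subset of $U$ and $\Delta_\beta=\Delta_{\beta,U}$ is one of the defining maps of $\cO$. The open case requires more care, and here is the main obstacle: I need to show that $\Delta_\beta$ is continuous on $(C^\alpha(U,E),\cO)$. I plan to exploit the transitivity of initial topologies (Lemma~1.1) together with the fact that the compact-open topology on $C(U^{<\beta>},E)$ coincides with the initial topology with respect to the restriction maps to an open cover (the content of Lemma~\ref{coveremb} already invoked in the proof of Lemma~\ref{covcart}). Concretely, I would produce an open cover of $U^{<\beta>}$ by sets of the form $W^{<\beta>}$ with $W\sub U$ open cartesian: given $x\in U^{<\beta>}$, the finitely many points $(x^{(1)}_{j_1},\ldots,x^{(n)}_{j_n})$ lie in the open set $U$, so by a routine finite-intersection argument one finds open sets $Q_1,\ldots,Q_n\sub\K$ (without isolated points, since $\K$ is non-discrete) containing the respective coordinates $x^{(i)}_0,\ldots,x^{(i)}_{\beta_i}$ and satisfying $W:=Q_1\times\cdots\times Q_n\sub U$; then $x\in W^{<\beta>}$. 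Once this cover is in place, the restriction of $\Delta_\beta$ to each $W^{<\beta>}$ coincides with $\Delta_{\beta,W}$ (by the factorization used above), and each $\Delta_{\beta,W}$ is $\cO$-continuous by definition, so $\Delta_\beta$ itself is $\cO$-continuous by the transitivity principle. This yields $\cT\sub\cO$ and completes the argument.
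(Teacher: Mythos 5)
Your argument is correct, and it takes a different route from the paper. The paper's proof is a one-line reference: it re-runs the proof of Lemma~3.3 (covering $V^{<\beta>}$ by the sets $D_{\beta,i,j,k}$ together with diagonal neighbourhoods $(W_t)^{<\beta>}$, and inducting on $|\beta|$ via the recursion formula for $f^{<\beta>}$) ``verbatim with $V:=U$''. You avoid that induction entirely: for $\cO\sub\cT$ you use the factorization $\Delta_{\beta,V}=\rho_V\circ\Delta_\beta$, justified by uniqueness of continuous extensions on the dense set $V^{>\beta<}$ (this identity is also recorded in the paper's remark following Definition~2.13 in the cartesian case); for $\cT\sub\cO$ the cartesian case is trivial since $\Delta_\beta=\Delta_{\beta,U}$ is a defining map, and in the open case your Wallace-type fattening of the finite grid $\{x^{(1)}_0,\dots,x^{(1)}_{\beta_1}\}\times\cdots\times\{x^{(n)}_0,\dots,x^{(n)}_{\beta_n}\}\sub U$ to an open box $W=Q_1\times\cdots\times Q_n\sub U$ (with the $Q_i$ automatically without isolated points, $\K$ being non-discrete) yields a cover of $U^{<\beta>}$ by sets $W^{<\beta>}$, after which Lemma~B.9 and transitivity of initial topologies finish the proof with no recursion and no use of the difference-quotient formula. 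What each approach buys: yours is shorter and structurally cleaner, but it leans on exactly the feature of open sets that fails for merely locally cartesian domains (compare Example~2.10, where grids in $U$ need not admit cartesian open neighbourhoods) — which is harmless here, since the lemma is only asserted for $U$ open or cartesian; the paper's route buys nothing extra for this lemma beyond uniformity with Lemma~3.3, whose machinery is already in place.
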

Again, we use the compact-open topology on
$C(U^{<\beta>},E)$ here.\\[2mm]
\begin{proof}
The preceding proof can be repeated verbatim with $V:=U$.
\end{proof}
\begin{la}\label{basicres}
Let $S\sub \K^n$ be a locally cartesian
subset such that $S \sub U$.
Then $f|_S$ is $C^\alpha$ for each $C^\alpha$-map
$f\colon U\to E$. Moreover, the
``restriction map''
\[
\rho\colon C^\alpha(U,E)\to C^\alpha(S,E),\quad f\mto f|_S
\]
is continuous and linear.
\end{la}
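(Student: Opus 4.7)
The plan is to reduce both claims to a pair of nested cartesian neighbourhoods at each point of $S$: a small cartesian open set $T_x \sub S$ sitting inside a slightly larger cartesian open set $W_x \sub U$, with both obtained by intersecting $S$ and $U$ respectively with the same open cartesian box $Q_x \sub \K^n$. Since $f|_{W_x}$ is $C^\alpha$ in the sense of Definition~\ref{defSDS}(a), its continuous extensions $(f|_{W_x})^{<\beta>}$ restrict to $T_x^{<\beta>} \sub W_x^{<\beta>}$ to furnish continuous extensions of the $(f|_{T_x})^{>\beta<}$. Locality (Lemma~\ref{islocal}) will then give that $f|_S$ is $C^\alpha$, and the same restriction description combined with Lemma~\ref{covcart} will yield continuity of $\rho$.

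For the construction of $T_x$ and $W_x$, fix $x \in S$. Using the local cartesian structure at $x$, open subsets $Y_U, Y_S \sub \K^n$ exist such that $U \cap Y_U$ is a cartesian open neighbourhood of $x$ in $U$ and $S \cap Y_S$ is a cartesian open neighbourhood of $x$ in $S$. As in the argument of \ref{nicernbhd}, pick an open cartesian box $Q_x = Q_1 \times \cdots \times Q_n$ containing $x$ with $Q_x \sub Y_U \cap Y_S$. Then both $W_x := U \cap Q_x$ and $T_x := S \cap Q_x$ are cartesian (with factors of the shape $V_i \cap Q_i$, none having isolated points since $V_i$ has none and $Q_i$ is open), and $T_x \sub W_x$. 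Because the formula (\ref{badbd}) for the iterated difference quotient depends only on the values of $f$ at the listed arguments, one has $(f|_{T_x})^{>\beta<} = (f|_{W_x})^{>\beta<}|_{T_x^{>\beta<}}$, so $(f|_{W_x})^{<\beta>}|_{T_x^{<\beta>}}$ is a continuous extension of $(f|_{T_x})^{>\beta<}$. Hence $f|_{T_x}$ is $C^\alpha$ on the cartesian set $T_x$, and $f|_S$ is $C^\alpha$ by Lemma~\ref{islocal}.

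Linearity of $\rho$ is immediate from its pointwise description. For continuity, I would apply Lemma~\ref{covcart} to the cover $\{T_x\}_{x \in S}$ of $S$: it suffices to show that $\Delta_{\beta, T_x} \circ \rho \colon C^\alpha(U, E) \to C(T_x^{<\beta>}, E)$ is continuous for every $x \in S$ and every $\beta \in \N_0^n$ with $\beta \leq \alpha$. By uniqueness of the continuous extension on the cartesian set $T_x$ (where $T_x^{>\beta<}$ is dense in $T_x^{<\beta>}$), this composition sends $f$ to $(f|_{W_x})^{<\beta>}|_{T_x^{<\beta>}}$, and hence factors as $\Delta_{\beta, W_x} \colon C^\alpha(U,E) \to C(W_x^{<\beta>}, E)$ (continuous by Definition~\ref{coalpha}) followed by the pullback $C(W_x^{<\beta>}, E) \to C(T_x^{<\beta>}, E)$ along the inclusion $T_x^{<\beta>} \hookrightarrow W_x^{<\beta>}$ (continuous for the compact-open topologies). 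Both factors are continuous, so $\rho$ is continuous.

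The only real subtlety I expect is arranging that a single box $Q_x$ simultaneously yields cartesian open neighbourhoods in both $U$ and $S$, with $T_x \sub W_x$ sharing compatible cartesian factorisations. Once this uniform choice is in place, every remaining step is a routine restriction argument combining the locality of the iterated difference quotients with the density of $T_x^{>\beta<}$ in $T_x^{<\beta>}$.
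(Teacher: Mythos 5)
Your proposal is correct and follows essentially the same route as the paper's proof: at each point of $S$ you intersect with a single cartesian box to get nested cartesian open neighbourhoods $T_x\sub W_x$ (the paper does this via \ref{nicernbhd} and replacing the two boxes by their intersection), you deduce $f|_S\in C^\alpha$ from $(f|_{W_x})^{<\beta>}|_{T_x^{<\beta>}}$ and Lemma~\ref{islocal}, and you obtain continuity of $\rho$ from the factorisation $\Delta_{\beta,T_x}\circ\rho=(\text{restriction})\circ\Delta_{\beta,W_x}$ together with Lemma~\ref{covcart}, exactly as in the paper.
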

\begin{proof}
For $x\in S$, let $V_x\sub U$ and $W_x\sub S$ be open cartesian
subsets containing~$x$.
By \ref{nicernbhd}, we may assume that $V_x=U\cap Q$ and $W_x=S\cap P$
with open cartesian subsets $Q,P$ of $\K^n$.
After replacing $P$ and $Q$ with their open cartesian subset $P\cap Q$
(which is possible by the proof of \ref{nicernbhd}), we may assume that $V_x=U\cap Q$ and $W_x=S\cap Q$,
whence $W_x=S\cap V_x\sub V_x$.
Because the sets $W_x$ form a cover of $S$
for $x\in S$,
and $(f|_{V_x})^{<\beta>}|_{(W_x)^{<\beta>}}$
is a continuous extension of $(f|_{W_x})^{>\beta<}$
for each $\beta\in \N_0^n$ such that $\beta\leq\alpha$,
we deduce with Lemma~\ref{islocal} that $f|_S$ is $C^\alpha$,
with
\begin{equation}\label{preok}
(f|_{W_x})^{<\beta>}=(f|_{V_x})^{<\beta>}|_{(W_x)^{<\beta>}}.
\end{equation}
Clearly $\rho$ is linear.
Define
$\Delta_{\beta,V_x} \colon C^\alpha(U,E)\to C((V_x)^{<\beta>},E)$, $f\mto (f|_{V_x})^{<\beta>}$ and
$\Delta_{\beta,W_x}^S \colon C^\alpha(S,E)\to C((W_x)^{<\beta>},E)$, $f\mto (f|_{W_x})^{<\beta>}$.
Let\linebreak
$\rho_{\beta,x}\colon C((V_x)^{<\beta>},E)\to C((W_x)^{<\beta>},E)$
be the restriction map.
Then (\ref{preok}) can be rewritten as
\begin{equation}\label{nowok}
\Delta_{\beta,W_x}^S\circ \rho =
\rho_{\beta,x}\circ \Delta_{\beta,V_x}\,.
\end{equation}
The right-hand side of (\ref{nowok})
is continuous by Remark~\ref{resop} and the continuity of the maps
 $\Delta_{\beta,V_x}$.
 Hence also the left-hand side is continuous and therefore
 $\rho$ is continuous, using that
 the topology on $C^\alpha(S,E)$
is initial with respect to the maps $\Delta_{\beta,W_x}^S$,
by Lemma~\ref{covcart}.
\end{proof}
\begin{la}\label{covarbop}
Let $(U_a)_{a\in A}$ be a cover of $U$
by open subsets $U_a\sub U$.
Then the compact-open $C^\alpha$-topology on $C^\alpha(U,E)$
is the initial topology $\cT$
with respect to the restriction maps
\[
\rho_a\colon C^\alpha(U,E)\to C^\alpha(U_a,E),\quad
f\mto f|_{U_a}\qquad\mbox{for $\,a\in A$.}
\]
\end{la}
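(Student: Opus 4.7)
The plan is to show the two inclusions $\cT\subseteq\cO$ and $\cO\subseteq\cT$, where $\cO$ denotes the compact-open $C^\alpha$-topology and $\cT$ the initial topology with respect to the $\rho_a$'s. The inclusion $\cT\subseteq\cO$ is immediate: Lemma~\ref{basicres} (applied to $S:=U_a$, which is locally cartesian since it is open in the locally cartesian set~$U$) tells us that each restriction map $\rho_a$ is continuous on $(C^\alpha(U,E),\cO)$, and $\cT$ is by definition the coarsest topology making all $\rho_a$ continuous.

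For the reverse inclusion, the idea is to use Lemma~\ref{covcart} with a cover of~$U$ by open cartesian sets that are each small enough to sit inside some~$U_a$. Concretely, for every $x\in U$, choose $a(x)\in A$ with $x\in U_{a(x)}$; since $U_{a(x)}$ is open in $U$ and itself locally cartesian, the discussion in~\ref{nicernbhd} (together with the remark preceding Remark~\ref{passtosub}) yields an open cartesian neighbourhood $W_x$ of $x$ with $W_x\subseteq U_{a(x)}$. The family $(W_x)_{x\in U}$ covers~$U$ by open cartesian subsets, so by Lemma~\ref{covcart} the topology~$\cO$ is initial with respect to the maps
\[
\Delta_{\beta,W_x}\colon C^\alpha(U,E)\to C((W_x)^{<\beta>},E),\quad f\mto (f|_{W_x})^{<\beta>}
\]
for $\beta\in\N_0^n$ with $\beta\leq\alpha$ and $x\in U$.

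Now I would factor each $\Delta_{\beta,W_x}$ through~$\rho_{a(x)}$. Since $W_x$ is an open cartesian subset of both $U$ and~$U_{a(x)}$, the uniqueness of continuous extensions (together with the fact that $(f|_{W_x})^{>\beta<}=((f|_{U_{a(x)}})|_{W_x})^{>\beta<}$) gives $(f|_{W_x})^{<\beta>}=((f|_{U_{a(x)}})|_{W_x})^{<\beta>}$, i.e.
\[
\Delta_{\beta,W_x}\;=\;\Delta_{\beta,W_x}^{U_{a(x)}}\circ \rho_{a(x)}\,,
\]
where $\Delta_{\beta,W_x}^{U_{a(x)}}\colon C^\alpha(U_{a(x)},E)\to C((W_x)^{<\beta>},E)$ is the analogous evaluation map. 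The map $\Delta_{\beta,W_x}^{U_{a(x)}}$ is continuous by the very definition of the topology on $C^\alpha(U_{a(x)},E)$, and $\rho_{a(x)}$ is $\cT$-continuous by construction of~$\cT$. Hence $\Delta_{\beta,W_x}$ is $\cT$-continuous, and since these maps generate~$\cO$, we conclude $\cO\subseteq\cT$, completing the proof by transitivity of initial topologies.

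There is no real obstacle here; the only point requiring a moment of care is producing the cartesian neighbourhoods $W_x$ that lie inside the prescribed~$U_{a(x)}$, which is precisely what the ``shrinking'' addendum at the end of~\ref{nicernbhd} provides.
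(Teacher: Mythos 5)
Your proof is correct and follows essentially the same route as the paper: both arguments rest on Lemma~\ref{covcart} combined with the factorization $\Delta_{\beta,V}=\Delta^{a}_{\beta,V}\circ\rho_a$ for cartesian open sets $V$ contained in some $U_a$, plus transitivity of initial topologies. The only cosmetic difference is that the paper takes \emph{all} cartesian open $V\sub U_a$ and identifies $\cO$ and $\cT$ in one stroke as initial topologies with respect to the same family of maps, whereas you pick one cartesian neighbourhood per point and argue the two inclusions separately (using Lemma~\ref{basicres} for $\cT\sub\cO$); the substance is identical.
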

\begin{proof}
For $a\in A$, let $\cV_a$ be the set of all cartesian open subsets
$V\sub U_a$. For $V\in \cV_a$ and $\beta\in \N_0^n$ such that $\beta\leq\alpha$,
let $\Delta_{\beta,V}\colon C^\alpha(U,E)\to C(V^{<\beta>},E)$
and $\Delta_{\beta,V}^a\colon C^\alpha(U_a,E)\to C(V^{<\beta>},E)$
be defined via $f\mto (f|_V)^{<\beta>}$.
Because the topology on $C^\alpha(U_a,E)$ is initial
with respect to the maps $\Delta^a_{\beta,V}$ with $\beta$ and $V$ as before,
the
``transitivity of initial topologies'' implies that $\cT$
is the initial topology with respect to the mappings
$\Delta_{\beta,V}^a\circ \rho_a=\Delta_{\beta,V}$.
As $\bigcup_{a\in A}\cV_a$ is a cover of~$U$
by cartesian open subsets of~$U$,
the latter topology coincides with the compact-open $C^\alpha$-topology on $C^\alpha(U,E)$,
by Lemma~\ref{covcart}.
\end{proof}
\begin{la}\label{cloima}
For each cover $\cV$ of $U$ by open cartesian sets $V\sub U$,
the map
\begin{equation}\label{nowprd}
\Delta:=(\Delta_{\beta, V})\colon C^\alpha(U,E)\to\prod_{\beta, V}C(V^{<\beta>},E),\quad
f\mto ((f|_V)^{<\beta>})_{\beta,V}
\end{equation}
is linear and a topological embedding with closed image $($where the product
is taken over all $V\in \cV$ and all $\beta\in \N_0^n$ such that $\beta\leq \alpha)$.
\end{la}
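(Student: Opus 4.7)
The $\K$-linearity of $\Delta$ is immediate since each component $\Delta_{\beta,V}$ is linear. For the topological embedding claim, I would argue as follows: each $\Delta_{\beta,V}$ is continuous by Definition~\ref{coalpha}, so $\Delta$ is continuous. Conversely, Lemma~\ref{covcart} tells us that the compact-open $C^\alpha$-topology on $C^\alpha(U,E)$ is already the initial topology with respect to the family $(\Delta_{\beta,V})_{V\in \cV,\,\beta\leq\alpha}$. Since the product topology is initial with respect to the coordinate projections, the transitivity of initial topologies implies that $C^\alpha(U,E)$ carries the initial topology with respect to $\Delta$ itself. Injectivity is then clear: $\Delta(f)=\Delta(g)$ forces $f|_V=(f|_V)^{<0>}=(g|_V)^{<0>}=g|_V$ for every $V\in \cV$, and the $V$'s cover~$U$.

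The essential content of the lemma is closedness of the image. I would characterize the image as the set of tuples $(g_{\beta,V})$ obeying: (a) compatibility on overlaps, $g_{0,V}(x)=g_{0,V'}(x)$ for all $V,V'\in\cV$ and $x\in V\cap V'$; and (b) the difference-quotient identity $g_{\beta,V}(x)=(g_{0,V})^{>\beta<}(x)$ for all $V\in\cV$, $\beta\leq\alpha$ and $x\in V^{>\beta<}$, where the right-hand side is the explicit finite $\K$-linear combination of values of $g_{0,V}$ given by formula~(\ref{badbd}). Necessity of (a) and (b) is clear by setting $g_{0,V}=f|_V$ and $g_{\beta,V}=(f|_V)^{<\beta>}$. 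For sufficiency, define $f\colon U\to E$ by $f(x):=g_{0,V}(x)$ for any $V\in\cV$ containing~$x$; condition (a) makes this well-defined, $f$ is continuous because its restriction to each member of the open cover $\cV$ is so, and condition (b) says that $g_{\beta,V}$ is a continuous extension of $(f|_V)^{>\beta<}$ to $V^{<\beta>}$. Hence every $f|_V$ is $C^\alpha$ with $(f|_V)^{<\beta>}=g_{\beta,V}$, and Lemma~\ref{islocal} upgrades this to $f\in C^\alpha(U,E)$ with $\Delta(f)$ equal to the given tuple.

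Finally, each individual equation in (a) and (b) is a closed condition on the product. For any fixed $x\in V^{<\beta>}$, the point evaluation $C(V^{<\beta>},E)\to E$, $h\mapsto h(x)$, is continuous in the compact-open topology. Consequently, each map $(g_{\cdot,\cdot})\mapsto g_{0,V}(x)-g_{0,V'}(x)$ and each map $(g_{\cdot,\cdot})\mapsto g_{\beta,V}(x)-\sum_j c_j(x)\,g_{0,V}(y_j(x))$ (with the scalars $c_j(x)\in\K$ and points $y_j(x)\in V$ read off from~(\ref{badbd})) is a continuous $\K$-linear map from the product to~$E$. Since $E$ is Hausdorff, $\{0\}\subseteq E$ is closed, and the preimage of $\{0\}$ under each such map is closed in the product. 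The image of $\Delta$ is the intersection of all these closed sets, hence closed. The one nontrivial step is the sufficiency direction in the characterization of the image, where one must glue $f$ from the $g_{0,V}$ and invoke Lemma~\ref{islocal} to obtain $C^\alpha$-regularity; everything else is either formal or already prepared by the preceding lemmas.
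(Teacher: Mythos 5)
Your argument is correct. The linearity, injectivity and embedding parts coincide with the paper's proof (Lemma~\ref{covcart} plus transitivity of initial topologies), and your key step for the image — gluing $f$ from the $g_{0,V}$, reading condition (b) as saying that $g_{\beta,V}$ is a continuous extension of $(f|_V)^{>\beta<}$, and invoking Lemma~\ref{islocal} together with the uniqueness of continuous extensions on the dense sets $V^{>\beta<}\sub V^{<\beta>}$ — is exactly the core of the paper's argument as well. Where you genuinely differ is in how closedness is extracted: the paper takes a net $(f_a)$ with $\Delta(f_a)$ convergent in the product, observes (using continuity of point evaluations and formula (\ref{badbd})) that the limit tuple inherits the difference-quotient relations, and then builds $f$; you instead characterize $\im(\Delta)$ once and for all as the intersection of the sets cut out by the pointwise conditions (a) and (b), each of which is the preimage of $\{0\}\sub E$ under a continuous linear map on the product (projections composed with point evaluations, fixed scalar multiplications and sums), hence closed since $E$ is Hausdorff. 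Your route avoids the convergence argument entirely and has the small extra benefit of exhibiting $\im(\Delta)$ explicitly as an intersection of kernels of continuous linear functionals into $E$, so it is manifestly a closed vector subspace; the paper's net argument is marginally shorter to state but must verify that pointwise limits are compatible with (\ref{badbd}). Both proofs rest on the same prepared lemmas, so either is acceptable.
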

\begin{proof}
The linearity is clear.
As $\Delta_{0,V}$ is the restriction map $f\mto f|_V$ and $\cV$
is a cover of~$U$, the map $\Delta$ is injective.
Combining this with Lemma~\ref{covcart}, we find that $\Delta$
is a topological embedding.
To see that the image is closed, let $(f_a)_{a\in A}$
be a net in $C^\alpha(U,E)$ such that $\Delta(f_a)\to (g_{\beta,V})_{\beta,V}$
with functions $g_{\beta,V}\in C(V^\beta,E)$.
If $x\in U$, there is $V\in \cV$ such that $x\in V$.
Then $f(x):=\lim f_a(x)$ exists,
because $f_a(x)=\Delta_{0,V}(f_a)(x)\to g_{0,V}(x)$.
By the preceding, $f|_V=g_{0,V}$,
whence $f$ is continuous.
It is clear from (\ref{badbd}) that $((f_a)|_V)^{>\beta<}$
converges pointwise to $(f|_V)^{>\beta<}$.
On the other hand, $((f_a)|_V)^{>\beta<}=\Delta_{\beta,V}(f_a)|_{V^{>\beta<}}$
converges pointwise to $g_{\beta,V}|_{V^{>\beta<}}$.
Hence $(f|_V)^{>\beta<}=g_{\beta,V}|_{V^{>\beta<}}$.
Since $g_{\beta,V}$ provides a continuous extension for the latter function, we see with Lemma~\ref{islocal}
that $f$ is $C^\alpha$,
with $(f|_V)^{<\beta>}=g_{\beta,V}$ for all $V\in \cV$ and $\beta\in \N_0^n$ such that $\beta\leq\alpha$.
\end{proof}
\begin{la}\label{pllower}
Let $B_\alpha$ be the set of all $\beta\in\N_0^n$ such that $\beta\leq \alpha$.
The spaces $C^\beta(U,E)$ form a projective system
of topological $\K$-vector spaces for $\beta\in B_\alpha$, together with the inclusion
maps $\phi_{\gamma,\beta}\colon C^\beta(U,E)\to C^\gamma(U,E)$
for $\gamma\leq\beta$ in $B_\alpha$.
Moreover, $C^\alpha(U,E)$ is the projective limit of this system,
together with the inclusion maps $\phi_\beta\colon
C^\alpha(U,E)\to C^\beta(U,E)$ for $\beta\in B_\alpha$.
\end{la}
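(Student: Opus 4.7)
My plan proceeds in three steps, the heart of which is the transitivity of initial topologies.

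First, I check that the maps are well-defined and that the projective system axioms hold. By the remark immediately following Remark~\ref{alphacts}, every $C^\alpha$-map on $U$ is also $C^\beta$ for every $\beta \in B_\alpha$, and similarly every $C^\beta$-map is $C^\gamma$ for $\gamma \leq \beta$. Hence the ``inclusion'' maps $\phi_{\gamma,\beta}\colon C^\beta(U,E) \to C^\gamma(U,E)$ (for $\gamma \leq \beta$ in $B_\alpha$) and $\phi_\beta\colon C^\alpha(U,E) \to C^\beta(U,E)$ (for $\beta \in B_\alpha$) are genuine inclusions of sets and $\K$-linear. Continuity of each such inclusion is furnished by Remark~\ref{compaco}(c). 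The projective system identities $\phi_{\gamma,\beta}\circ\phi_{\beta,\delta}=\phi_{\gamma,\delta}$, $\phi_{\beta,\beta}=\id$, and the compatibility $\phi_{\gamma,\beta}\circ\phi_\beta=\phi_\gamma$ are automatic, all maps being inclusions.

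Second, I show that the compact-open $C^\alpha$-topology on $C^\alpha(U,E)$ is the initial topology with respect to the family $(\phi_\beta)_{\beta\in B_\alpha}$. One inclusion of topologies is Remark~\ref{compaco}(c) (the $\phi_\beta$ are continuous). For the other, each defining map $\Delta_{\beta,V}$ of the compact-open $C^\alpha$-topology factors as $\Delta_{\beta,V}=\Delta^\beta_{\beta,V}\circ \phi_\beta$, where $\Delta^\beta_{\beta,V}\colon C^\beta(U,E)\to C(V^{<\beta>},E)$ is one of the defining continuous maps of the compact-open $C^\beta$-topology. Transitivity of initial topologies (the unnumbered Lemma in Section~\ref{secnot}) then forces the two topologies to coincide.

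Third, I verify the universal property. Given a topological $\K$-vector space $X$ and continuous linear maps $\psi_\beta\colon X\to C^\beta(U,E)$ (for $\beta\in B_\alpha$) compatible in the sense that $\phi_{\gamma,\beta}\circ \psi_\beta=\psi_\gamma$, the fact that the $\phi_{\gamma,\beta}$ are inclusions means that all $\psi_\beta(x)$ agree as functions $U\to E$. Setting $\psi(x):=\psi_\beta(x)$ (any $\beta$), the element $\psi(x)$ lies in $\bigcap_{\beta\in B_\alpha}C^\beta(U,E)$. The one substantive observation is that this intersection equals $C^\alpha(U,E)$: indeed, by Definition~\ref{defSDS} and the downward closure of $B_\alpha$, being $C^\alpha$ is precisely the requirement that a continuous extension $(f|_V)^{<\gamma>}$ exist for every cartesian open $V\subseteq U$ and every $\gamma\in B_\alpha$, which is the same as lying in every $C^\beta(U,E)$ for $\beta\in B_\alpha$. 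Thus $\psi$ is a well-defined $\K$-linear map into $C^\alpha(U,E)$, uniquely determined by $\phi_\beta\circ\psi=\psi_\beta$, and continuous thanks to the initial topology description from the second step.

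The only non-bookkeeping point is the set-theoretic identity $C^\alpha(U,E)=\bigcap_{\beta\in B_\alpha}C^\beta(U,E)$; once this is noted, the rest is a direct appeal to the universal property together with transitivity of initial topologies, so I do not anticipate a genuine obstacle.
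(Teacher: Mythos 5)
Your proof is correct and in substance the same as the paper's: both hinge on the set identity $C^\alpha(U,E)=\bigcap_{\beta\in B_\alpha}C^\beta(U,E)$ and on the transitivity of initial topologies applied to the factorizations $\Delta_{\gamma,V}=\Delta^\beta_{\gamma,V}\circ\phi_\beta$. The only difference is presentational: the paper exhibits an explicit topological isomorphism $\Psi$ of $C^\alpha(U,E)$ onto the standard realization of $\pl\,C^\beta(U,E)$ inside $\prod_{\beta\in B_\alpha}C^\beta(U,E)$, whereas you verify the universal property of the limit directly.
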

\begin{proof}
If $(f_\beta)_{\beta\in B_\alpha}$ is an element of
\[
\pl \,C^\beta(U,E)=\{ (f_\beta)_{\beta\in B_\alpha}\in \prod_{\beta\in B_\alpha}
C^\beta(U,E)\colon (\beta\geq \gamma)\impl
f_\gamma=\phi_{\gamma,\beta}(f_\beta)\},
\]
then
$\phi_{\gamma,\beta}(f_\beta)=f_\gamma$ and thus $f_\beta=f_\gamma$,
whenever $\beta\geq \gamma$.
Hence $f_0=f_\beta$ for all $\beta\in B_\alpha$, and thus $f_0\in C^\alpha(U,E)$.
As a consequence, the map
\[
\Psi\colon C^\alpha(U,E)=\bigcap_{\beta\in B_\alpha}  C^\beta(U,E)\to \, \pl \, C^\beta(U,E),\quad
f\mto (f)_{\beta\in B_\alpha}
\]
is an isomorphism of vector spaces.
Let $\Delta_{\beta, V}\colon C^\alpha(U,E)\to C(V^{<\beta>},E)$
be as in (\ref{pamps}),
$\Delta_{\gamma, V}^\beta\colon C^\beta(U,E)\to C(V^{<\gamma>},E)$ for $\gamma\leq \beta$
be the analogous map and $\pi_\beta\colon \pl\,C^\beta(U,E)\to C^\beta(U,E)$
be the projection onto the $\beta$-component.
Using the transitivity of initial topologies twice, we see:
The initial topology on $C^\alpha(U,E)$ with respect to $\Psi$
coincides with the initial topology with respect to the maps $\pi_\beta\circ\Psi$;
and this topology is initial with respect to the maps $\Delta^\beta_{\gamma,V}\circ \pi_\beta\circ\Psi
=\Delta_{\gamma,V}$. It therefore coincides with the compact-open $C^\alpha$-topology on $C^\alpha(U,E)$,
and hence $\Psi$ is a homeomorphism.
\end{proof}
Recall that a topological space $X$ is called \emph{hemicompact}
if there exists a sequence $K_1\sub K_2\sub\cdots$
of compact subsets of~$X$, with $\bigcup_{n\in \N}K_n=X$, such that
each compact subset of~$X$ is contained in some~$K_n$.
For example, every $\sigma$-compact locally compact space is hemicompact.\\[2.4mm]
Also, we recall: For each point $x$ in a locally compact topological space~$X$ and open neighbourhood
$P$ of~$x$, there exists a $\sigma$-compact open neighbourhood $Q$ of $x$ in $X$
such that $Q\sub P$.\footnote{If $L\sub P$ is compact, then each $y\in L$ has a compact neighbourhood
$L_y$ in $P$. By compactness of $L$, we have $L\sub \bigcup_{y\in \Phi}L_y=:L'$
for some finite subset $\Phi\sub L$. Then $L$ is in the interior of $L'$ and $L'\sub P$.
Starting with $L_1:=\{x\}$, this construction yields compact subsets $L_{n+1}:=(L_n)'$ of $P$
for $n\in \N$ such that $L_n$ is in the interior $L_{n+1}^0$.
Thus $Q=\bigcup_{n\in \N}L_n=\bigcup_{n\in \N}L_n^0$ has the desired properties.}
\begin{la}\label{nula1}
If $U\sub \K^n$ is a locally cartesian subset which is locally compact and $\sigma$-compact,
then $U$ admits a countable cover by cartesian open subsets $V\sub U$ which are
$\sigma$-compact.
\end{la}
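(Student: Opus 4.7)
The plan is to show that each $x\in U$ admits a cartesian open neighbourhood in $U$ which is itself $\sigma$-compact; the conclusion then follows from the Lindel\"of property of $U$ (which holds since $U$ is $\sigma$-compact), by extracting a countable subcover from the cover of $U$ by all such neighbourhoods.

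Given $x=(x_1,\ldots,x_n)\in U$, I would first apply \ref{nicernbhd} to obtain a cartesian open neighbourhood $V_x=V_1\times\cdots\times V_n$ of $x$ in $U$, with each factor $V_i\sub\K$ a subset without isolated points. The key intermediate step is to show that each $V_i$ is locally compact as a topological space in its own right (in the subspace topology from $\K$). Indeed, $V_x$ is an open subset of the locally compact space $U$, hence $V_x$ itself is locally compact. The subset
\[
S_i:=\{x_1\}\times\cdots\times\{x_{i-1}\}\times V_i\times\{x_{i+1}\}\times\cdots\times\{x_n\}\sub V_x
\]
is closed in $V_x$ (singletons are closed in the Hausdorff space $\K$, and pre-images of singletons under the coordinate projections are intersected to obtain $S_i$) and canonically homeomorphic to $V_i$. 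Since closed subspaces of locally compact Hausdorff spaces are locally compact, $V_i$ is locally compact.

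Next, I would invoke the footnote above (asserting that in a locally compact topological space every point admits a $\sigma$-compact open neighbourhood contained in any prescribed open neighbourhood), applied to $V_i$ at the point $x_i$, to produce a $\sigma$-compact open neighbourhood $W_i\sub V_i$ of $x_i$. Setting $W_x:=W_1\times\cdots\times W_n$, one obtains an open subset of $V_x\sub U$ containing $x$; each $W_i$ inherits from $V_i$ the property of having no isolated points (as any open subspace of a space without isolated points retains this property), so $W_x$ is cartesian; and $W_x$ is $\sigma$-compact, being a finite product of $\sigma$-compact Hausdorff spaces. Finally, since $U$ is $\sigma$-compact and hence Lindel\"of, the open cover $\{W_x\colon x\in U\}$ admits a countable subcover, yielding the desired countable cover of $U$ by $\sigma$-compact cartesian open subsets.

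The main point requiring care is the verification that each coordinate factor $V_i$ is locally compact; once this slice argument is in hand, the remainder of the proof is routine assembly from the cited footnote and the Lindel\"of property.
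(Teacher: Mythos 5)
Your proof is correct, but it reaches the per-point $\sigma$-compact cartesian neighbourhood by a different route than the paper. The paper never proves that the individual factors are locally compact: starting from a cartesian open neighbourhood $W=W_1\times\cdots\times W_n\sub U$ of $x$, it takes a compact neighbourhood $K\sub W$ of $x$ in $U$, enlarges it to the compact box $\pr_1(K)\times\cdots\times\pr_n(K)\sub W$, and applies the footnote inside each compact factor $K_j:=\pr_j(K)$ (with $P=K_j^0$, the interior of $K_j$ relative to $W_j$) to obtain $\sigma$-compact sets $Q_j$ open in $K_j^0$, hence open in $W_j$ and without isolated points, exactly as in the tail of your argument. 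You instead first establish that each factor $V_i$ is locally compact in its own right, via the closed slice $S_i\cong V_i$ inside the open (hence, by Hausdorffness of $\K^n$, locally compact) subspace $V_x$ of $U$, and then apply the footnote directly to $V_i$ with $P=V_i$; this is a valid application, and the slice and permanence arguments (open subspaces and closed subspaces of locally compact Hausdorff spaces are locally compact) are sound. Your route makes explicit a fact the paper only asserts without proof in Lemma~\ref{nula2} (``as $V$ is locally compact, each $V_j$ is locally compact''), at the cost of invoking those two permanence properties, whereas the paper's projection trick stays entirely inside compact sets and needs only that continuous images of compact sets are compact. The countability step is essentially the same in both: your appeal to the Lindel\"of property of the $\sigma$-compact space $U$ amounts to the paper's extraction of finitely many of the neighbourhoods over each member of a compact exhaustion of $U$.
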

\begin{proof}
It suffices to show that each $x\in U$ has an open cartesian neighbourhood $V_x\sub U$ which is $\sigma$-compact.
In fact, writing $U=\bigcup_{n\in \N}C_n$ with compact sets $C_n$,
each $C_n$ will be covered by $V_x$ with $x$ in a finite subset $\Phi_n\sub C_n$.
Then the $V_x$ for $x$ in the countable set
$\bigcup_{n\in \N}\Phi_n$ provide the desired countable cover for~$U$.\\[2.4mm]
Fix $x=(x_1,\ldots, x_n)\in U$. To construct $V_x$, start with a cartesian open neighbourhood
$W=W_1\times\cdots\times W_n\sub U$ of~$x$. Since $U$ is locally compact, there exists
a compact neighbourhood $K\sub W$ of~$x$.
Let $\pr_j\colon \K^n\to \K$ be the projection onto the $j$-th component, for $j\in \{1,\ldots, n\}$.
Then $K_j:=\pr_j(K)$ is a compact subset of $W_j$
and thus $K_1\times\cdots \times K_n\sub W$.
After replacing $K$ with a larger set, we may therefore assume that
$K=K_1\times\cdots\times K_n$.
Let $K^0$ be the interior of $K$ relative~$W$.
Then $K^0=K_1^0\times\cdots\times K_n^0$,
where $K_j^0$ is the interior of~$K_j$ relative~$W_j$.
Because $K_j$ is compact and $K_j^0$ an open neighbourhood of~$x_j$ in~$K_j$,
there exists a $\sigma$-compact, open neighbourhood~$Q_j$ of~$x_j$ in~$K_j$
such that $Q_j\sub K_j^0$. Then $Q_j$ is open in $K_j^0$ and hence also
open in~$W_j$, whence $Q_j$ does not have isolated points and
$V_x:=Q_1\times\cdots\times Q_n$
is open in~$W$ and hence in~$U$. Since each $Q_j$ is $\sigma$-compact,
so is the finite direct product~$V_x$.
\end{proof}
\begin{la}\label{nula2}
If $V\sub \K^n$ is a cartesian subset which is locally compact and $\sigma$-compact,
then $V^{<\beta>}$ is locally compact and $\sigma$-compact,
for each $\beta\in \N_0^n$.
\end{la}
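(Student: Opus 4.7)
The proof should be very short: everything is built from the product structure of $V$ combined with Example~\ref{mimpex}.

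Since $V \subseteq \K^n$ is cartesian, write $V = V_1 \times \cdots \times V_n$ with subsets $V_i \subseteq \K$ having no isolated points. By Example~\ref{mimpex} (with the obvious correction of the typo, so that the first factor reads $V_1^{1+\beta_1}$), we have
\[
V^{<\beta>} \; = \; V_1^{1+\beta_1} \times V_2^{1+\beta_2} \times \cdots \times V_n^{1+\beta_n},
\]
equipped with the product topology inherited from $\K^{n+|\beta|}$. Hence it suffices to show that each factor $V_i$ is locally compact and $\sigma$-compact, since a finite Cartesian product of locally compact $\sigma$-compact spaces is again locally compact and $\sigma$-compact (a finite product of compact sets is compact, and a countable union of such products exhausts the total product).

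To see that each $V_i$ is locally compact, fix $v_i \in V_i$ and extend it to some $v = (v_1,\ldots,v_n) \in V$. Since $V$ is locally compact, $v$ has a compact neighbourhood $K \sub V$, and $K$ contains a basic open neighbourhood $W_1 \times \cdots \times W_n$ of $v$ (relative to the product topology of $V$). Then the projection $\pi_i(K) \sub V_i$ is compact (as a continuous image of $K$), and it contains the open neighbourhood $W_i$ of $v_i$ in $V_i$; thus $V_i$ is locally compact at $v_i$. Similarly, $\sigma$-compactness of $V_i$ follows because $V_i = \pi_i(V)$ is a continuous image of the $\sigma$-compact space $V$ (writing $V = \bigcup_n C_n$ with compact $C_n$ gives $V_i = \bigcup_n \pi_i(C_n)$, a countable union of compact sets).

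There is really no obstacle here: the entire content of the lemma is that the ``fattening'' operation $V \mapsto V^{<\beta>}$ on a cartesian set is just a finite Cartesian power construction on the factors, and both local compactness and $\sigma$-compactness are inherited by finite products and by continuous images (respectively, by factors). The only small care needed is to harvest the relevant properties of the individual factors $V_i$ from the assumed properties of~$V$ itself, which is done via projections as above.
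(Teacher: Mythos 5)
Your proof is correct and follows essentially the same route as the paper: write $V=V_1\times\cdots\times V_n$, use the product formula (\ref{spcas}) for $V^{<\beta>}$, extract local compactness and $\sigma$-compactness of each factor $V_j$ via the projections, and conclude for the finite product. The only (harmless) detail the paper makes explicit and you elide is the trivial case $V=\emptyset$, needed before extending a point $v_i\in V_i$ to a point of~$V$.
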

\begin{proof}
Let $\pr_j\colon \K^n\to \K$ be the projection onto the $j$-th component, for $j\in \{1,\ldots, n\}$.
We may assume $V\not=\emptyset$.
Write $V=V_1\times\cdots\times V_n$ with $V_j\sub \K$.
As $V$ is locally compact, each $V_j$ is locally
compact. Further, $V_j=\pr_j(V)$ is $\sigma$-compact.
Hence also each $V^{<\beta>}$ is locally compact and $\sigma$-compact,
being a finite direct product of copies of the factors $V_1,\ldots, V_n$
(see (\ref{spcas})).
\end{proof}
\begin{la}\label{samsu}
The space $C^\alpha(U,E)$ has the following properties:
\begin{itemize}
\item[{\rm(a)}]
If $\K$ is metrizable and $E$ is complete,
then $C^\alpha(U,E)$ is complete.
\item[{\rm(b)}]
If $\K$ is metrizable and $E$ is sequentially complete,
then $C^\alpha(U,E)$ is sequentially complete.
\item[{\rm(c)}]
If $U$ is $\sigma$-compact and locally compact
and $E$ is metrizable,
then $C^\alpha(U,E)$ is metrizable.
\item[{\rm(d)}]
If $\K$ is an ultrametric field and $E$ is locally convex,
then $C^\alpha(U,E)$ is locally convex.
\item[{\rm(e)}]
If $\K$ is an ultrametric field, $U$ is compact,
$E$ an ultrametric normed space and $\alpha\in \N_0^n$,
then $C^\alpha(U,E)$ is an ultrametric normed
space.
\item[{\rm(f)}]
If $\K$ is an ultrametric field, $U$ is compact,
$E$ an ultrametric Banach space and $\alpha\in \N_0^n$,
then $C^\alpha(U,E)$ is an ultrametric Banach
space.
\item[{\rm(g)}]
Assume that there exists a cover $\cV$ of $U$ by cartesian open subsets $V\sub U$
such that $V^{<\beta>}$ is a k-space for each $\beta\in \N_0^n$ such that $\beta\leq\alpha$.
Then
$C^\alpha(U,E)$ is complete if $E$ is complete,
and $C^\alpha(U,E)$ is sequentially complete if $E$ is sequentially complete.
\item[{\rm(h)}]
Let $E$ be metrizable and
assume that there exists a countable cover $\cV$ of $U$ by cartesian open subsets $V\sub U$
such that $V^{<\beta>}$ is hemicompact
for each $\beta\in \N_0^n$ such that $\beta\leq\alpha$.
Then
$C^\alpha(U,E)$ is metrizable.
\end{itemize}
\end{la}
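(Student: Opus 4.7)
The unifying tool is Lemma \ref{cloima}, which realises $C^\alpha(U,E)$ as a closed vector subspace of the product
\[
P \;=\; \prod_{V\in\cV,\ \beta\leq\alpha} C(V^{<\beta>},E),
\]
each factor carrying the compact-open topology, for any cover $\cV$ of $U$ by open cartesian subsets. Since completeness, sequential completeness, metrizability (for countable index sets), local convexity, and being an ultrametric normed/Banach space all transfer from the factors (or from the product) to a closed vector subspace, every part will be reduced to a statement about the factors $C(V^{<\beta>},E)$, by choosing $\cV$ appropriately.

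For (g), the hypothesis that each $V^{<\beta>}$ is a k-space ensures that $C(V^{<\beta>},E)$ in the compact-open topology is (sequentially) complete whenever $E$ is; arbitrary products inherit this, and so does the closed subspace $C^\alpha(U,E)$. Parts (a) and (b) are then immediate: if $\K$ is metrizable, every subset of $\K^{n+|\beta|}$ is metrizable, hence first-countable, hence a k-space, and the k-space hypothesis of (g) is satisfied by any cover by open cartesian sets (such a cover exists since $U$ is locally cartesian).

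For (h), I would invoke the classical fact that if $X$ is hemicompact and $E$ metrizable, then $C(X,E)$ is metrizable (glue the sup-metrics on the exhausting compact sets $K_n$ into a compatible weighted metric). A countable product of metrizable spaces is metrizable, and so is its subspace $C^\alpha(U,E)$. Part (c) follows from (h): by Lemma \ref{nula1} one obtains a countable cover $\cV$ of $U$ by $\sigma$-compact open cartesian subsets $V$, each of which is locally compact since $U$ is, and Lemma \ref{nula2} then yields that $V^{<\beta>}$ is locally compact and $\sigma$-compact, hence hemicompact.

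For (d), the compact-open topology on $C(V^{<\beta>},E)$ is generated by the ultrametric seminorms $g\mapsto \sup_{x\in K}q(g(x))$ for $K\subseteq V^{<\beta>}$ compact and $q$ a continuous ultrametric seminorm on $E$, and local convexity is preserved by products and by subspaces. For (e) the ultrametric assumption is essential: every ultrametric ball in $\K$ is clopen, so by \ref{nicernbhd} the compact set $U$ admits a finite cover by open cartesian subsets $V_j = U\cap Q_j$ with $Q_j$ a product of ultrametric balls small enough that $V_j$ is cartesian; each such $V_j$ is clopen in $U$, hence compact, and by (\ref{spcas}) each $V_j^{<\beta>}$ is a finite product of copies of the compact factors of $V_j$, hence compact. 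Since $\alpha\in\N_0^n$, only finitely many $\beta$ satisfy $\beta\leq\alpha$, so
\[
\|f\|_\alpha \;:=\; \max_{\beta\leq\alpha,\,1\leq j\leq k}\ \sup_{x\in V_j^{<\beta>}}\bigl\|(f|_{V_j})^{<\beta>}(x)\bigr\|
\]
is a finite ultrametric norm, and by Lemma \ref{covcart} it generates the compact-open $C^\alpha$-topology (the compact-open topology on each $C(V_j^{<\beta>},E)$ coincides with the sup-norm topology because $V_j^{<\beta>}$ is compact). Finally, (f) combines (e) with (a), the latter applicable since ultrametric fields are metrizable. The main obstacle is the construction in (e)/(f): one has to arrange the finite cover so that each member is \emph{both} cartesian open (for $(f|_{V_j})^{<\beta>}$ to be defined on all of $V_j^{<\beta>}$) \emph{and} compact (for the sup norm to be finite), which is precisely where the clopenness of ultrametric balls is indispensable.
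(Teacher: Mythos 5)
Your proposal is correct, and for parts (a)--(d) and (f)--(h) it is essentially the paper's own argument: embed $C^\alpha(U,E)$ via Lemma~\ref{cloima} as a closed subspace of a product of spaces $C(V^{<\beta>},E)$, use Lemmas~\ref{nula1} and \ref{nula2} to get a countable cover with hemicompact $V^{<\beta>}$ for (c), and quote the corresponding parts of Lemma~\ref{sammelsu} for the factors. The genuine difference is in (e). The paper first treats compact \emph{cartesian} $U$ (taking $\cV=\{U\}$ in Lemma~\ref{cloima}) and then handles a general compact locally cartesian $U$ by producing, for each $x$, a relatively open set $V_x=K^0$ together with the compact cartesian set $L_x=\wb{V_x}$, and embedding $C^\alpha(U,E)$ into the finite product $\prod_x C^\alpha(L_x,E)$ via the restriction maps of Lemma~\ref{basicres}, using the trick that $\sigma\circ\rho$ being an embedding forces $\rho$ to be one. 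You instead use the ultrametric hypothesis (which (e) assumes anyway) right away: clopen balls give a finite cover of $U$ by sets $V_j=U\cap Q_j$ that are simultaneously relatively open, cartesian (by \ref{nicernbhd}) and compact, so each $V_j^{<\beta>}$ is compact by (\ref{spcas}), and Lemma~\ref{covcart} identifies the compact-open $C^\alpha$-topology with the topology of the explicit max--sup norm $\|f\|_\alpha$ (which is indeed a norm, since the $V_j$ cover $U$ and $\beta=0$ recovers $f$). Both routes are sound; yours is shorter and anticipates the clopen-decomposition idea the paper only formalizes later in Lemma~\ref{deccompinct}, whereas the paper's route trades this for reuse of the general-purpose restriction machinery (Lemma~\ref{basicres}) and does not need to re-derive a clopen cover at this stage.
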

\begin{proof}
(a) and (b) follow from (g), because every metrizable topological space is a k-space.

(c) By Lemma~\ref{nula1}, $U$ admits a countable cover $\cV$ by cartesian open subsets $V\sub U$
which are locally compact and $\sigma$-compact.
For such $V$ and $\beta\in \N_0^n$ with $\beta\le\alpha$, the set
$V^{<\beta>}$ is locally compact and $\sigma$-compact (by Lemma~\ref{nula2})
and hence hemicompact. Hence (c) follows from (h).

(d) By Lemma~\ref{cloima}, the topological vector space $C^\alpha(U,E)$ is isomorphic
to a vector subspaces of a direct product of the spaces of the form
$C(V^{<\beta>},E)$.
Each of these is locally convex by Lemma~\ref{sammelsu}\,(g), and hence
also $C^\alpha(U,E)$.

(e) Assume first that the compact set $U$ is cartesian,
$U_1\times \cdots\times U_n=U$ with compact subsets $U_1,\ldots, U_n\sub \K$.
We can then take $\cV=\{U\}$ in Lemma~\ref{cloima}.
Hence $C^\alpha(U,E)$ is isomorphic
to a vector subspaces of a finite direct product of spaces of the form
$C(U^{<\beta>},E)$.
Each $C(U^{<\beta>},E)$ is an ultrametric normed space by Lemma~\ref{sammelsu}\,(h),
using that $U^{<\beta>}$ is of the form (\ref{spcas})
and hence compact. Hence also
$C(U^{<\beta>},E)$ is an ultrametric normed space.\\[2.4mm]
We now return to the general case,
assuming only that the compact set $U$ is locally cartesian.
Fix $x\in U$.
Then $x$ has a cartesian open neighbourhood $W=W_1\times\cdots\times W_n\sub U$
and the latter contains a compact neighbourhood~$K$
of $x$ of the form $K=K_1\times\cdots\times K_n$
(see proof of Lemma~\ref{nula1}).
We let $V_x:=K^0$ be the interior of $K$ relative~$W$.
Then $V_x=K_1^0\times\cdots\times K_n^0$,
where $K_j^0$ is the interior of~$K_j$ relative~$W_j$.
Since $W_j$ does not have isolated points and $K_j^0$ is open
in $W_j$, also $K_j^0$ does not have isolated points,
and hence also its closure $\wb{K_j^0}$ does not have isolated points.
Hence $L_x:=\wb{V_x}=\wb{K_1^0}\times\cdots \times \wb{K_n^0}$
is a compact cartesian subset of $\K^n$.
Since $V_x$ is a neighbourhood of $x$ in~$U$,
by compactness of~$U$ there is a finite subset $\Phi\sub U$
such that $U=\bigcup_{x\in \Phi}V_x$.
For each $x\in \Phi$, the restriction maps
\[
\rho_x\colon C^\alpha(U,E)\to C^\alpha(L_x,E)\quad\mbox{and}\quad
\sigma_x\colon C^\alpha(L_x,E)\to C^\alpha(V_x,E)
\]
are continuous linear, by Lemma~\ref{basicres}.
Hence the mappings
\[
\rho:=(\rho_x)_{x\in \Phi}\colon C^\alpha(U,E)\to\prod_{x\in \Phi}C^\alpha(L_x,E)
\]
and
\[
\sigma:=\prod_{x\in \Phi}\sigma_x\colon
\prod_{x\in \Phi}C^\alpha(L_x,E)\to
\prod_{x\in \Phi}C^\alpha(V_x,E), \;\;\; (f_x)_{x\in \Phi}\mto (\sigma_x(f_x))_{x\in \Phi}
\]
are continuous linear.
Because $\sigma\circ\rho$ is a topological embedding (by Lemma~\ref{cloima}),
we deduce that also~$\rho$ is a topological embedding.
Since each of the spaces $C^\alpha(L_x,E)$ is an ultrametric normed space,
so is their finite direct product $\prod_{x\in \Phi}C^\alpha(L_x,E)$
and hence also $C^\alpha(U,E)$.

(f) follows from (a) and (e).

(g) By Lemma~\ref{cloima}, the topological vector space $C^\alpha(U,E)$ is
isomorphic to a closed vector subspace of the product of the spaces
$C(V^{<\beta>},E)$ with $\beta$ and $V$ as described in~(g).
Each of the latter is complete (resp., sequentially complete),
by Lemma~\ref{sammelsu} (d) and (e).
Hence so is $C^\alpha(U,E)$.

(h) Let $B_\alpha$ be the set of all $\beta\in \N_0^n$ such that
$\beta\leq \alpha$. Since $B_\alpha$ and $\cV$ are countable, also
$B_\alpha\times \cV$ is countable.
By Lemma~\ref{sammelsu}\,(c), each of the spaces $C(V^{<\beta>},E)$
is metrizable. Hence also the countable direct product $\prod_{(\beta,V)\in B_\alpha\times \cV}C(V^{<\beta>},E)$
is metrizable and hence also $C^\alpha(U,E)$, being homeomorphic to
a subspace of the latter product by Lemma~\ref{cloima}.
\end{proof}
\begin{la}\label{alphpushf}
If $\lambda\colon E\to F$ is a continuous linear mapping between
topological $\K$-vector spaces,
then the map
\[
C^\alpha(U,\lambda)\colon
C^\alpha(U,E)\to
C^\alpha(U,F),\quad f\mto \lambda\circ f
\]
is continuous and linear.
If $\lambda$ is linear and a topological embedding, then also
$C^\alpha(U,\lambda)$ is linear and a topological embedding.
\end{la}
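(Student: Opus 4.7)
The plan is to reduce everything to the defining initial topology on $C^\alpha(U,E)$ and $C^\alpha(U,F)$, and then invoke the corresponding facts for the compact-open topology on spaces $C(V^{<\beta>},\cdot)$ (presumably collected in Lemma~\ref{sammelsu}).

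First, the map $C^\alpha(U,\lambda)$ is well-defined: by Lemma~\ref{linchain}, if $f \in C^\alpha(U,E)$ then $\lambda\circ f \in C^\alpha(U,F)$, and for every open cartesian subset $V\sub U$ and every $\beta\in\N_0^n$ with $\beta\leq\alpha$ we have the identity
\[
((\lambda\circ f)|_V)^{<\beta>}\;=\;\lambda\circ (f|_V)^{<\beta>}\,.
\]
Linearity of $C^\alpha(U,\lambda)$ is immediate from linearity of $\lambda$.

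For continuity, recall from Definition~\ref{coalpha} that the topology on $C^\alpha(U,F)$ is initial with respect to the maps $\Delta_{\beta,V}^F\colon g\mto (g|_V)^{<\beta>}$ into $C(V^{<\beta>},F)$. By the universal property of initial topologies, it suffices to check that each composition $\Delta_{\beta,V}^F\circ C^\alpha(U,\lambda)$ is continuous. By the displayed identity, this composition equals $C(V^{<\beta>},\lambda)\circ \Delta_{\beta,V}^E$, where $C(V^{<\beta>},\lambda)\colon C(V^{<\beta>},E)\to C(V^{<\beta>},F)$ is the push-forward $g\mto \lambda\circ g$. The latter map is continuous on the compact-open topology (a standard fact about composition with a continuous linear map, presumably part of Lemma~\ref{sammelsu}), and $\Delta_{\beta,V}^E$ is continuous by definition. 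This gives continuity of $C^\alpha(U,\lambda)$.

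Now suppose $\lambda$ is a topological embedding (in particular injective). Then $C^\alpha(U,\lambda)$ is injective, since $\lambda\circ f_1=\lambda\circ f_2$ forces $f_1=f_2$ pointwise. To see that it is an embedding, the key additional input is that push-forward by an embedding remains an embedding on compact-open function spaces: $C(V^{<\beta>},\lambda)$ is a topological embedding whenever $\lambda$ is (this follows from the definition of the compact-open topology via subbasic sets $\{g:g(K)\sub W\}$ and the fact that open $0$-neighbourhoods in $E$ are precisely traces of open $0$-neighbourhoods in $F$; again it is part of Lemma~\ref{sammelsu}). Granting this, the compact-open $C^\alpha$-topology on $C^\alpha(U,E)$ is initial with respect to $(\Delta^E_{\beta,V})_{\beta,V}$, which, since each $C(V^{<\beta>},\lambda)$ is an embedding, coincides with the initial topology with respect to $(C(V^{<\beta>},\lambda)\circ\Delta^E_{\beta,V})_{\beta,V}=(\Delta^F_{\beta,V}\circ C^\alpha(U,\lambda))_{\beta,V}$. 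By transitivity of initial topologies this is the initial topology on $C^\alpha(U,E)$ with respect to the single map $C^\alpha(U,\lambda)$, which is exactly the statement that $C^\alpha(U,\lambda)$ is a topological embedding.

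The only non-routine point I expect is the clean invocation of the two auxiliary facts about $C(V^{<\beta>},\lambda)$ (continuity in general, embedding when $\lambda$ is an embedding); both belong to the general theory of the compact-open topology on vector-valued function spaces and should already be available in the preliminary collection referred to as Lemma~\ref{sammelsu}. Once these are in hand the argument is purely formal manipulation of initial topologies.
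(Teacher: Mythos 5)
Your proof is correct and follows essentially the same route as the paper: the identity $((\lambda\circ f)|_V)^{<\beta>}=\lambda\circ(f|_V)^{<\beta>}$ from Lemma~\ref{linchain}, continuity of the push-forward $C(V^{<\beta>},\lambda)$, and transitivity of initial topologies, with your embedding argument being exactly the single-map special case of Lemma~\ref{alphinit} to which the paper defers. The only slip is cosmetic: the auxiliary facts about $C(V^{<\beta>},\lambda)$ (continuity, and being an embedding when $\lambda$ is) are Lemmas~\ref{covsuppo} and~\ref{ctsemb} (via Lemma~\ref{inipush}) in the appendix, not Lemma~\ref{sammelsu}.
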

\begin{proof}
Let $\Delta_{\beta,V}\colon C^\alpha(U,E)\to C(V^{<\beta>},E)$
be as in Definition~\ref{coalpha}
and $\Delta_{\beta,V}^F\colon C^\alpha(U,F)\to C(V^{<\beta>},F)$
be the analogous map.
By Lemma~\ref{linchain},
\begin{equation}\label{leftright}
\Delta_{\beta,V}^F
\circ C^\alpha(U,\lambda)
=C(V^{<\beta>},\lambda)\circ \Delta_{\beta,V}.
\end{equation}
Because the right-hand side of (\ref{leftright}) is continuous, also
each of the maps $\Delta_{\beta,V}^F \circ C^\alpha(U,\lambda)$ is continuous
and hence also $C^\alpha(U,\lambda)$
(as the topology on $C^\alpha(U,F)$ is initial with respect to the mappings
$\Delta_{\beta,V}^F$).\\[2.4mm]
If $\lambda$ is an embedding, then the topology on~$E$ is initial with respect to $\lambda$.
Since $C^\alpha(U,\lambda)$
is injective, it will be an embedding if we can show that the
topology on
$C^\alpha(U,E)$ is initial with respect to
$C^\alpha(U,\lambda)$.
This is a special case of the next lemma.
\end{proof}
\begin{la}\label{alphinit}
If the topology on $E$ is initial with respect to a family $(\lambda_j)_{j\in J}$
of linear maps $\lambda_j\colon E\to E_j$ to topological $\K$-vector spaces~$E_j$,
then the compact-open $C^\alpha$-topology on $C^\alpha(U,E)$
is the initial topology with respect to the linear maps $C^\alpha(U,\lambda_j)\colon C^\alpha(U,E)\to C^\alpha(U,E_j)$,
for $j\in J$.
\end{la}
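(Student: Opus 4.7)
The plan is to invoke transitivity of initial topologies twice, with the compact-open topology on $C(V^{<\beta>},\sbull)$ playing the intermediate role.

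First, recall from Definition~\ref{coalpha} that the topology on $C^\alpha(U,E_j)$ is initial with respect to the maps $\Delta^{E_j}_{\beta,V}\colon C^\alpha(U,E_j)\to C(V^{<\beta>},E_j)$, $g\mto (g|_V)^{<\beta>}$, as $\beta$ ranges over $\N_0^n$ with $\beta\leq\alpha$ and $V$ over cartesian open subsets of~$U$. By the transitivity of initial topologies, the initial topology on $C^\alpha(U,E)$ with respect to the family $\bigl(C^\alpha(U,\lambda_j)\bigr)_{j\in J}$ therefore coincides with the initial topology with respect to the compositions
\[
\Delta^{E_j}_{\beta,V}\circ C^\alpha(U,\lambda_j)\colon C^\alpha(U,E)\to C(V^{<\beta>},E_j),
\]
for $j\in J$, $V$, and $\beta$. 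By equation~(\ref{leftright}) in the proof of Lemma~\ref{alphpushf}, we have
\[
\Delta^{E_j}_{\beta,V}\circ C^\alpha(U,\lambda_j)\;=\;C(V^{<\beta>},\lambda_j)\circ \Delta_{\beta,V}\,.
\]

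Next, I would establish the auxiliary fact that the compact-open topology on $C(V^{<\beta>},E)$ is itself initial with respect to the pushforwards $C(V^{<\beta>},\lambda_j)\colon C(V^{<\beta>},E)\to C(V^{<\beta>},E_j)$, $f\mto \lambda_j\circ f$. This is a general property of the compact-open topology: since a subbasis of open sets in~$E$ is given by the preimages $\lambda_j^{-1}(W)$ for $W\sub E_j$ open, the subbasic compact-open sets $\lfloor K,\lambda_j^{-1}(W)\rfloor=\{f\in C(V^{<\beta>},E):f(K)\sub \lambda_j^{-1}(W)\}$ coincide with the preimages under $C(V^{<\beta>},\lambda_j)$ of the subbasic compact-open sets $\lfloor K,W\rfloor$ in $C(V^{<\beta>},E_j)$, so they generate the compact-open topology on $C(V^{<\beta>},E)$.

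Combining these observations and invoking transitivity of initial topologies once more, the initial topology on $C^\alpha(U,E)$ with respect to the family $\bigl(C(V^{<\beta>},\lambda_j)\circ \Delta_{\beta,V}\bigr)_{j,V,\beta}$ coincides with the initial topology with respect to the maps $\Delta_{\beta,V}$, which by Definition~\ref{coalpha} is the compact-open $C^\alpha$-topology on $C^\alpha(U,E)$. The main (and only non-formal) obstacle is the auxiliary fact about the compact-open topology commuting with initial topologies in the target; it is plausibly contained in the unseen Lemma~\ref{sammelsu}, but if not, it requires the short verification sketched above.
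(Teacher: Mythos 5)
Your proof is correct and follows essentially the same route as the paper: two applications of transitivity of initial topologies, combined with the identity $\Delta^{E_j}_{\beta,V}\circ C^\alpha(U,\lambda_j)=C(V^{<\beta>},\lambda_j)\circ\Delta_{\beta,V}$. The auxiliary fact you verify by hand is exactly the paper's Lemma~\ref{inipush} (proved in the appendix via Lemma~\ref{subbaca}, not Lemma~\ref{sammelsu}), and your subbasis argument for it matches that proof.
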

\begin{proof}
Let $\cO$ be the compact-open $C^\alpha$-topology
on $C^\alpha(U,E)$ and $\cT$ be the initial topology described in the lemma.\\[2.4mm]
Step 1. By Lemma~\ref{inipush}, the topology on $C(V^{<\beta>},E)$ is initial with respect
to the maps $C(V^{<\beta>},\lambda) \colon C(V^{<\beta>},E)\to C(V^{<\beta>},E_j)$.
Thus, by transitivity of initial topologies,
$\cO$ is initial with respect to the maps
$C(V^{<\beta>},\lambda_j)\circ \Delta_{\beta,V}$ with $\beta\in \N_0^n$ such that
$\beta\leq\alpha$, open cartesian subsets $V\sub U$
and $\Delta_{\beta,V}$ as in Definition~\ref{coalpha}.\\[2.4mm]
Step~2. Let $\Delta_{\beta,V}^{E_j}\colon C^\alpha(U,E_j)\to C(V^{<\beta>},E_j)$
be the analogous maps.
The topology on $C^\alpha(U,E_j)$ is initial with respect to the maps $\Delta^{E_j}_{\beta,V}$.
Hence,
by transitivity of initial topologies, the topology~$\cT$ is initial
with respect to the maps $\Delta^{E_j}_{\beta,V}\circ C^\alpha(U,\lambda_j)$.
But $\Delta^{E_j}_{\beta,V}\circ C^\alpha(U,\lambda_j)=
C(V^{<\beta>},\lambda_j)\circ \Delta_{\beta,V}$, by Lemma~\ref{linchain}.
Comparing with Step~1, we see that $\cO=\cT$.
\end{proof}
\begin{la}\label{alphvsub}
If $E_0\sub E$ is a vector subspace,
then the compact-open $C^\alpha$-topology on
$C^\alpha(U,E_0)$ coincides with the topology induced by
$C^\alpha(U,E)$.
\end{la}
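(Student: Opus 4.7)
The plan is to deduce the statement directly from Lemma~\ref{alphpushf} applied to the inclusion map $\iota\colon E_0\hookrightarrow E$. Since $E_0$ is to be given the subspace topology inherited from~$E$, the map $\iota$ is $\K$-linear and a topological embedding between topological $\K$-vector spaces essentially by definition (the topology on~$E_0$ is the initial topology with respect to $\iota$).

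Lemma~\ref{alphpushf} then produces the linear map
\[
C^\alpha(U,\iota)\colon C^\alpha(U,E_0)\to C^\alpha(U,E),\quad f\mto \iota\circ f,
\]
and asserts that it is itself a topological embedding. Injectivity of $C^\alpha(U,\iota)$ is clear from the injectivity of~$\iota$, so identifying $f\colon U\to E_0$ with $\iota\circ f\colon U\to E$ realizes $C^\alpha(U,E_0)$ as a linear subspace of $C^\alpha(U,E)$. The statement that $C^\alpha(U,\iota)$ is a topological embedding is then precisely the assertion that the compact-open $C^\alpha$-topology on $C^\alpha(U,E_0)$ agrees with the topology induced by $C^\alpha(U,E)$.

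I do not anticipate any real obstacle: the lemma is essentially a repackaging of Lemma~\ref{alphpushf}. One small point worth noting is that one need not assume $E_0$ to be closed in~$E$ here (in contrast with Lemma~\ref{difinsub}), because the statement only compares topologies on the set $C^\alpha(U,E_0)$ as it stands, and does not claim that every $C^\alpha$-map $U\to E$ taking values in~$E_0$ is $C^\alpha$ as a map to $E_0$.
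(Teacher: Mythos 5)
Your argument is correct and is exactly the paper's proof: the paper also obtains this lemma by applying Lemma~\ref{alphpushf} to the inclusion map $\lambda\colon E_0\to E$, which is a linear topological embedding, so that $C^\alpha(U,\lambda)$ is an embedding and the two topologies on $C^\alpha(U,E_0)$ coincide. Your side remark that no closedness of $E_0$ is needed (unlike in Lemma~\ref{difinsub}) is also accurate.
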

\begin{proof}
Apply Lemma~\ref{alphpushf} to the inclusion map $\lambda\colon E_0\to E$.
\end{proof}
\begin{la}\label{alphprod}
If $E=\prod_{j\in J}E_j$ as a topological $\K$-vector
space, with the canonical projections $\pr_j\colon E\to E_j$,
then the mapping
\[
\Psi:=(C^\alpha(U,\pr_j))_{j\in J}\colon C^\alpha(U,E)\to \prod_{j\in J}C^\alpha(U,E_j)
\]
is an isomorphism of topological vector spaces. 
\end{la}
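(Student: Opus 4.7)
The plan is to derive this lemma as a direct consequence of the earlier results on products (Lemma \ref{inprodu}) and on initial topologies (Lemma \ref{alphinit}), so essentially no new work is required.

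First I would verify that $\Psi$ is a bijection. Linearity is immediate from the linearity of each $C^\alpha(U,\pr_j)$. Injectivity follows because the $\pr_j$ separate points of $E$, so if $\pr_j\circ f=\pr_j\circ g$ for all $j$ then $f=g$. For surjectivity, given a family $(f_j)_{j\in J}\in \prod_{j\in J}C^\alpha(U,E_j)$, define $f\colon U\to E$ by $f(x):=(f_j(x))_{j\in J}$. By Lemma~\ref{inprodu}, $f$ is $C^\alpha$ because each component $\pr_j\circ f=f_j$ is $C^\alpha$, and clearly $\Psi(f)=(f_j)_{j\in J}$.

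Next I would identify the topologies. The product topology on $E=\prod_{j\in J}E_j$ is by definition the initial topology with respect to the family of continuous linear projections $(\pr_j)_{j\in J}$. Hence, by Lemma~\ref{alphinit}, the compact-open $C^\alpha$-topology on $C^\alpha(U,E)$ is the initial topology with respect to the maps $C^\alpha(U,\pr_j)\colon C^\alpha(U,E)\to C^\alpha(U,E_j)$, $j\in J$. On the other hand, the product topology on $\prod_{j\in J}C^\alpha(U,E_j)$ is, by definition, the initial topology with respect to its own coordinate projections $\tilde\pr_j$, and $\tilde\pr_j\circ\Psi=C^\alpha(U,\pr_j)$. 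By transitivity of initial topologies, the topology on $C^\alpha(U,E)$ coincides with the initial topology induced from $\prod_{j\in J}C^\alpha(U,E_j)$ via~$\Psi$.

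Since $\Psi$ is a linear bijection and the topology on its domain is initial with respect to $\Psi$, it is a topological embedding, and being surjective, a homeomorphism. Combined with linearity, $\Psi$ is an isomorphism of topological $\K$-vector spaces, as claimed. No step here presents a real obstacle; the argument is essentially a book-keeping application of Lemmas~\ref{inprodu} and~\ref{alphinit}.
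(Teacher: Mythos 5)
Your proposal is correct and follows essentially the same route as the paper: bijectivity via Lemma~\ref{inprodu} and identification of the topology via Lemma~\ref{alphinit} together with transitivity of initial topologies, yielding that the linear bijection $\Psi$ is a topological embedding and hence an isomorphism of topological vector spaces. You merely spell out a few steps (injectivity, surjectivity, the coordinate projections of the product) in slightly more detail than the paper does.
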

\begin{proof}
Lemma~\ref{inprodu} entails that $\Psi$ is bijective and hence
an isomorphism of vector spaces.
Because the topology on $E$ is initial with respect to the family $(\pr_j)_{j\in J}$,
Lemma~\ref{alphinit} shows that the topology on $C^\alpha(U,E)$
is initial with respect to the maps $C^\alpha(U,\pr_j)$.
Hence $\Psi$ is a topological embedding and hence $\Psi$
is an isomorphism of topological vector spaces.
\end{proof}
\begin{la}\label{plinim}
Let $E=\pl\, E_j$ be as in Lemma~{\rm\ref{plfirst}}, with the bonding maps $\phi_{i,j}\colon E_j\to E_i$
and the limit maps
$\phi_j\colon E\to E_j$. Then the spaces $C^\alpha(U,E_j)$ form a projective system
with bonding maps $C^\alpha(U,\phi_{i,j})$, and
\[
C^\alpha(U,E)=\pl\,C^\alpha(U, E_j),
\]
with the limit maps $C^\alpha(U,\phi_j)$.
\end{la}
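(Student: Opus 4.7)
The plan is to reduce Lemma \ref{plinim} to three facts already established: functoriality of $C^\alpha(U,-)$ (Lemma \ref{alphpushf}), its commutation with direct products (Lemma \ref{alphprod}), and its description of projective limits realized as closed subspaces of products (Lemma \ref{plfirst}). First I verify that $\bigl(C^\alpha(U,E_j),C^\alpha(U,\phi_{i,j})\bigr)$ is a projective system. Since $C^\alpha(U,\lambda)(f)=\lambda\circ f$ for any continuous linear $\lambda$, the identities $\phi_{i,j}\circ\phi_{j,k}=\phi_{i,k}$ and $\phi_{i,i}=\id_{E_i}$ immediately yield $C^\alpha(U,\phi_{i,j})\circ C^\alpha(U,\phi_{j,k})=C^\alpha(U,\phi_{i,k})$ and $C^\alpha(U,\phi_{i,i})=\id$. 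The same computation shows that the cone $\bigl(C^\alpha(U,\phi_j)\bigr)_{j\in J}$ is compatible: $C^\alpha(U,\phi_{i,j})\circ C^\alpha(U,\phi_j)=C^\alpha(U,\phi_i)$ for $i\le j$ in $J$.

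Next I adopt the realization used in the proof of Lemma \ref{plfirst}: up to isomorphism of topological $\K$-vector spaces, $E$ is the closed vector subspace of $\prod_{j\in J}E_j$ consisting of compatible families $(e_j)_{j\in J}$ (those satisfying $\phi_{i,j}(e_j)=e_i$ for $i\le j$), and $\phi_j=\pr_j|_E$. By Lemma \ref{alphprod}, the map $\Psi:=\bigl(C^\alpha(U,\pr_j)\bigr)_{j\in J}$ is an isomorphism $C^\alpha(U,\prod_jE_j)\to\prod_jC^\alpha(U,E_j)$ of topological vector spaces. By Lemma \ref{alphpushf} applied to the inclusion $\iota\colon E\emb\prod_jE_j$, the induced map $C^\alpha(U,\iota)$ is a topological embedding. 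Composing, I obtain a topological embedding
\begin{equation*}
\Phi:=\Psi\circ C^\alpha(U,\iota)\colon C^\alpha(U,E)\to\prod_{j\in J}C^\alpha(U,E_j),\quad f\mto\bigl(C^\alpha(U,\phi_j)(f)\bigr)_{j\in J}.
\end{equation*}

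The last step is to identify $\im\Phi$ with the projective limit $\pl\,C^\alpha(U,E_j)$, realized (again as in Lemma \ref{plfirst}) as the subspace of compatible families inside $\prod_jC^\alpha(U,E_j)$. One inclusion is automatic: if $f\in C^\alpha(U,E)$, then $C^\alpha(U,\phi_{i,j})(C^\alpha(U,\phi_j)(f))=C^\alpha(U,\phi_i)(f)$ for $i\le j$ by the functoriality noted above. Conversely, given a compatible family $(g_j)_{j\in J}$ in $\prod_jC^\alpha(U,E_j)$, the pointwise map $g\colon U\to\prod_{j\in J}E_j$, $u\mto(g_j(u))_{j\in J}$ takes values in $E$ (since $\phi_{i,j}(g_j(u))=g_i(u)$ for each $u$) and satisfies $\pr_j\circ g=g_j$, which is $C^\alpha$ for every $j\in J$. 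By Lemma \ref{plfirst}, $g$ is $C^\alpha$ as a map into $E$, and $\Phi(g)=(g_j)_{j\in J}$. Hence $\Phi$ is a linear homeomorphism onto the claimed projective limit, with limit maps $C^\alpha(U,\phi_j)$.

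I expect no serious obstacle: everything is largely bookkeeping with functoriality, embeddings, and products. The only step that is not purely formal is the appeal to Lemma \ref{plfirst} in the last paragraph, which converts the pointwise compatibility of the $g_j$ into $C^\alpha$-regularity of the assembled map $g$ as a map to the closed subspace $E\sub\prod_jE_j$.
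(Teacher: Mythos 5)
Your proof is correct and follows essentially the same route as the paper: realize $E$ (up to isomorphism) as the closed subspace of compatible families in $\prod_{j\in J}E_j$, use Lemma~\ref{alphpushf} and Lemma~\ref{alphprod} to embed $C^\alpha(U,E)$ into $\prod_{j\in J}C^\alpha(U,E_j)$, and use Lemma~\ref{plfirst} to show the image is exactly the projective limit. The only cosmetic difference is that the paper works with the embedding $\phi=(\phi_j)_{j\in J}\colon E\to\prod_{j\in J}E_j$ directly instead of replacing $E$ by its image, which changes nothing of substance.
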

\begin{proof}
Let $P:=\prod_{j\in J}E_j$ and $\pr_j\colon P\to E_j$
be the projection onto the $j$-th component.
It is known that the map
\[
\phi:=(\phi_j)_{j\in J}\colon E\to P
\]
is a topological embedding with image $\phi(E)=\{(x_j)_{j\in J}\in P\colon
(\forall i,j\in J)$\linebreak
$i\leq j \impl x_i=\phi_{i,j}(x_j)\}$.
Thus $C^\alpha(U,\phi)$ is a topological embedding (see Lemma~\ref{alphpushf})
and hence also
\[
\Psi\circ C^\alpha(U,\phi) \colon C^\alpha(U,E)\to\prod_{j\in J}C^\alpha(U,E_j),
\]
using the isomorphism $\Psi:=(C^\alpha(U,\pr_j))_{j\in J}\colon C^\alpha(U,P)\to
\prod_{j\in J}C^\alpha(U,E_j)$ of topological vector spaces from Lemma~\ref{alphprod}.
The image of $\Psi\circ C^\alpha(U,\phi)$ is contained in the projective limit
\[
L:=\{(f_j)_{j\in J}\in\prod_{j\in J}C^\alpha(U,E_j)\colon
(\forall i,j\in J)\, i\leq j \impl f_i=\phi_{i,j}\circ f_j\}
\]
of the spaces $C^\alpha(U,E_j)$.
If $(f_j)_{j\in J} \in L$ and $x\in U$, then
$\phi_{i,j}(f_j(x))=f_i(x)$ for all $i\leq j$, whence
there exists $f(x)\in E$ with $\phi_i(f(x))=f_i(x)$.
By Lemma~\ref{plfirst}, we have $f\in C^\alpha(U,E)$.
Now $(\Psi\circ C^\alpha(U,\phi))(f)=(f_j)_{j\in J}$.
Hence $\Psi\circ C^\alpha(U,\phi)$ has image $L$ and hence is an isomorphism of topological vector spaces from
$C^\alpha(U,E)$ onto~$L$. The assertions follow.
\end{proof}
The following observation will be useful in
the proof of Theorem~B.
\begin{la}\label{laavoid}
For each $f\in C^\alpha(U,\K)$ and $a\in E$, we have $f a\in C^\alpha(U,E)$,
and the following linear map is continuous:
\[
\theta\colon E\to C^\alpha(U,E),\quad a\mto f a.
\]
\end{la}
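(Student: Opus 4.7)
The plan is to treat the two assertions in succession, reducing each to a previously established fact.

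For the first assertion, fix $a\in E$ and consider the continuous linear map $\mu_a\colon \K\to E$, $t\mapsto ta$. Then $fa=\mu_a\circ f$, so Lemma~\ref{linchain} immediately gives $fa\in C^\alpha(U,E)$, together with the identity
\[
((fa)|_V)^{<\beta>}\;=\;\mu_a\circ (f|_V)^{<\beta>}
\]
for every open cartesian $V\sub U$ and every $\beta\in \N_0^n$ with $\beta\leq \alpha$.

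For the continuity of $\theta$, I would use that the compact-open $C^\alpha$-topology on $C^\alpha(U,E)$ is initial with respect to the maps $\Delta_{\beta,V}$ of Definition~\ref{coalpha}; hence it suffices to show that $\Delta_{\beta,V}\circ \theta\colon E\to C(V^{<\beta>},E)$ is continuous for each such pair $(\beta,V)$. By the identity displayed above, this map is
\[
a\;\mto\; g\cdot a,\qquad\mbox{where $\;g:=(f|_V)^{<\beta>}\in C(V^{<\beta>},\K)$.}
\]
So the task reduces to showing that, for any continuous $g\colon X\to \K$ on a topological space $X$, the linear map $E\to C(X,E)$, $a\mto g\cdot a$ is continuous when the target carries the compact-open topology.

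The key step, and the only one that requires work, is this last reduction. Given a compact $K\sub X$ and a $0$-neighbourhood $W\sub E$, the compact image $g(K)\sub \K$ together with joint continuity of scalar multiplication $\K\times E\to E$ at each point $(s,0)$ with $s\in g(K)$ produces, via a standard finite-subcover argument on $g(K)$, a $0$-neighbourhood $W'\sub E$ with $g(K)\cdot W'\sub W$. Then $a\in W'$ implies $g\cdot a\in \lfloor K,W\rfloor$, giving continuity of $a\mto g\cdot a$ at~$0$, hence everywhere by linearity. Combining this with the initial-topology reduction yields continuity of $\theta$, and linearity of $\theta$ is obvious. The main (mild) obstacle is precisely this compactness-plus-joint-continuity argument; everything else is a direct unwinding of definitions and previous lemmas.
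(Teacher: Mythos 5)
Your proposal is correct and follows the paper's proof in all essentials: the first assertion via $fa=m_a\circ f$ and Lemma~\ref{linchain}, and continuity of $\theta$ via the initial-topology reduction to the maps $\Delta_{\beta,V}\circ\theta$, which you correctly identify with $a\mto (f|_V)^{<\beta>}\cdot a$. The only divergence is the final step: the paper disposes of it by writing this map as the composition of $a\mto c_a$ (Lemma~\ref{tocon}) with the continuous module multiplication $C(V^{<\beta>},\K)\times C(V^{<\beta>},E)\to C(V^{<\beta>},E)$ from Lemma~\ref{sammelsu}\,(f), whereas you re-prove this special case by hand, using compactness of $g(K)\sub\K$ and joint continuity of scalar multiplication to find $W'$ with $g(K)\cdot W'\sub W$, then continuity at $0$ plus linearity. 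Your direct argument is sound (it is essentially the proof hidden inside the appendix lemmas) and makes the step self-contained, at the cost of duplicating work the paper has already packaged in Lemmas~\ref{tocon} and \ref{sammelsu}\,(f).
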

\begin{proof}
The map $m_a\colon \K\to E$, $x\mto xa$ is continuous linear,
whence $f a =$\linebreak
$m_a\circ f\in C^\alpha(U,E)$
(see Lemma~\ref{linchain}).
Let $\Delta_{\beta,V}\colon C^\alpha(U,E)\to C(V^{<\beta>},E)$
be as in Definition~\ref{coalpha} and define
$\Delta_{\beta,V}^\K \colon C^\alpha(U,\K)\to C(V^{<\beta>},\K)$
analogously. Then
\[
\Delta_{\beta,V}\circ \theta\colon E\to C(V^{<\beta>},E)
\]
is continuous. In fact, we can write
$\Delta_{\beta,V}(\theta(a))=
\Delta_{\beta,V}(m_a\circ f)
=m_a\circ \Delta_{\beta,V}^\K(f)
=\Delta_{\beta,V}^\K(f) \cdot c_a$,
where the dot denotes the
continuous module multiplication $C(V^{<\beta>},\K)\times C(V^{<\beta>},E)\to C(V^{<\beta>},E)$
from Lemma~\ref{sammelsu}\,(f)
and the continuous linear map
$E\to C(V^{<\beta>},E)$, $a\mto c_a$ is as in Lemma~\ref{tocon}.
Hence $\theta$ is continuous.
\end{proof}
\begin{rem}
In our terminology,
\cite[Lemma~3.9]{NDI}
asserts that, for each complete ultrametric field $\K$,
locally cartesian subset $U\sub \K^n$
and compact set $K\sub U$ of the form
$K=K_1\times\cdots \times K_n$,
there exists a cartesian open subset $V\sub U$
with $K\sub V$. The next example shows that this
assertion is incorrect.\footnote{In the main cases when $U$ is open \cite[5.12]{Kel}
or cartesian, the assertion is correct.}
Therefore the definition of the topology
of  ``topology of compact cartesian convergence''
in \cite[pp.\,24--25]{Nag} does not work as stated
(instead, one should only consider
compact sets which are contained in some
cartesian open subset $V$
of the domain of definition).
 \end{rem}
\begin{example}
Let $U=(V_1\times V_2)\cup (W_1\times W_2)$, $v$ and $w$
be as in Example~\ref{exa1},
and $K:=\{0\}\times \{v,w\}$.
Then $K$ does not have an open cartesian neighbourhood in~$U$.\\[2.4mm]
[Suppose that $P_1\times P_2\sub U$ is open
and $K\sub P_1\times P_2$.
Then $0\in P_1$ and $v,w\in P_2$.
Since $(p^m,v)\to (0,v)$ as $m\to\infty$
and $(p^m,v)\in U$, there is $m\in \N$ such that $(p^m,v)\in P_1\times P_2$.
Thus $p^m\in P_1$
and hence $(p^m,w)
\in P_1\times P_2\sub U$.
Now $(\Q_p\times W_2)\cap U=W_1\times W_2$.
As this set does not contain $(p^m,w)$, we
have reached a contradiction.]\\[2.4mm]
Alternatively, note that if a locally cartesian set $U\sub \K^n$
satisfies the conclusion of \cite[Lemma~3.9]{NDI},
then $U^{>\beta<}$ is dense in
$U^{<\beta>}$ for each $\beta\in \N_0^n$.
We have seen in Example~\ref{exa1} that~$U$ does not have this property.\footnote{Given $x\in U^{<\beta>}$, define $K_i:=\{x^{(i)}_0,\ldots, x^{(i)}_{\beta_i}\}$
for $i\in \{1,\ldots, n\}$.
Then $K:=K_1\times\cdots\times K_n\sub U$, by definition of $U^{<\beta>}$.
If an open cartesian set $V\sub U$ with $K\sub V$
exists, then $x\in V^{<\beta>}$,
which is in the closure of
$V^{>\beta<}$. Hence $x$ is in the closure of
$U^{>\beta<}$.}
\end{example}
%
%
% MAYBE IN LATER PREPRINT VERSION: Composition with $f$ on left ok.
% Omit continuity/diff#bility of superpos operator on C^\alpha-space
% Composition with $g_1,\ldots, g_n$ on the right and continuity of corresponding
% pullback: OMIT for the moment.
%
%
%
%
%
\section{Proof of Theorem A}
If $x\in U$, then $f_x:=f(x,\sbull)=f^\vee(x)\in C^\beta(V,E)$.
To see this, let $Q\sub V$ be a cartesian open subset
and $\eta\in \N_0^m$ such that $\eta\leq\beta$.
There is an open cartesian subset $P\sub U$ such that $x\in P$.
It is clear that $(f|_{P\times Q})^{>(0,\eta)<}(x,y)=(f_x|_Q)^{>\eta<}(y)$
for all $y\in Q^{>\eta<}$.
Hence $(f|_{P\times Q})^{<(0,\eta)>}(x,\sbull)\colon Q^{<\eta>}\to E$
is a continuous extension
for $(f_x|_Q)^{>\eta<}$ and thus $f_x$ is $C^\beta$.\\[2.4mm]
We claim: \emph{If $f\in C^{(\alpha,\beta)}(U\times V,E)$,
$P\sub U$ and $Q\sub V$ are cartesian open subsets,
$\gamma\in \N_0^n$ with $\gamma\leq\alpha$ and
$\eta\in \N_0^m$ with $\eta\leq\beta$,
then}
\begin{equation}\label{keyobs}
(\Delta_{\eta,Q}\circ f^\vee|_P)^{>\gamma<}(x)(y)=f^{>(\gamma,\eta)<}(x,y)
\end{equation}
\emph{for all $(x,y)\in P^{>\gamma<}\times Q^{>\eta<}=(Q\times P)^{>(\gamma,\eta)<}$,
where $\Delta_{\eta,Q}$ denotes the map $C^\beta(V,E)\to C(Q^{<\eta>},E)$, $g\mto g^{<\eta>}$.}\\[2.4mm]
If this is true, holding $x$ fixed we deduce that
\[
(\Delta_{\eta,Q}\circ f^\vee)^{>\gamma<}(x)(y)=(f|_{P\times Q})^{<(\gamma,\eta)>}(x,y)
\]
for all $(x,y)\in P^{>\gamma<}\times Q^{<\eta>}$ (by continuity).
Thus
\[
(\Delta_{\eta,Q}\circ f^\vee)^{>\gamma<}(x)=((f|_{P\times Q})^{<(\gamma,\eta)>})^\vee(x)
\]
for all $x\in P^{>\gamma<}$.
As the maps $((f|_{P\times Q})^{<(\gamma,\eta)>})^\vee\colon
P^{<\gamma>}\to C(Q^{<\eta>},E)$ are continuous
(see Proposition~\ref{ctsexp}),
we deduce that each of the maps $\Delta_{\eta,Q}\circ f^\vee|_P$ is $C^\alpha$,
with
\begin{equation}\label{nochein}
(\Delta_{\eta,Q}\circ f^\vee|_P)^{<\gamma>}=((f|_{P\times Q})^{<(\gamma,\eta>})^\vee\,.
\end{equation}
Using Lemmas~\ref{cloima}, \ref{difinsub} and \ref{inprodu}, we see that $f^\vee|_P$ is $C^\alpha$.
Hence $f^\vee$ is $C^\alpha$, by Definition~\ref{defSDS}\,(b).
Since $\Delta_{\eta,Q}$ is continuous linear, Lemma~\ref{linchain}
and (\ref{nochein}) show  that
\begin{equation}\label{prelstag}
\Delta_{\eta,Q}\circ ((f^\vee|_P)^{<\gamma>})=
(\Delta_{\eta,Q}\circ f^\vee|_P)^{<\gamma>}=
((f|_{P\times Q})^{<(\gamma,\eta)>})^\vee\,.
\end{equation}
By Proposition~\ref{ctsexp}, the map
\[
\Psi_{\gamma,\eta,P,Q}\colon
C(P^{<\gamma>}\times Q^{<\eta>},E)\to
C(P^{<\gamma>}, C(Q^{<\eta>},E)),\quad h\mto h^\vee
\]
is a topological embedding.
Let $\Phi$ be as in Theorem~A and consider the mappings
$\Delta_{\gamma,P}\colon C^\alpha(U,C^\beta(V,E))\to C(P^{<\gamma>},C^\beta(V,E))$, $g\mto g^{<\gamma>}$
and
$\Delta_{(\gamma,\eta),P\times Q}
\colon C^{(\alpha,\beta)}(U\times V,E)\to C(P^{<\gamma>}\times Q^{<\eta>},E)$,
$h\mto h^{<(\gamma,\eta)>}$
(where $(P\times Q)^{<(\gamma,\eta)>}=P^{<\gamma>}\times Q^{<\eta>}$).
We can then re-read (\ref{prelstag}) as
\begin{equation}\label{givsall}
C(P^{<\gamma>},\Delta_{\eta,Q})\circ \Delta_{\gamma,P}\circ\Phi = 
\Psi_{\gamma,\eta,P,Q}
\circ \Delta_{(\gamma,\eta),P\times Q}.
\end{equation}
Let $\cO$ be the compact-open $C^{(\alpha,\beta)}$-topology on $C^{(\alpha,\beta)}(U\times V,E)$
and $\cT$ be the initial topology with respect to $\Phi$.
Since $\cO$ is initial with respect to the maps
$\Delta_{(\gamma,\eta),P\times Q}$ and the topology on
$C(P^{<\gamma>}\times Q^{<\eta>},E)$
is initial with respect to $\Psi_{\gamma,\eta,P,Q}$,
the transitivity of initial topologies shows that $\cO$
is initial with respect to the maps $\Psi_{\gamma,\eta,P,Q}\circ
\Delta_{(\gamma,\eta),P\times Q}$.\\[2.4mm]
The topology on $C^\beta(V,E)$ being initial with respect
to the maps $\Delta_{\eta,Q}$, Lemma~\ref{inipush} shows that the topology
on $C(P^{<\gamma>},C^\beta(V,E))$
is initial with respect to the maps $C(P^{<\gamma>},\Delta_{\eta,Q})$.
Also, the topology on $C^\alpha(U,C^\beta(V,E))$ is initial with respect
to the maps $\Delta_{\gamma,P}$.
Hence, by transitivity of initial topologies, $\cT$
is the initial topology with respect to the maps
\[
C(P^{<\gamma>},\Delta_{\eta,Q})\circ \Delta_{\gamma,P}\circ\Phi.
\]
By (\ref{givsall}), these maps coincide with
$\Psi_{\gamma,\eta,P,Q}
\circ \Delta_{(\gamma,\eta),P\times Q}$. Comparing with the preceding description of~$\cO$
as an initial topology, we see that $\cO=\cT$. Since $\Phi$ is also injective,
we deduce that~$\Phi$ is a topological embedding.\\[2.4mm]
If $\K$ is metrizable, we have to show that~$\Phi$ is surjective.
To this end, let $g\in C^\alpha(U,C^\beta(V,E))$.
Then $g\colon U\to C^\beta(V,E)$ is continuous (see Remark~\ref{alphacts}),
entailing that $g$ is continuous also as a map $U\to C(V,E)$
(see Remark~\ref{compaco}\,(d)).
Since $U\times V$ is metrizable and hence a k-space,
we deduce with Proposition~\ref{ctsexp} that
\[
f:=g^\wedge \colon U\times V\to E,\quad f(x,y)=g(x)(y)
\]
is continuous.
Now
\[
g_{\gamma,\eta,P,Q}
:=\Delta_{\eta,Q}\circ \Delta_{\gamma,P}(g)\in C(P^{<\gamma>},C(Q^{<\eta>},E)).
\]
Because $P^{<\gamma>}\times Q^{<\eta>}$ is metrizable
and hence a k-space, Proposition~\ref{ctsexp}
shows that the associated map
\[
g_{\gamma,\eta,P,Q}^\wedge
\colon P^{<\gamma>}\times Q^{<\eta>}\to E,\quad
(x,y)\mto g_{\gamma,\eta,P,Q}(x)(y)
\]
is continuous.
We claim that
\begin{equation}\label{gvsrest}
(f|_{P\times Q})^{>(\gamma,\eta)<}=g_{\gamma,\eta,P,Q}^\wedge|_{P^{>\gamma<}\times Q^{>\eta<}}.
\end{equation}
If this is true, then
$g_{\gamma,\eta,P,Q}^\wedge$ provides a continuous extension
for $(f|_{P\times Q})^{>(\gamma,\eta)<}$
to $P^{<\gamma>}\times Q^{<\eta>}=(P\times Q)^{<(\gamma,\eta)>}$.
Hence $f$ is $C^{(\alpha,\beta)}$.
Moreover, $\Phi(f)=f^\vee=(g^\wedge)^\vee=g$.
Thus, it only remains to prove the claims.\\[2.4mm]
\emph{Proof of the first claim.}
Since $\Delta_{\eta,Q}$ is linear, we have
$(\Delta_{\eta,Q}\circ f^\vee|_P)^{>\gamma<}
=
\Delta_{\eta,Q}\circ (f^\vee|_P)^{>\gamma<}$.
Let $y\in Q$.
Because the point evaluation $\ve_y\colon C^\beta(V,E)\to E$, $g\mto g(y)$
is linear, we have
\[
\ve_y\circ (f^\vee|_P)^{>\gamma<}=(\ve_y\circ f^\vee)^{>\gamma<}=
(x\mto f(x,y))^{>\gamma<}.
\]
Hence
$(f^\vee|_P)^{>\gamma<}(x)(y)=(\ve_y\circ (f^\vee|_P)^{>\gamma<})(x)$
is given by (\ref{badbd}) for $x\in P^{>\gamma<}$, replacing $\beta$ with $\gamma$,
$V$ with $P$ and $f$ with $f(\sbull,y)$ there.
Holding $x\in P^{>\gamma<}$ fixed
in the resulting formula,
we use (\ref{badbd})
to calculate
$\Delta_{\eta,Q}((f^\vee|_P)^{>\gamma<}(x))(y)$
for $y\in Q^{>\eta<}$.
We obtain that
$\Delta_{\eta,Q}((f^\vee|_P)^{>\gamma<}(x))(y)$ is given by
{\small\[
\sum_{j_1=0}^{\gamma_1}\cdots
\sum_{j_n=0}^{\gamma_n}
\sum_{i_1=0}^{\eta_1}\cdots
\sum_{i_m=0}^{\eta_m}
\Big(\prod_{k_1\not=j_1} \frac{1}{x^{(1)}_{j_1}-x^{(1)}_{k_1}}
\cdot\ldots\cdot
\prod_{k_n\not=j_n}\frac{1}{x^{(n)}_{j_n}-x^{(n)}_{k_n}}\cdot\hspace*{12mm}
\]
\begin{equation}\label{soonag}
\hspace*{5mm}\prod_{\ell_1\not=i_1} \frac{1}{y^{(1)}_{i_1}-y^{(1)}_{Ê\ell_1}}
\cdot\ldots\cdot
\prod_{\ell_m\not=i_m}\frac{1}{y^{(m)}_{i_m}-y^{(m)}_{\ell_m}}\Big)
f(x^{(1)}_{j_1},\ldots, x^{(n)}_{j_n},
y^{(1)}_{i_1},\ldots, y^{(m)}_{i_m}).
\end{equation}}
\hspace*{-1.4mm}Using (\ref{badbd}) to calculate $(f|_{P\times Q})^{>(\gamma,\eta)<}(x,y)$,
we get the same formula.\\[2.4mm]
\emph{Proof of the second claim.}
For $y\in Q$ and $x\in P^{>\gamma<}$,
we can calculate $((g|_P)^{>\gamma<}))(x)(y)=(\ve_y\circ ((g|_P)^{>\gamma<}))(x)
=(\ve_y\circ g|_P)^{>\gamma<}(x)=(f(\sbull,y)|_P)^{>\gamma<}(x)$
using (\ref{badbd}). Holding $x$ fixed,
we can apply (\ref{badbd}) again and infer that
$g_{\gamma,\eta,P,Q}^\wedge(x,y)$ $=
(\Delta_{\eta,Q}\circ \Delta_{\gamma,P}(g))(x,y)$ is given for $y\in Q^{>\eta<}$ by
(\ref{soonag}). Using (\ref{badbd}),
the same formula is obtained for
$(f|_{P\times Q})^{>(\gamma,\eta)<}(x,y)$.
This completes the proof of the claim and hence completes the proof
of Theorem~A.\,\Punkt
\section{Proof of Theorem B}
The proof is by induction on~$n\in \N$.\\[2.4mm]
\emph{The case $n=1$.} Thus $\alpha=(r)$ with $r:=\alpha_1$.
For $E=\K$ and $r<\infty$, the assertion was established
in \cite[Proposition~II.40]{Nag},
and one can proceed analogously
if $r<\infty$ and~$E$ is an ultrametric
Banach space.\\[2.4mm]
Now let $E$ be a locally convex space,
$f\in C^r(U,E)$ and $W\sub C^r(U,E)$ be an open $0$-neighbourhood.\\[2.4mm]
The case $r<\infty$.
Let $\wt{E}$ be a completion of~$E$
with $E\sub \wt{E}$. Since $C^r(U,E)$
carries the topology induced by $C^r(U,\wt{E})$
(see Lemma~\ref{alphvsub}), we have $W=\wt{W}\cap C^r(U,E)$
for some open $0$-neighbourhood $\wt{W}\sub
C^r(U,\wt{E})$.
Now $\wt{E}=\pl \,E_q$ for certain ultrametric Banach spaces
$(E_q,\|.\|_q)$, with continuous linear limit maps $\phi_q\colon \wt{E}\to E_q$
with dense image.
Then $C^r(U,\wt{E})=\pl\,C^r(U,E_q)$
with respect to the maps $C^r(U,\phi_q)$ (see Lemma~\ref{plinim}).
After shrinking~$\wt{W}$ (and $W$), we may therefore assume that
$\wt{W}=C^r(U,\phi_q)^{-1}(V)$
for some $q$ and open $0$-neighbourhood $V\sub C^r(U,E_q)$.
By the case of ultrametric Banach spaces,
there exists $g\in \LocPol_{\leq r}(U,E_q)$
such that $\pi_q\circ f - g\in V$.
Thus, there are $\ell\in \N$, clopen subsets $U_1,\ldots, U_\ell\sub U$
and $c_{i,j}\in E_q$ for
$i\in \{0,1,\ldots, r\}$ and
$j\in \{1,\ldots,\ell\}$
such that
\[
g(x)=\sum_{i=0}^r\sum_{j=1}^\ell x^i \one_{U_j}(x)c_{i,j},
\]
where $\one_{U_j}\colon U\to \K$
takes the value $1$ on $U_j$,
and vanishes elsewhere.
Let us write
$X^i \colon U\to \K$ for the function
$x\mto x^i$.
By Lemma~\ref{laavoid},
there are open neighbourhoods
$Q_{i,j}\sub E_q$ of $c_{i,j}$
such that
\[
\phi_q\circ f - \sum_{i=0}^r\sum_{j=1}^\ell X^i \one_{U_j}b_{i,j}\in V,
\]
for all $b_{i,j}\in Q_{i,j}$.
Since $\phi_q(E)$ is dense in~$E_q$,
there exist $a_{i,j} \in E$
such that $\phi_q(a_{i,j})\in Q_{i,j}$.
Hence $h:=\sum_{i=0}^r\sum_{j=1}^\ell X^i  \one_{U_j}a_{i,j}\in \LocPol_{\leq r}(U,E)$
and
\[
\phi_q\circ (f-h)=\phi_q\circ f - \sum_{i=0}^r\sum_{j=1}^\ell X^i  \one_{U_j}\phi_q(a_{i,j})\in V.
\]
Thus
$f-h\in C^r(U,\phi_q)^{-1}(V)=\wt{W}$ and thus
$f-h\in \wt{W}\cap E=W$.\\[2.4mm]
If $r=\infty$, recall from Lemma~\ref{pllower} that $C^\infty(U,E)=\pl_{k\in \N_0}\,C^k(U,E)$.
We may therefore assume that $W=V\cap C^\infty(U,E)$
for some $k\in \N_0$ and open $0$-neighbourhood $V\sub C^k(U,E)$.
By the preceding, there is
$h\in \LocPol_{\leq k}(U,E)$ such that $f-h\in V$.
Then $f-h\in V\cap C^\infty(U,E)=W$.\\[2.4mm]
\emph{Induction step.}
If $n\geq 2$, we write $U=U_1\times U'$ with $U\sub \K$ and
$U'\sub \K^{n-1}$.
We also write $\alpha=(r,\alpha')$ with $r\in \N_0\cup\{\infty\}$
and $\alpha'\in (\N_0\cup\{\infty\})^{n-1}$.
Let $f\in C^\alpha(U,E)$ and $W\sub C^\alpha(U,E)$
be an open $0$-neighbourhood.
By Theorem~A, the map
\[
\Phi\colon C^\alpha(U,E)\to C^r(U_1,C^{\alpha'}(U',E)),\quad f\mto f^\vee
\]
is an isomorphism of topological vector spaces.
Hence $W=\Phi^{-1}(V)$ for some open $0$-neighbourhood
$V\sub C^r(U_1,C^{\alpha'}(U',E))$.
By induction, there exists $g\in \LocPol_{\leq r}(U_1,C^{\alpha'}(U',E))$
with $\Phi(f)-g\in V$.
There are $\ell\in \N$, clopen subsets $R_1,\ldots, R_\ell\sub U_1$
and $c_{i,j}\in C^{\alpha'}(U',E)$ for
$i\in \{0,1,\ldots, r\}$ and
$j\in \{1,\ldots,\ell\}$
such that
\[
g(x)=\sum_{i=0}^r\sum_{j=1}^\ell x^i \one_{R_j}(x)c_{i,j},
\]
using the characteristic function $\one_{R_j}\colon U_1\to \K$.
Write $X^i \colon U_1\to\K$ for the function $x\mto x^i$.
By Lemma~\ref{laavoid},
there are open neighhbourhoods $Q_{i,j}\sub C^{\alpha'}(U',E)$ of $c_{i,j}$
such that
\[
\Phi(f) - \sum_{i=0}^r\sum_{j=1}^\ell X^i  \one_{R_j}b_{i,j}\in V,
\]
for all $b_{i,j}\in Q_{i,j}$.
By induction, we find $b_{i,j}\in Q_{i,j}\cap \LocPol_{\leq \alpha'}(U',E)$.
Then
\[
g\colon U\to E,\quad g(x,x'):=\sum_{i=0}^r\sum_{j=1}^\ell x^i\one_{R_j}(x)b_{i,j}(x')\quad\mbox{for $x\in U_1$, $x'\in U'$}
\]
is an element of $\LocPol_{\leq\alpha}(U,E)$.
Since $\Phi(g)=\sum_{i=0}^r\sum_{j=1}^\ell X^i \one_{R_j}b_{i,j}$,
we have $\Phi(f)-\Phi(g)\in V$ and thus $f-g\in W$.
This completes the proof.\,\Punkt
\section{Proof of Theorem C}
For $x\in \Z_p$ and $\nu\in\N_0$,
as usual we define
\[
\left(
\begin{array}{c}
x\\
\nu
\end{array}
\right):= \frac{x(x-1)\cdots (x-\nu+1)}{\nu!}
\]
(see, e.g., \cite[Definition 47.1]{Sch}).
This is a polynomial function in~$x$ of degree~$\nu$.
Since
$\left(
\begin{array}{c}
x\\
\nu
\end{array}
\right)\in \N_0$ if $x\in\N$, the density of $\N$ in $\Z_p$
and continuity of
$\left(
\begin{array}{c}
\sbull\\
\nu
\end{array}
\right)$
entail that
\begin{equation}\label{inzp}
\left(
\begin{array}{c}
x\\
\nu
\end{array}
\right)\in \Z_p\quad\mbox{for all $\,x\in \Z_p$.}
\end{equation}
For $n\in \N$, $\nu\in \N_0^n$ and $x=(x_1,\ldots, x_n)\in \Z_p^n$,
we consider the product
\[
\left(
\begin{array}{c}
x\\
\nu
\end{array}
\right)
:=
\left(
\begin{array}{c}
x_1\\
\nu_1
\end{array}
\right)
\cdot \ldots\cdot
\left(
\begin{array}{c}
x_n\\
\nu_n
\end{array}
\right)\in \Z_p
.
\]
Let $\cF_n$ be the set of all finite subsets
of $\N_0^n$.
We make $\cF_n$ a directed set by means of inclusion of sets,
$\leq:=\sub$.
\begin{la}\label{detmind}
Let $E$ be a topological vector space over $\Q_p$,
$n\in \N$ and $a_\nu\in E$ for $\nu\in \N_0^n$.
Assume that, for each $x\in \Z_p^n$,
the limit
\begin{equation}\label{26b}
f(x):=
\sum_{\nu\in \N_0^n}\left(
\begin{array}{c}
x\\
\nu
\end{array}
\right)a_\nu:=\lim_{\Phi\in \cF_n}
\sum_{\nu\in \Phi}\left(
\begin{array}{c}
x\\
\nu
\end{array}
\right)a_\nu
\end{equation}
of the net of finite partial sums exists.
Then the family $(a_\nu)_{\nu\in \N_0^n}$ is uniquely determined
by $f$; the coefficient $a_\nu$ can be recovered via
\begin{equation}\label{iscoeff}
a_\nu=\sum_{\mu\leq\nu}
(-1)^{|\nu|-|\mu|}
\left(
\begin{array}{c}
\nu\\
\mu
\end{array}
\right) f(\mu)
\end{equation}
$($where the summation is over all $\mu\in \N_0^n$ such that $\mu\leq\nu)$.
In particular, $a_\nu$ is contained in the additive subgroup of $(E,+)$ generated by
$f(\N_0^n)$.
\end{la}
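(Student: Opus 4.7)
The plan is to show that $f(\mu)$ for integer arguments $\mu\in\N_0^n$ reduces the Mahler expansion to a finite sum, so that (\ref{iscoeff}) becomes the classical (multivariate) binomial inversion formula.

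First I would observe: for $\mu\in\N_0^n$ one has $\binom{\mu_i}{\nu_i}=0$ whenever $\nu_i>\mu_i$, hence $\binom{\mu}{\nu}=\prod_i\binom{\mu_i}{\nu_i}=0$ unless $\nu\leq\mu$. Because only finitely many $\nu$ satisfy $\nu\leq\mu$, the cofinal subnet of $(\cF_n,\subseteq)$ consisting of those $\Phi\in\cF_n$ with $\{\nu\in\N_0^n\colon \nu\leq \mu\}\subseteq\Phi$ yields identical partial sums, and the convergence hypothesis forces
\[
f(\mu)\;=\;\sum_{\nu\leq\mu}\binom{\mu}{\nu}a_\nu\qquad\text{for every }\mu\in\N_0^n.
\]
This finite relation is the only place the hypothesis on $f$ is used.

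Next I would substitute this into the right-hand side of (\ref{iscoeff}) and interchange the finite sums, obtaining
\[
\sum_{\mu\leq\nu}(-1)^{|\nu|-|\mu|}\binom{\nu}{\mu}f(\mu)\;=\;\sum_{\eta\leq\nu}a_\eta\sum_{\eta\leq\mu\leq\nu}(-1)^{|\nu|-|\mu|}\binom{\nu}{\mu}\binom{\mu}{\eta}.
\]
The inner sum is evaluated componentwise. Using the identity $\binom{\nu}{\mu}\binom{\mu}{\eta}=\binom{\nu}{\eta}\binom{\nu-\eta}{\mu-\eta}$ and the substitution $\lambda=\mu-\eta$,
\[
\sum_{\eta\leq\mu\leq\nu}(-1)^{|\nu|-|\mu|}\binom{\nu}{\mu}\binom{\mu}{\eta}\;=\;\binom{\nu}{\eta}\prod_{i=1}^{n}\sum_{\lambda_i=0}^{\nu_i-\eta_i}(-1)^{\nu_i-\eta_i-\lambda_i}\binom{\nu_i-\eta_i}{\lambda_i}.
\]
Each factor equals $(1-1)^{\nu_i-\eta_i}$, which vanishes unless $\nu_i=\eta_i$, in which case it equals $1$. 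Hence the whole expression equals $\delta_{\nu,\eta}$, so the substitution yields $a_\nu$, proving (\ref{iscoeff}).

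Finally, since each $\binom{\nu}{\mu}\in\Z$, formula (\ref{iscoeff}) exhibits $a_\nu$ as an integer linear combination of values of $f$ on $\N_0^n$, so $a_\nu$ lies in the subgroup of $(E,+)$ generated by $f(\N_0^n)$; uniqueness of the family $(a_\nu)$ follows at once. No step is genuinely delicate; the only point requiring care is verifying that the convergence hypothesis actually lets one read off $f(\mu)$ as the finite sum, which is handled by exhibiting a cofinal subnet as above.
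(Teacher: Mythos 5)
Your proof is correct. Where the paper follows Schikhof's route — introducing the partial difference operators $(\delta_j f)(x)=f(x+e_j)-f(x)$, performing the one-variable computation of \cite[pp.\,152--153]{Sch} in each variable in turn to get $a_\nu=(\delta_1^{\nu_1}\cdots\delta_n^{\nu_n}f)(0,\ldots,0)$, and then expanding this iterated difference at $0$ into the alternating sum (\ref{iscoeff}) — you bypass the operator calculus entirely: you evaluate the Mahler series at integer points $\mu\in\N_0^n$, where $\binom{\mu}{\nu}=0$ unless $\nu\leq\mu$ makes the net eventually constant (Hausdorffness of $E$, assumed throughout the paper, then identifies $f(\mu)$ with the finite sum $\sum_{\nu\leq\mu}\binom{\mu}{\nu}a_\nu$), and then you invert this finite triangular system by the classical binomial inversion, using $\binom{\nu}{\mu}\binom{\mu}{\eta}=\binom{\nu}{\eta}\binom{\nu-\eta}{\mu-\eta}$ and $\sum_{\lambda=0}^{m}(-1)^{m-\lambda}\binom{m}{\lambda}=\delta_{m,0}$ componentwise. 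The two arguments establish the same identity, but yours is more self-contained and slightly more elementary: it avoids having to justify applying $\delta_j$ termwise to the infinite series (coefficient shifting) and needs no reference to the one-dimensional case, at the cost of carrying out the inversion computation explicitly; the paper's version is shorter on the page because it delegates the computation to Schikhof. Your concluding observations (uniqueness, and $a_\nu$ lying in the additive subgroup generated by $f(\N_0^n)$ since the coefficients $(-1)^{|\nu|-|\mu|}\binom{\nu}{\mu}$ are integers) are exactly as in the paper.
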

\begin{proof}
We closely follow the treatment of the one-dimensional scalar-valued case
in the proof of \cite[Theorem~52.1]{Sch}.
For $j\in \{1,\ldots, n\}$, define the operator
$\delta_j\colon E^{\Z_p^n}\to E^{\Z_p^n}$
via
\[
(\delta_j f)(x):=f(x+e_j)-f(x),
\]
where $e_j=(0,\ldots,0,1,0,\ldots,0)$
with only a non-zero $j$-th entry.
Performing the calculations on \cite[pp.\,152--153]{Sch}
in each of the variables $x_n, x_{n-1},\ldots, x_1$ in turn,
we obtain
\begin{eqnarray*}
a_\nu & = & (\delta_1^{\nu_1} \cdots \delta_n^{\nu_n} f)(0,\ldots,0)\\
&= & \sum_{\mu_1=0}^{\nu_1}\cdots
\sum_{\mu_n=0}^{\nu_n}(-1)^{\nu_1-\mu_1}\cdots (-1)^{\nu_n-\mu_n}
f(\mu_1,\ldots,\mu_n),
\end{eqnarray*}
which can be written more compactly as (\ref{iscoeff}).
\end{proof}
\begin{defn}\label{dfnmahcoe}
If $E$ is a vector space over $\Q_p$
and $f\colon \Z_p^n\to E$ a function,
we take (\ref{iscoeff}) as the definition of $a_\nu\in E$
and call $a_\nu$ the \emph{$\nu$-th Mahler coefficient of~$f$}.
The series (net of finite partial sums)
\[
\sum_{\nu\in \N_0^n}
\left(
\begin{array}{c}
\sbull \\
\nu
\end{array}
\right)a_\nu
\]
of functions $\Z_p^n\to E$
is called the \emph{Mahler series of~$f$}.
\end{defn}
\begin{rem}\label{NEWrm1}
Let $f$ be as in Defintion~\ref{dfnmahcoe},
$a_\nu\in E$ be its $\nu$-th Mahler coefficient
and $\lambda\colon E\to F$ be a continuous linear map
to a topological $\Q_p$-vector space~$F$.
Applying $\lambda$ to (\ref{iscoeff}),
we see that $\lambda(a_\nu)$ is the $\nu$-th Mahler coefficient of the continuous
function $\lambda\circ f\colon \Z_p^n\to F$.
\end{rem}
\begin{defn}\label{defnc0}
Let $X$ be a set, $E$ be a locally convex space
over an ultrametric field $\K$
and $w\colon X\to \,]0,\infty[$
be a function.
%and $\cW$ a set of functions $w\colon X\to [0,\infty[$
%such that, for each $x\in X$,
%there exists $w\in\cW$ such that $w(x)>0$.
We define $c_0(X,w,E)$
as the space of all $f\colon X\to E$
such that, for each ultrametric continuous
seminorm $q$ on~$E$ and $\ve>0$,
there exists a finite subset $\Phi\sub X$ such that
\[
(\forall x\in X\setminus \Phi)\quad w(x)q(f(x))<\ve.
\]
We endow $c_0(X,w,E)$
with the locally convex topology defined by the ultrametric seminorms
given by
\[
\|f\|_{w,q}:=\sup \{w(x)q(f(x))\colon x\in X\}\in [0,\infty[.
\]
If $w$ is the constant function~$1$,
we abbreviate $c_0(X,E):=c_0(X,1,E)$
and $\|.\|_{q,\infty}:=\|.\|_{1,q}$.
\end{defn}
\begin{la}\label{suppoc0}
Let $X$, $w$ and $E$ be as in Definition~{\rm\ref{defnc0}}
and $\lambda\colon E\to F$
be a continuous linear map to a locally convex space~$F$
over~$\K$.
Then $\lambda\circ f\in c_0(X,w,F)$
for each $f\in c_0(X,w,E)$, and the map
\[
c_0(X,w,\lambda)\colon c_0(X,w,E)\to
c_0(X,w,F),\quad f\mto \lambda\circ f
\]
is continuous and linear.
If $\lambda$ is a linear topological embedding,
then also $c_0(X,w,\lambda)$.
\end{la}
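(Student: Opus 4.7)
The plan rests on one elementary identity: for every continuous ultrametric seminorm $q$ on $F$, the composition $p := q \circ \lambda$ is a continuous ultrametric seminorm on $E$ (continuity because $\lambda$ is continuous, and the ultrametric inequality is preserved because $\lambda$ is linear), and for any function $f\colon X\to E$,
\[
\|\lambda \circ f\|_{w,q} \;=\; \sup_{x\in X} w(x)\, q(\lambda(f(x))) \;=\; \|f\|_{w,q\circ\lambda}.
\]
This identity drives all three assertions.

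First I would establish the inclusion $\lambda \circ f \in c_0(X,w,F)$ whenever $f\in c_0(X,w,E)$. Fix a continuous ultrametric seminorm $q$ on $F$ and $\ve>0$; applying the defining $c_0$-condition for $f$ to the seminorm $p := q\circ\lambda$ yields a finite $\Phi \sub X$ with $w(x) q(\lambda(f(x))) = w(x)p(f(x)) < \ve$ for all $x \in X\setminus \Phi$, which is precisely the $c_0$-condition for $\lambda\circ f$ at $q$ and $\ve$. Linearity of the map $c_0(X,w,\lambda)$ is then immediate from linearity of $\lambda$, and the displayed identity shows that the seminorm $\|\cdot\|_{w,q}$ on $c_0(X,w,F)$ pulls back to $\|\cdot\|_{w,q\circ\lambda}$ on $c_0(X,w,E)$; since each $\|\cdot\|_{w,q\circ\lambda}$ is a generating seminorm of $c_0(X,w,E)$, continuity of $c_0(X,w,\lambda)$ follows.

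For the final assertion, assume $\lambda$ is a topological embedding. Injectivity of $c_0(X,w,\lambda)$ is clear from injectivity of $\lambda$. To upgrade continuity to an embedding, I would invoke the standard characterization that, for locally convex spaces over an ultrametric field, $\lambda$ is a topological embedding if and only if every continuous ultrametric seminorm $p$ on $E$ satisfies $p \leq q\circ\lambda$ for some continuous ultrametric seminorm $q$ on $F$ (this is the ultrametric analogue of recovering the topology of a subspace via restrictions of seminorms from the ambient space, and follows from the definition of locally convex given in Section~\ref{secnot}). Granted this, one gets
\[
\|h\|_{w,p} \;\leq\; \|h\|_{w,q\circ\lambda} \;=\; \|c_0(X,w,\lambda)(h)\|_{w,q} \qquad \text{for every } h \in c_0(X,w,E),
\]
so each generating seminorm of $c_0(X,w,E)$ is dominated by the pullback of a continuous seminorm from $c_0(X,w,F)$; combined with the already established continuity of $c_0(X,w,\lambda)$, this shows that the topology on $c_0(X,w,E)$ is initial with respect to $c_0(X,w,\lambda)$, which together with injectivity gives the embedding property.

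The main obstacle is bookkeeping rather than genuine difficulty: one must stay scrupulously within the \emph{ultrametric} seminorm framework used in the paper (never passing to arbitrary continuous seminorms) and must correctly translate the topological-embedding hypothesis on $\lambda$ into a seminorm-domination statement. Once that translation is in hand, each of the three claims reduces to a one-line manipulation of the identity $\|\lambda\circ f\|_{w,q} = \|f\|_{w,q\circ\lambda}$.
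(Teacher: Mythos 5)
Your proposal is correct and takes essentially the same route as the paper: the identity $\|\lambda\circ f\|_{w,q}=\|f\|_{w,q\circ\lambda}$ gives membership, linearity and continuity, and the embedding part comes down to the fact that the topology of $E$ is described by the pulled-back seminorms $q\circ\lambda$. The only difference is organizational: the paper delegates this last step to Lemma~\ref{inic0} on initial topologies with respect to a family of maps, whereas you inline the single-map case via an explicit seminorm-domination argument, which is equivalent in content.
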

\begin{proof}
If $q$ is an ultrametric  continuous seminorm on~$F$, then
$q\circ \lambda$ is an ultrametric continuous
seminorm on~$E$.
If $\ve>0$, there exists a finite subset
$\Phi\sub X$ such that $\sup\{w(x)(q\circ\lambda)(f(x))\colon
x\in X\setminus \Phi\}<\ve$.
Since $(q\circ\lambda)(f(x))=q((\lambda\circ f)(x))$,
we see that $\lambda\circ f\in c_0(X,w,F)$.
Because
\[
\|\lambda\circ f\|_{w,q}=\|f\|_{w,q\circ\lambda}\leq
\|f\|_{w,q\circ\lambda},
\]
the linear map $c_0(X,w,\lambda)$ is continuous.\\[2.4mm]
If $\lambda$ is a topological embedding,
then $c_0(X,w,\lambda)$ is injective.
Hence $c_0(X,w,\lambda)$ will be an embedding
if the initial topology on $c_0(X,w,E)$
with respect to $c_0(X,w,\lambda)$
coincides with the given topology.
But this is a special case of the next lemma.
%As a consequence, $c_0(X,w,\lambda)$ is an isomorphism of topological vector
%spaces whenever $\lambda$ is.
%To prove the final assertion, we may therefore
%assume that $E\sub F$ is a vector subspace with the induced topology,
%and $\lambda\colon E\to F$ the inclusion map.
%If $q$ is an ultrametric  continuous seminorm on~$E$,
%then there exists an ultrametric continuous seminorm
%$Q$ on $F$ such that $q\leq Q|_E$,
%as is well known.\footnote{Pick an open submodule $V\sub F$ such that $V\cap E\sub \{x\in E\colon q(x)<1\}
%=:W$.
%Then $(V+W)\cap E=W$. Let $\mu \colon F\to[0,\infty[$
%be the Minkowski functional of $V+W$, which associates to
%$x\in F$ the infimum of all $|t|$, for $t\in \K$ such that
%$x\in t (V+W)$. We can then choose $Q$ as a suitable multiple of~$\mu$.}
%Thus
%\[
%\{f\in c_0(X,w,E)\colon \|f\|_{w,q}<1\}\supseteq c_0(X,w,E)\cap \{f\in c_0(X,w,F)\colon \|f\|_{w,Q}<1\},
%\]
%entailing that the injective continuous linear map $c_0(X,w,\lambda)$ is an open map onto its image at~$0$
%and hence a linear topological embedding.
\end{proof}
\begin{la}\label{inic0}
Let $X$, $w$ and $E$ be as in Definition~{\rm\ref{defnc0}}
and assume that the topology on~$E$ is initial with respect to a family
$(\lambda_j)_{j\in J}$ of linear mappings $\lambda_j\colon E\to E_j$
to locally convex spaces~$E_j$.
Then the topology on
$c_0(X,w,E)$ is initial with respect to the family
$(c_0(X,w,\lambda_j))_{j\in J}$.
\end{la}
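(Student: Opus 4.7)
The plan is to identify both topologies concretely as ones generated by explicit families of ultrametric seminorms, then show each family is dominated by the other.

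First I would note that Lemma~\ref{suppoc0} already guarantees that each pushforward $c_0(X,w,\lambda_j)$ is a well-defined continuous linear map, and, more importantly, for $f\in c_0(X,w,E)$ and a continuous ultrametric seminorm $q_j$ on $E_j$ we have the identity
\[
\|c_0(X,w,\lambda_j)(f)\|_{w,q_j}\;=\;\|\lambda_j\circ f\|_{w,q_j}\;=\;\|f\|_{w,q_j\circ\lambda_j}.
\]
Hence the initial topology $\cT$ on $c_0(X,w,E)$ with respect to the family $(c_0(X,w,\lambda_j))_{j\in J}$ is precisely the locally convex topology defined by the seminorms $\|\sbull\|_{w,q_j\circ\lambda_j}$, as $j$ ranges over $J$ and $q_j$ over the continuous ultrametric seminorms on~$E_j$. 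Since each $q_j\circ\lambda_j$ is a continuous ultrametric seminorm on~$E$, these seminorms form a subfamily of the defining seminorms of the given topology $\cO$ on $c_0(X,w,E)$; thus $\cT\sub \cO$.

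For the converse inclusion $\cO\sub\cT$, I would use the non-archimedean counterpart of the standard locally convex fact: if the topology of the locally convex space~$E$ is initial with respect to a family of linear maps $\lambda_j\colon E\to E_j$ into locally convex spaces, then every continuous ultrametric seminorm $q$ on~$E$ is dominated by a finite maximum
\[
q(x)\;\leq\; C\max_{1\leq i\leq k} q_i(\lambda_{j_i}(x))\qquad(x\in E),
\]
for some indices $j_1,\ldots,j_k\in J$, continuous ultrametric seminorms $q_i$ on~$E_{j_i}$, and a constant $C>0$. Applying this estimate pointwise to $f(x)$, multiplying by $w(x)$, and taking the supremum over $x\in X$ yields
\[
\|f\|_{w,q}\;\leq\;C\max_{1\leq i\leq k}\|f\|_{w,q_i\circ\lambda_{j_i}}\;=\;C\max_{1\leq i\leq k}\|c_0(X,w,\lambda_{j_i})(f)\|_{w,q_i}
\]
for all $f\in c_0(X,w,E)$. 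Hence every defining seminorm of $\cO$ is $\cT$-continuous, giving $\cO\sub \cT$ and completing the proof.

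The main obstacle is justifying the seminorm-domination claim in the non-archimedean setting. This amounts to the observation that a basic $0$-neighbourhood of $E$'s initial topology has the form $\bigcap_{i=1}^k \lambda_{j_i}^{-1}(V_i)$ for open $\bO$-submodules $V_i\sub E_{j_i}$, and that the gauge of an open $\bO$-submodule is a continuous ultrametric seminorm; non-discreteness of the valuation on~$\K$ then supplies scalars allowing the gauges to be compared up to a multiplicative constant. Once this technical point is secured, both inclusions are purely formal.
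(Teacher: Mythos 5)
Your proof is correct and takes essentially the same route as the paper: both identify the initial topology via the identity $\|\lambda_j\circ f\|_{w,q}=\|f\|_{w,q\circ\lambda_j}$ and then compare the resulting seminorms $\|.\|_{w,q\circ\lambda_j}$ with the defining seminorms of $c_0(X,w,E)$. The only difference is that you make explicit, through the finite-maximum domination estimate $q\leq C\max_i q_i\circ\lambda_{j_i}$, the step the paper compresses into the assertion that $\cO$ is "therefore" defined by the seminorms $\|.\|_{w,q\circ\lambda_j}$; that estimate is exactly the standard fact needed, so there is no gap.
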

\begin{proof}
By hypothesis,
the locally convex vector topology on~$E$
can be defined by the seminorms $q\circ\lambda_j$,
for $j\in J$ and $q$ ranging through the ultrametric continuous
seminorms on~$E_j$.
The given locally convex topology $\cO$ on $c_0(X,w,E)$ is therefore
defined by the seminorms of the form $\|.\|_{w,q\circ \lambda_j}$.
On the other hand, the initial topology $\cT$ on $c_0(X,w,E)$
with respect to the maps $c_0(X,w,\lambda_j)$
is the locally convex vector topology defined by the ultrametric
seminorms $\|.\|_{w,q}\circ c_0(X,w,\lambda_j)$.
As these coincide with the seminorms $\|.\|_{w,q\circ \lambda_j}$
from above, $\cO=\cT$ follows.
\end{proof}
\begin{la}\label{c0props}
Let $X$, $w$ and $E$ be as in Definition~{\rm\ref{defnc0}}.
The locally convex space $c_0(X,w,E)$
has the following properties:
\begin{itemize}
\item[{\rm(a)}]
If $E$ is metrizable, then
$c_0(X,w,E)$
is metrizable.
\item[{\rm(b)}]
If $E$ is complete $($resp., sequentially complete$)$,
then also
$c_0(X,w,E)$
is complete $($resp., sequentially complete$)$.
\item[{\rm(c)}]
If $E$ is an ultrametric Banach space,
then also
$c_0(X,w,E)$.
\end{itemize}
\end{la}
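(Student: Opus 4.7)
The plan is to handle parts (a)--(c) in parallel by exploiting the point evaluations $\ev_x\colon c_0(X,w,E)\to E$, $f\mto f(x)$, which are continuous linear thanks to the estimate $q(f(x))\le w(x)^{-1}\|f\|_{w,q}$ for each continuous ultrametric seminorm $q$ on $E$ and each $x\in X$. Hausdorffness of $c_0(X,w,E)$ is automatic, since $w(x)>0$ forces $\|f\|_{w,q}\ge w(x_0)q(f(x_0))>0$ whenever $f\ne 0$ and one picks $x_0\in X$ and a continuous ultrametric seminorm $q$ with $q(f(x_0))>0$.

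For (a), I would start from a countable family $(q_n)_{n\in\N}$ of continuous ultrametric seminorms defining the topology of the metrizable space $E$, arranged to be increasing (by replacing $q_n$ with $\max(q_1,\ldots,q_n)$, which is again an ultrametric seminorm). A standard scaling argument, using non-discreteness of the absolute value on $\K$, shows that every continuous ultrametric seminorm $q$ on $E$ admits $n\in\N$ and $C>0$ with $q\le Cq_n$; hence $\|.\|_{w,q}\le C\|.\|_{w,q_n}$. Thus the countable family $(\|.\|_{w,q_n})_{n\in\N}$ already defines the topology of $c_0(X,w,E)$, and together with Hausdorffness this yields metrizability.

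For (b), suppose $E$ is complete (resp.\ sequentially complete) and let $(f_i)_{i\in I}$ be a Cauchy net (resp.\ Cauchy sequence) in $c_0(X,w,E)$. Continuity of $\ev_x$ makes $(f_i(x))_{i\in I}$ Cauchy in $E$ for each $x\in X$, hence convergent to some $f(x)\in E$. Given a continuous ultrametric seminorm $q$ on $E$ and $\ve>0$, pick $i_0$ with $\|f_i-f_j\|_{w,q}\le\ve$ for $i,j\ge i_0$; fixing $i\ge i_0$ and using continuity of $q$ together with pointwise convergence $f_j(x)\to f(x)$, I obtain $w(x)q(f_i(x)-f(x))\le\ve$ for all $x\in X$. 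Since $f_i\in c_0(X,w,E)$ supplies a finite $\Phi\sub X$ off which $w(x)q(f_i(x))<\ve$, the ultrametric inequality then gives $w(x)q(f(x))\le\max(w(x)q(f(x)-f_i(x)),\,w(x)q(f_i(x)))\le\ve$ off $\Phi$, so $f\in c_0(X,w,E)$; simultaneously $\|f_i-f\|_{w,q}\le\ve$ for $i\ge i_0$, proving convergence $f_i\to f$ in $c_0(X,w,E)$.

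For (c), if $E$ is an ultrametric Banach space with norm $\|.\|$, then $\|.\|_{w,\|.\|}$ is an ultrametric norm on $c_0(X,w,E)$ (positive definiteness comes from $w(x)>0$ together with $\|.\|$ being a norm, while the ultrametric inequality is inherited pointwise from $\|.\|$), and completeness follows from (b). The main obstacle is the verification in (b) that the candidate pointwise limit $f$ actually lies in $c_0(X,w,E)$; this is precisely where the ultrametric nature of the seminorms is indispensable, since it allows the two error terms $w(x)q(f(x)-f_i(x))$ and $w(x)q(f_i(x))$ to be controlled simultaneously on a common cofinite set by the single bound $\ve$, rather than by $2\ve$ as would be forced by an archimedean triangle inequality.
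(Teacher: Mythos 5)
Your proposal is correct and follows essentially the same route as the paper's proof: point evaluations give pointwise limits, passing to the limit in the Cauchy estimate yields $w(x)q(f_i(x)-f(x))\le\ve$ uniformly in $x$, membership of the limit in $c_0(X,w,E)$ is deduced from a fixed $f_{i_0}$, part (a) uses a countable defining family of seminorms $\|.\|_{w,q_n}$, and (c) combines the single norm $\|.\|_{w,\|.\|}$ with (b). One minor remark: the ultrametric inequality is not actually indispensable in the step you highlight --- the paper itself uses the additive bound $w(x)q(h(x))\le w(x)q(h_{j_0}(x))+\ve$, and a $2\ve$ estimate would serve just as well.
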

\begin{proof}
(a) If $E$ is metrizable, then its vector topology is defined
by a countable set~$Q$ of ultrametric seminorms.
Then $\{\|.\|_{w,q}\colon q\in Q\}$
is a countable set of ultrametric seminorms
that defines the locally convex topology of $c_0(X,w,E)$,
and hence $c_0(X,w,E)$ is metrizable.\\[2.4mm]
(b) If $E$ is complete, let $(h_j)_{j\in J}$
be a Cauchy net in $c_0(X,w,E)$.
Since the point evaluation
$\ve_x\colon c_0(X,w,E)\to E$
is continuous and linear for $x\in X$, we see
that $(h_j(x))_{j\in J}$ is a Cauchy net in~$E$ and hence convergent to some
$h(x)\in E$. Given an ultrametric continuous seminorm~$q$ on~$E$
and $\ve>0$,
there exists $j_0\in J$ such that
$\|h_j-h_k\|_{w,q}\leq\ve$ for all $j,k\in J$ such that $j,k\geq j_0$.
For fixed $x\in X$, we therefore have
$w(x)q(h_j(x)-h_k(x))\leq\ve$ for all $j,k\in J$ such that $j,k\geq j_0$.
Passing to the limit in~$k$,
we see that
\begin{equation}\label{mayneed}
(\forall j \geq j_0)(\forall x\in X)\;\; w(x)q(h_j(x)-h(x))\leq\ve.
\end{equation}
Hence $w(x)q(h(x))\leq w(x)q(h_{j_0})+\ve$
and thus $h\in c_0(X,w,E)$, with $\|h\|_{w,q}\leq \|h_{j_0}\|_{w,q}+\ve$.
Now (\ref{mayneed}) can be read as $\|h_j-h\|_{w,q}\leq\ve$
for all $j\geq j_0$. Thus $h_j\to h$, showing the completeness
of $c_0(X,w, E)$.\\[2.4mm]
(c) If the ultrametric norm
$q:=\|.\|$ defines the vector topology on~$E$,
then $\|.\|_{w,q}$ is an ultrametric norm
that defines the vector topology on $c_0(X,w,E)$
and makes it an ultrametric Banach space
(by (b)).
\end{proof}
We abbreviate $c_0(X,\lambda):=c_0(X,1,\lambda)$.
\begin{la}\label{c0exp}
Let $X$ and $Y$ be sets,
$v\colon X\to \;]0,\infty[$
and  $w\colon Y\to \;]0,\infty[$
be functions and~$E$ be a locally convex space over
an ultrametric field~$\K$.
Define $v\tensor w
\colon X\times Y\to \,]0,\infty[$
via $(v\tensor w)(x,y):=
v(x)w(y)$.
Then $f^\vee(x):=f_x:=f(x,\sbull)\in c_0(Y,w,E)$ for
each $x\in X$ and $f\in c_0(X\times Y,v\tensor w, E)$.
Moreover, $f^\vee\in c_0(X,v,c_0(Y,w,E))$, and the map
\[
\Theta \colon c_0(X\times Y,v\tensor w,E)\to c_0(X,v,c_0(Y,w,E)),\quad f\mto f^\vee
\]
is an isomorphism of topological vector spaces with inverse
\[
\Psi\colon c_0(X,v,c_0(Y,w,E))\to c_0(X\times Y,v\tensor w,E),\quad g\mto g^\wedge,
\]
where $g^\wedge(x,y):=g(x)(y)$.
\end{la}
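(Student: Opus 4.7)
\textbf{Proof plan for Lemma \ref{c0exp}.}
The plan is to verify the four pointwise claims ($f^\vee(x)$ lies in $c_0(Y,w,E)$, $f^\vee$ lies in $c_0(X,v,c_0(Y,w,E))$, $\Psi$ is well defined, and $\Theta,\Psi$ are mutually inverse) and then establish continuity via the single identity
\[
\|g^\wedge\|_{v\tensor w,q}\;=\;\|g\|_{v,\|\cdot\|_{w,q}}
\]
for every continuous ultrametric seminorm $q$ on $E$. Linearity of $\Theta$ and $\Psi$ and the fact that they invert each other are immediate from $\Theta(f)(x)(y)=f(x,y)=\Psi^{-1}(f)(x)(y)$, so the core is the well-definedness together with the seminorm identity.

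First I would check that $f^\vee(x)\in c_0(Y,w,E)$ for every $x$. Given $q$ and $\ve>0$, the hypothesis $f\in c_0(X\times Y,v\tensor w,E)$ produces a finite $\Phi'\sub X\times Y$ such that $v(x')w(y)q(f(x',y))<v(x)\ve$ outside $\Phi'$; taking the $Y$-slice of $\Phi'$ at $x$ yields the required finite set. Next, for $f^\vee\in c_0(X,v,c_0(Y,w,E))$, I would choose $\Phi'\sub X\times Y$ finite with $v(x')w(y)q(f(x',y))<\ve/2$ outside $\Phi'$, and take $\Phi:=\pr_X(\Phi')$; for $x\notin\Phi$ the estimate holds for every $y$, so $v(x)\|f^\vee(x)\|_{w,q}\le\ve/2<\ve$. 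This also gives the key inequality $\|f^\vee\|_{v,\|\cdot\|_{w,q}}\leq\|f\|_{v\tensor w,q}$, and the reverse inequality is obvious by interchanging $\sup_x\sup_y=\sup_{(x,y)}$.

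For $\Psi$, given $g\in c_0(X,v,c_0(Y,w,E))$ I would use that $\|\cdot\|_{w,q}$ is a continuous ultrametric seminorm on $c_0(Y,w,E)$ by Definition \ref{defnc0}, so there is a finite $\Phi\sub X$ with $v(x)\|g(x)\|_{w,q}<\ve$ for $x\notin\Phi$; for each $x\in\Phi$, using $g(x)\in c_0(Y,w,E)$ I obtain a finite $\Phi_x\sub Y$ with $w(y)q(g(x)(y))<\ve/v(x)$ for $y\notin\Phi_x$. The finite set $\Phi':=\bigcup_{x\in\Phi}\{x\}\times\Phi_x\sub X\times Y$ then witnesses $g^\wedge\in c_0(X\times Y,v\tensor w,E)$: a split into the cases $x\notin\Phi$ and $x\in\Phi$, $y\notin\Phi_x$ handles both exterior regions.

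Finally, the seminorm identity $\|g^\wedge\|_{v\tensor w,q}=\sup_{x,y}v(x)w(y)q(g(x)(y))=\sup_x v(x)\|g(x)\|_{w,q}=\|g\|_{v,\|\cdot\|_{w,q}}$ shows simultaneously that $\Theta$ and $\Psi$ are continuous linear maps (in fact each isometric for every such seminorm), and together with the elementary computation $(f^\vee)^\wedge=f$, $(g^\wedge)^\vee=g$ this shows $\Theta$ is an isomorphism of topological vector spaces with inverse $\Psi$. The only slightly delicate step I anticipate is the bookkeeping in the well-definedness of $\Psi$ (choosing the correct threshold $\ve/v(x)$ on each slice so that the finitely many slices assemble into a single finite exceptional set); everything else reduces to the identity $\sup_{(x,y)}=\sup_x\sup_y$.
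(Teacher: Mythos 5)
Your proposal is correct and follows essentially the same route as the paper: verify the pointwise memberships by manipulating finite exceptional sets (your slicing/threshold $\ve/v(x)$ bookkeeping is a harmless variant of the paper's rectangularized sets $\Phi_1\times\Phi_2$), and obtain the topological isomorphism from the identity $\sup_{(x,y)}=\sup_x\sup_y$, i.e.\ $\|f\|_{v\tensor w,q}=\|f^\vee\|_{v,\|\cdot\|_{w,q}}$. No gaps.
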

\begin{proof}
Let $q$ be an ultrametric continuous seminorm on~$E$ and $\ve>0$.
Fix $z\in X$. There exists a finite subset $\Phi\sub X\times Z$ such that
$v(x)w(y)q(f(x,y))<\min\{\ve,v(z)\ve\}$
for all $(x,y)\in (X\times Y)\setminus \Phi$.
After increasing $\Phi$, we may assume that $\Phi=\Phi_1\times \Phi_2$
with finite subsets $\Phi_1\sub X$, $\Phi_2\sub Y$.
For $y\in Y\setminus \Phi_2$,
we then have
$w(y)q(f(z,y))<\ve$,
and we deduce that $f_z\in c_0(Y,w,E)$.
Moreover, for all $x\in E\setminus \Phi_1$
and all $y\in Y$ we have
$v(x)w(y)q(f(x,y))<\ve$.
Passing to the supremum over~$y$,
we obtain $v(x)\|f_x\|_{w,q}\leq\ve$.
Hence $f^\vee\in c_0(X,v,c_0(Y,w,E))$ indeed.
Because
\begin{eqnarray*}
\|f\|_{v\tensor w,q}& = & \sup\{v(x)w(y)q(f(x,y))\colon x\in X,y\in Y\}\\
&=& \sup_{x\in X}v(x)\sup_{y\in Y}w(y)q(f(x,y))
=\sup_{x\in X}v(x)\|f_x\|_{w,q}=\|f^\vee\|_{v,\|.\|_{w,q}},
\end{eqnarray*}
the injective linear map $\Theta$ is a topological embedding.
To see that $\Theta$ is surjective, let $g\in c_0(X,v,c_0(Y,w,E))$.
Define $g^\wedge\colon X\times Y\to E$, $g^\wedge(x,y):=g(x)(y)$.
If we can show that
$g^\wedge \in c_0(X\times Y,v\tensor w,E)$,
then $\Theta(g^\wedge)=(g^\wedge)^\vee=g$
proves the surjectivity.
To this end, let $q$  and $\ve$ be as before.
Then there is a finite subset $\Phi_1\sub X$ such that
$v(x)\|g(x)\|_{w,q}<\ve$ for all $x\in X\setminus \Phi_1$.
For each $x\in \Phi_1$, there exists a finite subset $\Xi_x\sub Y$
such that $w(x)v(y)q(g(x)(y))<\ve$ for all $y\in Y\setminus \Xi_x$.
Set $\Phi_2:=\bigcup_{x\in \Phi_1}\Xi_x$
and $\Phi:=\Phi_1\times \Phi_2$.
If $(x,y)\in (X\times Y)\setminus \Phi$,
then $x\in X\setminus \Phi_1$ (in which case
$v(x)w(y)q(g(x)(y))\leq v(x)\|g(x)\|_{w,q}<\ve$)
or $x\in \Phi_1$ and $y\in Y\setminus \Phi_2$,
in which case $y\in Y\setminus \Xi_x$ and thus
$w(x)v(y)q(g(x)(y))<\ve$ as well.
Thus $g^\wedge\in c_0(X\times Y,v\tensor w,E)$ indeed.
\end{proof}
If $X$ is a compact space, $E$ a locally convex space over an ultrametric field
and $q$ an ultrametric continuous seminorm on~$E$, we obtain an ultrametric
continuous seminorm on $C(X,E)$ via
\[
\|f\|_{q,\infty}:=\sup\{q(f(x))\colon x\in X\}\in [0,\infty[\quad \mbox{for $\,f\in C(X,E)$.}
\]
\begin{la}\label{strongMahcts}
Let $E$ be a locally convex space over~$\Q_p$
and $f\colon \Z_p^n\to E$ be a continuous function,
with Mahler coefficients $a_\nu(f)$ for $\nu\in \N_0^n$.
Then the Mahler series of $f$ converges uniformly to~$f$
$($i.e., it converges to~$f$ in $C(\Z_p^n,E))$.
Further,
$(a_\nu(f))_{\nu\in \N_0^n}\in c_0(\N_0^n,E)$, and the map
\[
A_{n,E}:= (a_\nu)_{\nu\in \N_0^n}\colon C(\Z_p^n,E) \to c_0(\N_0^n,E),\quad f\mto (a_\nu(f))_{\nu\in \N_0^n}
\]
is a linear topological embedding with
\begin{equation}\label{likeisome}
\|A_{n,E}(f)\|_{q,\infty}=\|f\|_{q,\infty}
\end{equation}
for each continuous ultrametric seminorm $q$ on~$E$.
If $E$ is sequentially\linebreak
complete, then $A_{n,E}$ is an isomorphism
of topological vector spaces.
\end{la}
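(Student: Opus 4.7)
The plan has four layers: (i) the classical one-variable Mahler theorem for ultrametric Banach spaces; (ii) induction on~$n$ via an exponential law for continuous functions, to treat all~$n$ in the Banach case; (iii) reduction from a general locally convex $E$ to the Banach case one continuous seminorm at a time; and (iv) surjectivity in the sequentially complete case via a cofinal sequence of partial sums.

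The base case $n=1$ with $E$ an ultrametric Banach space over~$\Q_p$ is the classical Mahler theorem (cf.\ \cite[Theorem~52.1]{Sch}, whose proof extends verbatim to $E$-valued functions): $A_{1,E}$ is an isometric isomorphism onto $c_0(\N_0,E)$ and the Mahler series converges uniformly to $f$. For $n\geq 2$ with $E$ an ultrametric Banach space I would proceed by induction. The exponential law $C(\Z_p^n,E)\isom C(\Z_p,C(\Z_p^{n-1},E))$ (Proposition~\ref{ctsexp} applies since $\Z_p^n$ is metrizable and hence a k-space), combined with the inductive isomorphism $C(\Z_p^{n-1},E)\isom c_0(\N_0^{n-1},E)$, the fact that $c_0(\N_0^{n-1},E)$ is itself an ultrametric Banach space (Lemma~\ref{c0props}\,(c)), the one-variable Mahler theorem applied to that Banach space, and Lemma~\ref{c0exp} in the form $c_0(\N_0,c_0(\N_0^{n-1},E))\isom c_0(\N_0^n,E)$, yields an isometric isomorphism $C(\Z_p^n,E)\to c_0(\N_0^n,E)$. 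To identify the composed isomorphism with $A_{n,E}$ I would use Remark~\ref{NEWrm1} applied to the point evaluations $\ve_\mu\colon c_0(\N_0^{n-1},E)\to E$ together with the iterated difference-operator representation in the proof of Lemma~\ref{detmind}, which shows that iterated one-variable Mahler coefficients equal the multivariable coefficients defined by (\ref{iscoeff}).

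For a general locally convex $E$ and arbitrary $n$, fix a continuous ultrametric seminorm $q$ on $E$ and let $\phi_q\colon E\to\widehat{E_q}$ be the canonical continuous linear map into the completion of the normed quotient $E/\ker(q)$, which satisfies $\|\phi_q(x)\|=q(x)$. By Remark~\ref{NEWrm1}, $a_\nu(\phi_q\circ f)=\phi_q(a_\nu(f))$ for every $\nu$, so the Banach case applied to $\phi_q\circ f$ provides $q(a_\nu(f))\to 0$ (hence $A_{n,E}(f)\in c_0(\N_0^n,E)$) and the chain of equalities
\[
\|A_{n,E}(f)\|_{q,\infty}=\sup_\nu q(a_\nu(f))=\sup_\nu\|\phi_q(a_\nu(f))\|=\|\phi_q\circ f\|_\infty=\|f\|_{q,\infty},
\]
which is (\ref{likeisome}). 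Since the topologies on $C(\Z_p^n,E)$ and $c_0(\N_0^n,E)$ are both defined by the seminorms $\|\cdot\|_{q,\infty}$ indexed by the same family of continuous ultrametric $q$ on $E$, this isometric identity immediately yields that $A_{n,E}$ is a linear topological embedding. Uniform convergence of the Mahler series to $f$ in $C(\Z_p^n,E)$ follows by the same seminorm-by-seminorm reduction from the Banach case.

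For surjectivity under sequential completeness of $E$, given $(a_\nu)_\nu\in c_0(\N_0^n,E)$ I set $\Phi_k:=\{\nu\in\N_0^n\colon|\nu|\leq k\}$ and $f_k(x):=\sum_{\nu\in\Phi_k}\binom{x}{\nu}a_\nu$. By (\ref{inzp}) one has $|\binom{x}{\nu}|\leq 1$ for $x\in\Z_p^n$, so ultrametricity yields $\|f_k-f_\ell\|_{q,\infty}\leq\max\{q(a_\nu)\colon\nu\in\Phi_k\triangle\Phi_\ell\}\to 0$, making $(f_k)_k$ a Cauchy sequence in $C(\Z_p^n,E)$. Since $E$ is sequentially complete, every uniformly Cauchy sequence of continuous functions $\Z_p^n\to E$ converges uniformly to a continuous limit, yielding $f:=\lim_k f_k\in C(\Z_p^n,E)$; the identity $a_\nu(\binom{\cdot}{\mu})=\delta_{\nu,\mu}$ (a consequence of (\ref{iscoeff}) and classical binomial inversion) together with continuity of the point evaluations $f\mto f(\mu)$ then gives $A_{n,E}(f)=(a_\nu)_\nu$. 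The main obstacle I anticipate is the clean identification in step (ii) of iterated one-variable Mahler coefficients with the multivariable coefficients of Definition~\ref{dfnmahcoe}; a lesser subtlety is that only sequential (not net) completeness is assumed, which is why step (iv) is organized around the countable cofinal family $(\Phi_k)_{k\in\N_0}$.
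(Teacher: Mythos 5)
Your argument is correct, but it is organized differently from the paper's proof. The paper runs the induction on $n$ entirely within the locally convex category: its $n=1$ case is proved for arbitrary locally convex $E$ (Schikhof's exercise adapted to ultrametric seminorms, with the completion $\wt{E}$ only used to house the limit of the series), and the inductive step applies that one-variable result to the coefficient space $F=C(\Z_p^{n-1},E)$ -- which is not normable for general $E$ -- composing $\Phi$, $A_{1,F}$, $c_0(\N_0,A_{n-1,E})$ and the $c_0$-exponential law of Lemma~\ref{c0exp}, identifying the composite with $A_{n,E}$ by the explicit computation (\ref{usefina}) based on Remark~\ref{NEWrm1}, deriving (\ref{likeisome}) directly from (\ref{iscoeff}) and (\ref{reqest}), and obtaining surjectivity for $n\geq 2$ from sequential completeness of~$F$ (Lemma~\ref{sammelsu}\,(e)). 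You instead prove the Banach case for all $n$ first (same skeleton: exponential law, one-variable Mahler applied to the Banach space $c_0(\N_0^{n-1},E)$ rather than to $C(\Z_p^{n-1},E)$, then Lemma~\ref{c0exp}), and then reduce general locally convex $E$ to the Banach case seminorm by seminorm via $\phi_q\colon E\to\widehat{E_q}$ and the naturality $a_\nu(\phi_q\circ f)=\phi_q(a_\nu(f))$ of Remark~\ref{NEWrm1}; surjectivity is re-proved directly by the countable family of partial sums over $\{|\nu|\leq k\}$, which is exactly the right way to exploit mere sequential completeness. Your route lets you quote the classical Banach-valued Mahler theorem wholesale and keeps every topological statement at the level of a single isometric identity, at the price of an extra reduction layer; the paper's route avoids the quotient completions $\widehat{E_q}$ altogether and gets the sequentially complete case for free from the composed isomorphisms. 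Two small points you should make explicit when writing this up: (1) in the Banach inductive step, uniform convergence of the multivariable Mahler series to $f$ is not automatic from the isomorphism alone -- deduce it from the coefficient decay (the partial-sum net is uniformly Cauchy, hence converges to a continuous $g$), Lemma~\ref{detmind} (which gives $a_\nu(g)=a_\nu(f)$) and the injectivity of $A_{n,E}$ already established, since your locally convex uniform-convergence claim rests on this; (2) the identification of the composite with $A_{n,E}$ via the iterated difference operators $\delta_1^{\nu_1}\cdots\delta_n^{\nu_n}$ should be carried out carefully (it is the analogue of the paper's claim (\ref{NewEQ})), but your proposed use of the point evaluations on $c_0(\N_0^{n-1},E)$ together with Remark~\ref{NEWrm1} does the job.
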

\begin{proof}
Let $\wt{E}$ be a completion of~$E$
with $E\sub \wt{E}$.
The proof is by induction on $n\in \N$.
If $n=1$, then one finds as in \cite[p.\,156, Exercise 52E(v)]{Sch}\footnote{Due to a misprint,
the exercise is labelled Exercice~52.D, like the previous one.}
that $q(a_\nu(f))\to 0$ as $\nu\to\infty$,
for each $f\in C(\Z_p,E)$ and continuous ultrametric seminorm~$q$ on~$E$.
Hence $A_{1,E}(f)\in c_0(\Z_p,E)$.
Since
\begin{equation}\label{reqest}
q\left(
\left(
\begin{array}{c}
x\\
\nu
\end{array}
\right)a_\nu(f)\right)\leq q(a_\nu(f)),
\end{equation}
we deduce that
the Mahler series of $f$ converges
uniformly in $C(\Z_p,\wt{E})$,
to a continuous function $g\colon \Z_p\to \wt{E}$.
As in the cited exercise, one finds that
$f(x)=g(x)$ for $x\in \N_0$
and hence for all $x\in \Z_p$, by continuity.
The Mahler series thus converges
uniformly to~$f$.
We now deduce from (\ref{reqest}) and (\ref{26b}) that
\[
\|f\|_{q,\infty}\leq \sup\{q(a_\nu(f))\colon \nu\in \N_0\}=\|A_{1,E}(f)\|_{q,\infty}.
\]
Conversely, (\ref{iscoeff}) entails that
$q(a_\nu(f))\leq \|f\|_{q,\infty}$ for each $\nu\in \N_0$.
Hence also $\|A_{1,E}(f)\|_{q,\infty}\leq
\|f\|_{q,\infty}$, and (\ref{likeisome}) follows.
It is clear that $A_{1,E}$ is linear,
and Lemma~\ref{detmind} implies that $A_{1,E}$ is injective
(noting that $f$ is given by its Mahler series,
as just shown). Summing up, $A_{1,E}$ is a linear topological embedding.\\[2.4mm]
If $E$ is sequentially complete, let
$b=(b_\nu)_{\nu\in \N_0}\in c_0(\N_0,E)$.
Then the limit $f(x):=
\sum_{\nu\in \N_0}\left(
\begin{array}{c}
x\\
\nu
\end{array}
\right)b_\nu$ exists in~$E$, uniformly in~$x$,
and hence defines a continuous function $f\colon \Z_p\to E$.
By Lemma~\ref{detmind}, $A_{1,E}(f)=(b_\nu)_{\nu\in \N_0}$.
Thus $A_{1,E}$ is also surjective and hence an isomorphism
of topological vector spaces.\\[2.4mm]
Induction step. If $n\geq 2$, abbreviate $F:=C(\Z_p^{n-1},E)$.
This space is sequentially complete if~$E$ is sequentially
complete, by Lemma~\ref{sammelsu}\,(e).
Now $f^\vee\colon \Z_p\to C(\Z_p^{n-1},E)=F$, $x\mto f_x:=f(x,\sbull)$
is continuous, for each $f\in C(\Z_p^n,E)$,
and the map
\[
\Phi\colon C(\Z_p^n,E)\to C(\Z_p,F),\quad f\mto f^\vee
\]
is an isomorphism of topological
vector spaces (see Proposition~\ref{ctsexp}).
By the case $n=1$, the map
\[
A_{1,F}\colon C(\Z_p,F)\to c_0(\N_0,F)
\]
is a linear topological embedding
(and an isomorphism if~$E$ and hence $F$ is sequentially complete).
Since $A_{n-1,E}$ is a linear topological embedding by induction
(and an isomorphism if $E$ is sequentially complete),
also the map
\[
c_0(\N_0,A_{n-1,E})\colon
c_0(\N_0,F)\to c_0(\N_0,c_0(\N_0^{n-1},E))
\]
is a topological embedding
(resp., an isomorphism), by Lemma~\ref{suppoc0}.
Finally,
\[
\Psi\colon c_0(\N_0,c_0(\N_0^{n-1},E))\to c_0(\N_0^n,E),\quad g\mto g^\wedge
\]
(with $g^\wedge(\nu_1,\ldots, \nu_n):=g(\nu_1)(\nu_2,\ldots, \nu_n)$)
is an isomorphism of topological vector spaces, by Lemma~\ref{c0exp}.
Hence $\Psi\circ c_0(\N_0,A_{n-1,E})\circ A_{1,F}\circ\Phi$
is a linear topological embedding
(and an isomorphism if~$E$
is sequentially complete).
We claim that
\begin{equation}\label{NewEQ}
\Psi\circ c_0(\N_0,A_{n-1,E})\circ A_{1,F}\circ\Phi = A_{n,E}.
\end{equation}
If this is true, then all assertions are verified,
except for (\ref{likeisome}).
However, (\ref{likeisome}) readily
follows from (\ref{iscoeff}) and (\ref{reqest}), as in the case $n=1$.\\[2.4mm]
To verify the claim, let $\nu=(\nu_1,\nu')$ with $\nu_1\in \N_0$ and $\nu'\in \N_0^{n-1}$.
For $\mu'\in \N_0^{n-1}$ such that $\mu'\leq\mu$,
the point evaluation
$\ve_{\mu'}\colon C(\N_0^{n-1},E)\to E$,
$g\mto g(\mu')$ is continuous linear.
Using Remark~\ref{NEWrm1}, we deduce that
\begin{eqnarray}
A_{1,F}(f^\vee)(\nu_1)(\mu')
&=&\ve_{\mu'}(A_{1,F}(f^\vee)(\nu_1))
\,=\,A_{1,E}((\ve_{\mu'}\circ f^\vee)(\nu_1)\notag\\
&=&A_{1,E}(f(\sbull,\mu'))(\nu_1)\notag\\
&=& \sum_{\mu_1=0}^{\nu_1}
(-1)^{\nu_1-\mu_1}
\left(
\begin{array}{c}
\nu_1\\
\mu_1
\end{array}
\right)
f(\mu_1,\mu').\label{usefina}
\end{eqnarray}
As a consequence,
\begin{eqnarray*}
\lefteqn{(\Psi\circ c_0(\N_0,A_{n-1,E})\circ A_{1,F}\circ\Phi)(f)(\nu) \,=\,
\Psi(A_{n-1,E}\circ A_{1,F}(f^\vee))(\nu)}\hspace*{7.3mm}\\
&=&(A_{n-1,E}\circ A_{1,F}(f^\vee))(\nu_1)(\nu')
\,=\, A_{n-1,E}(A_{1,F}(f^\vee)(\nu_1))(\nu')\\
&=&
\sum_{\mu'\leq\nu'}
(-1)^{|\nu'|-|\mu'|}
\left(
\begin{array}{c}
\nu'\\
\mu'
\end{array}
\right)
A_{1,F}(f^\vee)(\nu_1)(\mu')\\
&=&
\sum_{\mu\leq\nu}
(-1)^{|\nu|-|\mu|}
\left(
\begin{array}{c}
\nu\\
\mu
\end{array}
\right)
f(\mu)\,=\,
A_{n,E}(f)(\nu),
\end{eqnarray*}
using (\ref{usefina})
to obtain the penultimate equality.
\end{proof}
\begin{numba}\label{defnwalph}
For $\alpha\in \N_0^n$,
define $w_\alpha\colon \N_0^n\to \;]0,\infty[$ via
$w_\alpha(\nu):=\nu^\alpha:=\nu_1^{\alpha_1}\cdots
\nu_n^{\alpha_n}$.
\end{numba}
\begin{prop}\label{isweightedf}
In the situation of Theorem~C,
the map
\[
A^\alpha_E\colon C^\alpha(\Z_p^n,E)\to c_0(\N_0^n,w_\alpha,E)
\]
taking a function to its Mahler coefficients
is an isomorphism of topological vector spaces.
\end{prop}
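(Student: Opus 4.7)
The plan is to argue by induction on $n \in \N$, using Theorem~A together with the exponential law for weighted $c_0$-spaces (Lemma~\ref{c0exp}) to reduce everything to a one-variable base case, and then to identify the resulting composition of isomorphisms with the Mahler coefficient map.

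For the base case $n=1$ with $\alpha = r \in \N_0$, the subcase $r=0$ is precisely Lemma~\ref{strongMahcts} (the sequential completeness hypothesis on~$E$ being used to obtain surjectivity). For $r \geq 1$, I would first settle the ultrametric Banach space case, which is the classical $C^r$-refinement of Mahler's theorem available in Schikhof's treatment. To pass from this to a sequentially complete locally convex~$E$, I embed $E$ into a completion $\wt{E}$, realize $\wt{E}$ as a projective limit $\pl E_q$ of ultrametric Banach spaces $(E_q,\|.\|_q)$, and combine Lemmas~\ref{plinim}, \ref{alphvsub}, \ref{suppoc0}, and \ref{inic0} to see that $A^r_{\wt E}$ is the projective limit of the Banach isomorphisms $A^r_{E_q}$ and that $A^r_E$ is the restriction of $A^r_{\wt E}$ to $C^r(\Z_p,E) \sub C^r(\Z_p,\wt E)$. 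Surjectivity of $A^r_E$ onto $c_0(\N_0, w_r, E)$ then follows from formula~(\ref{iscoeff}), which shows that the Mahler coefficients of any function taking values in~$E$ already lie in~$E$ and not merely in~$\wt E$.

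For the induction step, assume the result for dimension $n-1$ and every multi-index in $\N_0^{n-1}$. Write $\alpha = (r,\alpha')$ with $r \in \N_0$ and $\alpha' \in \N_0^{n-1}$, and set $F := C^{\alpha'}(\Z_p^{n-1},E)$; by Lemma~\ref{samsu}\,(b) this $F$ is again sequentially complete and locally convex. I then chain together four isomorphisms of topological vector spaces: Theorem~A produces $\Phi \colon C^\alpha(\Z_p^n,E) \to C^r(\Z_p,F)$, $f \mto f^\vee$; the already-proven $n=1$ case applied to~$F$ yields $A^r_F \colon C^r(\Z_p,F) \to c_0(\N_0, w_r, F)$; Lemma~\ref{suppoc0} applied to the inductive isomorphism $A^{\alpha'}_E$ gives $c_0(\N_0, w_r, A^{\alpha'}_E) \colon c_0(\N_0, w_r, F) \to c_0(\N_0, w_r, c_0(\N_0^{n-1}, w_{\alpha'}, E))$; and Lemma~\ref{c0exp} with $v := w_r$, $w := w_{\alpha'}$ (noting $v \tensor w = w_\alpha$ directly from Definition~\ref{defnwalph}) furnishes the final isomorphism~$\Psi$ onto $c_0(\N_0^n, w_\alpha, E)$.

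What remains is to identify the composition $\Psi \circ c_0(\N_0, w_r, A^{\alpha'}_E) \circ A^r_F \circ \Phi$ with $A^\alpha_E$. This is a direct analogue of the verification of equation~(\ref{NewEQ}) at the end of the proof of Lemma~\ref{strongMahcts}: using continuity and $\Q_p$-linearity of point evaluations, together with Remark~\ref{NEWrm1}, the composition evaluated at $f$ and $\nu = (\nu_1,\nu') \in \N_0 \times \N_0^{n-1}$ unfolds into the iterated finite sum $\sum_{\mu_1 \leq \nu_1}\sum_{\mu' \leq \nu'} (-1)^{|\nu|-|\mu|} \binom{\nu}{\mu} f(\mu)$, which is precisely $A^\alpha_E(f)(\nu)$ by~(\ref{iscoeff}). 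I expect the genuine difficulty to be the one-dimensional base case for $r \geq 1$ in the locally convex setting, since the inductive step is then essentially mechanical; by contrast, the passage from Banach-valued to locally convex~$E$ requires careful bookkeeping of how the completion $\wt E$, the embedding $E \emb \wt E$, and the projective limit structure interact with the weighted $c_0$-topology and with Mahler expansions.
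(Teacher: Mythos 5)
Your overall architecture coincides with the paper's: the induction step (Theorem~A, the $n=1$ case applied to $F=C^{\alpha'}(\Z_p^{n-1},E)$, Lemma~\ref{suppoc0}, Lemma~\ref{c0exp} with $w_r\tensor w_{\alpha'}=w_\alpha$, and the identification of the composite with the Mahler coefficient map as in~(\ref{NewEQ})) is exactly the paper's, and the reduction of the base case to ultrametric Banach spaces via a completion $\wt{E}=\pl\,E_q$ is also the paper's route. Two differences in the base case: the paper obtains the topological isomorphism for Banach~$E$ from the Closed Graph Theorem (using Lemmas~\ref{samsu}\,(f) and \ref{c0props}), needing Schikhof-style arguments only for the set-theoretic characterization, whereas you take the Banach case as classical; and for the passage to locally convex~$E$ the paper only needs initial-topology arguments (Lemmas~\ref{alphinit} and \ref{inic0}), while your plan of writing $A^r_{\wt{E}}$ as a projective limit of the $A^r_{E_q}$ additionally requires the identification $c_0(\N_0,w_r,\wt{E})\cong \pl\, c_0(\N_0,w_r,E_q)$, which is routine but not among the stated lemmas.

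There is, however, a genuine gap in your surjectivity argument for $A^r_E$ in the base case. You assert that surjectivity onto $c_0(\N_0,w_r,E)$ ``follows from (\ref{iscoeff}), which shows that the Mahler coefficients of any function taking values in~$E$ already lie in~$E$.'' That observation goes in the wrong direction: it shows that $A^r_E$ maps $C^r(\Z_p,E)$ into $E$-valued coefficient sequences (well-definedness), not that every $b\in c_0(\N_0,w_r,E)$ is attained. From your projective-limit step you only get $\wt{f}:=(A^r_{\wt{E}})^{-1}(b)\in C^r(\Z_p,\wt{E})$, and you must still prove that $\wt{f}$ takes values in~$E$ and is $C^r$ as a map into~$E$; this is precisely where sequential completeness of~$E$ enters for $r\geq 1$ (in your sketch it is used only at $r=0$). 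The repair is standard: the Mahler series of $\wt{f}$ converges uniformly (Lemma~\ref{strongMahcts}) and its partial sums are $E$-valued, so sequential completeness (hence sequential closedness of $E$ in $\wt{E}$) gives $\wt{f}(\Z_p)\sub E$, and the sequentially-closed variant of Lemma~\ref{difinsub} (available since $\Q_p$ is metrizable) then yields $\wt{f}\in C^r(\Z_p,E)$. The paper sidesteps this by proving bijectivity for general sequentially complete locally convex~$E$ directly: it repeats Schikhof's proof of the characterization ``$f$ is $C^r$ iff $q(a_\nu(f))\nu^r\to 0$'' with seminorms and gets surjectivity from Lemma~\ref{strongMahcts} together with that characterization, reserving the completion/Banach reduction for the purely topological statement that $A^r_E$ is an embedding.
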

{\bf Proof of Theorem~C and Proposition~\ref{isweightedf}.}
The final assertion of Theorem~C is a special case
of Corollary~\ref{fico} (see also Remark~\ref{forThmC}),
whence we omit its proof here.
The other assertions are proved by induction on $n\in \N$.\\[2.4mm]
The case $n=1$. Write $\alpha=(r)$ with $r\in \N_0$.
If $f\in C(\Z_p,E)$, let $(a_\nu(f))_{\nu\in \N_0}$
be the sequence of Mahler coefficients of~$f$.
Using ultrametric continuous seminorms
instead of $|.|$, we can repeat the proof of \cite[Theorem~54.1]{Sch}
(devoted the scalar-valued case)
and find that $f$ is $C^r$ if and only if $q(a_\nu(f))\nu^r\to 0$
as $\nu\to\infty$, for each ultrametric continuous seminorm~$q$
on~$E$. In particular, we can define a map
\[
A^r_E \colon C^r(\Z_p,E)\to c_0(\N,w_r,E),\quad f\mto (a_\nu(f))_{\nu\in \N_0}.
\]
It is clear that $A^r_E$ is linear, and it is injective
(by Lemma~\ref{detmind}).
To see that $A^r_E$ is also surjective
(and hence an isomorphism of vector spaces),
let $b=(b_\nu)_{\nu\in \N_0}\in c_0(\N,w_r,E)$.
Since $w_r\geq 1$, we have $b\in c_0(\N,E)$.
As~$E$ is assumed sequentially complete,
this entails the existence of a continuous function $f\colon  \Z_p\to E$
with Mahler coefficients $b_\nu$ (Lemma~\ref{strongMahcts}).
But then $f\in C^r(\Z_p,E)$ by the characterization of $C^r$-functions
just obtained, and now $b=A^r_E(f)$ by construction of~$f$.\\[2.4mm]
To see that $A^1_E$ is an isomorphism of topological vector spaces,
it only remains to show that $A^1_E$ is a topological embedding.
We may assume that $E$ is complete.\footnote{If $\wt{E}$ is a completion of~$E$ containing~$E$,
then $C^r(\Z_p,E)$ embeds into $C^r(\Z_p,\wt{E})$ (see Lemma~\ref{alphpushf})
and $c_0(\N_0,w_r,E)$ embeds into $c_0(\N_0,w_r,E)$ (Lemma~\ref{suppoc0}),
whence $A^1_E$ will be an embedding if $E^1_{\wt{E}}$ is so.}
Thus $E=\pl\,E_q$ for some ultrametric Banach spaces~$E_q$, with
limit maps $\pi_q\colon E\to E_q$.
Then the topology on $C^r(\Z_p,E)$ is initial with respect to the maps
$C^r(\Z_p,\pi_q)$ (Lemma~\ref{alphinit})
and the topology on
$c_0(\N_0,w_r,E)$ is initial with respect to
the maps $c_0(\N_0,w_r,\pi_q)$ (Lemma~\ref{inic0}).
Hence, using the transitivity of initial topologies,
the initial topology $\cT$ on $C^r(\Z_p,E)$
with respect to the map $A^1_E$
is also initial with respect to the maps
$c_0(\N_0,w_r,\pi_q)\circ A^1_E=A^1_{E_q}\circ C^r(\Z_p,\pi_q)$
(where the equality of both sides comes from Remark~\ref{NEWrm1}).
If we can show that each $A^1_{E_q}$ is an embedding,
then the topology on $C^r(\Z_p,E_q)$ is initial
with respect to $A^1_{E_q}$ and hence
the compact-open $C^r$-topology~$\cO$ on $C^r(\Z_p,E)$
is initial with respect to the maps $A^1_{E_q}\circ C^r(\Z_p,\pi_q)$.
It therefore coincides with~$\cT$, whence the bijective\linebreak
map $A^1_E$ is
a homeomorphism (and hence an isomorphism of topological\linebreak
vector spaces).\\[2.4mm]
By the preceding, it suffices to show that $A^1_E$ is an isomorphism
of topological vector spaces if~$E$ is an ultrametric Banach space.
In this case, $C^r(\Z_p,E)$ is an ultrametric Banach space
(Lemma~\ref{samsu}\,(f)).
Also $c_0(\N_0,w_r,E)$ is an ultrametric Banach space (Lemma~\ref{c0props}).
Since $A^1_E$ is an isomorphism of vector spaces,
the Closed Graph Theorem
(see, e.g., \cite[Proposition~8.5]{PSn}) implies
that $A^1_E$ will be an isomorphism of topological vector spaces
if its graph
\[
\Gamma:=\{(f,A^1_E(f))\colon f\in C^r(\Z_p,E)\}
\]
is closed in $C^r(\Z_p,E)\times c_0(\N_0,w_r,E)$.
To verify the latter, let $(f_i)_{i\in \N}$ be a sequence in
$C^r(\Z_p,E)$ which converges to some $f\in
C^r(\Z_p,E)$, such that $A^1_E(f_i)$ converges
in $c_0(\N_0,w_r,E)$
to some $b=(b_\nu)_{\nu\in \N_0}\in c_0(\N_0,w_r,E)$.
The topology on $c_0(\N_0,w_r,E)$
being finer than the topology of pointwise convergence,
the point evaluation
\[
\ve_\nu\colon c_0(\N_0,w_r,E)\to E,\quad (c_\mu)_{\mu\in\N_0}\mto c_\nu
\]
is continuous, for each $\nu\in \N_0$.
Hence $A^1_E(f_i)\to b$ implies $a_\nu(f_i)\to b_\nu$.
But $a_\nu(f_i)\to a_\nu(f)$,
as is clear from (\ref{iscoeff}).
Hence $f$ has the Mahler coefficients $b_\nu$
and thus $(f,b)=(f,A^1_E(f))\in \Gamma$, proving that $\Gamma$
is closed.\\[2.4mm]
Induction step. If $n\geq 2$, write $\alpha=(\alpha_1,\alpha')$ with $\alpha_1\in \N_0$ and
$\alpha' \in \N_0^{n-1}$.
Since $E$ is sequentially complete and $\Q_p$ is metrizable,
$H:=C^{\alpha'}(\Z_p^{n-1},E)$ is sequentially complete (Lemma~\ref{samsu}\,(b)).
The map
\[
\Phi_\alpha\colon C^\alpha(\Z_p^n,E)\to C^{\alpha_1}(\Z_p,C^{\alpha'}(\Z_p^{n-1},E)),
\quad f\mto f^\vee
\]
is an isomorphism of topological vector spaces, by Theorem~A.
Since~$H$ is sequentially complete,
$A^{\alpha_1}_H \colon C^{\alpha_1}(\Z_p,C^{\alpha'}(\Z_p^{n-1},E))\to
c_0(\N_0,w_{\alpha_1},C^{\alpha'}(\Z_p^{n-1},E))$ is an isomorphism of topological
vector spaces (by the case $n=1$).
Because~$E$ is sequentially complete, the map
$A^{\alpha'}_E\colon C^{\alpha'}(\Z_p^{n-1},E)\to c_0(\N_0^{n-1},w_{\alpha'},E)$
is an isomorphism of topological vector spaces, by induction.
Finally, the map $\Psi_\alpha\colon c_0(\N_0,w_{\alpha_1},c_0(\N_0^{n-1},w_{\alpha'},E))\to
c_0(\N_0^n,w_\alpha,E)$, $h\mto h^\wedge$
is an isomorphism of topological vector spaces, by Lemma~\ref{c0exp}
(using that $w_{\alpha_1}\tensor w_{\alpha'}=w_\alpha$).
Hence also the map
\[
\Theta_\alpha:=\Psi_\alpha\circ c_0(\N_0,w_{\alpha_1},A^{\alpha'}_E)\circ A^{\alpha_1}_H \circ \Phi_\alpha\colon
C^\alpha(\Z_p^n,E)\to c_0(\N_0^n,w_\alpha,E)
\]
is an isomorphism of topological vector spaces.
Note that $\Theta_\alpha$ is a restriction of the composition
$\Theta_0=A_{n,E}$ considered in~(\ref{NewEQ}).
Hence $\Theta_\alpha(f)$ is the family
of Mahler coefficients of~$f$, if $f\in C^\alpha(\Z_p^n,E)$.
Thus $(a_\nu(f))_{\nu\in \N_0^n}=\Theta_\alpha(f)\in c_0(\N_0^n, w_\alpha,E)$
in this case, i.e., (\ref{decaycond}) holds.
Conversely, assume that $f\in C(\Z_p^n,E)$
and (\ref{decaycond}) is satisfied, i.e.,
$b:=(a_\nu(f))_{\nu\in \N_0^n}\in c_0(\N_0^n,w_\alpha, E)$.
Then $g:=\Theta_\alpha^{-1}(b)\in C^\alpha(\Z_p^n,E)$.
Since $f$ and $g$ have the same Mahler coefficients, $f=g\in C^\alpha(\Z_p^n,E)$
follows.\,\Punkt
\section{{\boldmath $C^\alpha$}-maps on subsets of {\boldmath$\K^{n_1}\times\cdots\times \K^{n_\ell}$}}
In this section,
$\K$ is a topological field and
$E$ a topological $\K$-vector space.
\begin{defn}\label{defmocom}
Let $n_1,\ldots, n_\ell\in \N$,
$n:=n_1+\cdots+n_\ell$,
$\alpha\in (\N_0\cup\{\infty\})^\ell$
and $U\sub \K^n$ be a locally cartesian subset.
We let $C^\alpha(U,E)$ be the space of
all $C^\alpha$-maps $f\colon U\to E$
(as defined before Theorem~D).
Thus
\begin{equation}\label{hencprl1}
C^\alpha(U,E)=\bigcap_{\beta\in N_\alpha}C^\beta(U,E),
\end{equation}
where $N_\alpha$ is the set of all $\beta=(\beta_1,\ldots,\beta_\ell)\in \N_0^n=\N_0^{n_1}\times\cdots\times\N_0^{n_\ell}$
such that $|\beta_j|\leq\alpha_j$ for all $j\in \{1,\ldots, \ell\}$.
We endow $C^\alpha(U,E)$ with the initial topology $\cO$ with respect to the inclusion maps
\[
\iota_\beta\colon C^\alpha(U,E)\to C^\beta(U,E),
\]
using the compact-open $C^\beta$-topology on the right-hand side.
We call $\cO$ the compact-open $C^\alpha$-topology on
$C^\alpha(U,E)$.
\end{defn}
If $n_j=1$ for each $j$, then the new definition of the compact-open $C^\alpha$-topology on
$C^\alpha(U,E)$ coincides with the earlier one (cf.\ Lemma~\ref{pllower}).\\[2.4mm]
Taking $\ell=1$, we have $\alpha=(r)$ with some $r\in \N_0\cup\{\infty\}$.
Then the $C^{(r)}$-maps are precisely the $C^r$-maps,
defined as follows:
\begin{defn}
Let $\K$ be a topological field,
$U\sub \K^n$ be a locally cartesian subset, $E$
a topological $\K$-vector space and $r\in \N_0\cup\{\infty\}$.
A map $f\colon U\to E$ is called $C^r$ if it is $C^\alpha$
for all $\alpha\in \N_0^n$ such that $|\alpha|\leq r$.
\end{defn}
\begin{rem}
In the cases when $U$ is open or
$U$ is cartesian, this corresponds to the
Definition of $C^r_{SDS}$-maps given
by the author in \cite{CMP} (based on ideas from \cite{Sch} and \cite{DSm}).
In this case, the compact-open $C^r$-topology
(without\linebreak
that name)
was already introduced in the
preprint version
of \cite{CMP} (see\linebreak
Definition~B.1 in {\tt arXiv:math/0609041v1}),
and (for $U$ open) shown to\linebreak
coincide
with the topology on $C^r(U,E)$ introduced earlier
in~\cite{ZOO}\footnote{The latter is even defined if $U$ is an open set in
an arbitrary topological vector space.}
(see\linebreak
Theorem~B.2 in the preprint version of~\cite{CMP}).
The step to locally cartesian sets was performed in~\cite{NDI}.
\end{rem}
\begin{la}\label{multaclos}
The mapping
\begin{equation}\label{againclim}
\iota:=(\iota_\beta)_{\beta\in N_\beta} \colon
C^\alpha(U,E)\to \prod_{\beta\in N_\beta} C^\beta(U,E)
\end{equation}
is a linear topological embedding with closed image.
\end{la}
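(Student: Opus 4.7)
The three claims to verify are linearity, embedding, and closed image; each is essentially formal given the preparatory material. Linearity of $\iota$ is immediate from the linearity of each inclusion map $\iota_\beta$, and I shall not belabour it.

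For the embedding claim, the plan is to invoke transitivity of initial topologies. By construction, the product topology on $\prod_{\beta\in N_\alpha} C^\beta(U,E)$ is initial with respect to the projections $\pi_\beta$, so the initial topology on $C^\alpha(U,E)$ with respect to $\iota$ coincides with the initial topology with respect to the family $(\pi_\beta\circ \iota)_{\beta\in N_\alpha}=(\iota_\beta)_{\beta\in N_\alpha}$; by Definition~\ref{defmocom}, this is exactly the compact-open $C^\alpha$-topology on $C^\alpha(U,E)$. Injectivity of $\iota$ follows at once from the injectivity of a single component: the trivial multi-index $0=(0,\ldots,0)$ lies in $N_\alpha$, and $\iota_0$ takes values in $C^0(U,E)=C(U,E)$, where distinct functions remain distinct. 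Hence $\iota$ is a topological embedding.

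For closed image, let $(f_a)_{a\in A}$ be a net in $C^\alpha(U,E)$ such that $\iota(f_a)\to (g_\beta)_{\beta\in N_\alpha}$ in the product; I must produce $f\in C^\alpha(U,E)$ with $\iota(f)=(g_\beta)_\beta$. For each fixed $\beta\in N_\alpha$, the convergence $f_a=\iota_\beta(f_a)\to g_\beta$ takes place in $C^\beta(U,E)$, whose topology is finer than the compact-open topology induced from $C(U,E)$ by Remark~\ref{compaco}(d); in particular $f_a(x)\to g_\beta(x)$ for every $x\in U$. Since $E$ is Hausdorff, pointwise limits are unique, so $f(x):=g_\beta(x)$ is independent of $\beta\in N_\alpha$. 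Equivalently, all the functions $g_\beta$ agree pointwise with a common function $f\colon U\to E$, and $f=g_\beta\in C^\beta(U,E)$ for every $\beta\in N_\alpha$. Invoking the intersection description~(\ref{hencprl1}), we conclude $f\in C^\alpha(U,E)$, and then $\iota(f)=(f)_{\beta\in N_\alpha}=(g_\beta)_{\beta\in N_\alpha}$ as desired.

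No genuine obstacle arises: once the initial-topology description of $C^\alpha(U,E)$ is in place and the $C^\beta$-topologies are known to refine the compact-open topology, both the embedding property and the closedness of the image reduce to pointwise-limit bookkeeping. \Punkt
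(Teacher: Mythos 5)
Your proof is correct, and the embedding part is exactly the paper's argument: transitivity of initial topologies applied to $\pi_\beta\circ\iota=\iota_\beta$, plus injectivity. Where you diverge is the closed-image part. The paper does not chase a net at all: it observes that
\[
\im(\iota)=\bigcap_{\beta,\gamma\in N_\alpha}\{(f_\eta)_\eta\in P\colon (\iota_{0,\beta}\circ\pr_\beta)(f)=(\iota_{0,\gamma}\circ\pr_\gamma)(f)\},
\]
i.e.\ the image is the set of tuples whose components coincide as elements of $C(U,E)$, and each such set is closed as the equalizer of two continuous maps into the Hausdorff space $C(U,E)$ (using the continuous inclusions $C^\beta(U,E)\to C(U,E)$ from Remark~\ref{compaco}); that the common component then lies in $C^\alpha(U,E)$ is immediate from (\ref{hencprl1}). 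Your version instead takes a net $\iota(f_a)\to(g_\beta)_\beta$, uses that each $C^\beta$-topology refines the compact-open topology (hence pointwise convergence) and that $E$ is Hausdorff to identify all $g_\beta$ with one function $f$, and concludes $f\in C^\alpha(U,E)$ again by (\ref{hencprl1}). Both are sound and rest on the same two ingredients (the $C^\beta$-topologies refine a common Hausdorff topology on functions, and $C^\alpha=\bigcap_\beta C^\beta$); the paper's equalizer description is a little shorter and avoids nets, while your argument parallels the net proof the paper itself gives for Lemma~\ref{cloima}, at the cost of an extra layer of limit bookkeeping.
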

\begin{proof}
Because the topology on $P:=\prod_{\beta\in N_\beta} C^\beta(U,E)$
is initial with the respect to the projections  $\pr_j\colon P\to C^\beta(U,E)$,
the initial topology~$\cT$ on $C^\alpha(U,E)$ with respect to~$\iota$
coincides with the initial topology with respect to
the maps $\pr_j\circ\iota=\iota_\beta$ (by transitivity of initial topologies),
and hence with~$\cO$.
Thus, being also injective, $\iota$ is a topological embedding.
The linearity is clear. For each $\beta\in N_\alpha$,
the inclusion map $\iota_{0,\beta}\colon C^\beta(U,E)\to C(U,E)$
is continuous.
Since
\begin{eqnarray*}
\im(\iota) & = & \{ (f_\eta)_{\eta\in N_\alpha} \colon (\forall \beta,\gamma\in N_\alpha)\, f_\beta=f_\gamma\}\\
&=& \bigcap_{\beta,\gamma\in N_\alpha}\{f\in P\colon (\iota_{0,\beta}\circ\pr_\beta)(f)=
(\iota_{0,\gamma}\circ\pr_\gamma)(f)\},
\end{eqnarray*}
$\im(\iota)$ is closed.
\end{proof}
\begin{rem}\label{gentypeCalph}
Let $\alpha\in (\N_0\cup\{\infty\})^\ell$
and $U\sub \K^n$ be a locally cartesian subset with $n=n_1+\cdots+ n_\ell$.
It is essential for the proofs of Propositions~\ref{betterdensloc} and \ref{denseglobpo}
that various results from Section~\ref{secthetop}
remain valid for the more general $C^\alpha$-maps on $U$
in the sense of Definition~\ref{defmocom}:
\begin{itemize}
\item[(a)]
Lemma~\ref{basicres} remains valid.\\[2.4mm]
[For $\beta\in N_\alpha$, let $\iota_\beta\colon C^\alpha(U,E)\to C^\beta(U,E)$
and $\iota^S_\beta\colon C^\alpha(S,E)\to C^\beta(S,E)$
be the inclusion maps and $\rho_\beta\colon C^\beta(U,E)\to C^\beta(S,E)$
be the restriction map, which is continuous linear by Lemma~\ref{basicres}.
Then $\iota^S_\beta\circ \rho=\rho_\beta\circ \iota_\beta$
is continuous and hence also $\rho$.]
\item[(b)]
Lemma~\ref{covarbop} remains valid.\\[2.4mm]
[Let $\rho_a$ be as in the lemma, $\rho_{\beta,a}\colon C^\beta(U,E)\to C^\beta(U_a,E)$
be the restriction map and
$\iota_{\beta,a}\colon C^\alpha(U_a,E)\to C^\beta(U,E)$ the inclusion map
for $\beta\in N_\alpha$, $a\in A$.
As a consequence of Lemma~\ref{covarbop},
the compact-open $C^\alpha$-topology on $C^\alpha(U,E)$
is initial with respect to the maps $\rho_{\beta,a}\circ \iota_\beta=\iota_{\beta,a}\circ \rho_a$
and hence also with respect to the maps $\rho_a$.]
\item[(c)]
Lemma~\ref{pllower} carries over in the sense that\vspace{-1.5mm}
$C^\alpha(U,E)=\pl_{\beta\in B_\alpha}\, C^\beta(U,E)$,
where $B_\alpha$ is the set of all $\beta\in \N_0^\ell$
with $\beta\leq\alpha$.\\[2.4mm]
[Consider the inclusion maps $\iota_\gamma\colon C^\alpha(U,E)\to
C^\gamma(U,E)$ for $\gamma\in N_\alpha$,
as well as the inclusion maps
$\iota_{\beta,\gamma}\colon C^\gamma(U,E)\to C^\beta(U,E)$
for $\gamma\in N_\alpha$ and $\beta\in B_\gamma$.
We have $C^\alpha(U,E)=\bigcap_{\beta\in B_\alpha}\, C^\beta(U,E)$,
and the initial topology on $C^\alpha(U,E)$
with respect to the inclusion maps
$\phi_\beta\colon C^\alpha(U,E)\to
C^\beta(U,E)$ for $\beta\in B_\alpha$
coincides with the initial topology with
respect to the maps
$\iota_{\gamma,\beta}\circ \phi_\beta=\iota_\gamma$,
for $\beta\in B_\alpha$ and $\gamma\in N_\beta\sub N_\alpha$.
Since $\bigcup_{\beta\in B_\alpha}N_\beta=N_\alpha$,
this is the compact-open $C^\alpha$-topology.]
\item[(d)]
Lemma~\ref{alphpushf} remains valid.\\[2.4mm]
[For $\beta\in N_\alpha$,
consider the inclusion maps
$\iota^E_\beta\colon C^\alpha(U,E)\to C^\beta(U,E)$
and $\iota^F_\beta \colon C^\alpha(U,F)\to C^\beta(U,F)$.
Since $\iota^F_\beta\circ C^\alpha(U,\lambda)=C^\beta(U,\lambda)\circ\iota^E_\beta$ is continuous by Lemma~\ref{alphpushf},
$C^\alpha(U,\lambda)$ is continuous.
If $\lambda$ is an embedding, then the topology on $C^\beta(U,E)$ is initial
with respect to $C^\beta(U,\lambda)$,
whence the compact-open $C^\alpha$-topology on $C^\alpha(U,E)$
is initial with respect to the maps $C^\beta(U,\lambda)\circ \iota^E_\beta=\iota^F_\beta\circ
C^\alpha(U,\lambda)$,
which is also initial with respect to $C^\alpha(U,\lambda)$.
The latter map being also injective, it is a topological embedding.]
\item[(e)]
Lemma~\ref{alphprod} remains valid.\\[2.4mm]
[For $\beta\in N_\alpha$,
consider the inclusion maps
$\iota^E_\beta\colon C^\alpha(U,E)\to C^\beta(U,E)$
and $\iota^{E_j}_\beta \colon C^\alpha(U,E_j)\to C^\beta(U,E_j)$.
The initial topology on $C^\alpha(U,E)$
with respect to the maps
$C^\alpha(U,\pr_j)$ coincides with the initial topology with respect to
the maps $\iota^{E_j}_\beta\circ
C^\alpha(U,\pr_j)=C^\beta(U,\pr_j)\circ \iota^E_{\beta}$.
In view of Lemma~\ref{alphprod},
it is therefore also initial with
respect to the maps
$\iota^E_{\beta}$, and hence coincides with the compact-open
$C^\alpha$-topology.]
\item[(f)]
Lemma~\ref{plinim} remains valid.\\[2.4mm]
[Using (d) and (e) from this remark
instead of Lemmas~\ref{alphvsub} and \ref{alphprod},
we can repeat the proof of Lemma~\ref{plinim}.]
\item[(g)]
Lemma~\ref{laavoid} remains valid.\\[2.4mm]
[Let $\theta$ be as in the lemma and
$\theta_\beta\colon E\to C^\beta(U,E)$ be the corresponding map
for $\beta\in N_\alpha$. Then $\iota_\beta\circ \theta=\theta_\beta$
is continuous, by Lemma~\ref{laavoid}.
Hence $\theta$ is continuous.]
\end{itemize}
\end{rem}
%
%Moreover, the following variant of Lemma~\ref{covarbop}
%is available:
%%
%%
%%
%\begin{la}\label{nach36}
%Let $\alpha\in (\N_0\cup\{\infty\})^\ell$
%and $U\sub \K^n$ be a locally cartesian subset with $n=n_1+\cdots+ n_\ell$.
%Let $\cW$ be a cover of $U$
%by open subsets $W\sub U$
%which is upward directed in the sense that
%for all $W_1,W_2\in \cW$, there exists $W_2\in \cW$ such that $W_1\sub W_3$ and $W_2\sub W_3$.
%For $W\in \cW$, let
%\[
%\rho_W\colon C^\alpha(U,E)\to C^\alpha(W,E)
%\]
%%
%be the restriction map.
%Let $f\in C^\alpha(U,E)$
%and $Q\sub C^\alpha(U,E)$ be a neighbourhood of~$f$.
%Then there exists $W\in \cW$ and
%an open subset $P\sub C^\alpha(W,E)$
%such that $\rho_W^{-1}(P)\sub Q$.
%\end{la}
%%
%\begin{proof}
%By Remark~\ref{gentypeCalph}\,(b),
%we find $W_1,\ldots, W_n\in \cW$ and open subsets $P_j\sub C^\alpha(W_j,E)$
%for $j\in \{1,\ldots, n\}$ such that
%\[
%f\in \rho_{W_1}^{-1}(P_1)\cap\cdots\cap
%\rho_{W_n}^{-1}(P_n)\sub Q\,.
%\]
%Since $\cW$ is directed, we find a $W\in \cW$ such that
%$W_1,\ldots, W_n\sub W$. The restriction map
%$\rho_{W_j,W}\colon C^\alpha(W,E)\to C^\alpha(W_j,E)$
%is continuous for $j\in\{1,\ldots, n\}$
%(as a consequence of Remark~\ref{gentypeCalph}\,(b),
%whence
%\[
%P:=\bigcap_{j=1}^n \rho_{W_j,W}^{-1}(P_j)
%\]
%is open in $C^\alpha(W,E)$. Since $\rho_{W_j,W}\circ \rho_W=\rho_{W_j}$,
%we have $f\in \rho_W^{-1}(P)$ and $\rho_W^{-1}(\rho_{W_j,W}^{-1}(P_j))=
%\rho_{W_j}^{-1}(P_j)$, which entails $\rho_W^{-1}(P)=
%\bigcap_{j=1}^n\rho_{W_j}^{-1}(P_j)\sub Q$.
%\end{proof}
%
%
%
\begin{la}\label{onesttwost}
Let $U$ be a locally cartesian subset of $\K^{n_1}\times\cdots\times \K^{n_\ell}=\K^n$
with $n_1,\ldots, n_\ell\in \N$ and $n=n_1+\cdots+ n_\ell$,
and $\alpha\in (\N_0\cup\{\infty\})^\ell$.
Let $\cV$ be a cover of $U$ by cartesian open subsets $V\sub U$,
and $\cU$ be a basis of open $0$-neighbourhoods of~$E$.
Then a basis of open $0$-neighbourhoods in $C^\alpha(U,E)$
is given by finite intersections of sets of the form
\begin{equation}\label{explsbase}
\{f\in C^\alpha(U,E)\colon (f|_V)^{<\beta>}\in \lfloor K,Q\rfloor\}
\end{equation}
with $V\in \cV$, $Q\in \cU$, $\beta\in N_\alpha$
and compact subsets $K\sub V^{<\beta>}$ of the form
$K=K_1^{1+\beta_1}\times\cdots\times K_n^{1+\beta_n}$
with compact subsets $K_1,\ldots, K_n\sub \K$.
\end{la}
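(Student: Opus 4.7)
The plan is to unfold the definition of the compact-open $C^\alpha$-topology via the transitivity of initial topologies, obtain a natural subbasis of open $0$-neighbourhoods, and then shrink each subbasic member to the special form claimed in the lemma.

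First, by Definition~\ref{defmocom} the topology on $C^\alpha(U,E)$ is initial with respect to the inclusions $\iota_\gamma\colon C^\alpha(U,E)\to C^\gamma(U,E)$ for $\gamma\in N_\alpha$, and by Lemma~\ref{covcart} each codomain carries the initial topology with respect to the maps $\Delta_{\beta,V}\colon C^\gamma(U,E)\to C(V^{<\beta>},E)$, $f\mto (f|_V)^{<\beta>}$, for $\beta\in \N_0^n$ with $\beta\leq\gamma$ and $V\in\cV$. Transitivity of initial topologies then makes the topology on $C^\alpha(U,E)$ initial with respect to the compositions $\Delta_{\beta,V}\circ\iota_\gamma$, i.e.\ with respect to the evaluations $f\mto (f|_V)^{<\beta>}$ as $V$ ranges over $\cV$ and $\beta$ over $\bigcup_{\gamma\in N_\alpha}\{\beta\in\N_0^n\colon\beta\leq\gamma\}=N_\alpha$ (the equality holds because $\beta\leq\gamma$ componentwise in $\N_0^n$ forces $|\beta_j|\leq|\gamma_j|\leq\alpha_j$ block-wise). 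Since $C(V^{<\beta>},E)$ carries the compact-open topology, finite intersections of preimages of subbasic sets $\lfloor K',Q'\rfloor=\{g\in C(V^{<\beta>},E)\colon g(K')\sub Q'\}$, with $K'\sub V^{<\beta>}$ compact and $Q'\sub E$ an open $0$-neighbourhood, form a basis of open $0$-neighbourhoods in $C^\alpha(U,E)$.

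Next, I would refine $K'$ and $Q'$ to the special forms prescribed by the lemma. Since $V\in\cV$ is cartesian, write $V=V_1\times\cdots\times V_n$ with $V_i\sub\K$ having no isolated points; formula~(\ref{spcas}) then yields $V^{<\beta>}=V_1^{1+\beta_1}\times\cdots\times V_n^{1+\beta_n}$. For each $i\in\{1,\ldots,n\}$, let $K_i\sub V_i$ be the union of the projections of $K'$ onto each of the $1+\beta_i$ copies of $V_i$ appearing in the $i$-th block; as a finite union of compact subsets of $\K$, $K_i$ is itself compact. Setting $K:=K_1^{1+\beta_1}\times\cdots\times K_n^{1+\beta_n}\sub V^{<\beta>}$, we have $K'\sub K$, whence $\lfloor K,Q'\rfloor\sub\lfloor K',Q'\rfloor$, and hence the preimage under $f\mto (f|_V)^{<\beta>}$ shrinks correspondingly. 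Since $\cU$ is a basis of open $0$-neighbourhoods in $E$, we may additionally pick $Q\in\cU$ with $Q\sub Q'$, which gives $\lfloor K,Q\rfloor\sub\lfloor K,Q'\rfloor$ and further shrinks the preimage.

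Combining both reductions, every basic open $0$-neighbourhood produced in the first paragraph (a finite intersection of preimages of sets $\lfloor K',Q'\rfloor$) contains the corresponding finite intersection of preimages of sets $\lfloor K,Q\rfloor$ with $K$ and $Q$ of the prescribed form, i.e.\ a finite intersection of sets of the form~(\ref{explsbase}). Consequently the collection (\ref{explsbase}) is itself a subbasis of open $0$-neighbourhoods in $C^\alpha(U,E)$, and finite intersections constitute a basis, as asserted. The main (and quite modest) obstacle is the combinatorial observation in the second paragraph, namely that an arbitrary compact subset of the cartesian product $V^{<\beta>}$ may be enlarged to a cartesian compact of the required shape; everything else is a routine application of the transitivity of initial topologies.
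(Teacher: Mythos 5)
Your proposal is correct and follows essentially the same route as the paper: transitivity of initial topologies (Definition~\ref{defmocom} plus Lemma~\ref{covcart}) to reduce to the maps $f\mapsto (f|_V)^{<\beta>}$ with $\beta\in N_\alpha$, $V\in\cV$, followed by enlarging an arbitrary compact $K'\sub V^{<\beta>}$ to the cartesian form via projections and shrinking the open set into the basis $\cU$. The only cosmetic difference is that the paper packages the factor-wise refinement through Lemma~\ref{sammelsu}\,(b) and an embedding into the product $\prod_{\beta,V}C(V^{<\beta>},E)$, whereas you perform the same refinement directly on preimages of subbasic sets.
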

\begin{proof}
Step 1.
Let $V=V_1\times\cdots\times V_n$
be an open cartesian subset of~$U$
and $\beta\in N_\alpha$.
For $i\in \{1,\ldots, n\}$, let
$\pi_i\colon \K^{1+\beta_1}\times\cdots\times \K^{1+\beta_n}\to \K^{1+\beta_i}$
be the projection onto the $i$-th component.
For $j\in \{1,\ldots, 1+\beta_i\}$, let
$\pi_{j,i}\colon \K^{1+\beta_i}\to \K$
be the projection onto the $j$-th component.
If $K$ is a compact subset of
$V^{<\beta>}=V_1^{1+\beta_1}\times \cdots\times V_n^{1+\beta_n}$,
then also the larger set $K_1^{1+\beta_1}\times \cdots\times K_n^{1+\beta_n}\sub
V^{<\beta>}$ is compact, where $K_i:=\bigcup_{j=1}^{1+\beta_i}\pi_{j,i}(\pi_i(K))$.\\[2.4mm]
Step 2.
In view of Step~1 and Lemma~\ref{sammelsu}\,(b),
the sets $\lfloor K,Q\rfloor$
with $Q\in \cU$ and $K\sub V^{<\beta>}$ a compact
set of the form
$K_1^{1+\beta_1}\times \cdots\times K_n^{1+\beta_n}$
form a basis $\cU_{\beta,V}$ of open $0$-neighbourhoods
in $C(V^{<\beta>},E)$.\\[2.4mm]
Step~3.
As a consequence of Definition~\ref{defmocom} and Lemma~\ref{covcart},
the topology on $C^\alpha(U,E)$ is initial with respect
to the maps $\Delta_{\beta,V}\circ \iota_\beta$ with $\beta\in N_\alpha$
and $V\in \cV$, whence the injective map
\[
\Delta\colon C^\alpha(U,E)\to \prod_{\beta\in N_\alpha, V\in \cV} C(V^{<\beta>},E),\quad
f\mto (\Delta_{\beta,V}(f))_{\beta\in N_\alpha, V\in \cV}
\]
is a topological embedding.
A basis of open $0$-neighbourhoods of the direct product
is given by sets of the form
$W=\prod_{\beta,V}W_{\beta,V}$, where
$W_{\beta,V}=C(V^{<\beta>},E)$
for all but finitely
many $(\beta,V)$, and the remaining ones
are in the basis $\cU_{\beta,V}$ of open $0$-neighbourhoods in 
$C(V^{<\beta>},E)$.
Because $\Delta$ is a topological
embedding, it follows that the pre-images $\Delta^{-1}(W)$
form a basis of $0$-neighbourhoods in $C^\alpha(U,E)$.
These are the finite intersections described in the lemma.
\end{proof}
\section{Proof of Theorem D}
For $\gamma\in N_\alpha$
and $\eta\in N_\beta$,
the map
\[
\Phi_{\gamma,\eta}\colon C^{(\gamma,\eta)}(U\times V,E)\to C^\gamma(U,C^\eta(V,E)),\quad
f\mto f^\vee
\]
is a linear topological embedding, by Theorem~A.
Because the topology on $C^\beta(V,E)$ is initial with respect
to the inclusion maps
\[
\iota_\eta\colon C^\beta(V,E)\to C^\eta(V,E),
\]
the topology on $C^\gamma(U,C^\eta(V,E))$ is initial with respect to the
(inclusion) maps
\[
C^\gamma(U,\iota_\eta)\colon C^\gamma(U,C^\beta(V,E))\to C^\gamma(U,C^\eta(V,E))
\]
(see Lemma~\ref{alphinit}).
By definition, the topology on $C^{(\alpha,\beta)}(U\times V,E)$
is initial with respect to the inclusion maps
$\lambda_{\gamma,\eta}\colon C^{(\alpha,\beta)}(U\times V,E)\to
C^{(\gamma,\eta)}(U\times V,E)$, for
$(\gamma,\eta)\in N_{(\alpha,\beta)}=N_\alpha\times N_\beta$.
Likewise, the topology on
$C^\alpha(U,C^\beta(V,E))$ is initial with respect to the inclusion maps
\[
\Lambda_\gamma\colon C^\alpha(U,C^\beta(V,E))\to C^\gamma(U,C^\beta(V,E))\quad
\mbox{for $\,\gamma\in N_\alpha$.}
\]
Let $f\in C^{(\alpha,\beta)}(U\times V,E)$
and $x\in U$.
Let $\gamma\in N_\alpha$.
For each $\eta\in N_\beta$, the map $f$ is $C^{(\gamma,\eta)}$,
whence $f_x\in C^\eta(V,E)$,
by Theorem~A. Hence $f_x$ is $C^\beta$.
Since $f$ is $C^{(\gamma,\eta)}$, the map
$f^\vee$ is $C^\gamma$
as a map to $C^\eta(V,E)$ (by Theorem~A).
Considering now $f^\vee$ as a map to $C^\beta(V,E)$,
this means that $\iota_\eta\circ f^\vee$ is $C^\gamma$
for each $\eta\in N_\beta$.
Using Lemmas~\ref{multaclos}, \ref{inprodu} and \ref{difinsub},
this implies that $f^\vee\colon U\to C^\beta(V,E)$ is~$C^\gamma$.
Hence, because $\gamma\in N_\alpha$ was arbitrary,
$f^\vee\colon U\to C^\beta(V,E)$ is $C^\alpha$.\\[2.4mm]
It is clear that $\Phi$ is injective and linear.
Let $\cT$ be the initial topology on $C^\alpha(U,C^\beta(V,E))$
with respect to~$\Phi$.
By the transitivity of initial topologies,
this topology coincides with the initial topology with respect to the maps
$\Lambda_\gamma\circ \Phi$, for $\gamma\in N_\alpha$.
Again by transitivity, this topology coincides with the initial topology
with respect to the maps
$C^\gamma(U,\iota_\eta)\circ \Lambda_\gamma\circ\Phi$,
for
$\gamma\in N_\alpha$, $\eta\in N_\beta$.
But $C^\gamma(U,\iota_\eta)\circ \Lambda_\gamma\circ\Phi
=\Phi_{\gamma,\eta}\circ\lambda_{\gamma,\eta}$.
Hence~$\cT$ coincides with the initial topology
with respect to the maps $\Phi_{\gamma,\eta}\circ\lambda_{\gamma,\eta}$.
By transitivity of initial topologies, this topology
is initial with respect to the maps $\lambda_{\gamma,\eta}$
and hence coincides with the compact-open $C^{(\alpha,\beta)}$-topology on
$C^{(\alpha,\beta)}(U\times V,E)$ Thus $\Phi$ is a linear topological embedding.\\[2.4mm]
If $\K$ is metrizable and $g\in C^\alpha(U,C^\beta(V,E))$,
define $g^\wedge\colon U\times V\to E$, $g^\wedge(x,y):=g(x)(y)$.
Let $\gamma\in N_\alpha$ and $\eta\in N_\beta$.
Then $g$ is a $C^\gamma$-map to $C^\beta(V,E)$
and also (because the inclusion map $\iota_\eta$ is continuous linear)
a $C^\gamma$-map to $C^\eta(V,E)$. Hence $g^\wedge=\Phi_{\gamma,\eta}^{-1}(g)$
is $C^{(\gamma,\eta)}$. Thus $g^\wedge$ is $C^{(\alpha,\beta)}$
and since $g=\Phi(g^\wedge)$, the linear embedding $\Phi$ is surjective
and hence an isomorphism of topological vector spaces.\,\Punkt
\section{Further extensions and consequences}
We collect consequences and generalizations
of the previous results on Mahler expansions.
Afterwards, we return to the approximation
of functions by locally polynomial functions.
We also obtain results concerning approximation by polynomial functions.
Certain weighted $c_0$-spaces will be used:
\begin{defn}\label{defwset}
Let $X$ be a set, $\cW$ be a non-empty set of functions\linebreak
$w\colon X\to \,]0,\infty[$ and $E$ be a locally convex space over an
ultrametric field~$\K$.
We set
\[
c_0(X,\cW,E)
:=\bigcap_{w\in \cW}c_0(X,w,E)
\]
and endow this space with the locally convex vector topology~$\cO$
defined by the set of ultrametric seminorms $\|.\|_{w,q}$,
for $w\in \cW$ and continuous ultrametric seminorms $q$ on~$E$.
Thus $\cO$ is the initial topology with respect to the inclusion maps
\begin{equation}\label{groLa}
\Lambda_w\colon c_0(X,\cW,E)\to c_0(X,w,E),\quad\mbox{for $\,w\in \cW$.}
\end{equation}
\end{defn}
\begin{prop}\label{Mahbetter}
Let $E$ be a sequentially complete locally convex $p$-adic vector space,
$n_1,\ldots, n_\ell\in \N$,
$\alpha\in (\N_0\cup\{\infty\})^\ell$,
and $N_\alpha$ be the set of all
$\beta=(\beta_1,\ldots,\beta_\ell)\in \N_0^{n_1}\times \cdots\times \N_0^{n_\ell}=\N_0^n$
such that $|\beta_j|\leq\alpha_j$ for all $j\in \{1,\ldots, \ell\}$.
Let $f\colon (\Z_p)^n\to E$ be a continuous function
and $a_\nu(f)\in E$ be the Mahler coefficients of~$f$.
Then $f$ is $C^\alpha$ if and only if
\begin{equation}\label{decaycond2}
q(a_\nu(f)) \nu^\beta\to 0\quad\mbox{as $\,|\nu|\to\infty$}
\end{equation}
for each $\beta\in N_\alpha$ and ultrametric continuous seminorm $q$ on~$E$.
Moreover, the map
\[
A^\alpha_E\colon C^\alpha(\Z_p^n,E)\to c_0(\N_0^n,\cW_\alpha,E),\quad f\mto (a_\nu(f))_{\nu\in \N_0^n}
\]
is an isomorphism of topological vector spaces,
for $\cW_\alpha:=\{w_\beta\colon \beta\in N_\alpha\}$
with $w_\beta$ as in {\rm\ref{defnwalph}}.
\end{prop}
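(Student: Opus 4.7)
The plan is to reduce everything to the already-established scalar multi-index case (Proposition~\ref{isweightedf}) by exploiting the fact that both sides in the statement are, by construction, intersections equipped with initial topologies indexed by the same set $N_\alpha$.

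First, I would unpack the definitions: by equation~(\ref{hencprl1}), $C^\alpha(\Z_p^n,E)=\bigcap_{\beta\in N_\alpha}C^\beta(\Z_p^n,E)$ with the initial topology $\cO_1$ with respect to the inclusions $\iota_\beta\colon C^\alpha(\Z_p^n,E)\to C^\beta(\Z_p^n,E)$, while by Definition~\ref{defwset}, $c_0(\N_0^n,\cW_\alpha,E)=\bigcap_{\beta\in N_\alpha}c_0(\N_0^n,w_\beta,E)$ with the initial topology $\cO_2$ with respect to the inclusions $\Lambda_{w_\beta}$. Proposition~\ref{isweightedf} supplies, for each $\beta\in N_\alpha$, an isomorphism of topological vector spaces $A^\beta_E\colon C^\beta(\Z_p^n,E)\to c_0(\N_0^n,w_\beta,E)$ mapping $f$ to $(a_\nu(f))_{\nu\in \N_0^n}$.

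Next, I would establish the characterization of $C^\alpha$-maps. If $f\in C^\alpha(\Z_p^n,E)$, then $f\in C^\beta(\Z_p^n,E)$ for every $\beta\in N_\alpha$, so Proposition~\ref{isweightedf} gives $(a_\nu(f))_{\nu}\in c_0(\N_0^n,w_\beta,E)$, which is precisely the decay condition~(\ref{decaycond2}) for that $\beta$. Conversely, if~(\ref{decaycond2}) holds for every $\beta\in N_\alpha$, then Proposition~\ref{isweightedf} forces $f\in C^\beta(\Z_p^n,E)$ for each such $\beta$, hence $f\in C^\alpha(\Z_p^n,E)$ by~(\ref{hencprl1}). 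The same reasoning shows that $A^\alpha_E$ is well defined with image in $c_0(\N_0^n,\cW_\alpha,E)$.

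Then I would verify that $A^\alpha_E$ is bijective. Injectivity is immediate since $A^\alpha_E$ is the restriction of the injective map $A^\beta_E$ for any fixed $\beta\in N_\alpha$ (e.g.\ $\beta=0$). For surjectivity, given $b\in c_0(\N_0^n,\cW_\alpha,E)$, apply $(A^0_E)^{-1}$ to obtain a continuous $f\colon \Z_p^n\to E$ with $A^0_E(f)=b$; for each $\beta\in N_\alpha$ the same $b$ also lies in $c_0(\N_0^n,w_\beta,E)$, and by the uniqueness statement of Lemma~\ref{detmind}, the corresponding preimage under $A^\beta_E$ must be the same function $f$, so $f\in C^\beta(\Z_p^n,E)$ for all $\beta\in N_\alpha$, hence $f\in C^\alpha(\Z_p^n,E)$ with $A^\alpha_E(f)=b$.

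Finally, to upgrade bijectivity to an isomorphism of topological vector spaces, I would use the commutative diagram $\Lambda_{w_\beta}\circ A^\alpha_E = A^\beta_E\circ \iota_\beta$ for each $\beta\in N_\alpha$. By transitivity of initial topologies, $\cO_1$ coincides with the initial topology with respect to the maps $A^\beta_E\circ\iota_\beta=\Lambda_{w_\beta}\circ A^\alpha_E$, which (since each $A^\beta_E$ is a homeomorphism) coincides with the initial topology with respect to $A^\alpha_E$ composed with the defining maps $\Lambda_{w_\beta}$ of~$\cO_2$, i.e.\ with the initial topology with respect to $A^\alpha_E$ alone. Hence $A^\alpha_E$ is an isomorphism of topological vector spaces. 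The only potential subtlety is ensuring that the different preimages under the various $A^\beta_E$ agree as functions, but this is guaranteed cleanly by Lemma~\ref{detmind}, so I do not expect any serious obstacle.
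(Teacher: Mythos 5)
Your proposal is correct and follows essentially the same route as the paper: reduce to Proposition~\ref{isweightedf} for each $\beta\in N_\alpha$, obtain bijectivity of $A^\alpha_E$ from the uniqueness of Mahler coefficients (Lemma~\ref{detmind}, as the paper itself invokes it), and deduce the topological isomorphism from the relation $\Lambda_{w_\beta}\circ A^\alpha_E=A^\beta_E\circ\iota_\beta$ together with transitivity of initial topologies. The only cosmetic difference is that you produce the preimage in the surjectivity step via $(A^0_E)^{-1}$ and then match it with the $\beta$-wise preimages, whereas the paper takes $f_\beta$ for each $\beta$ and notes they coincide; the substance is identical.
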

\begin{proof}
The continuous map $f$ is $C^\alpha$
if and only if it is $C^\beta$ for each $\beta\in N_\alpha$
(see Definition~\ref{defmocom}).
By Theorem~A, this holds if and only if
$(a_\nu(f))_{\nu\in \N_0^n}\in c_0(\N_0^n,w_\beta,E)$
for each $\beta\in N_\alpha$.
By Definition~\ref{defwset},
this condition is equivalent to $(a_\nu(f))_{\nu\in \N_0^n}\in c_0(\N_0^n,\cW_\alpha,E)$.
By Proposition~\ref{detmind}, the map $A^\alpha_E$ is injective.
To verify that $A^\alpha_E$ is surjective, let
$b=(b_\nu)_{\nu\in \N_0^n}\in c_0(\N_0^n,\cW_\alpha,E)$.
Then $b\in c_0(\N_0^n,w_\beta,E)$
for each $\beta\in N_\alpha$.
By Proposition~\ref{isweightedf}, we find $f_\beta\in C^\beta(\Z_p^n,E)$
with Mahler coefficients~$b_\nu$.
The uniqueness in Proposition~\ref{detmind}
entails that $f:=f_\beta$ is independent of~$\beta$.
Hence $f\in \bigcap_{\beta\in N_\alpha} C^\beta(\Z_p^n,E)=C^\alpha(\Z_p^n,E)$,
and $A^\alpha_E(f)=b$ by construction. Hence $A^\alpha_E$ is surjective
and hence an isomorphism of vector spaces.
For $\beta\in N_\alpha$, let $\iota_\beta\colon C^\alpha(\Z_p^n,E)\to C^\beta(\Z_p^n,E)$
be the inclusion map. By Proposition~\ref{isweightedf}, the topology on $C^\beta(\Z_p^n,E)$ is initial
with respect to the map
$A^\beta_E\colon C^\beta(\Z_p\ n,E)\to c_0(\N_0,w_\beta,E)$.
Hence the compact-open $C^\alpha$-topology~$\cO$ on $C^\alpha(\Z_p^n,E)$
is initial with respect to the maps $A^\beta_E\circ \iota_\beta$,
for $\beta\in N_\alpha$.
Let $\cT$ be the topology on $C^\alpha(\Z_p^n,E)$ which is initial with respect to~$A^\alpha_E$.
By transitivity of initial topologies, $\cT$ is also initial with respect to the maps
$\Lambda_{w_\beta}\circ A^\alpha_E=A^\beta_E\circ \iota_\beta$
(using notation as in~(\ref{groLa})),
and hence coincides with~$\cO$.
\end{proof}
\begin{cor}\label{fico}
In the situation of Proposition~{\rm\ref{Mahbetter}},
let $f\colon (\Z_p)^n\to E$ be a continuous function
and $a_\nu(f)\in E$ be the Mahler coefficients of~$f$.
Let $N_\alpha'$ be
the set of all $\beta=(\beta_1,\ldots,\beta_\ell)\in N_\alpha$ such that,
for each $i\in \{1,\ldots,\ell\}$, the $n_i$-tupel
$\beta_i\in \N_0^{n_i}$ has at most one
non-zero component.
Then $f$ is $C^\alpha$ if and only if
\begin{equation}\label{decaycond3}
q(a_\nu(f)) \nu^\beta\to 0\quad\mbox{as $\,|\nu|\to\infty$}
\end{equation}
for each $\beta\in N_\alpha'$ and ultrametric continuous seminorm~$q$
on~$E$.
Moreover, the map
\[
C^\alpha(\Z_p^n,E)\to c_0(\N_0^n,\cW_\alpha',E),\quad f\mto (a_\nu(f))_{\nu\in \N_0^n}
\]
is an isomorphism of topological vector spaces,
for $\cW_\alpha':=\{w_\beta\colon \beta\in N_\alpha'\}$.
\end{cor}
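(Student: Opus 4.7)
The plan is to deduce this corollary from Proposition~\ref{Mahbetter} by showing that the weighted $c_0$-spaces $c_0(\N_0^n,\cW_\alpha,E)$ and $c_0(\N_0^n,\cW_\alpha',E)$ coincide as topological $\K$-vector spaces, even though the two weight families $\cW_\alpha\supseteq \cW_\alpha'$ are different. Since the inclusion $c_0(\N_0^n,\cW_\alpha,E)\sub c_0(\N_0^n,\cW_\alpha',E)$ and the continuity of that inclusion are immediate from $N_\alpha'\sub N_\alpha$, the real work is the opposite direction.

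The key point is the elementary inequality: for each $\beta\in N_\alpha$, there exists a finite subset $S(\beta)\sub N_\alpha'$ such that
\[
\nu^\beta\;\leq\;\sum_{\beta'\in S(\beta)}\nu^{\beta'}\qquad\text{for all }\nu\in\N_0^n.
\]
To see this, write $\nu=(\nu_1,\ldots,\nu_\ell)$ with $\nu_i\in\N_0^{n_i}$, and analogously $\beta=(\beta_1,\ldots,\beta_\ell)$. For each block $i$, bounding the product $\nu_i^{\beta_i}=\prod_{j=1}^{n_i}\nu_{i,j}^{\beta_{i,j}}$ by $(\max_{j}\nu_{i,j})^{|\beta_i|}$ and then by $\sum_{k=1}^{n_i}\nu_{i,k}^{|\beta_i|}$, multiplying over $i$, and expanding the product of sums, gives
\[
\nu^\beta\;\leq\;\prod_{i=1}^\ell\sum_{k_i=1}^{n_i}\nu_{i,k_i}^{|\beta_i|}\;=\;\sum_{(k_1,\ldots,k_\ell)}\prod_{i=1}^\ell \nu_{i,k_i}^{|\beta_i|}.
\]
Each summand equals $\nu^{\beta'}$ for the multi-index $\beta'\in\N_0^n$ whose $i$-th block has entry $|\beta_i|$ at position $k_i$ (and $0$ elsewhere). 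Then $|\beta_i'|=|\beta_i|\leq\alpha_i$, so $\beta'\in N_\alpha'$; thus the set $S(\beta)$ of all such $\beta'$ is a finite subset of $N_\alpha'$ with the required property.

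From this inequality, $w_\beta(\nu)\leq\sum_{\beta'\in S(\beta)}w_{\beta'}(\nu)$ pointwise, and consequently
\[
\|g\|_{w_\beta,q}\;\leq\;\sum_{\beta'\in S(\beta)}\|g\|_{w_{\beta'},q}
\]
for every ultrametric continuous seminorm $q$ on $E$ and every function $g$. Hence $c_0(\N_0^n,\cW_\alpha',E)\sub c_0(\N_0^n,w_\beta,E)$ for each $\beta\in N_\alpha$, which gives $c_0(\N_0^n,\cW_\alpha',E)\sub c_0(\N_0^n,\cW_\alpha,E)$, and simultaneously shows that the inclusion $c_0(\N_0^n,\cW_\alpha',E)\to c_0(\N_0^n,\cW_\alpha,E)$ is continuous. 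Combined with the trivial reverse inclusion (and its continuity), the two spaces are equal as topological vector spaces.

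Composing this identification with the isomorphism $A^\alpha_E\colon C^\alpha(\Z_p^n,E)\to c_0(\N_0^n,\cW_\alpha,E)$ from Proposition~\ref{Mahbetter} gives the desired topological-vector-space isomorphism $C^\alpha(\Z_p^n,E)\to c_0(\N_0^n,\cW_\alpha',E)$, and also yields the characterization of $C^\alpha$ via the decay condition~(\ref{decaycond3}). The main (and essentially only) obstacle is spotting the elementary inequality above; once that is in hand, everything reduces formally to Proposition~\ref{Mahbetter}.
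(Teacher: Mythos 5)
Your proposal is correct and follows essentially the same route as the paper: both reduce to Proposition~\ref{Mahbetter} by showing $c_0(\N_0^n,\cW_\alpha,E)=c_0(\N_0^n,\cW_\alpha',E)$ as topological vector spaces via the pointwise bound $\nu_i^{\beta_i}\leq(\max_j\nu_{i,j})^{|\beta_i|}$, your set $S(\beta)$ being exactly the paper's collection $\{\beta[j]\colon j\in J\}$. The only (inessential) difference is that you dominate $w_\beta$ by a finite sum of the weights $w_{\beta'}$ where the paper uses their pointwise maximum.
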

\begin{proof}
Let $J$ be the set of all $j=(j_1,\ldots, j_\ell)$
such that $j_i\in \{1,\ldots, n_i\}$ for $i\in\{1,\ldots,\ell\}$.
If $\beta\in N_\alpha$ and $j\in J$, define $\beta[j]\in N_\alpha$
via $\beta[j]:=(\eta_1,\ldots, \eta_\ell)$,
where $\eta_i\in \N_0^{n_i}$ has $|\beta_i|$ as its $j_i$-th component,
while all other components are~$0$.
If $\nu=(\nu_1,\ldots, \nu_\ell)\in \N_0^{n_1}\times\cdots\times \N_0^{n_\ell}$
with $\nu_i=(\nu_{i,1},\ldots, \nu_{i,n_i})$,
then
\[
\nu_i^{\beta_i}\leq\max\{\nu_{i,1},\ldots,\nu_{i,n_i}\}^{|\beta_i|}
\]
and hence
\begin{equation}\label{wilreu}
\nu^\beta \leq  \max\{\nu_{1,j_i}^{|\beta_1|}\cdots\nu_{\ell,j_\ell}^{|\beta_\ell|}\colon
(j_1,\ldots, j_\ell)\in J\}.
\end{equation}
Hence, if (\ref{decaycond3}) is satisfied for all $\beta\in N_\alpha'$,
then also for all $\beta\in N_\alpha$.
The converse is obvious as $N_\alpha'\sub N_\alpha$.
Thus Proposition~\ref{Mahbetter}
shows that indeed $f$ is $C^\alpha$
if and only if (\ref{decaycond3}) is satisfied for all $\beta\in N_\alpha'$.
By (\ref{wilreu}), we have
\begin{equation}\label{wilaso}
w_\beta\leq  \max\{ w_{\beta [ j ] }  \colon j\in J \}
\end{equation}
(as a pointwise maximum), entailing that
$c_0(\N_0^n,\cW_\alpha,E) = c_0(\N_0^n,\cW_\alpha',E)$
as a vector space.
We also deduce from (\ref{wilaso}) that
\[
\| . \|_{w_\beta,q} \leq \max\{ \|.\|_{ w_{\beta [ j ] ,q}} \colon j\in J \}
\]
(as a pointwise maximum) for each ultrametric continuous seminorm $q$ on~$E$.
Thus
$c_0(\N_0^n,\cW_\alpha,E)=c_0(\N_0^n,\cW_\alpha',E)$
as a topological vector space.
The final assertion therefore follows from the final conclusion of Proposition~\ref{Mahbetter}.
\end{proof}
\begin{rem}\label{forThmC}
To deduce the final assertion of Theorem~C from Corollary~\ref{fico}, note that
\[
w_{r e_i}(\nu)
=\nu_i^r \leq |\nu|^r\leq n^r\max\{\nu_1^r,\ldots,\nu_n^r\}
\leq n^r \max\{w_{r e_i}(\nu)\colon i\in\{1,\ldots, n\}\}
\]
for $\nu\in \N_0^n$.
\end{rem}
\begin{cor}
Let $E$ be a sequentially complete locally convex
space over $\Q_p$ and $f\colon U\to E$ be a function
on an open subset $U\sub \Z_p^n$.
Let $k\in \N_0$. Then $f$ is $C^k$ if and only if
$f$ is $C^{(k,0,\ldots,0)}$, $C^{(0,k,0,\ldots,0)}$, $\ldots\;$,
$C^{(0,\ldots,0,k,0)}$ and
$C^{(0,\ldots,0,k)}$.
\end{cor}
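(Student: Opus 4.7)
The forward direction is immediate: since $|ke_i|=k$, every $C^k$-map (being $C^\alpha$ for all $\alpha\in\N_0^n$ with $|\alpha|\leq k$) is $C^{ke_i}$ for each~$i$. For the converse, suppose $f$ is $C^{ke_i}$ for every $i\in\{1,\ldots,n\}$. My plan is to reduce to the case $U=\Z_p^n$, on which the Mahler-expansion theory (Theorem~C and Corollary~\ref{fico}) is available, and then to match up the resulting decay conditions on the Mahler coefficients.

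Being $C^\alpha$ is a local property (Lemma~\ref{islocal}), and the same holds for $C^{ke_i}$ and $C^k$, as these are defined as intersections of $C^\alpha$-properties. The cosets $a+p^N\Z_p^n$ ($a\in\Z_p^n$, $N\in\N_0$) form a basis for the topology of $\Z_p^n$, so we may cover $U$ by such cosets contained in~$U$. A direct calculation from~(\ref{badbd}) shows that the affine bijection $\phi\colon\Z_p^n\to a+p^N\Z_p^n$, $y\mapsto a+p^N y$, rescales every coordinate difference by $p^N$ and hence transforms partial difference quotients via $(f\circ\phi)^{>\beta<}=p^{N|\beta|}\cdot f^{>\beta<}\circ\widehat{\phi}$, where $\widehat{\phi}$ denotes the coordinatewise action of~$\phi$ on $\Z_p^{n+|\beta|}$. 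Hence $f|_{a+p^N\Z_p^n}$ is $C^\beta$ if and only if $f\circ\phi$ is, for each $\beta\in\N_0^n$. Both the hypothesis and the desired conclusion therefore transfer from~$U$ to the domain~$\Z_p^n$.

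On $\Z_p^n$, Corollary~\ref{fico} applied with $\ell=n$, $n_i=1$, $\alpha=ke_i$ shows (using that $N'_{ke_i}=\{je_i:0\leq j\leq k\}$) that $C^{ke_i}$-ness is equivalent to the decay $q(a_\nu(f))\,\nu_i^k\to 0$ as $|\nu|\to\infty$ for every continuous ultrametric seminorm~$q$ on~$E$; the weaker conditions $q(a_\nu(f))\,\nu_i^j\to 0$ with $j<k$ follow automatically once $q(a_\nu(f))\to 0$, which holds by continuity of~$f$ via Lemma~\ref{strongMahcts}. By the last assertion of Theorem~C, $C^k$-ness is equivalent to $q(a_\nu(f))\,|\nu|^k\to 0$. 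The elementary estimate
\[
|\nu|^k \;=\; (\nu_1+\cdots+\nu_n)^k \;\leq\; n^k\max_i\nu_i^k \;\leq\; n^k\sum_{i=1}^n\nu_i^k
\]
then yields $q(a_\nu(f))\,|\nu|^k\leq n^k\sum_{i=1}^n q(a_\nu(f))\,\nu_i^k$, which tends to~$0$ once each summand does, completing the argument. The main obstacle is the local reduction via the affine change of variables; the displayed transformation law for $(f\circ\phi)^{>\beta<}$ makes this step routine, after which the proof is essentially the elementary inequality above applied to the Mahler characterizations.
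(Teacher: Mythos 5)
Your proof is correct and takes essentially the same route as the paper: cover $U$ by cosets $x+p^N\Z_p^n$, transfer to $\Z_p^n$ via the affine rescaling $y\mapsto x+p^N y$, and compare Mahler-coefficient decay conditions, where the paper invokes Corollary~\ref{fico} with $\ell=1$ and $\alpha=(k)$ while your detour through the final assertion of Theorem~C together with $|\nu|^k\le n^k\max_i\nu_i^k$ uses exactly the estimate of Remark~\ref{forThmC}. Your explicit transformation law $(f\circ\phi)^{>\beta<}=p^{N|\beta|}\,f^{>\beta<}\circ\widehat{\phi}$ merely spells out a step the paper leaves implicit.
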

\begin{proof}
Each point $x\in U$ has an open neighbourhood of the form
$x+p^m\Z_p^n$ for some $m\in \N_0$.
The assertions now follow
from Corollary~\ref{fico},
applied (with $\ell=1$ and $\alpha=(k)$)
to the function $g\colon \Z_p^n\to E$, $g(y):= f(x+p^m y)$.
\end{proof}
Compare already \cite[p.\,140]{DSm} for the characterization
of $C^1$-functions\linebreak
$f\colon \Z_p^2\to\Q_p$
via (\ref{decaycond3}) (with $\ell=1$).
Related results for $C^k$-functions $\Z_p^n\to\K$
(where $\K$ is a finite field extension
of $\Q_p$) and topologies on
spaces of such functions can also
be found in \cite[54--65]{Nag}.\\[2.4mm]
The following lemma will help us to pass from compact cartesian sets to compact locally cartesian
(and more general) sets in some results.
\begin{la}\label{deccompinct}
Let $(\K,|.|)$ be an ultrametric field, $n\in \N$,
$U\sub \K^n$ be a compact locally cartesian set
and $\cX$ be an open cover of $U$.
Then the following holds:
\begin{itemize}
\item[{\rm(a)}]
There exist $m\in \N$ and disjoint compact, cartesian, relatively open subsets $W_1,\ldots, W_m$
of~$U$ subordinate to~$\cX$ such that $U=W_1\cup\cdots\cup W_m$.
\item[{\rm(b)}]
The sets $W_1,\ldots, W_m$ in {\rm(a)}
can be chosen in such a way that
$W_j=U\cap Q_j$ for certain disjoint clopen cartesian subsets
$Q_1,\ldots, Q_m$ of~$\K^n$.
%
%\item[{\rm(c)}]
%If $\K$ is locally compact,
%then
%$Q_1,\ldots, Q_m$ in {\rm(b)} can be chosen compact.
\end{itemize}
\end{la}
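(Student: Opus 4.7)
My plan is to extract a single uniform radius $\delta>0$, form product boxes of closed $\delta$-balls around each point of $U$, and then exploit the ultrametric dichotomy---that two closed balls of the same radius in~$\K$ are either equal or disjoint---to assemble those boxes into a finite disjoint clopen cartesian partition of a neighborhood of~$U$.

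First, for each $z\in U$ I use the construction in~\ref{nicernbhd} to obtain a cartesian neighborhood $V_z=U\cap Q_z$ of~$z$ in~$U$, where $Q_z=Q_{z,1}\times\cdots\times Q_{z,n}$ is open cartesian in $\K^n$. Shrinking~$Q_z$ and using that clopen balls form a basis of the topology of~$\K$, I may assume each $Q_{z,i}$ is a closed ball $\wb{B}^d_{r_{z,i}}(c_{z,i})$ of some positive radius $r_{z,i}$, and that $V_z\subseteq X_z$ for some $X_z\in\cX$. A finite subcover $V_{z_1},\ldots,V_{z_M}$ of~$U$ exists by compactness; set $\delta:=\min_{j,i} r_{z_j,i}$. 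Given any $y\in U$, pick $z_j$ with $y\in V_{z_j}$; since $|y_i-c_{z_j,i}|\leq r_{z_j,i}$, the ultrametric inequality yields $\wb{B}^d_\delta(y_i)\subseteq Q_{z_j,i}$ for every~$i$, so the box
\begin{equation*}
B(y)\;:=\;\wb{B}^d_\delta(y_1)\times\cdots\times\wb{B}^d_\delta(y_n)
\end{equation*}
lies inside~$Q_{z_j}$ and hence $B(y)\cap U\subseteq V_{z_j}\subseteq X_{z_j}$.

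Next, for each coordinate~$i$, the compact projection $\pr_i(U)\subseteq\K$ is covered by the closed $\delta$-balls around its points; the dichotomy renders these balls pairwise equal or disjoint, and compactness leaves only finitely many distinct ones $C_{i,1},\ldots,C_{i,K_i}$, which are pairwise disjoint and clopen. Form the product boxes
\begin{equation*}
\mathcal{B}_k\;:=\;C_{1,k_1}\times\cdots\times C_{n,k_n},\qquad k=(k_1,\ldots,k_n)\in\{1,\ldots,K_1\}\times\cdots\times\{1,\ldots,K_n\};
\end{equation*}
they are pairwise disjoint, clopen and cartesian in~$\K^n$ and together cover $\pr_1(U)\times\cdots\times\pr_n(U)\supseteq U$. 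Let $J$ be the finite set of multi-indices~$k$ with $\mathcal{B}_k\cap U\ne\emptyset$, enumerate~$J$ as $k^{(1)},\ldots,k^{(m)}$, and set $Q_j:=\mathcal{B}_{k^{(j)}}$, $W_j:=U\cap Q_j$.

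It remains to verify the required properties. Disjointness, relative openness in~$U$, and compactness (closed subsets of the compact set~$U$) are inherited directly from the~$\mathcal{B}_k$. Fixing $y\in W_j$, the dichotomy forces $\wb{B}^d_\delta(y_i)=C_{i,k^{(j)}_i}$, hence $Q_j=B(y)$; the first paragraph furnishes $z_l$ with $y\in V_{z_l}$ and $B(y)\subseteq Q_{z_l}$, so writing $V_{z_l}=V_{z_l,1}\times\cdots\times V_{z_l,n}$ gives
\begin{equation*}
W_j\;=\;B(y)\cap V_{z_l}\;=\;\bigl(V_{z_l,1}\cap\wb{B}^d_\delta(y_1)\bigr)\times\cdots\times\bigl(V_{z_l,n}\cap\wb{B}^d_\delta(y_n)\bigr).
\end{equation*}
Each factor is a relatively open subset of~$V_{z_l,i}$ and hence still has no isolated points, so~$W_j$ is cartesian; moreover $W_j\subseteq V_{z_l}\subseteq X_{z_l}$, proving~(a). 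The $Q_j=\mathcal{B}_{k^{(j)}}$ are by construction disjoint clopen cartesian subsets of~$\K^n$, yielding~(b). The main obstacle is ensuring $W_j$ inherits a genuine cartesian factorization rather than being merely a product-like slice of~$U$; this is what the uniform choice of~$\delta$ achieves, by forcing each box~$B(y)$ inside some~$Q_{z_l}$ so that the cartesian factorization of~$V_{z_l}$ descends to the intersection with~$B(y)$.
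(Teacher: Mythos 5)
Your proof is correct and is essentially the argument of the paper: a uniform radius $\delta$ so that the $\delta$-box around any point of $U$ sits inside a cartesian neighbourhood subordinate to $\cX$, the ultrametric dichotomy making these boxes pairwise equal or disjoint, and compactness leaving finitely many. The only cosmetic differences are that you obtain uniformity by taking the minimum of finitely many ball radii (exploiting ultrametricity) where the paper invokes a Lebesgue number, and that you partition the coordinate projections and form product boxes, whose nonempty traces on $U$ are exactly the closed $\delta$-balls for the maximum metric used in the paper.
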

\begin{proof}
(a) Consider the ultrametric $D$ on $\K^n$ defined via
\[
D(x,y):=\max\{|x_1-y_1|,\ldots,|x_n-y_n|\}
\]
for $x=(x_1,\ldots, x_n)$
and $y=(y_1,\ldots, y_n)$ in $\K^n$.
Let $d:=D|_{U\times U}$ be the metric induced on~$U$.
Because $U$ is locally cartesian, there exists a cover $\cV$ of $U$
consisting of cartesian, relatively open subsets $V\sub U$.
Let $r>0$ be a Lebesgue number
for the open cover $\cV$ of the compact metric space~$U$.
Thus, for each $x\in U$, there exists $V_x\in\cV$
such that $B^d_r(x)\sub V_x$. Write $V_x=V_1\times\cdots\times V_n$ with $V_1,\ldots,V_n\sub\K$.
After decreasing~$r$ if necessary, we may assume that $r$ also is a Lebesgue
number for~$\cX$.
Then
\begin{eqnarray*}
B^d_r(x) &=& U\cap B^D_r(x)=V_x\cap B^D_r(x)
=(V_1\times\cdots\times V_n)\cap
(B^\K_r(x_1)\times B^\K_r(x_n))\\
&=& (V_1\cap B^\K_r(x_1))\times\cdots\times (V_n\cap B^\K_r(x_n))
\end{eqnarray*}
is a cartesian, relatively open subset of~$U$.
The ultrametric equality implies that,
if $x,y\in U$, then either
$B^d_r(x)=B^d_r(y)$
or $B^d_r(x)\cap B^d_r(y)=\emptyset$.
Hence
\[
P:=\{ B^d_r(x)\colon x\in U\}
\]
is a partition of $U$ into disjoint
open and compact cartesian sets.
By compactness of~$U$, the set~$P$ is finite,
say $P=\{U_1,\ldots, U_m\}$ with disjoint sets
$U_1,\ldots, U_m$. Finally,
because $r$ is a Lebesgue number for~$\cX$,
each $B^d_r(x)$ (and hence each $U_j$)
is contained in some set $X\in \cX$. Hence $U_1,\ldots, U_m$ is subordinate to~$\cX$.\\[2.4mm]
(b) If $x,y\in U$ and $B^D_r(x)
\cap B^D_r(y)\not=\emptyset$, then
$B^D_r(x) =B^D_r(y)$ (as a consequence of the ultrametric inequality)
and hence $B^d_r(x) =B^d_r(y)$.
Therefore, if $U_j=B^d_r(x_j)$
in the proof of (a) (for $j\in \{1,\ldots, m\}$),
then also the clopen subsets $Q_1:=B^D_r(x_1),\ldots,
Q_m:=B^D_r(x_m)$ of $\K^n$ are disjoint.
By construction, $U_j=U\cap Q_j$.
%\\[2.4mm]
%
%(c) If $\K$ is locally compact, then each of the balls $Q_j=B^D_r(x_j)$
%from the proof of (b) is compact.
\end{proof}
%
%
%\begin{la}\label{finllem}
%If $(\K,|.|)$ is an ultrametric field
%and $U\sub \K^n$ a locally compact,
%locally cartesian subset,
%then each $x\in U$ has an open neighbourhood
%$V\sub U$ which is compact and cartesian.
%\end{la}
%%
%\begin{proof}
%Let $K$ be a compact neighbourhood
%of~$x$ in~$U$.
%Then $x$ has an open cartesian neighbourhood
%$W\sub U$ such that $W\sub K$ (see \ref{nicernbhd}).
%Let $Q\sub\K^n$ be a cartesian
%clopen neighbourhood of~$x$
%(a suitable ball for the maximum norm)
%such that $V:=Q\cap U \sub W$.
%Then $V$ is cartesian (see (\ref{hlpone})),
%open in $U$ (since $V=Q\cap U$)
%and compact (since $V=Q\cap K$ is closed
%in~$K$).
%\end{proof}
%
%
\begin{la}\label{justperfect}
Let $(X,d)$ be a complete metric space which is perfect $($i.e., without isolated points$)$.
Then every compact subset $K$ of $X$ is contained in a perfect
compact subset $L$ of~$X$.
\end{la}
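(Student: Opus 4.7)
My approach is to enlarge $K$ by attaching a small perfect compact ``companion'' set to each point of a countable dense subset of $K$, and then take the closure.

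Since $K$ is a compact metric space it is separable, so I pick a countable dense subset $D=\{d_1,d_2,\ldots\}\sub K$. Using only that $X$ is complete and perfect, I construct for each $i\in \N$ a non-empty perfect compact set $C_i\sub \wb{B}^d_{2^{-i}}(d_i)$ with $d_i\in C_i$, by a standard Cantor-tree construction: recursively on finite $0$-$1$ strings $s$, define a point $y_s\in X$ and a radius $r_s>0$ (starting from $y_\emptyset:=d_i$ and $r_\emptyset:=2^{-i}$) with $y_{s0}:=y_s$ and $y_{s1}\in B^d_{r_s}(y_s)\setminus\{y_s\}$ (possible since $X$ has no isolated points), choosing $r_{s0},r_{s1}<2^{-|s|-1}$ so small that the closed balls $\wb{B}^d_{r_{s0}}(y_{s0})$ and $\wb{B}^d_{r_{s1}}(y_{s1})$ are disjoint and both lie in $B^d_{r_s}(y_s)$. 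For each infinite string $\sigma\in\{0,1\}^\N$, the nested closed balls $\wb{B}^d_{r_{\sigma|n}}(y_{\sigma|n})$ have diameters tending to~$0$, so by completeness of~$X$ they share a unique common point $\phi(\sigma)$; the image $C_i:=\phi(\{0,1\}^\N)$ is a compact perfect subset of $\wb{B}^d_{2^{-i}}(d_i)$, and $d_i=\phi(0,0,\ldots)\in C_i$.

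Now set $L:=\wb{K\cup\bigcup_{i\in\N}C_i}$, with closure taken in~$X$. Then~$L$ is compact: it is closed in the complete space~$X$, and for each $\ve>0$ I pick $N$ with $2^{-N}<\ve/2$, so that $\bigcup_{i>N}C_i$ lies in the $\ve/2$-neighbourhood of~$K$, while $K\cup\bigcup_{i\le N}C_i$ is a finite union of compact sets and hence admits a finite $\ve/2$-net; these combine to a finite $\ve$-net for $K\cup\bigcup_{i\in\N}C_i$ and therefore for~$L$, proving total boundedness. For perfectness, consider $z\in L$: if $z\in C_i$ for some~$i$, then $z$ is a limit of points of $C_i\setminus\{z\}\sub L\setminus\{z\}$ because $C_i$ is perfect; if $z\in K\setminus D$, a sequence $(d_{i_k})$ in~$D$ with $d_{i_k}\to z$ and $d_{i_k}\neq z$ shows $z$ is a limit point of~$L$; if $z=d_i\in D$, then $z\in C_i$ and the first case applies; finally, if $z\in L\setminus(K\cup\bigcup_{i\in\N}C_i)$ then by construction $z$ is a limit of points in $K\cup\bigcup_{i\in\N}C_i\sub L\setminus\{z\}$.

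The only non-routine step is the Cantor-tree construction of the $C_i$'s, which is where completeness and perfectness of~$X$ enter; once this lemma is in place, compactness of~$L$ is a bookkeeping of totally-boundedness and perfectness of~$L$ reduces to the short case distinction above.
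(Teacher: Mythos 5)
Your proof is correct. It takes a genuinely different route from the paper's: where you attach, at each point $d_i$ of a countable dense subset of $K$, a ready-made perfect compact set $C_i\sub \wb{B}^d_{2^{-i}}(d_i)$ built by a Cantor-tree scheme (this is where you invoke completeness, via nested closed balls with shrinking diameters, and perfectness of $X$, to branch), the paper never builds any Cantor set. Instead it enlarges the dense subset recursively by \emph{finite} sets $F_1\sub F_2\sub\cdots$, adding at stage $n$ the $n$-th dense point together with one companion point $y_{n,x}\not=x$ at distance $<2^{-n}$ from every point $x$ already present; perfectness of $L:=\wb{\bigcup_n F_n}$ then falls out because each point of the union keeps acquiring companions at all later stages, and completeness of $X$ is used only once, to upgrade total boundedness of $L$ (via the inclusion $F\sub B^d_{2^{-m}}(F_m\cup K)$) to compactness. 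The paper's argument is thus somewhat more elementary, needing perfectness of $X$ only to pick a single nearby point at each step and avoiding the injectivity/disjoint-balls bookkeeping of a Cantor scheme; your argument is more modular, since the ``perfect compact set inside any ball of a complete perfect space'' lemma is standard and reusable, and it gives slightly more structure (a Cantor set attached arbitrarily close to every point of a dense subset of $K$). Your case distinction for perfectness of $L$ and the finite-net argument for total boundedness are both sound; only the trivial case $K=\emptyset$ is left implicit, which the paper also dismisses with ``we may assume $K\not=\emptyset$''.
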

\begin{proof}
We may assume that $K\not=\emptyset$.
Being compact and metrizable, $K$ has a countable dense subset.
Thus, we find $x_n\in K$ for $n\in \N$ such that
$D:=\{x_n\colon n\in \N\}$ is dense in~$K$.
Define $F_1:=\{x_1\}$.
For $x\in F_1\cup\{x_2\}$, choose $y_{2,x}\in X\setminus\{x\}$
such that $d(x,y_{2,x})<2^{-2}$, and set
\[
F_2:=F_1\cup\{x_2\}\cup \{y_{2,x}\colon x\in F_1\cup\{x_2\}\}.
\]
Recursively, if $F_{n-1}$ has already been defined,
choose $y_{n,x}\in X\setminus\{x\}$
for $x\in F_{n-1}\cup\{x_n\}$, 
such that $d(x,y_{n,x})<2^{-n}$, and set
\[
F_n:=F_{n-1}\cup\{x_n\}\cup \{y_{n,x}\colon x\in F_{n-1}\cup\{x_n\}\}.
\]
Because $F:=\bigcup_{n\in \N}F_n$ contains~$D$,
its closure $L:=\wb{F}$ contains~$K$ as a subset.
To see that~$L$ does not have isolated points,
it suffices to show this for~$F$.
Now each $x\in F$ is an element of~$F_m$ for some $m\in \N$.
Then $x\in F_n$ for each $n>m$,
and $F_n$ contains the point $y_{n,x}\not=x$
with $d(x,y_{n,x})<2^{-n}$. Hence $x$ is not isolated in~$F$.\\[2.4mm]
Note that each set $F_n$ is finite
and $F_1\sub F_2\sub \cdots$.
For each $m\in \N$, we have
\[
F_{m+1}\sub B^d_{2^{-(m+1)}}(F_m)\cup B^d_{2^{-(m+1)}}(K)=
B^d_{2^{-(m+1)}}(F_m\cup K)
\]
and hence, by a simple induction based on the triangle inequality,
\[
F_{m+n}\sub B^d_{2^{-(m+1)}+\cdots+ 2^{-(m+n)}}(F_m\cup K)
\sub
B^d_{2^{-m}}(F_m\cup K)
\]
for each $n\in \N$. Therefore
\begin{equation}\label{Fok}
F\sub B^d_{2^{-m}}(F_m\cup K).
\end{equation}
Because $L$, being closed in~$X$, is complete in the induced metric,
it will be compact if we can show that it is precompact.
To prove this, let $\ve>0$.
Choose $m\in \N$ such that $2^{-m}<\frac{\ve}{2}$.
Because $K$ is compact, there is a finite subset $\Phi\sub K$
such that $K\sub \bigcup_{x\in\Phi} B^d_{\ve/2}(x)$.
Then $\Phi\cup F_m$ is a finite subset of~$L$
and we claim that
\begin{equation}\label{thatsit}
L\sub \bigcup_{x\in \Phi\cup F_m}B^d_\ve(x)
\end{equation}
(whence $L$ is precompact and the proof is complete).
To prove the claim, it suffices to show that
\begin{equation}\label{ggen}
F\sub \bigcup_{x\in \Phi\cup F_m}\wb{B}^d_{2^{-m}+\frac{\ve}{2}}(x)
\end{equation}
(as $F$ is dense in~$L$ and the right-hand side is closed
and contained in that of (\ref{thatsit})).
Let $y\in F$. By (\ref{Fok}), we have
$y\in B^d_{2^{-m}}(x)$ for some $x\in F_m\cup K$.
If $x\in F_m$, then $y$ is in the right-hand side of~(\ref{ggen}).
If $x\in K$, then $x\in B^d_{\ve/2}(z)$ for some $z\in \Phi$
and thus $y\in B^d_{2^{-m}}(x)\sub B^d_{2^{-m}+\frac{\ve}{2}}(z)$,
which is a subset of the union in (\ref{ggen}).
\end{proof}
\begin{la}\label{verylast}
Let $(\K,|.|)$ be a complete ultrametric field and $U\sub \K^n$ be a locally closed,
locally cartesian subset. Then the following holds:
\begin{itemize}
\item[{\rm(a)}]
If $x\in U$, then every neighbourhood $W\sub U$
of~$x$ contains an open cartesian neighbourhood  $V\sub U$
of~$x$ which is closed in~$\K^n$.
\item[{\rm(b)}]
If $V\sub U$ is an open cartesian subset which is closed in~$\K^n$,
then every compact subset $K\sub V$ is contained
in a compact cartesian subset $L\sub V$.
\item[{\rm(c)}]
Every compact set $K\sub U$ is contained in a compact, locally cartesian subset
of~$U$.
\item[{\rm(d)}]
Consider $U$ as a subset of $\K^{n_1}\times\cdots\times\K^{n_\ell}$
with $n_1+\cdots+ n_\ell=n$, and let $\alpha\in (\N_0\cup\{\infty\})^\ell$.
Then the topology on $C^\alpha(U,E)$ is initial with respect to the restriction maps
\[
\rho_K\colon C^\alpha(U,E)\to C^\alpha(K,E),
\]
for $K$ ranging through the set of all
compact cartesian subsets of~$U$.
\item[{\rm(e)}]
Every $0$-neighbourhood in $C^\alpha(U,E)$ contains
a $0$-neighbourhood of the form $\rho_K^{-1}(Q)$
for some compact, locally cartesian subset $K\sub U$
and $0$-neighbourhood $Q\sub C^\alpha(K,E)$.
\end{itemize}
\end{la}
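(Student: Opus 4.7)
The strategy is to prove (a)--(c) by explicit constructions, then use (a) together with Lemma \ref{onesttwost} to obtain (d), and finally combine (c) with (d) for (e). For (a), I would use local closedness to fix a neighbourhood $P\sub W$ of $x$ in~$U$ closed in $\K^n$, and local cartesian-ness to fix a cartesian open neighbourhood $\tilde V=\tilde V_1\times\cdots\times\tilde V_n\sub P$ of~$x$. For $r>0$ small enough that $U\cap\prod_i B^\K_r(x_i)\sub\tilde V$, the set $V:=\prod_i(\tilde V_i\cap B^\K_r(x_i))$ coincides with $P\cap\prod_i B^\K_r(x_i)$ and is closed in~$\K^n$ (since ultrametric open balls are also closed in~$\K$), open in~$U$, and cartesian (open subsets of perfect sets in~$\K$ remain perfect). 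For (b), $V$ being closed in $\K^n$ forces each $V_i$ to be closed in~$\K$, hence complete, and the cartesian assumption makes it perfect; so Lemma \ref{justperfect} applied coordinatewise to $K_i:=\pr_i(K)$ yields compact perfect $L_i\sub V_i$ with $K_i\sub L_i$, and $L:=L_1\times\cdots\times L_n$ does the job.

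For (c), I would cover $K$ by finitely many cartesian open sets $V^{x_1},\ldots,V^{x_m}$ supplied by~(a). A Lebesgue-number argument for this cover of $K$ in the ultrametric $d(x,y)=\max_i |x_i-y_i|$ gives $r>0$ such that each $y\in K$ sits in a ball $B^d_r(y)=\prod_k(V^{x_{j(y)}}_k\cap B^\K_r(y_k))$ contained in some $V^{x_{j(y)}}$; this ball is cartesian, open in~$U$, and closed in~$\K^n$. The essential ultrametric input is that two balls of the same radius are either equal or disjoint, so finitely many distinct such balls form a pairwise disjoint cover of~$K$. Applying (b) inside each to $K\cap B^d_r(y_i)$ produces compact cartesian $L^i$, and disjointness of the ambient balls forces $L\cap B^d_r(y_i)=L^i$ for $L:=\bigsqcup_i L^i$, so each $L^i$ is clopen and cartesian in~$L$, making $L$ compact and locally cartesian.

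For (d), continuity of each $\rho_K$ (Lemma \ref{basicres} combined with Remark \ref{gentypeCalph}(a)) makes the initial topology coarser than the compact-open $C^\alpha$-topology. For the reverse inclusion, I would apply Lemma \ref{onesttwost} with the cover $\cV$ consisting of those open cartesian subsets of $U$ that are closed in $\K^n$ (available by (a)): a basis of $0$-neighbourhoods is then given by finite intersections of sets $\{f:(f|_V)^{<\beta>}\in\lfloor K',Q\rfloor\}$ with $V\in\cV$ and $K'=K_1^{1+\beta_1}\times\cdots\times K_n^{1+\beta_n}\sub V^{<\beta>}$. Since $V_i$ is now complete and perfect, Lemma \ref{justperfect} enlarges each $K_i$ to a compact perfect $L_i\sub V_i$, producing a compact cartesian $L:=\prod_i L_i\sub V$ with $K'\sub L^{<\beta>}$. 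By uniqueness of continuous extensions on $L^{<\beta>}$, $(f|_V)^{<\beta>}|_{L^{<\beta>}}=(f|_L)^{<\beta>}$, so the set in question is the $\rho_L$-preimage of a $0$-neighbourhood in $C^\alpha(L,E)$.

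Finally, for (e), use (d) to replace a given $0$-neighbourhood $W$ by a finite intersection $\bigcap_{i=1}^N\rho_{K_i}^{-1}(Q_i)\sub W$ with each $K_i$ compact cartesian, apply (c) to $\bigcup_i K_i$ to obtain a compact locally cartesian $K\sub U$ containing all~$K_i$, and rewrite the intersection as $\rho_K^{-1}(\bigcap_i\rho_{K_i,K}^{-1}(Q_i))$, exploiting continuity of each restriction $\rho_{K_i,K}\colon C^\alpha(K,E)\to C^\alpha(K_i,E)$. The principal obstacle is (c): a bare union of cartesian sets need not be locally cartesian, and the argument only closes because the ultrametric dichotomy for balls of common radius delivers genuinely disjoint cartesian pieces that then become clopen in the union.
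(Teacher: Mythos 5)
Your proposal is correct and follows essentially the same route as the paper's proof: (a) via intersecting a closed neighbourhood from local closedness with a small clopen cartesian ball, (b) via coordinatewise application of Lemma~\ref{justperfect}, (c) via a Lebesgue number and the ultrametric dichotomy for balls of equal radius followed by (b) on the disjoint pieces, (d) via the basis from Lemma~\ref{onesttwost}, enlargement of the compact product sets to compact cartesian sets and uniqueness of continuous extensions, and (e) by combining (c) and (d) through the composed restriction maps. The only (cosmetic) deviations are that in (d) you invoke Lemma~\ref{justperfect} directly instead of citing part~(b), and in (c) you take a finite subcover before the Lebesgue-number step.
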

\begin{proof}
(a) Because~$U$ is locally cartesian, after shrinking~$W$,
we may assume that~$W$ is cartesian.
Since~$U$ is locally closed, there exists a neighbourhood $C\sub U$ of~$x$
such that $C\sub W$.
Let $C^0$ be the interior of $C$ relative~$U$.
There exists a clopen cartesian subset $Q\sub \K^n$
with $x\in Q$ (e.g., a small ball around~$x$ with respect to the maximum norm)
such that $U\cap Q\sub C^0$.
Then $V:=U\cap Q$ is open in~$U$
and $V=C\cap V=C\cap U\cap Q=C\cap Q$ is closed in~$\K^n$.
Moreover, $V=W\cap Q$ is cartesian (see \ref{nicernbhd}).

(b) We have $V=V_1\times\cdots\times V_n$ with closed subsets
$V_1,\ldots, V_n\sub \K$ without isolated points.
Being closed in~$\K$, the sets $V_j$ are complete.
Let $\pr_j\colon \K^n\to \K$ be the projection onto the $j$-th component,
for $j\in \{1,\ldots, n\}$. Then $K_j:=\pr_j(K)$ is a compact subset of~$V_j$
and hence contained in a compact perfect subset $L_j\sub V_j$,
by Lemma~\ref{justperfect}.
Now $L:=L_1\times\cdots\times L_n$ is a compact, cartesian
subset of~$V$ and contains~$K$.

(c) Let $\cV$ be the set of all open, cartesian subsets of~$U$ which are closed in $\K^n$.
By (a), $\cV$ is an open cover of~$U$.
Define the ultrametric $D$ on $\K^n$
as in the proof of Lemma~\ref{deccompinct}\,(a).
Let $d:=D|_{U\times U}$ be the metric induced on~$U$
and $d_K:=d|_{K\times K}$ be the metric induced on~$K$.
We let $\delta>0$ be a Lebesgue number for the
open cover $\cV$ of~$K$.
Thus,
for each $x\in K$, there exists $V_x\in \cV$ such that $B^d_\delta(x)\sub V_x$.
Because $K$ is compact and $d_K$ is an ultrametric,
we have $\{B^{d_K}_\delta(x)\colon x\in K\}=\{W_1,\ldots, W_m\}$
with finitely many disjoint set $W_1,\ldots, W_m$.
We have $W_j=B^{d_K}_\delta(x_j)$ for some $x_j\in K$.
Then also the sets $Y_j:=B^d_\delta(x_j)$
are disjoint for $j\in \{1,\ldots, m\}$.
Note that $Y_j=B^D_\delta(x_j)\cap V_{x_j}$
is cartesian (see~\ref{nicernbhd}), open in $U$ and closed in~$\K^n$.
The sets $K_j:=K\cap Y_j$ being compact,
(b) provides a compact cartesian subset $L_j\sub Y_j$
such that $K_j\sub L_j$. Because the cartesian sets $L_j$ are disjoint and
closed, it is clear that $L:= L_1\cup\cdots\cup L_m$
is locally cartesian. Moreover, $L$ is compact,
a subset of~$U$, and contains $K=K_1\cup\cdots\cup K_m$.

(d) Let $\cT$ be the initial topology on $C^\alpha(U,E)$ with respect to the mappings
$\rho_K$.
By Remark~\ref{gentypeCalph}\,(a),
the compact-open $C^\alpha$-topology $\cO$
on $C^\alpha(U,E)$
makes each of the maps $\rho_K$ continuous and linear.
Hence $\cT\sub\cO$.
For the converse, let $W\sub C^\alpha(U,E)$ be a $0$-neighbourhood.
Let $\cV$ be the set of all open cartesian subsets of~$U$
which are closed in~$\K^n$.
By (a), $\cV$ is a cover of~$U$.
Thus Lemma~\ref{onesttwost}
provides $\beta_1,\ldots,\beta_m\in N_\alpha$
and $V_1,\ldots, V_m\in \cV$ with
$\beta_j=(\beta_{j,1},\ldots,\beta_{j,n})$
and $V_j=V_{j,1}\times\cdots\times V_{j,n}$ for $j\in \{1,\ldots, m\}$;
compact
subsets $K_j\sub V_j^{<\beta_j>}$
such that $K_j=K_{j,1}^{1+\beta_{j_1}}\times\cdots\times K_{j,n}^{1+\beta_{j,n}}$
with compact subsets $K_{j,i}\sub \K$
for $i\in \{1,\ldots, n\}$;
and open $0$-neighbourhoods $Q_1,\ldots, Q_m\sub E$
such that
\[
P:=\bigcap_{j=1}^m\Delta_{\beta_j,V_j}^{-1}(\lfloor K_j,Q_j\rfloor)\sub W,
\]
where $\Delta_{\beta_j,V_j}\colon C^\alpha(U,E)\to C((V_j)^{<\beta_j>},E)$, $f\mto (f|_{V_j})^{<\beta_j>}$.
Then $K_{j,i}$ is a compact subset of $V_{j,i}$
and hence $C_j:=K_{j,1}\times\cdots\times K_{j,n}$
a compact subset of~$V_j$.
By (b), there exists a compact cartesian subset $L_j=L_{j,1}\times\cdots\times L_{j,n}
\sub V_j$
such that $C_j\sub L_j$ and thus $K_{j,i}\sub L_{j,i}$ for all~$i$.
Consider the continuous linear maps
$\delta_{\beta_j,L_j}\colon C^\alpha(L_j,E)\to C(L_j^{<\beta_j>},E)$, $f\mto f^{<\beta_j>}$.
Then $P_j:=\delta_{\beta_j,L_j}^{-1}(\lfloor K_j,Q_j\rfloor)$ is
an open $0$-neighbourhood in $C^\alpha(L_j,E)$
and $P=\bigcap_{j=1}^m\rho_{L_j}^{-1}(P_j)$ is open with respect to~$\cT$.
Hence $\cO\sub\cT$ and $\cO=\cT$ follows.

(e) By (d), each $0$-neighbourhood $W\sub C^\alpha(U,E)$
contains a $0$-neighbour\-hood of the form
$P:=\rho_{K_1}^{-1}(Q_1)\cap\cdots\cap \rho_{K_m}^{-1}(Q_m)$
for some $m\in \N$,
compact cartesian sets $K_j\sub U$
and $0$-neighbourhoods $Q_j\sub C^\alpha(K_j,E)$,
for $j\in \{1,\ldots, m\}$.
By~(c), $K_1\cup\cdots\cup K_m$ is contained in a compact, locally\linebreak
cartesian
subset $K\sub U$. Let $\rho_K\colon C^\alpha(U,E)\to C^\alpha(K,E)$
and $\rho_{K_j,K}\colon$ $C^\alpha(K,E)\to C^\alpha(K_j,E)$
be the restriction maps, which are continuous\linebreak
linear by Remark~\ref{gentypeCalph}\,(a).
Then $Q:=\bigcap_{j=1}^m  \rho_{K_j,K}^{-1}(Q_j)$
is a $0$-neighbourhood in $C^\alpha(K,E)$ and
$\rho_{K_j,K}\circ\rho_K=\rho_{K_j}$ entails that $P=\rho_K^{-1}(Q)$.
\end{proof}
\begin{numba}
Let $\K$ be a field, $E$ be a $\K$-vector space,
$n_1,\ldots, n_\ell\in \N$,
$n:=n_1+\cdots+n_\ell$ and
$\alpha\in (\N_0\cup\{\infty\})^\ell$.
A function $p \colon U \to E$ on a subset $U\sub \K^n$
is called a \emph{polynomial function of multidegree $\leq \alpha$}
if there exist $a_\beta\in E$ for multi-indices $\beta=(\beta_1,\ldots,\beta_\ell)\in \N_0^{n_1}\times\cdots\times
\N_0^{n_\ell}=\N_0^n$
with $|\beta_j|\leq\alpha_j$ for $j\in \{1,\ldots,\ell\}$,
such that $a_\beta=0$
for all but finitely many $\beta$ and
\[
p(x)=\sum_{\beta\leq\alpha}x^\beta a_\beta\quad\mbox{for all $\,x=(x_1,\ldots, x_n)\in U$.}
\]
We write $\Pol_{\leq \alpha}(U,E)$
for the space of all such~$p$.
If $(\K,|.|)$ is a valued field,
$U\sub \K^n$ a subset and
$E$ a topological $\K$-vector space, we say that a function
$f \colon U\to E$
is \emph{locally polynomial} of multidegree $\leq \alpha$
if each $x\in U$ has an open neighbourhood $V$ in~$U$
such that $f|_V=p$
for some polynomial function $p\colon \K^n\supseteq V \to E$
of multidegree $\leq \alpha$.
We write $\text{LocPol}_{\leq\alpha}(U,E)$
for the space of all locally polynomial $E$-valued
functions of multidegree $\leq\alpha$ on~$U$.
\end{numba}
\begin{la}\label{extlocp}
Let $(\K,|.|)$ be an ultrametric field,
$U\sub \K^{n_1}\times\cdots\times\K^{n_\ell}=\K^n$ be a compact locally cartesian set,
where $n_1,\ldots n_\ell\in \N$ and $n=n_1+\cdots + n_\ell$.\linebreak
Let
$\alpha\in (\N_0\cup\{\infty\})^\ell$
and $f\in \LocPol_{\leq\alpha}(U,E)$.
Then there exists a function
$g\in \LocPol_{\leq\alpha}(\K^n,E)$
such that $g|_U=f$.
\end{la}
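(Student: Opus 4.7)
The plan is to use the refined partition from Lemma \ref{deccompinct}\,(b) to reduce the extension problem to finitely many pieces, each of which is a restriction of a global polynomial.

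\textbf{Step 1: Produce a suitable open cover.} Since $f \in \LocPol_{\leq\alpha}(U,E)$, for each $x \in U$ there is a relatively open neighbourhood $V_x \sub U$ and a polynomial function $p_x \colon \K^n \to E$ of multidegree $\leq \alpha$ such that $f|_{V_x} = p_x|_{V_x}$. The family $\cX := \{V_x : x\in U\}$ is an open cover of the compact locally cartesian set~$U$.

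\textbf{Step 2: Refine to a clopen cartesian partition.} By Lemma~\ref{deccompinct}\,(b), there exist disjoint clopen cartesian subsets $Q_1,\ldots,Q_m \sub \K^n$ such that $W_j := U \cap Q_j$ are compact cartesian relatively open subsets of~$U$ covering $U$ and subordinate to~$\cX$. Thus for each $j$ there exists $x_j \in U$ with $W_j \sub V_{x_j}$, whence $f|_{W_j} = p_{x_j}|_{W_j}$.

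\textbf{Step 3: Define the global extension.} Let $Q_0 := \K^n \setminus (Q_1 \cup \cdots \cup Q_m)$, which is clopen in $\K^n$ since finitely many $Q_j$ are clopen. Define $g \colon \K^n \to E$ by
\[
g(x) := \begin{cases} p_{x_j}(x) & \text{if } x \in Q_j \text{ for some } j \in \{1,\ldots,m\},\\ 0 & \text{if } x \in Q_0.\end{cases}
\]
The definition is unambiguous because the sets $Q_0,Q_1,\ldots,Q_m$ are pairwise disjoint and cover $\K^n$. For $x \in U$, $x$ lies in exactly one $W_j = U \cap Q_j$, so $g(x) = p_{x_j}(x) = f(x)$, giving $g|_U = f$.

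\textbf{Step 4: Check local polynomiality.} Each point of $\K^n$ lies in exactly one of the clopen sets $Q_0,Q_1,\ldots,Q_m$, which serves as an open neighbourhood on which $g$ coincides with a polynomial of multidegree $\leq \alpha$: namely $p_{x_j}$ on $Q_j$ for $j \geq 1$, and the zero polynomial on $Q_0$. Hence $g \in \LocPol_{\leq\alpha}(\K^n,E)$. The whole argument is essentially bookkeeping; the only nontrivial input is the refinement to a \emph{clopen cartesian} partition provided by Lemma~\ref{deccompinct}\,(b), without which one could not combine the local polynomial representatives into a globally defined extension.
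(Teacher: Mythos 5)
Your proof is correct and follows essentially the same route as the paper: both invoke Lemma~\ref{deccompinct} to refine the cover coming from local polynomiality to finitely many disjoint clopen sets $Q_j$ of $\K^n$, define $g$ as $p_j$ on $Q_j$ and $0$ on the complement, and read off $g|_U=f$ and $g\in\LocPol_{\leq\alpha}(\K^n,E)$ from disjointness and clopenness. (Minor remark: the cartesian shape of the $Q_j$ plays no role here; disjointness and clopenness suffice, exactly as in the paper's argument.)
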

\begin{proof}
We may assume that $U\not=\emptyset$.
Let $\cX$ be the set of all open subsets $X\sub U$
such that $f|_X\in \Pol_{\leq\alpha}(X,E)$.
By Lemma~\ref{deccompinct},
there exist disjoint clopen subsets $Q_1,\ldots, Q_m$ of $\K^n$
such that the intersections $W_j:=Q_j\cap U$
form an open cover of~$U$ for $j\in \{1,\ldots, m\}$
and  $W_j\sub X_j$ for some $X_j\in \cX$.
Thus, there are $p_j\in \Pol_{\leq\alpha}(\K^n,E)$
such that $p_j|_{W_j}=f|_{W_j}$.
Define $g\colon \K^n\to E$ via
$g(x):=p_j(x)$ if $x\in Q_j$, $g(x):=0$
if $x\in \K^n\setminus(Q_1\cup\cdots\cup Q_m)$.
Because the sets $Q_m$ are clopen and disjoint,
we have $g\in \LocPol_{\leq\alpha}(\K^n,E)$.
Also $g|_U=f$, using that
$g|_{W_j}=p_j|_{W_j}=f|_{W_j}$
and $U=W_1\cup\cdots\cup W_m$.
\end{proof}
Then the following analogue of Theorem~B holds:
\begin{prop}\label{betterdensloc}
For every  complete ultrametric field $\K$,
locally convex topological $\K$-vector space~$E$,
$n_1,\ldots, n_\ell\in \N$,
$\alpha\in (\N_0\cup\{\infty\})^\ell$
and\linebreak
locally closed, locally cartesian subset $U\sub \K^{n_1}\times\cdots\times\K^{n_\ell}=\K^n$ with\linebreak
$n:=n_1+\cdots+n_\ell$,
the space $\LocPol_{\leq \alpha}(U,E)$
of $E$-valued locally polynomial\linebreak
functions of
multidegree $\leq\alpha$
is dense in $C^\alpha(U,E)$.
\end{prop}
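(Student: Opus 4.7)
\emph{Proof plan.} The approach follows the template of the proof of Theorem~B, but with the coarse multidegree convention and with Theorem~D (instead of Theorem~A) driving the induction. Given $f\in C^\alpha(U,E)$ and an open $0$-neighborhood $W\sub C^\alpha(U,E)$, I would seek $h\in\LocPol_{\leq\alpha}(U,E)$ with $f-h\in W$. First I would reduce to the compact cartesian case in two stages. By Lemma~\ref{verylast}(e), there exist a compact locally cartesian subset $K\sub U$ and an open $0$-neighborhood $Q\sub C^\alpha(K,E)$ whose pullback under the restriction map is contained in~$W$; any $h_K\in\LocPol_{\leq\alpha}(K,E)$ with $f|_K-h_K\in Q$ extends via Lemma~\ref{extlocp} to a locally polynomial function of multidegree~$\leq\alpha$ on~$\K^n$, and restricting to~$U$ yields the desired~$h$. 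Next, Lemma~\ref{deccompinct}(b) decomposes $K=W_1\sqcup\cdots\sqcup W_m$ into disjoint clopen compact cartesian pieces; since the topology on $C^\alpha(K,E)$ is initial with respect to the restrictions to the $C^\alpha(W_j,E)$ (Remark~\ref{gentypeCalph}(b)) and locally polynomial functions glue across clopen pieces, it suffices to treat the case where~$U$ is itself compact cartesian. Finally, a $0$-neighborhood in $C^\alpha(U,E)$ pulls back from one in some $C^\beta(U,E)$ with $\beta\in\N_0^\ell$ and $\beta\leq\alpha$ (Remark~\ref{gentypeCalph}(c)), and $\LocPol_{\leq\beta}\sub\LocPol_{\leq\alpha}$, so I may additionally assume $\alpha\in\N_0^\ell$.

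The core of the argument is an induction on~$\ell$ using Theorem~D. The base case $\ell=1$ with $\alpha=(r)\in\N_0$ asserts density of $\LocPol_{\leq(r)}(U,E)$ (locally polynomial functions of total degree~$\leq r$) in $C^r(U,E)$ on a compact cartesian $U\sub\K^n$; for scalar $E=\K$ this is \cite[Proposition~II.40]{Nag}, and the extension to general locally convex~$E$ follows verbatim the argument from the proof of Theorem~B for $n=1$ (embed $E$ in a completion, realize the completion as a projective limit of ultrametric Banach spaces via Remark~\ref{gentypeCalph}(f) and Lemma~\ref{plinim}, shrink the target neighborhood to come from one Banach factor, use the Banach-valued analogue of Nagel's result there, and lift the finitely many polynomial coefficients using Lemma~\ref{laavoid} in its $C^\alpha$-form from Remark~\ref{gentypeCalph}(g) together with density of the canonical image). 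For the inductive step, write $U=U'\times U''$ with cartesian $U'\sub\K^{n_1+\cdots+n_{\ell-1}}$ and $U''\sub\K^{n_\ell}$, and $\alpha=(\alpha',r)$. Theorem~D supplies a topological isomorphism $\Phi\colon C^\alpha(U,E)\to C^{\alpha'}(U',F)$ with $F:=C^{(r)}(U'',E)$ locally convex, so the inductive hypothesis at level~$\ell-1$ yields an approximation $g(x)=\sum_\beta x^\beta\one_{R_\beta}(x)c_\beta\in\LocPol_{\leq\alpha'}(U',F)$ of~$\Phi(f)$ with finitely many clopen $R_\beta\sub U'$ and $c_\beta\in F$. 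Lemma~\ref{laavoid} (via Remark~\ref{gentypeCalph}(g)) supplies open neighborhoods $Q_\beta\sub F$ of~$c_\beta$ such that replacing each $c_\beta$ by an arbitrary $b_\beta\in Q_\beta$ preserves the approximation of $\Phi(f)$; by the base case applied to~$U''$, I can pick $b_\beta\in Q_\beta\cap\LocPol_{\leq(r)}(U'',E)$. Pulling back through~$\Phi$, the function $h(x,y):=\sum_\beta x^\beta\one_{R_\beta}(x)b_\beta(y)$ lies in $\LocPol_{\leq\alpha}(U,E)$ and approximates~$f$ in~$W$.

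The main obstacle I anticipate is the base case: transplanting Nagel's scalar density result to an arbitrary locally convex target requires verifying that the finitely many polynomial coefficients of the approximation can be lifted through the projective limit of Banach factors without destroying the approximation. This is precisely the locally-convex reduction already performed at the start of the proof of Theorem~B, so it amounts to adapting existing bookkeeping rather than confronting a genuinely new difficulty. Once the base case is in place, the exponential law of Theorem~D makes the $\ell$-step induction essentially formal, exactly as Theorem~A does in the proof of Theorem~B.
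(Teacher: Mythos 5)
Your proposal is correct and takes essentially the same route as the paper: reduction to the compact cartesian case via Lemma~\ref{verylast}\,(e), Lemma~\ref{extlocp} and Lemma~\ref{deccompinct}, then an induction driven by Theorem~D and Lemma~\ref{laavoid} (in the form recorded in Remark~\ref{gentypeCalph}), with the single-block case resting on \cite[Proposition~II.40]{Nag} and the locally convex extension machinery from the proof of Theorem~B. The only deviations are cosmetic: you run the reductions first and split off the last block (so the base case furnishes the coefficients and the inductive hypothesis the outer function, rather than the reverse), and you pass to finite multidegree upfront instead of treating the components equal to $\infty$ inside the base case.
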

\begin{proof}
Step~1: If $U$ is a compact cartesian subset of~$\K^n$,
we can prove the assertion by induction on~$n\in \N$:\\[2.4mm]
\emph{The case $n=1$.} Then $\alpha=(r)$ with $r:=\alpha_1$.
For $E=\K$ and $r<\infty$, the assertion was established
in \cite[Proposition~II.40]{Nag},
and one can proceed analogously
if $r<\infty$ and~$E$ is an ultrametric
Banach space.
We can now pass to general locally convex spaces and
$r\in \N_0\cup\{\infty\}$
as in the proof of Theorem~B,
using the analogues of Lemmas~\ref{pllower},
\ref{alphpushf}
(subsuming \ref{alphvsub}),
\ref{plinim} and
\ref{laavoid}
for $C^\alpha$-maps on $U\sub
\K^{n_1}\times\cdots\times\K^{n_\ell}$
described in Remark~\ref{gentypeCalph} (c), (d), (f)
and (g), respectively.\\[2.4mm]
\emph{Induction step.}
If $n\geq 2$, we write $U=U_1\times U'$ with
compact cartesian subsets $U_1\sub \K^{n_1}$
and $U'\sub \K^{n_2}\times\cdots\times \K^{n_\ell}$.
We also write $\alpha=(r,\alpha')$ with $r\in \N_0\cup\{\infty\}$
and $\alpha'\in (\N_0\cup\{\infty\})^{\ell-1}$.
Using Theorem~D instead of Theorem~A
and replacing $\sum_{i=0}^r$ with
$\sum_{|\nu|\leq r}$ and $i$ with $\nu$
(where the summation is over all $\nu\in \N_0^{n_1}$
such that $|\nu|\leq r$),
we can perform the induction step
as in the proof of Theorem~B.\\[2.4mm]
Step 2: If $U$ is a compact, locally cartesian subset
of~$\K^n$, then $U=W_1\cup\cdots\cup W_m$
is a disjoint union of certain compact, relatively open,
cartesian subsets $W_1,\ldots, W_m$ of $U$
(see Lemma~\ref{deccompinct}).
As a consequence of Lemma~\ref{islocal}
and Remark~\ref{gentypeCalph}\,(b), the map
\[
\rho=(\rho_j)_{j=1}^m\colon C^\alpha(U,E)\to
\prod_{j=1}^m
C^\alpha(W_j,E),\quad f\mto (f|_{W_j})_{j=1}^m
\]
is an isomorphism of topological vector spaces.
Let $f\in C^\alpha(U,E)$ and $P\sub
C^\alpha(U,E)$ be a neighbourhood of~$f$.
By the preceding, we may assume
that $P=\rho^{-1}(P_1\times\cdots\times P_m)$
with neighbourhoods $P_j\sub C^\alpha(W_j,E)$ of $f|_{W_j}$
for $j\in\{1,\ldots, m\}$.
By Step~1, for each $j$ there exists some
$g_j\in P_j\cap \LocPol_{\leq\alpha}(W_j,E)$.
We define $g\colon U\to E$ via $g(x):=g_j(x)$ if $x\in W_j$.
Then $g\in P$ and $g\in \LocPol_{\leq\alpha}(U,E)$,
showing that the latter is dense.\\[2.4mm]
Step~3. Finally, let $U\sub \K^n$ be a locally closed,
locally cartesian subset,
$f\in C^\alpha(U,E)$ and $P\sub C^\alpha(U,E)$
be a $0$-neighbourhood.
By Lemma~\ref{verylast}\,(e),
we may assume that
\[
P=\rho_K^{-1}(Q)
\]
for some compact, locally cartesian subset $K$ of $U$
and open $0$-neighbourhood $Q\sub C^\alpha(K,E)$.
By Step~2, there exists $g\in (f|_K+Q)\cap
\LocPol_{\leq\alpha}(K,E)$.
Using Lemma~\ref{extlocp},
we find $h\in \LocPol_{\leq\alpha}(U,E)$
such that $h|_K=g$.
Then $h\in \rho_K^{-1}(f|_K+Q)=f+P$
and thus $\LocPol_{\leq\alpha}(U,E)$ is dense.
\end{proof}
\begin{prop}\label{denseglobpo}
For every complete ultrametric field $\K$,
locally convex topological $\K$-vector space~$E$,
$n_1,\ldots, n_\ell\in \N$,
$\alpha\in (\N_0\cup\{\infty\})^\ell$
and\linebreak
locally closed, locally
cartesian subset $U\sub \K^{n_1}\times\cdots\times \K^{n_\ell}=\K^n$ with\linebreak
$n:=n_1+\cdots+n_\ell$,
the space $\Pol(U,E)$
of $E$-valued polynomial
functions
is dense in $C^\alpha(U,E)$.
\end{prop}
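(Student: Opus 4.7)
\emph{Proof plan.}  By Lemma~\ref{verylast}(e), every $0$-neighbourhood of $f$ in $C^\alpha(U,E)$ contains $\rho_K^{-1}(Q)$ for some compact, locally cartesian $K\sub U$ and $0$-neighbourhood $Q\sub C^\alpha(K,E)$.  Since $\Pol(U,E)=\Pol(\K^n,E)|_U$, it suffices to show that $\Pol(\K^n,E)|_K$ is dense in $C^\alpha(K,E)$ for every compact locally cartesian $K\sub\K^n$.  I first handle the compact cartesian case, and then reduce the locally cartesian case to it via a polynomial approximation of characteristic functions on an enclosing compact cartesian superset.

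For the compact cartesian case (density of $\Pol(\K^n,F)|_W$ in $C^\alpha(W,F)$ when $W\sub\K^n$ is compact cartesian and $F$ is a locally convex $\K$-vector space), the scalar subcase $F=\K$ with $\alpha\in\N_0^n$ is \cite[Corollary~II.42]{Nag}.  The extension to arbitrary $\alpha\in(\N_0\cup\{\infty\})^\ell$ and to locally convex $F$ then follows by the two-stage reduction employed in the proof of Theorem~B: coordinatewise induction on the number of factors via Theorem~D, and a scalar-to-Banach then Banach-to-locally-convex passage via completion, Lemma~\ref{plinim}, and Lemma~\ref{laavoid}, with $\Pol$ substituted for $\LocPol_{\leq\alpha}$ throughout and the analogues from Remark~\ref{gentypeCalph} supplying the needed continuity and restriction properties.

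For a general compact locally cartesian $K$, Lemma~\ref{deccompinct}(b) yields a disjoint decomposition $K=W_1\sqcup\cdots\sqcup W_m$ with $W_j=K\cap Q_j$ compact cartesian and the $Q_1,\ldots,Q_m\sub\K^n$ pairwise disjoint, clopen cartesian.  Writing $W_j=W_{j,1}\times\cdots\times W_{j,n}$ with compact $W_{j,i}\sub\K$ without isolated points, and setting $\tilde K_i:=\bigcup_jW_{j,i}$ (a finite union of such sets, hence itself compact and without isolated points), I obtain a compact cartesian superset $\tilde K:=\prod_i\tilde K_i\supseteq K$.  Applying the compact cartesian case on each $W_j$ with target $E$ yields $p_j\in\Pol(\K^n,E)$ with $p_j|_{W_j}$ close to $f|_{W_j}$, and applying it on $\tilde K$ with target $\K$ to the locally constant (hence $C^\alpha$) function $\one_{Q_j}|_{\tilde K}$ yields $h_j\in\Pol(\K^n,\K)$ with $h_j|_{\tilde K}$ close to $\one_{Q_j}|_{\tilde K}$.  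Continuity of the restriction $C^\alpha(\tilde K,\K)\to C^\alpha(K,\K)$ (Remark~\ref{gentypeCalph}(a)) then gives $h_j|_K\approx\one_{W_j}$ in $C^\alpha(K,\K)$.

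Finally I set $h:=\sum_jh_jp_j\in\Pol(\K^n,E)$ and verify $h|_K\approx f$ in $C^\alpha(K,E)$: on each $W_k$ the term $h_kp_k$ is close to $\one_{W_k}p_k=p_k\approx f$, while the $h_jp_j$ for $j\neq k$ are close to $\one_{W_j}p_j=0$.  The quantitative control needed in the $C^\alpha$-topology uses joint continuity of scalar--vector multiplication $C^\alpha(K,\K)\times C^\alpha(K,E)\to C^\alpha(K,E)$, obtainable from standard ultrametric Leibniz-type estimates on the difference quotients; alternatively, since the $p_j$ are polynomials, each map $g\mapsto g\cdot p_j$ factors as multiplication by the fixed polynomial $p_j$ on $C^\alpha(K,\K)$ (handled via the coordinatewise polynomial identity) followed by Lemma~\ref{alphpushf} applied to the linear maps $\K\to E$, $c\mapsto ca_\gamma$ for each coefficient $a_\gamma$ of $p_j$.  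The principal obstacle is the first step: transporting Nagel's scalar compact-cartesian density to the $E$-valued, general-$\alpha$ setting requires re-executing the two-stage proof of Theorem~B with $\Pol$ in place of $\LocPol_{\leq\alpha}$; once this is in hand, the remaining reductions described above are essentially routine.
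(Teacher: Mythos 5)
Your overall skeleton matches the paper's: the reduction of the locally closed case to a compact, locally cartesian set via Lemma~\ref{verylast}\,(e), and the compact cartesian case obtained by re-running the two-stage proof of Theorem~B (completion/Banach reduction plus Lemma~\ref{laavoid}-type perturbation, induction over the factors via Theorem~D, with $\Pol$ in place of $\LocPol_{\leq\alpha}$ and Remark~\ref{gentypeCalph} supplying the auxiliary continuity statements) are exactly Steps~3 and~1 of the paper's proof. Where you genuinely diverge is the compact, locally cartesian step. You decompose $K=W_1\cup\cdots\cup W_m$ by Lemma~\ref{deccompinct}, approximate $f$ on each cartesian piece by a polynomial $p_j$, approximate the indicator $\one_{Q_j}$ on a cartesian hull $\wt{K}$ by a polynomial $h_j$, and take $h=\sum_j h_jp_j$. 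The paper instead first replaces $f$ by a locally polynomial function (Proposition~\ref{betterdensloc}), so that $f|_{W_j}=p_j|_{W_j}$ exactly, and then \emph{glues}: it defines $g$ on the compact cartesian hull $\pr_1(K)\times\cdots\times\pr_n(K)$ by $g:=p_j$ on $Q_j$ and $g:=0$ off $Q_1\cup\cdots\cup Q_m$, which is $C^\alpha$ by Lemma~\ref{islocal} because the $Q_j$ are clopen, and then applies the cartesian case to $g$ and restricts. The paper's route thus never multiplies two $C^\alpha$-functions; yours trades the use of Proposition~\ref{betterdensloc} for an approximate polynomial partition of unity.

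The price of your route is the one ingredient you flag but do not supply: continuity of multiplication, at least of the linear maps $u\mapsto u\cdot p$ on $C^\alpha(K,\K)$ (equivalently, of multiplication by monomials $X^\gamma$, after which Lemma~\ref{linchain}/\ref{alphpushf} handles the vector coefficients). Nothing of this kind is proved in the paper -- Lemma~\ref{sammelsu}\,(f) gives the module structure only for $C(X,E)$ with the compact-open topology, not for the compact-open $C^\alpha$-topology -- and the estimate $(h_j-\one_{W_j})p_j\approx 0$ in $C^\alpha(K,E)$ really does need it. It is true and provable (a divided-difference Leibniz identity, or for monomials the recursion expressing $((X_iu)|_V)^{<\beta>}$ through $(u|_V)^{<\beta>}$ and $(u|_V)^{<\beta-e_i>}$, carried over to the grouped classes of Definition~\ref{defmocom}), so your argument can be completed; but as written this is a missing lemma whose proof is not shorter than the gluing argument it replaces, and the paper's zero-extension construction is designed precisely to avoid it. If you want to keep your Step~2, state and prove the multiplication lemma explicitly; otherwise, adopting the paper's gluing via Lemma~\ref{islocal} makes the products (and hence the $h_j$) unnecessary.
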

\begin{proof}
Step~1. If $U\sub \K^n$ is a compact cartesian set,
the proof is by induction on~$\ell$.\\[2.4mm]
If $\ell=1$, then $\alpha=(r)$
with $r:=\alpha_1$.
For $E=\K$ and $r<\infty$, the assertion was established
in \cite[Corollary~II.42]{Nag},
and one can proceed analogously
if $r<\infty$ and~$E$ is an ultrametric
Banach space.\\[2.4mm]
Now let $E$ be a locally convex space,
$f\in C^r(U,E)$ and $W\sub C^r(U,E)$ be an open
$0$-neighbourhood.\\[2.4mm]
The case $r<\infty$.
Let $\wt{E}$
and $\phi_q\colon\wt{E}\to E_q$
be as in the proof of Theorem~B.
Repeating the arguments
(with Remark~\ref{gentypeCalph} (d) and (f)
instead of
Lemmas \ref{alphvsub} and \ref{plinim}),
we may assume that
$W=C^r(U,E)\cap\wt{W}$
for some open $0$-neighbourhood $\wt{W}\sub
C^r(U,\wt{E})$,
and
$\wt{W}=C^r(U,\phi_q)^{-1}(V)$
for some $q$ and open $0$-neighbourhood $V\sub C^r(U,E_q)$.
By the case of ultrametric Banach spaces,
there exists $g\in \Pol(U,E_q)$
such that $\phi_q\circ f - g\in V$.
Thus, there exists a finite subset $\Theta\sub \N_0^n$
and $c_\nu\in E_q$ for
$\nu\in \Theta$ such that
\[
g(x)=\sum_{\nu\in \Theta} x^\nu c_\nu.
\]
By Lemma~\ref{laavoid},
there are open neighbourhoods
$Q_\nu\sub E_q$ of $c_\nu$
such that
\[
\phi_q\circ f - \sum_{\nu\in \Theta}x^\nu b_\nu\in V,
\]
for all $b_\nu\in Q_\nu$.
Since $\phi_q(E)$ is dense in~$E_q$,
there exist $a_\nu \in E$
such that $\phi_q(a_\nu)\in Q_\nu$.
Hence $h:=\sum_{\nu\in \Theta}x^\nu a_\nu\in \Pol(U,E)$
and
\[
\phi_q\circ (f-h)=\phi_q\circ f - \sum_{\nu\in\Theta} x^\nu
a_\nu \in V.
\]
Thus
$f-h\in C^r(U,\phi_q)^{-1}(V)=\wt{W}$ and thus
$f-h\in \wt{W}\cap E=W$.\\[2.4mm]
If $r=\infty$,
we can argue as in the proof of Theorem~B,
replacing Lemma~\ref{pllower}
with Remark~\ref{gentypeCalph}\,(c).\\[2.4mm]
\emph{Induction step.}
If $\ell\geq 2$, we write $U=U_1\times U'$ with compact cartesian sets $U\sub \K^{n_1}$ and
$U'\sub \K^{n_2}\times\cdots\times \K^{n_\ell}$.
We also write $\alpha=(r,\alpha')$ with $r\in \N_0\cup\{\infty\}$
and $\alpha'\in (\N_0\cup\{\infty\})^{\ell-1}$.
Let $f\in C^\alpha(U,E)$ and $W\sub C^\alpha(U,E)$
be an open $0$-neighbourhood.
By Theorem~D, the map
\[
\Phi\colon C^\alpha(U,E)\to C^r(U_1,C^{\alpha'}(U',E)),\quad f\mto f^\vee
\]
is an isomorphism of topological vector spaces.
Hence $W=\Phi^{-1}(V)$ for some open $0$-neighbourhood
$V\sub C^r(U_1,C^{\alpha'}(U',E))$.
By induction, there exists
$g\in \Pol(U_1,C^{\alpha'}(U',E))$
with $\Phi(f)-g\in V$.
Thus, there is a finite subset $\Theta\sub \N_0^{n_1}$
and $c_\nu \in C^{\alpha'}(U',E)$ for
$\nu\in \Theta$
such that
\[
g(x)=\sum_{\nu\in\Theta} x^\nu c_\nu.
\]
By Remark~\ref{gentypeCalph}\,(g),
there are open neighbourhoods $Q_\nu\sub C^{\alpha'}(U',E)$
of $c_\nu$
such that
\[
\Phi(f) - \sum_{\nu\in \Theta}X^\nu b_\nu\in V
\]
for all $b_\nu\in Q_\nu$,
where $X^\nu\colon U_1\to\K$, $x\mto x^\nu$.
By induction, we find $b_\nu\in Q_\nu\cap \Pol(U',E)$.
Then
\[
g\colon U\to E,\quad g(x,x'):=\sum_{\nu\in \Theta}
x^\nu b_\nu(x')\quad\mbox{for $x\in U_1$, $x'\in U'$}
\]
is an element of $\Pol(U,E)$.
Since $\Phi(g)=\sum_{\nu\in\Theta}x^\nu b_\nu$,
we have $\Phi(f)-\Phi(g)\in V$ and thus $f-g\in W$.\\[2.4mm]
Step~2. Now assume that $U\sub \K^n$ is compact and locally cartesian.
Given $f\in C^\alpha(U,E)$
and an open neighbourhood $W\sub C^\alpha(U,E)$
of~$f$,
we want to find $h\in W\cap \Pol(U,E)$.
By Proposition~\ref{betterdensloc},
there exists $g\in W\cap \LocPol_{\leq\alpha}(U,E)$.
After replacing $f$ with $g$,
we may assume that $f$ is locally polynomial.
Let $\cV$ be a cover of~$U$
by open subsets $V\sub U$ on which $f$ is polynomial.
By Lemma~\ref{deccompinct},
we can write
$U=W_1\cup\cdots\cup W_m$
as a disjoint union of relatively open,
compact, cartesian subsets $W_1,\ldots, W_m\sub U$,
of the form $W_j=U\cap Q_j$
with clopen, cartesian, disjoint
subsets $Q_1,\ldots,Q_m\sub \K^n$,
such that each $W_j$ is a subset
of some $V_j\in \cV$.
By the last property, for
each $j$ we find $p_j\in \Pol(\K^n,E)$
such that $f|_{W_j}=p_j|_{W_j}$.
Let $\pr_i\colon \K^n\to \K$
be the projection onto the $i$-th component,
for $i\in \{1,\ldots, n\}$.
Then $K_i:=\pr_i(U)\sub \K$
is compact and does not have isolated points
(as $K_i=\bigcup_{j=1}^m\pr_i(W_j)$).
Thus
\[
K:=K_1\times\cdots\times K_n\sub \K^n
\]
is a compact cartesian set.
We define a (new) function $g\colon K\to E$
via $g(x):=p_j(x)$ if $x\in K\cap Q_j$,
$g(x):=0$ if $x\in K\setminus (Q_1\cup\cdots\cup Q_m)$.
Lemma~\ref{islocal} implies that $g\in C^\alpha(K,E)$.
Moreover, $g|_U=f$.
The restriction map $\rho\colon C^\alpha(K,E)\to C^\alpha(U,E)$
is continuous (Remark~\ref{gentypeCalph}\,(a)),
whence $\rho^{-1}(W)$
is an open neighbourhood of~$g$.
By Step~1,
there exists $h\in \rho^{-1}(W)\cap \Pol(K,E)$.
Then $h|_U\in W\cap \Pol(U,E)$
and hence $\Pol(U,E)$ is dense.\\[2.4mm]
Step~3. Now consider a locally closed,
locally cartesian subset $U\sub \K^n$.
Let $f\in C^\alpha(U,E)$ and $P\sub C^\alpha(U,E)$
be an open $0$-neighbourhood.
As in Step~3 of the proof of Proposition~\ref{betterdensloc},
we may assume that $P=\rho^{-1}_K(Q)$
for some open $0$-neighbourhood $Q\sub C^\alpha(K,E)$
and compact, locally cartesian subset $K\sub U$.
By Step~2 and Lemma~\ref{extlocp}, there exists $p\in \Pol(U,E)$
such that $f|_K-p|_K\in Q$.
Thus $f-p\in \rho_K^{-1}(Q)=P$,
and thus $\Pol(U,E)$ is dense.
\end{proof}
\appendix
\section{Outlook on related concepts and results}
In this appendix, we give an alternative definition of $C^\alpha$-maps,
which applies to maps on subsets of arbitrary (possibly infinite-dimensional!)
topological vector spaces.
We also describe various assertions (to be proved elsewhere)
which will make the theory of $C^\alpha$-maps more complete
(but were irrelevant for our current ends).
It is natural to approach them with the methods
developed in \cite{Alz}, \cite{AaS}, \cite{BGN}, \cite{CMP}, \cite{ZOO},
and \cite{IM2}.\\[2.4mm]
We recall from \cite{IM2} (and the earlier work \cite{BGN},
in the case of open sets):
\begin{defn}
If $\K$ is a topological field,
$E$ a topological $\K$-vector space
and $U\sub E$ a subset with dense interior,
we define $U^{[0]}:=U$ and
\[
U^{[1]}:=\{(x,y,t)\in U\times E\times\K\colon x+t y\in U\}\,.
\]
Recursively, we let $U^{[k]}:=(U^{[1]})^{[k-1]}$
for $k\in \N$.
If also $F$ is a topological $\K$-vector space
and $f\colon U\to F$ a function,
we say that $f$ is $C^0_{BGN}$
it $f$ is continuous, and define $f^{[0]}:=f$ in the case.
We say that $f$ is $C^1_{BGN}$ if $f$ is continuous and
there exists a continuous map
\[
f^{[1]}\colon U^{[1]}\to F
\]
such that $f^{[1]}(x,y,t)=\frac{1}{t}(f(x+ty)-f(x))$ for all $(x,y,t)\in U^{[1]}$
with $t\not=0$.
Recursively, having defined when $f$ is $C^j_{BGN}$
and associated maps $f^{[j]}$ for $j\in \{0,1,\ldots,k-1\}$,
where $k\in \N$,
we say that $f$ is $C^k_{BGN}$ if $f$ is $C^1_{BGN}$
and $f^{[1]}$ is $C^{k-1}_{BGN}$.
Let $f^{[k]}:=(f^{[1]})^{[k-1]}\colon U^{[k]}\to F$
in this case.
\end{defn}
We now define the analogous $C^\alpha$-maps.
\begin{defn}\label{cainf}
Let $\K$ be a topological field,
$E_1,\ldots, E_\ell$ be topological $\K$-vector spaces
and $U_j\sub E_j$ be subsets with dense interior, for $j\in \{1,\ldots, \ell\}$.
Let $\alpha=(\alpha_1,\ldots,\alpha_\ell)\in (\N_0\cup\{\infty\})^\ell$
and $F$ be a topological $\K$-vector space.
We say that a map $f\colon U_1\times\cdots\times U_\ell\to F$ is $C^\alpha_{BGN}$
if there exist continuous mappings
\[
f^{[\beta]}\colon U_1^{[\beta_1]}\times\cdots\times U_\ell^{[\beta_\ell]}\to F
\]
for all $\beta\in \N_0^\ell$ with $\beta\leq\alpha$,
such that
\[
f^{[0,\beta_\ell]}(x,y):=(f(x,\sbull))^{[\beta_\ell]}(y)
\]
for all $x\in U_1\times \cdots\times U_{\ell-1}$
and $y\in U_\ell^{[\beta_\ell]}$
and, recursively,
\[
f^{[0,\beta_j,\beta')]}(x,y,z):=(f^{[0,0,\beta']}(x,\sbull,z))^{[\beta_j]}(y)
\]
for all $x\in U_1\times \cdots\times U_{j-1}$,
$y\in U_j^{[\beta_j]}$ and $z\in U_{j+1}^{[\beta_{j+1}]}\times\cdots\times
U_\ell^{[\beta_\ell]}$,
with $\beta':=(\beta_{j+1},\ldots,\beta_\ell)$.
Endow the space $C^\alpha(U,F)_{BGN}$ of all $C^\alpha_{BGN}$-maps
$f\colon U\to F$ with the initial topology with respect to
the maps
\[
C^\alpha(U,F)_{BGN}\to C(U^{[\beta]},F),\quad f\mto f^{[\beta]}
\]
(using the compact-open topology on the right-hand side),
for $\beta\in \N_0^\ell$ such that $\beta\leq \alpha$.
\end{defn}
Then the following holds:
\begin{numba}
For $E_1,\ldots, E_\ell$, $F$ and $U_1,\ldots, U_\ell$
as before and $r\in \N_0\cup\{\infty\}$,
a map $f\colon U_1\times\cdots\times U_\ell\to F$
is $C^r_{BGN}$ if and only if $f$ is $C^\alpha$
for all $\alpha\in \N_0^\ell$
such that $|\alpha|\leq r$
(cf.\ \cite[Lemma~3.9]{BGN} to get an induction started).
More generally, if also $H_1,\ldots, H_k$ are topological $\K$-vector spaces,
$V_i\sub H_i$ subsets with dense interior for $i\in \{1,\ldots, k\}$,
$\beta\in (\N_0\cup\{\infty\})^k$
and
\[
f\colon V_1\times \cdots\times V_k\times U_1\times\cdots\times U_\ell\to F
\]
a map, then  $f$ is $C^{(\beta,\alpha)}$
for all $\alpha$ as before
if and only if $f$ is $C^{(\beta,r)}$
on $V_1\times \cdots\times V_k\times U\sub H_1\times \cdots\times H_k\times E$,
with $E:=E_1\times\cdots\times E_\ell$ and $U:=U_1\times\cdots\times U_\ell$.
\end{numba}
\begin{numba}
Consider $E_j=\K^{n_j}$ for $j\in \{1,\ldots,\ell\}$,
locally cartesian subsets $U_j\sub E_j$ with dense interior,
and $\alpha\in (\N_0\cup\{\infty\})^\ell$.
Let $U:=U_1\times\cdots\times U_\ell$
and $f\colon U\to F$
be a function from~$U$ to a topological $\K$-vector space~$F$.
Then:
\begin{itemize}
\item[{\rm(a)}]
$f$ is $C^\alpha$ in the sense of Definition~{\rm\ref{defmocom}}
if and only if $f$ is $C^\alpha_{BGN}$.
Moreover, $C^\alpha(U,F)=C^\alpha(U,F)_{BGN}$
as a topological vector space.\footnote{See already \cite[Theorem~A]{CMP}
for the case of $C^r$-functions,
and Theorem~B.2 of its preprint-version
{\tt arXiv:math/0609041v1}
for the equality of the $C^r$-topologies.}
\item[{\rm(b)}]
If $\K\in \{\R,\C\}$ and $F$ is locally convex,
then $f$ is $C^\alpha$ if and only if the partial derivatives
$\partial^\beta f$ exist on the interior of
$U$
and extend to continuous functions $\partial^\beta f \colon U\to F$
$($denoted by the same symbol$)$,
for all $\beta\in N_\alpha$.
\end{itemize}
\end{numba}
See \cite{Alz}, \cite[Lemmas~3.17 and 3.18]{AaS}, \cite[Proposition~4.5]{BGN},
\cite[1.10]{IM2}
and  \cite[Proposition~2.18]{Nag} for special cases of the following Chain Rule:
\begin{numba}
If $f\colon U_1\times \cdots\times U_\ell\to F$ is $C^\alpha_{BGN}$
and $g_j\colon V_{j,1}\times\cdots\times V_{j,k_j}\to U_j\sub E_j$
is $C^{\beta_j}$ (resp., $C^{\beta_j}_{BGN}$) and $|\beta_j|\leq \alpha_j$
for $j\in \{1,\ldots,\ell\}$, then the map
\[
f\circ (g_1\times \cdots\times g_\ell)\colon
(V_{1,1}\times\cdots\times V_{1,k_1})\times\cdots\times
(V_{\ell,1}\times\cdots\times V_{\ell,k_\ell})\to F
\]
is $C^{(\beta_1,\ldots,\beta_\ell)}$
(resp., $C^{(\beta_1,\ldots,\beta_\ell)}_{BGN}$).
The composition is also $C^{(\beta_1,\ldots,\beta_\ell)}$
if $f$ is $C^\alpha$
and each $g_j$ is $C^{\beta_j}$.
\end{numba}
\begin{numba}
If $\K$ is $\R$ or $\C$,
all of $E_1,\ldots, E_\ell,F$ are locally convex and $U_1,\ldots, U_\ell$
open, then the $C^\alpha_{BGN}$-maps of Definition~\ref{cainf}
should also coincide with the $C^\alpha$-maps
discussed in \cite{AaS} (in the case $\ell=2$)
and the work in progress \cite{Alz}
(see \cite[Proposition~7.4]{BGN} for the case of $C^r$-maps).
Moreover, both approaches should give the same
topology on $C^\alpha(U,E)$
(as in the case of $C^r$-maps settled in~\cite[Proposition~4.19\,(d)]{ZOO}).
\end{numba}
\begin{numba}\label{ifmultilin}
If $E_1,\ldots, E_\ell$, $U_1\ldots, U_\ell$ and $\alpha$
are as in Definition~\ref{cainf}, $H:=H_1\times\cdots\times H_k$
a direct product of topological $\K$-vector spaces and
$f\colon U_1\times \cdots\times U_\ell\times H\to F$
a $C^{(\alpha,0)}$-map such that $f(x,\sbull)\colon H_1\times\cdots\times H_k\to F$ is
$k$-linear for all $x\in U_1\times\cdots\times U_\ell$,
then $f$ is $C^{(\alpha,\infty)}$ (cf.\
\cite[Lemma~3.14]{AaS}
for a special case).
\end{numba}
\begin{numba}
Assume that $E_1,\ldots, E_\ell$, $F_1\ldots, F_k$ and $H$ are topological vector spaces
over the topological field $\K$, $U\sub E_1\times \cdots\times E_\ell=:E$
and $V\sub F_1\times \cdots\times F_\ell=:F$
subsets with dense interior. Let $\alpha\in (\N_0\cup\{\infty\})^\ell$
and $\beta\in (\N_0\cup\{\infty\})^k$.
Then $f^\vee(x):=f(x,\sbull)\in C^\beta(V,H)$
for each $f\in C^{(\alpha,\beta)}(U\times V,H)$
and $x\in U$, the map $f^\vee\colon U\to C^\beta(V,H)$ is $C^\alpha$,
and
\[
\Phi\colon C^{(\alpha,\beta)}(U\times V,H)\to C^\alpha(U,C^\beta(V,H)),\quad
f\mto f^\vee
\]
is a linear topological embedding.
If $\K$, $E$ and $F$ are metrizable
or $\K$ and $V$ are locally compact,
then $\Phi$ is a topological embedding.\footnote{Less is needed:
Assume that $V^{[\eta]}$ is locally compact for each $\eta\in \N_0^k$ such that $\eta\leq\beta$,
or that $(U\times V)^{[\gamma]}$ is a k-space
for all $\gamma\in \N_0^{\ell+ k}$ such that $\gamma\leq (\alpha,\beta)$.} 
\end{numba}
\begin{numba}
If $\K$ and $U=U_1\times \cdots\times U_\ell$ is locally compact
in the situation of Definition~\ref{cainf},
then the evaluation map
\[
C^\alpha(U,F)\times U_1\times \cdots\times U_\ell \to F,\quad (f,x)\mto f(x)
\]
is $C^{(\infty,\alpha)}$.
\end{numba}
(See \cite{Alz}, \cite[Proposition~3.20]{AaS} and \cite[Proposition~11.1]{ZOO}
for special cases, and combine them with \ref{ifmultilin}
to increase the order of differentiability in the linear first argument).
Also the differentiability properties of the composition
$f\circ g$ as a function of $(f,g)$ can be analyzed
(generalizing the discussions in
\cite{Alz} and \cite[Proposition~11.2]{ZOO}).
\begin{numba}
If $U_j\sub \K^{n_j}$ is locally cartesian for $j\in \{1,\ldots, \ell\}$,
$U:=U_1\times\cdots\times U_\ell\sub \K^n$ with $n:=n_1+\cdots+n_\ell$,
$\alpha\in (\N_0\cup\{\infty\})^\ell$ and
$f\colon U\to E$ is a $C^\alpha$-function as in
Definition~\ref{defmocom},
then $f^{<\beta>}\colon U_1^{<\beta_1>}\times\cdots\times U_\ell^{<\beta_\ell>}\to F$
is a $C^{<\alpha-(|\beta_1|,\ldots,|\beta_\ell|)>}$-function,
for each $\beta=(\beta_1,\ldots,\beta_\ell)\in N_\alpha$. 
\end{numba}
%
% comp via explaw zurueckfuehrbar...
% ermoeglicht dies diffeogruppen in neuen situationen?
%
%
%
%
%
%
%
%
%
%
%
%
%
%
%
%
%
%
%
%
%
%
%
%
%
%
%
%
\section{The compact-open topology}\label{appcotop}
In this appendix, we give a self-contained introduction to the compact-open topology
on function spaces.
Most of the results are classical, and no originality is claimed
(cf.\ \cite[Chapter~X, \S1--\S3]{BTG}
and \cite{Eng}, for example).
However, the sources treating
the topic do so with a different thrust.
Here, we compile precisely those results which are
%needed
the foundation
for the study of non-linear mappings between function spaces,
and their differentiability properties.\\[2.4mm]
We recall: If $X$ and $Y$ are Hausdorff topological spaces,
then the compact-open topology on $C(X,Y)$
is the topology given by the subbasis of open sets
\begin{equation}\label{thesubba}
\lfloor K,U\rfloor :=\{\gamma\in C(X,Y)\colon \gamma(K)\sub U\},
\end{equation}
for $K$ ranging through the set $\cK(X)$ of all compact subsets
of $X$ and $U$ though the set of open subsets of~$Y$.
In other words, finite intersections
of sets as in (\ref{thesubba}) form a basis for the compact-open topology on $C(X,Y)$.
We always endow $C(X,Y)$ with the compact-open topology (unless the contrary is stated).
\begin{rem}
The compact-open topology on $C(X,Y)$ makes the point evaluation
\[
\ev_x\colon C(X,Y)\to Y,\quad \gamma\mto \gamma(x)
\]
continuous, for each $x\in X$.
(If $U\sub Y$ is open, then $\ev_x^{-1}(U)=\lfloor \{x\}, U\rfloor$
is open in $C(X,Y)$).
As the maps $\ev_x$ separate points on $C(X,Y)$ for $x\in X$,
it follows that
the compact-open topology on $C(X,Y)$
is Hausdorff.
\end{rem}
\begin{la}\label{subbaca}
Let $X$ and $Y$ be Hausdorff topological spaces and
$\cS$ be a subbasis of open subsets of~$Y$.
Then
\[
\cV:=\{\lfloor K,U\rfloor \colon K\in \cK(X), U\in \cS\}
\]
is a subbasis for the compact-open topology on $C(X,Y)$.
\end{la}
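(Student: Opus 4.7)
\textbf{Proof plan for Lemma \ref{subbaca}.}

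Let $\cT$ denote the topology on $C(X,Y)$ generated by $\cV$ and let $\cO$ denote the compact-open topology. The inclusion $\cT\sub\cO$ is immediate, since every $U\in\cS$ is open in $Y$, so each $\lfloor K,U\rfloor\in\cV$ is by definition $\cO$-open. The whole content of the lemma is the reverse inclusion $\cO\sub\cT$, and to establish this it suffices to show that every subbasic $\cO$-open set $\lfloor K,U\rfloor$ (with $K\sub X$ compact and $U\sub Y$ open) is $\cT$-open, i.e.\ that for each $\gamma\in\lfloor K,U\rfloor$ we can produce a finite intersection of members of $\cV$ that contains $\gamma$ and is contained in $\lfloor K,U\rfloor$.

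The first step is to use that finite intersections of elements of $\cS$ form a basis $\cB$ of the topology on $Y$. For each $x\in K$ we have $\gamma(x)\in U$, so we may choose $B_x\in\cB$ with $\gamma(x)\in B_x\sub U$, writing $B_x=U_{x,1}\cap\cdots\cap U_{x,n(x)}$ with $U_{x,i}\in\cS$. By continuity of $\gamma$, the preimages $V_x:=\gamma^{-1}(B_x)$ form an open cover of $K$ in $X$, and by compactness we extract a finite subcover $K\sub V_{x_1}\cup\cdots\cup V_{x_m}$.

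The crucial step is to refine this open cover to a \emph{closed} (hence, in the subspace topology, compact) cover of $K$. This is exactly where the Hausdorff hypothesis on $X$ is used: $K$ is compact Hausdorff and therefore normal, so the shrinking lemma yields open subsets $O_1,\ldots,O_m$ of $K$ with $\overline{O_j}^K\sub K\cap V_{x_j}$ and $K=O_1\cup\cdots\cup O_m$. Set $K_j:=\overline{O_j}^K$; each $K_j$ is closed in the compact set $K$, hence compact in $X$, and $K=K_1\cup\cdots\cup K_m$ with $\gamma(K_j)\sub B_{x_j}\sub U$. I expect this shrinking step to be the only nonroutine ingredient.

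Having done this, one simply assembles the result: the set
\[
W\;:=\;\bigcap_{j=1}^{m}\bigcap_{i=1}^{n(x_j)}\lfloor K_j,U_{x_j,i}\rfloor
\]
is a finite intersection of elements of $\cV$, hence $\cT$-open, and contains $\gamma$ by construction. For any $\eta\in W$ and any $j$, one has $\eta(K_j)\sub U_{x_j,1}\cap\cdots\cap U_{x_j,n(x_j)}=B_{x_j}\sub U$; since $K=K_1\cup\cdots\cup K_m$, it follows that $\eta(K)\sub U$, i.e.\ $\eta\in\lfloor K,U\rfloor$. Thus $\gamma\in W\sub\lfloor K,U\rfloor$, proving $\lfloor K,U\rfloor\in\cT$ and completing the argument.
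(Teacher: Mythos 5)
Your argument is correct and follows essentially the same strategy as the paper: cover $K$ by finitely many compact subsets, each of which $\gamma$ maps into a basic open set (a finite intersection of members of $\cS$) contained in $U$, and then take the finite intersection of the corresponding sets $\lfloor K_j,U_{x_j,i}\rfloor$. The only difference is the device used to produce that compact cover --- you invoke normality of the compact Hausdorff space $K$ and the shrinking lemma, while the paper uses local compactness of $K$ to choose compact neighbourhoods $K_x\sub\gamma^{-1}(S_x)$ whose relative interiors cover $K$ --- and both are routine consequences of $K$ being compact Hausdorff, so the proofs are interchangeable.
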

\begin{proof}
Each of the sets $V\in \cV$ is open in the compact-open topology.
To see that $\cV$ is a subbasis for the latter,
it suffices to show that
$\lfloor K,U\rfloor$
is open in the topology generated by $\cV$,
for each $K\in \cK(X)$ and open subset $U\sub Y$.
To check this, let $\gamma\in \lfloor K,U\rfloor$.
For each $x\in K$, there is a finite subset $F_x\sub \cS$ such that
\[
\gamma(x)\in \bigcap_{S\in F_x}S
\sub U.
\]
Set $S_x:=\bigcap_{S\in F_x}S$.
Since~$K$ is compact and hence locally compact,
each $x\in K$ has a compact neighbourhood $K_x\sub \gamma^{-1}(S_x)$ in~$K$.
By compactness, there is a finite subset $\Phi\sub K$
such that $K=\bigcup_{x\in \Phi}K_x^0$ (where $K_x^0$
denotes the interior of $K_x$ relative $K$).
Then $V:=\bigcap_{x\in \Phi}\lfloor K_x,S_x\rfloor=
\bigcap_{x\in \Phi}\bigcap_{S\in F_x}\lfloor K_x,S\rfloor$
is open in the topology generated by~$\cV$,
and $V\sub \lfloor K,U\rfloor$.
In fact, let $\gamma\in V$.
For each $y\in K$, there is $x\in \Phi$ such that $y\in K_x$.
Hence $\gamma(y)\in \gamma(K_x)\sub S_x\sub U$
and thus $\gamma\in \lfloor K,U\rfloor$.
\end{proof}
Mappings between function spaces
of the form $C(X,f)$ (so-called superposition operators)
are used frequently.
\begin{la}\label{covsuppo}
Let $X$, $Y_1$ and $Y_2$ be Hausdorff topological spaces.
If a map $f\colon Y_1\to Y_2$ is continuous, then also
the following map is continuous:
\[
C(X,f)\colon C(X,Y_1)\to C(X,Y_2), \quad \gamma\mto f\circ \gamma\, .
\]
\end{la}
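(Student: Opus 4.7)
The plan is to verify continuity by checking preimages of a subbasis. By the very definition of the compact-open topology on $C(X, Y_2)$, the collection of sets $\lfloor K, U\rfloor$ with $K \in \cK(X)$ and $U \sub Y_2$ open forms a subbasis. Hence, to establish the continuity of $C(X,f)$, it suffices to show that $C(X,f)^{-1}(\lfloor K, U\rfloor)$ is open in $C(X, Y_1)$ for every such pair $(K, U)$.

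The key step is a direct unwinding of the definitions. For $\gamma \in C(X, Y_1)$, we have $\gamma \in C(X,f)^{-1}(\lfloor K, U\rfloor)$ if and only if $(f \circ \gamma)(K) \sub U$, which is equivalent to $\gamma(K) \sub f^{-1}(U)$. Thus
\[
C(X,f)^{-1}(\lfloor K, U\rfloor) \;=\; \lfloor K, f^{-1}(U)\rfloor\,.
\]
Since $f$ is continuous, $f^{-1}(U)$ is open in $Y_1$, and therefore $\lfloor K, f^{-1}(U)\rfloor$ belongs to the defining subbasis of the compact-open topology on $C(X, Y_1)$; in particular, it is open.

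I do not anticipate any obstacle here: the argument is purely set-theoretic once one has the subbasic description of the topology, and the linearity (if $Y_1, Y_2$ carry vector space structures and $f$ is linear) plays no role at this stage. The only mild subtlety is being clear that Lemma~\ref{subbaca} is not even needed in full strength, because the sets $\lfloor K, U\rfloor$ with $U$ ranging over \emph{all} open subsets of $Y_2$ are already (by definition) a subbasis for the compact-open topology on $C(X, Y_2)$.
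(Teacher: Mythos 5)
Your proposal is correct and coincides with the paper's own argument: both check continuity on the subbasic sets $\lfloor K,U\rfloor$ and use the identity $C(X,f)^{-1}(\lfloor K,U\rfloor)=\lfloor K,f^{-1}(U)\rfloor$ together with the openness of $f^{-1}(U)$. Nothing further is needed.
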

\begin{proof}
The map $C(X,f)$ will be continuous if pre-images of subbasic open sets are open.
To this end, let $K\in \cK(X)$ and $U\sub Y_2$ be open.
Then $C(X,f)^{-1}(\lfloor K,U\rfloor)=\lfloor K,f^{-1}(U)\rfloor$ is open in $C(X,Y_1)$
indeed.
\end{proof}
\begin{la}\label{inipush}
Let $X$ and $Y$ be Hausdorff topological spaces.
Assume that the topology on $Y$ is initial with respect to
a family $(f_j)_{j\in J}$ of maps $f_j\colon Y\to Y_j$\linebreak
to Hausdorff topological spaces~$Y_j\!$.\hspace*{-.5mm}
Then the compact-open topology on $C(X,Y)$ is initial with respect to the
family $(C(X,f_j))_{j\in J}$ of the mappings\linebreak
$C(X,f_j)\colon C(X,Y)\to C(X,Y_j)$.
\end{la}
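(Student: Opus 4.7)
The plan is to combine Lemma~\ref{subbaca} with the identity
\[
\lfloor K,f_j^{-1}(U)\rfloor \;=\; C(X,f_j)^{-1}\bigl(\lfloor K,U\rfloor\bigr)
\]
for $K\in\cK(X)$, $j\in J$ and $U\sub Y_j$ open, which is immediate from unravelling the definitions: $\gamma(K)\sub f_j^{-1}(U)$ if and only if $(f_j\circ\gamma)(K)\sub U$.

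First I would observe that since each $f_j$ is continuous, Lemma~\ref{covsuppo} gives the continuity of $C(X,f_j)\colon C(X,Y)\to C(X,Y_j)$, and hence the compact-open topology~$\cO$ on $C(X,Y)$ is finer than the initial topology~$\cT$ with respect to the family $(C(X,f_j))_{j\in J}$. For the reverse inclusion, I would use that the topology on $Y$ being initial with respect to $(f_j)_{j\in J}$ means that
\[
\cS:=\{f_j^{-1}(U)\colon j\in J,\, U\sub Y_j\text{ open}\}
\]
is a subbasis for the topology on~$Y$. By Lemma~\ref{subbaca}, the family
\[
\cV:=\{\lfloor K,f_j^{-1}(U)\rfloor\colon K\in\cK(X),\, j\in J,\, U\sub Y_j\text{ open}\}
\]
is then a subbasis for $\cO$. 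By the displayed identity above, every set in~$\cV$ is a pre-image under some $C(X,f_j)$ of a subbasic open set $\lfloor K,U\rfloor$ in the compact-open topology on $C(X,Y_j)$, hence is open in~$\cT$. Therefore $\cO\sub\cT$, and equality follows.

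The only step that requires any care is verifying that $\cV$ really is a subbasis of $\cO$ (not merely a family of open sets), which is exactly the content of Lemma~\ref{subbaca}; once that is granted, the argument is essentially a one-line manipulation. There is no genuine obstacle.
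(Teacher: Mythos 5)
Your proposal is correct and follows essentially the same route as the paper: both arguments rest on Lemma~\ref{subbaca} applied to the subbasis $\{f_j^{-1}(U)\}$ of $Y$, together with the identity $\lfloor K,f_j^{-1}(U)\rfloor=C(X,f_j)^{-1}(\lfloor K,U\rfloor)$. The paper merely concludes at once that the two topologies share this subbasis, whereas you split the equality into two inclusions (using Lemma~\ref{covsuppo} for one direction), which is an inessential rearrangement of the same argument.
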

\begin{proof}
Let $\cS$ be the set of all subsets of $Y$ of the form
$f_j^{-1}(W)$, with $j\in J$ and $W$ an open subset of $Y_j$.
By hypothesis, $\cS$ is a subbasis for the topology of~$Y$.
Hence, by Lemma~\ref{subbaca},
the sets $\lfloor K, f_j^{-1}(W)\rfloor$
form a subbasis for
the compact-open topology on $C(X,Y)$,
for $j,W$ as before and $K\in \cK(X)$.
But $\lfloor K, f_j^{-1}(W)\rfloor=C(X,f_j)^{-1}(\lfloor K,W\rfloor)$
(since
$\gamma(K)\sub f_j^{-1}(W)$
$\aeq$ $f_j(\gamma(K))\sub W$ $\aeq$ $(f_j\circ \gamma)(K)\sub W$),
and these sets form a subbasis for the initial topology on $C(X,Y)$ with respect to the family $(C(X,f_j))_{j\in J}$.
\end{proof}
We mention three consequences of Lemma~\ref{inipush}.
\begin{la}\label{ctsemb}
If $X$ is a Hausdorff topological space and $f\colon Y_1\to Y_2$ is a\linebreak
topological embedding of Hausdorff topological spaces,
then also the mapping\linebreak
$C(X,f)\colon C(X,Y_1)\to C(X,Y_2)$
is a topological embedding.
\end{la}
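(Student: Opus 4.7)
The plan is to reduce Lemma \ref{ctsemb} directly to Lemma \ref{inipush}, using the standard characterization that a map is a topological embedding if and only if it is injective and its domain carries the initial topology with respect to it.

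First I would verify injectivity of $C(X,f)$. If $\gamma_1,\gamma_2\in C(X,Y_1)$ satisfy $f\circ\gamma_1 = f\circ\gamma_2$, then pointwise $f(\gamma_1(x))=f(\gamma_2(x))$ for all $x\in X$, so injectivity of $f$ (which holds because $f$ is an embedding) yields $\gamma_1=\gamma_2$. This step is essentially immediate.

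Next I would verify that the compact-open topology on $C(X,Y_1)$ agrees with the initial topology induced by $C(X,f)$. Since $f\colon Y_1\to Y_2$ is a topological embedding, the topology on $Y_1$ is initial with respect to the singleton family $\{f\}$. Applying Lemma \ref{inipush} with this one-element family $(f_j)_{j\in J}$ (where $J=\{*\}$ and $f_*=f$), we conclude that the compact-open topology on $C(X,Y_1)$ is initial with respect to $C(X,f)\colon C(X,Y_1)\to C(X,Y_2)$. Combined with injectivity, this says exactly that $C(X,f)$ is a topological embedding.

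There is essentially no obstacle: the whole content lies in the previously established Lemma \ref{inipush}, and the present lemma is the specialization to a one-element family plus the observation that injectivity is preserved. The only subtle point worth checking explicitly in the write-up is that the characterization ``initial topology with respect to $f$'' for an injective map $f$ implies it is a homeomorphism onto its image (given by the subspace topology)--this is the standard equivalence and requires no computation.
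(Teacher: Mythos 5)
Your proposal is correct and matches the paper's own proof: both establish injectivity of $C(X,f)$ from injectivity of $f$, then apply Lemma~\ref{inipush} to the singleton family $\{f\}$ and invoke the characterization of embeddings as injective maps inducing the initial topology (stated at the end of Section~\ref{secnot}). No gaps.
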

\begin{proof}
Since $f$ is injective, also $C(X,f)$ is injective.
By hypothesis, the topology on $Y_1$ is initial with respect to $f$.
Hence the topology on $C(X,Y_1)$ is initial with respect to $C(X,f)$
(by Lemma~\ref{inipush}).
As a consequence, the injective map $C(X,f)$ is a topological embedding.
%
% BEWEIS GANZ ZU FUSS:
%If $f$ is a topological embedding, then $f$ (and hence also $C(X,f)$) is injective.
%Therefore $C(X,f)$ will be an open map onto its image (which completes the proof)
%if we can show that all images of subbasic open sets are relatively open
%in the image of $C(X,f)$.
%Let $K\in \cK(X)$ and $U\sub Y_1$ be open.
%Since $f$ is an embedding, there is an open subset $V\sub Y_2$
%such that $f^{-1}(V)=U$.
%For $\gamma\in C(X,Y_1)$, we have
%$\gamma(K)\sub U$ $\aeq$ $\gamma(K)\sub f^{-1}(V)$
%$\aeq$ $f(\gamma(K))\sub V$ $\aeq$ $(f\circ \gamma)(K)\sub V$.
%Thus $C(X,f)(\lfloor K,U\rfloor)=\lfloor K,V\rfloor\cap \im C(X,f)$, entailing that $C(X,f)$ is open onto the image.
%
\end{proof}
\begin{rem}\label{reminduco}
In particular, if $X$ and $Y_2$ are Hausdorff topological spaces and
$Y_1\sub Y_2$ is a subset, endowed with the induced topology,
then topology induced by $C(X,Y_2)$ on $C(X,Y_1)$ coincides with the
compact-open topology on $C(X,Y_1)$.
\end{rem}
Next, we deduce that $C(X,\prod_{j\in J} Y_j)=\prod_{j\in J}C(X,Y_j)$.
\begin{la}\label{cotopprod}
Let $X$ be a Hausdorff topological space and $(Y_j)_{j\in J}$
be a family of Hausdorff topological spaces, with cartesian product
$Y:=\prod_{j\in J}Y_j$ $($endowed with the product topology$)$
and the projections
$\pr_j\colon Y\to Y_j$. Then the natural map
\[
\Phi:=(C(X,\pr_j))_{j\in J}\colon C(X,Y)\to \prod_{j\in J}C(X,Y_j)
\]
is a homeomorphism.
\end{la}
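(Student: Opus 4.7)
The plan is to show bijectivity and being a topological embedding separately, then combine. Both ingredients are nearly immediate from results already established.

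First I would verify that $\Phi$ is a bijection of sets. This is the standard universal property of the product topology: a map $\gamma \colon X \to Y = \prod_{j\in J} Y_j$ is continuous if and only if each component $\pr_j\circ \gamma \colon X \to Y_j$ is continuous. Hence the assignment $\gamma \mapsto (\pr_j\circ\gamma)_{j\in J}$ gives a bijection $C(X,Y) \to \prod_{j\in J} C(X,Y_j)$, which is exactly~$\Phi$.

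Next I would show that $\Phi$ is a topological embedding by identifying the topology on the codomain as an initial topology and applying Lemma~\ref{inipush}. Let $\pi_j \colon \prod_{i\in J} C(X,Y_i)\to C(X,Y_j)$ be the canonical projection. By definition of the product topology, the topology on $\prod_{j\in J} C(X,Y_j)$ is initial with respect to the family $(\pi_j)_{j\in J}$. Since $\pi_j \circ \Phi = C(X,\pr_j)$, transitivity of initial topologies tells us that the initial topology on $C(X,Y)$ with respect to $\Phi$ coincides with the initial topology with respect to the family $(C(X,\pr_j))_{j\in J}$. But the topology on $Y$ is initial with respect to the family $(\pr_j)_{j\in J}$, so Lemma~\ref{inipush} shows that the compact-open topology on $C(X,Y)$ is precisely the initial topology with respect to the $C(X,\pr_j)$. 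Therefore the compact-open topology on $C(X,Y)$ coincides with the topology induced from $\prod_{j\in J} C(X,Y_j)$ via $\Phi$, so the bijection $\Phi$ is a homeomorphism.

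There is no real obstacle here — the entire argument is a two-step invocation of Lemma~\ref{inipush} combined with the universal property of products and the transitivity of initial topologies. The only thing to be careful about is to avoid confusing the two different product topologies in play (the one on $Y$ and the one on $\prod_j C(X,Y_j)$), both of which are defined as initial topologies with respect to their respective projections.
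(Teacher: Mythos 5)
Your proposal is correct and follows essentially the same route as the paper: bijectivity from the universal property of the product, and the embedding property from Lemma~\ref{inipush} applied to the initial topology on $Y$ with respect to the projections $\pr_j$ (your explicit use of the projections $\pi_j$ and transitivity of initial topologies merely spells out a step the paper leaves implicit). No gaps.
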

\begin{proof}
The map $\Phi$ is a bijection (because
a map $f\colon X\to Y$ is continuous if and only if all of its components
$\pr_j\circ f$ are continuous). The topology on~$Y$
being initial with respect to the family $(\pr_j)_{j\in J}$,
the topology on $C(X,Y)$ is initial with respect to the family
$(C(X,\pr_j))_{j\in J}$ (Lemma~\ref{inipush}).
Thus $\Phi$ is a topological embedding
and hence a homeomorphism (being bijective).
\end{proof}
It is often useful that $C(X,\pl \, Y_j)=\pl \, C(X,Y_j)$.
\begin{la}
Let $(J,\leq)$ be a directed set, $((Y_j)_{j\in J}, (\phi_{j,k})_{j\leq k})$
be a projective system of Hausdorff topological
spaces\footnote{Thus $\phi_{j,k}\colon Y_k\to Y_j$ is a continuous
map for $j,k\in J$ such that $j\leq k$,
with $\phi_{j,j}=\id_{Y_j}$
and $\phi_{j,k}\circ \phi_{k,\ell}=\phi_{j,\ell}$ if $j\leq k\leq \ell$.}
and $Y$ be its projective limit,
with the limit maps $\phi_j\colon Y\to Y_j$.
Then the topological space $C(X,Y)$ is the projective limit of
$((C(X,Y_j))_{j\in J}, (C(X,\phi_{j,k}))_{j\leq k})$,
together with the limit maps $C(X,\phi_j)\colon C(X,Y)\to C(X,Y_j)$.
\end{la}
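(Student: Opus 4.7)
The plan is to reduce to the two previously established lemmas, exactly as in the proof of Lemma~\ref{plinim}: realize the projective limit as a closed subspace of a direct product, pass to function spaces via Lemmas~\ref{ctsemb} and~\ref{cotopprod}, and identify the image with the projective limit of the $C(X,Y_j)$.

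First I would invoke the standard realization of $Y$ as the closed subspace
\[
Y=\{(y_j)_{j\in J}\in P\colon (\forall j\leq k)\; \phi_{j,k}(y_k)=y_j\}
\]
of the direct product $P:=\prod_{j\in J}Y_j$, with the limit maps $\phi_j$ given by the restrictions of the canonical projections $\pr_j\colon P\to Y_j$. The inclusion $\iota\colon Y\to P$ is then a topological embedding. By Lemma~\ref{ctsemb}, $C(X,\iota)\colon C(X,Y)\to C(X,P)$ is a topological embedding, and by Lemma~\ref{cotopprod} the natural map $\Phi:=(C(X,\pr_j))_{j\in J}\colon C(X,P)\to \prod_{j\in J}C(X,Y_j)$ is a homeomorphism. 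Composing, the map
\[
\Psi:=\Phi\circ C(X,\iota)=(C(X,\phi_j))_{j\in J}\colon C(X,Y)\to \prod_{j\in J}C(X,Y_j)
\]
is a topological embedding.

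It remains to show that $\im(\Psi)$ coincides with the projective limit
\[
L:=\Bigl\{(f_j)_{j\in J}\in\prod_{j\in J}C(X,Y_j)\colon (\forall j\leq k)\; C(X,\phi_{j,k})(f_k)=f_j\Bigr\}.
\]
The inclusion $\im(\Psi)\subseteq L$ is immediate from $\phi_{j,k}\circ\phi_k=\phi_j$. For the converse, given $(f_j)_{j\in J}\in L$ and $x\in X$, the family $(f_j(x))_{j\in J}$ satisfies the compatibility condition defining $Y$, hence determines an element $f(x)\in Y$ with $\phi_j(f(x))=f_j(x)$. Continuity of $f\colon X\to Y$ follows because the topology on $Y$ is initial with respect to the family $(\phi_j)_{j\in J}$, and each $\phi_j\circ f=f_j$ is continuous. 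By construction $\Psi(f)=(f_j)_{j\in J}$, so $\Psi$ has image $L$ and is a homeomorphism onto~$L$. This yields the projective limit property, with $C(X,\phi_j)$ as the limit maps.

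I do not anticipate a serious obstacle: the only delicate point is the set-theoretic identification of the image with~$L$, but this is a direct unwinding of the definitions and parallels the argument already used in Lemma~\ref{plinim}.
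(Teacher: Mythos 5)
Your proposal is correct and follows essentially the same route as the paper's own proof: embed $Y$ into $\prod_{j\in J}Y_j$, apply Lemma~\ref{ctsemb} and Lemma~\ref{cotopprod} to obtain the embedding $(C(X,\phi_j))_{j\in J}$, and identify its image with the projective limit $L$ by constructing $f$ pointwise and using the initial topology on~$Y$. No gaps.
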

\begin{proof}
Let $P:=\prod_{j\in J}Y_j$ and $\pr_j\colon P\to Y_j$
be the projection onto the $j$-th component.
It is known that the map
\[
\phi:=(\phi_j)_{j\in J}\colon Y\to P
\]
is a topological embedding with image $\phi(Y)=\{(x_j)_{j\in J}\in P\colon
(\forall j,k\in J)$
$j\leq k \impl x_j=\phi_{j,k}(x_k)\}$.
Thus $C(X,\phi)$ is a topological embedding (see Lemma~\ref{ctsemb})
and hence so is
\[
\Phi\circ C(X,\phi) \colon C(X,Y)\to\prod_{j\in J}C(X,Y_j),
\]
using the homeomorphism $\Phi:=(C(X,\pr_j))_{j\in J}\colon C(X,P)\to
\prod_{j\in J}C(X,Y_j)$ from Lemma~\ref{cotopprod}.
The image of $\Phi\circ C(X,\phi)$ is contained in the projective limit
\[
L:=\{(f_j)_{j\in J}\in\prod_{j\in J}C(X,Y_j)\colon
(\forall j,k\in J)\, j\leq k \impl f_j=\phi_{j,k}\circ f_k\}
\]
of the spaces $C(X,Y_j)$.
If $(f_j)_{j\in J} \in L$ and $x\in X$, then
$\phi_{j,k}(f_k(x))=f_j(x)$ for all $j\leq k$, whence
there exists $f(x)\in Y$ with $\phi_j(f(x))=f_j(x)$.
The topology on~$Y$ being initial with respect to the maps $\phi_j$,
we deduce from the continuity of the maps $\phi_j\circ f= f_j$
that $f\colon X\to Y$ is continuous.
Now $(\Phi\circ C(X,\phi))(f)=(f_j)_{j\in J}$.
Hence $\Phi\circ C(X,\phi)$ is a homeomorphism from
$C(X,Y)$ onto~$L$, and the assertions follow.
%
%Passing to a homeomorphic space if necessary, we may assume that
%\[
%Y=\Big\{(y_j)_{j\in J}\in \prod_{j\in J}Y_j \colon (\forall j,k\in J)\;j\leq k\impl y_j=\phi_{j,k}(y_k)\Big\}
%\]
%with the topology induced by the direct product,
%and $\phi_j=\pr_j|_Y$.
%The assertion now follows with Lemma~\ref{cotopprod} and Remark~\ref{reminduco}.
% Details spaeter!
\end{proof}
Also composition operators (or pullbacks) $C(f,Y)$
are essential tools.
\begin{la}\label{pubas}
Let $X_1$, $X_2$ and $Y$ be Hausdorff topological spaces.
If a map $f\colon X_1\to X_2$ is continuous, then also
the map
\[
C(f,Y)\colon C(X_2,Y)\to C(X_1,Y), \quad \gamma\mto \gamma\circ f
\]
is continuous.
\end{la}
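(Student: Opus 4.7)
The goal is to show that pre-images of subbasic open sets under $C(f,Y)$ are open. The plan is to fix a compact subset $K \subseteq X_1$ and an open subset $U \subseteq Y$, and then compute
\[
C(f,Y)^{-1}(\lfloor K,U\rfloor) \;=\; \{\gamma \in C(X_2,Y) : (\gamma\circ f)(K) \subseteq U\} \;=\; \lfloor f(K),U\rfloor.
\]
Since $f$ is continuous and $K$ is compact, $f(K) \subseteq X_2$ is compact, so $\lfloor f(K),U\rfloor$ belongs to the defining subbasis of the compact-open topology on $C(X_2,Y)$ and is therefore open. This yields continuity of $C(f,Y)$, and linearity (when $Y$ is a topological vector space) is immediate from the definition, though not needed here.

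There is no real obstacle; the only point worth flagging is that one should reduce to checking openness on a subbasis (which is legitimate by the usual characterisation of continuity), and one should use that the image of a compact set under a continuous map is compact. This is a completely routine argument and, like the preceding Lemma~\ref{covsuppo}, fits in a couple of lines.
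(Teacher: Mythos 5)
Your proof is correct and follows exactly the paper's argument: compute $C(f,Y)^{-1}(\lfloor K,U\rfloor)=\lfloor f(K),U\rfloor$, note that $f(K)$ is compact, and conclude by checking openness of pre-images on the subbasis. Nothing is missing.
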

\begin{proof}
If $K\sub X_1$ is compact and $U\sub Y$ an open set,
then $f(K)\sub X_2$ is compact and $C(f,Y)^{-1}(\lfloor K,U\rfloor )
= \lfloor f(K),U\rfloor$
(since $(\gamma\circ f)(K)\sub U$ $\aeq$
$\gamma (f(K))\sub U$
for $\gamma\in C(X_2,Y)$).
\end{proof}
\begin{rem}\label{resop}
If $X_2$ and $Y$ are Hausdorff topological spaces and $X_1\sub X_2$ a subset,
then the restriction map
\[
\rho\colon C(X_2,Y)\to C(X_1,Y),\quad \gamma\mto \gamma|_{X_1}
\]
is continuous. In fact, since $\gamma|_{X_1}=\gamma\circ f$
with the continuous inclusion map $f\colon X_1\to X_2$, $x\mto x$,
we have $\rho=C(f,Y)$ and Lemma~\ref{pubas} applies.
\end{rem}
\begin{la}\label{coveremb}
Let $X$ and $Y$ be Hausdorff topological spaces
and $(X_j)_{j\in J}$ be a family of subsets of $X$ whose interiors $X_j^0$
cover~$X$.
Then the map
\[
\rho\colon C(X,Y)\to \prod_{j\in J}C(X_j,Y),\quad \gamma\mto (\gamma|_{X_j})_{j\in J}
\]
is a topological embedding with closed image.
\end{la}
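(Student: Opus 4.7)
The plan is to verify injectivity and continuity of $\rho$, then establish the embedding property by showing that each subbasic open set $\lfloor K,U\rfloor$ of $C(X,Y)$ lies in the initial topology with respect to $\rho$, and finally to identify the image of $\rho$ as the set of ``compatible'' families, which is visibly closed.

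First I would note that $\rho$ is injective, because every $x\in X$ lies in some $X_j^0\subseteq X_j$, so two continuous maps that agree on each $X_j$ agree everywhere. Continuity of $\rho$ reduces to continuity of each restriction map $C(X,Y)\to C(X_j,Y)$, which is Remark~\ref{resop}, combined with the universal property of the product.

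The key step is showing $\rho$ is a topological embedding. Given $K\in\cK(X)$ and an open set $U\sub Y$, the open cover $(K\cap X_j^0)_{j\in J}$ of the compact Hausdorff (hence regular) space~$K$ can be refined: each $x\in K$ lies in some $X_{j(x)}^0$, and by regularity of~$K$ there is an open neighbourhood~$V_x$ of~$x$ in~$K$ whose closure $L_x$ in $K$ is contained in $K\cap X_{j(x)}^0$. Since $L_x$ is closed in the compact set~$K$, it is compact. By compactness of~$K$, finitely many $V_{x_1},\ldots,V_{x_m}$ cover~$K$, and then $K=L_{x_1}\cup\cdots\cup L_{x_m}$ with each $L_{x_l}$ compact and contained in $X_{j(x_l)}$. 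Now $\gamma\in\lfloor K,U\rfloor$ if and only if $\gamma|_{X_{j(x_l)}}(L_{x_l})\sub U$ for all $l\in\{1,\ldots,m\}$; in other words,
\[
\lfloor K,U\rfloor \;=\; \bigcap_{l=1}^m \rho^{-1}\!\bigl(\pr_{j(x_l)}^{-1}(\lfloor L_{x_l},U\rfloor)\bigr),
\]
which is open in the initial topology with respect to~$\rho$. Combined with continuity and injectivity, this shows $\rho$ is a topological embedding.

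For the closedness of the image, I would show that $\im(\rho)$ equals the set
\[
\cC\;:=\;\bigl\{(g_j)_{j\in J}\in \textstyle\prod_{j\in J}C(X_j,Y)\colon g_j(x)=g_k(x)\;\text{for all $j,k\in J$, $x\in X_j\cap X_k$}\bigr\}.
\]
The inclusion $\im(\rho)\sub\cC$ is obvious. Conversely, given a compatible family $(g_j)_{j\in J}$, define $\gamma\colon X\to Y$ by $\gamma(x):=g_j(x)$ for any $j$ with $x\in X_j$ (well-defined by compatibility); since $\gamma|_{X_j^0}=g_j|_{X_j^0}$ is continuous and the $X_j^0$ cover~$X$, $\gamma$ is continuous, and $\rho(\gamma)=(g_j)_{j\in J}$. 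Finally, $\cC$ is closed: for each pair $(j,k)$ and each $x\in X_j\cap X_k$, the map $(g_l)_{l\in J}\mto g_j(x)-g_k(x)$ (or more precisely the two maps $\pr_j$ followed by $\ev_x$ and $\pr_k$ followed by $\ev_x$) are continuous into the Hausdorff space~$Y$, so the equalizer is closed, and $\cC$ is the intersection of all such equalizers.

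The main obstacle is the refinement step inside the embedding argument: one must pass from the cover of~$K$ by the open sets $K\cap X_j^0$ to a finite cover by compact sets each contained in some $X_j$. This uses regularity of the compact Hausdorff space~$K$ together with compactness, and is the only place where the hypothesis that the interiors $X_j^0$ (rather than the $X_j$ themselves) cover $X$ is essential.
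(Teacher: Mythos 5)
Your proof is correct and follows essentially the same route as the paper: injectivity and continuity via the restriction maps, the embedding property by refining the trace of the cover $(X_j^0)_{j\in J}$ on a compact set $K$ into finitely many compact pieces each contained in some $X_j$, and closedness of the image as an intersection of equalizers of point evaluations, using that $Y$ is Hausdorff. The only cosmetic differences are that you obtain the compact pieces via regularity of the compact Hausdorff space $K$ where the paper uses its local compactness, and that you phrase the embedding property as initiality of the compact-open topology with respect to $\rho$ rather than openness of $\rho$ onto its image --- equivalent criteria given injectivity and continuity.
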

\begin{proof}
It is clear that the map $\rho$ is injective,
and it is continuous since each of its components is continuous,
by the preceding remark.
The image of $\rho$ consists of all $(\gamma_j)_{j\in J}\in
\prod_{j\in J}C(X_j,Y)$ such that
\[
(\forall j,k\in J)\,(\forall x\in X_j\cap X_k)\quad
\gamma_j(x)=\gamma_k(x)\,.
\]
The point evaluation $\ev_x\colon C(X_j,Y)\to Y$
and the corresponding one on $C(X_k,Y)$ are continuous if $x\in X_j\cap X_k$.
Since $Y$ is Hausdorff (and thus the diagonal is closed in $Y\times Y$),
it follows that $\im(\rho)$ is closed.
Since $\rho$ is injective, it will be an open map onto its image
if it takes the elements of a subbasis to relatively open sets.
To verify this property,
let $K\sub X$ be compact and $U\sub Y$ be open.
Each $x\in K$ is contained in the interior $X_{j_x}^0$ (relative $X$) for
some $j_x\in J_0$.
Because $K$ is locally compact, $x$ has
a compact neighbourhood $K_x$ in $K$
such that $K_x\sub X_{j_x}^0$.
By compactness, there is a finite subset $\Phi\sub K$
such that $K=\bigcup_{x\in \Phi}K_x$.
Then $W_x:=\lfloor K_x,U\rfloor\sub C(X_{j_x},U)$ is an open subset of $C(X_{j_k},Y)$
for all $x\in \Phi$. Hence
$W:=\{(\gamma_j)_{j\in J}\in \prod_{j\in J}C(X_j,Y) \colon (\forall x\in \Phi)\;\gamma_{j_x}\in W_x\}$
is open in
$\prod_{j\in J}C(X_j,Y)$.
Since $\rho(\lfloor K,U\rfloor)
=W\cap\im(\rho)$,
we see that $\rho(\lfloor K,U\rfloor)$ is relatively open in $\im(\rho)$,
which completes the proof.
\end{proof}
\begin{la}\label{laeval}
If $X$ is a locally compact space and $Y$ a Hausdorff topological space,
then the evaluation map
\[
\ve\colon C(X,Y)\times X\to Y,\quad (\gamma,x)\mto \gamma(x)
\]
is continuous.
\end{la}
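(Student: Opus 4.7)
The plan is to verify continuity of $\ve$ at an arbitrary point $(\gamma_0,x_0)\in C(X,Y)\times X$ by exhibiting, for every open neighbourhood $U\sub Y$ of $\gamma_0(x_0)$, a basic product neighbourhood of $(\gamma_0,x_0)$ whose image under $\ve$ lies in~$U$. The key ingredients are just the continuity of $\gamma_0$, the local compactness of~$X$, and the definition of the subbasic open sets $\lfloor K,U\rfloor$ of the compact-open topology.

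First I would use the continuity of $\gamma_0\colon X\to Y$ to observe that $\gamma_0^{-1}(U)$ is an open neighbourhood of $x_0$ in~$X$. Then, invoking local compactness of~$X$, I would choose a compact neighbourhood $K\sub \gamma_0^{-1}(U)$ of $x_0$, and let $K^0$ be its interior in~$X$, so that $x_0\in K^0\sub K\sub \gamma_0^{-1}(U)$. Set
\[
W:=\lfloor K,U\rfloor \times K^0 \sub C(X,Y)\times X.
\]
By definition, $\lfloor K,U\rfloor$ is open in the compact-open topology, so $W$ is an open neighbourhood of $(\gamma_0,x_0)$: indeed $\gamma_0(K)\sub U$ since $K\sub \gamma_0^{-1}(U)$, and $x_0\in K^0$ by construction.

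Finally, I would verify that $\ve(W)\sub U$: for any $(\gamma,x)\in W$, we have $x\in K^0\sub K$ and $\gamma\in\lfloor K,U\rfloor$, so $\ve(\gamma,x)=\gamma(x)\in \gamma(K)\sub U$. As $(\gamma_0,x_0)$ and~$U$ were arbitrary, this gives continuity of~$\ve$ at every point.

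There is no real obstacle here; the argument is textbook. The only delicate point worth flagging is the use of local compactness in the form ``every open neighbourhood of a point contains a compact neighbourhood of that point,'' which in the Hausdorff setting is equivalent to the usual definition and is exactly what the compact-open topology is tailored to exploit. No further hypotheses on~$Y$ (beyond the standing Hausdorff assumption, which is in any case not needed for this particular lemma) are used.
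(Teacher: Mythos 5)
Your argument is correct and is essentially the paper's own proof: both choose, via local compactness, a compact neighbourhood $K\sub \gamma_0^{-1}(U)$ of the point and then use $\lfloor K,U\rfloor$ times a neighbourhood contained in $K$ (you take the interior $K^0$, the paper uses $K$ itself as a neighbourhood) to trap the evaluation in~$U$. No gaps; nothing further to add.
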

\begin{proof}
Let $U\sub Y$ be open and $(\gamma,x)\in \ve^{-1}(U)$.
By local compactness, there exists a compact neighbourhood $K\sub X$ of $x$
such that $K\sub \gamma^{-1}(U)$ .
Then $\lfloor K,U\rfloor\times K$
is a neighbourhood of $(\gamma,x)$
in $C(X,Y)\times X$ and $\lfloor K,U\rfloor\times K\sub \ve^{-1}(U)$.
As a consequence, $\ve^{-1}(U)$
is open and thus $\ve$ is continuous.
\end{proof}
\begin{la}
Let $X$, $Y$ and $Z$ be Hausdorff topological spaces.
If $Y$ is locally compact, then the composition map
\[
\Gamma\colon C(Y,Z)\times C(X,Y)\to C(X,Z),\quad (\gamma,\eta)\mto \gamma\circ \eta
\]
is continuous.
\end{la}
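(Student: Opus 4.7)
By Lemma~\ref{subbaca}, it suffices to show that the preimage under $\Gamma$ of each subbasic open set $\lfloor K, U \rfloor \subseteq C(X,Z)$ is open, where $K \subseteq X$ is compact and $U \subseteq Z$ is open. So fix $(\gamma_0, \eta_0) \in C(Y,Z) \times C(X,Y)$ with $\gamma_0 \circ \eta_0 \in \lfloor K, U \rfloor$, i.e.\ $\gamma_0(\eta_0(K)) \subseteq U$; I will produce a box neighbourhood of $(\gamma_0, \eta_0)$ in $C(Y,Z) \times C(X,Y)$ contained in $\Gamma^{-1}(\lfloor K, U \rfloor)$.

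The set $L := \eta_0(K)$ is a compact subset of $Y$ satisfying $L \subseteq \gamma_0^{-1}(U)$, and $\gamma_0^{-1}(U)$ is open in $Y$. Here I would use local compactness of $Y$: every $y \in L$ admits a compact neighbourhood $L_y \subseteq \gamma_0^{-1}(U)$. By compactness of $L$, finitely many interiors $L_{y_1}^0, \ldots, L_{y_m}^0$ cover $L$; set $L' := L_{y_1} \cup \cdots \cup L_{y_m}$. Then $L'$ is a compact subset of $Y$, contained in $\gamma_0^{-1}(U)$, with interior $(L')^0$ containing $L$.

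Now consider $W := \lfloor L', U \rfloor \times \lfloor K, (L')^0 \rfloor$, which is an open subset of $C(Y,Z) \times C(X,Y)$. By construction, $\gamma_0(L') \subseteq U$ and $\eta_0(K) = L \subseteq (L')^0$, so $(\gamma_0, \eta_0) \in W$. For any $(\gamma, \eta) \in W$, one has $\eta(K) \subseteq (L')^0 \subseteq L'$, whence $(\gamma \circ \eta)(K) \subseteq \gamma(L') \subseteq U$, giving $\Gamma(\gamma, \eta) \in \lfloor K, U \rfloor$. Thus $W \subseteq \Gamma^{-1}(\lfloor K, U \rfloor)$, completing the proof.

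The only nontrivial ingredient is the construction of the compact ``cushion'' $L'$ that simultaneously serves as a compact set in a subbasic neighbourhood of $\gamma_0$ in $C(Y,Z)$ and whose interior serves as the open target for a neighbourhood of $\eta_0$ in $C(X,Y)$; this is precisely where local compactness of $Y$ is indispensable, and is the step that would fail in its absence.
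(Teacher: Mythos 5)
Your proof is correct, and it rests on the same essential use of local compactness of $Y$ as the paper's proof---interpolating compact neighbourhoods between the compact set $\eta_0(K)$ and the open set $\gamma_0^{-1}(U)$---but the bookkeeping is genuinely different and somewhat more economical. The paper argues pointwise on $K$: for each $x\in K$ it chooses a compact cushion $L_x\subseteq\gamma^{-1}(U)$ around $\eta(x)$ \emph{and} a second compact neighbourhood $M_x\subseteq L_x^0$, decomposes $K$ into the finitely many compact pieces $K_x=K\cap\eta^{-1}(M_x)$, and takes the neighbourhood $\bigl(\bigcap_{x}\lfloor L_x,U\rfloor\bigr)\times\bigl(\bigcap_{x}\lfloor K_x,L_x^0\rfloor\bigr)$, so that each piece of $K$ is matched with its own target $L_x^0$. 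You instead aggregate the cushions into a single compact set $L'$ with $\eta_0(K)\subseteq (L')^0$ and $L'\subseteq\gamma_0^{-1}(U)$, which lets you use a product of just two subbasic sets, $\lfloor L',U\rfloor\times\lfloor K,(L')^0\rfloor$, with no decomposition of $K$ and no auxiliary neighbourhoods $M_x$; every step you take (compactness of $\eta_0(K)$, existence of compact neighbourhoods inside a given open set in a locally compact Hausdorff space, the finite union $L'$, and the final inclusion check) is sound. One small remark: the subbasis lemma you cite at the start is about transporting a subbasis of the \emph{target} space $Y$ into the compact-open topology, which is not quite the statement you need; what you actually use is simply that the sets $\lfloor K,U\rfloor$ form a subbasis of the compact-open topology by definition, together with the standard fact that continuity can be verified on preimages of subbasic open sets. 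This is exactly how the paper argues elsewhere, so it does not affect the validity of your proof.
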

\begin{proof}
Let $K\sub X$ be compact and $U\sub Z$ be an open set.
Let $(\gamma,\eta)\in \Gamma^{-1}(\lfloor K,U\rfloor)$.
For each $x\in K$, there is a compact neighbourhood~$L_x$
of $\eta(x)$ in~$Y$ such that $L_x\sub \gamma^{-1}(U)$.
We choose a compact neighbourhood $M_x$
of $\eta(x)$ in~$Y$ such that $M_x\sub L_x^0$.
Then $K_x:=K\cap\eta^{-1}(M_x)$ is a compact
neighbourhood of~$x$ in~$K$. Hence,
there is a finite subset $\Phi\sub K$ such that
$K=\bigcup_{x\in \Phi}K_x$.
Then $W:=\bigcap_{x\in \Phi}\lfloor K_x,L_x^0\rfloor$
is an open neighbourhood of~$\eta$ in $C(X,Y)$
and $V:=\bigcap_{x\in \Phi}\lfloor L_x,U\rfloor$
is an open neighbourhood of~$\gamma$ in $C(Y,Z)$.
We claim that $V\times W\sub \Gamma^{-1}(\lfloor K,U\rfloor)$.
If this is true, then $\Gamma^{-1}(U)$ is open and hence $\Gamma$
is continuous. To prove the claim, let $\sigma\in V$ and $\tau\in W$.
If $y\in K$, then $y\in K_x$ for some $x\in \Phi$.
Hence $\tau(y)\in \tau(K_x)\sub L_x^0$
and thus $\sigma(\tau(y))\in \sigma(L_x)\sub U$,
showing that $\sigma\circ \tau\in \lfloor K,U\rfloor$ indeed.
\end{proof}
\begin{la}\label{fewercp}
Let $X$ and $Y$ be Hausdorff topological spaces
and $\cL$ be a set of compact subsets of~$X$.
Assume that,
for each compact subset $K\sub X$
and open subset $V\sub X$ with $K\sub V$,
there exist $n\in \N$ and $K_1,\ldots, K_n\in \cL$
such that $K\sub \bigcup_{i=1}^nK_i\sub V$.
Then the sets
\[
\lfloor K,U\rfloor,\quad\mbox{for $K\in \cL$ and open sets $\,U\sub Y$,}
\]
form a basis for the compact-open topology on $C(X,Y)$.
\end{la}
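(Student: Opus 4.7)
The plan is to reduce the claim to the observation that every subbasic open set $\lfloor M,V\rfloor$ of the standard subbasis of the compact-open topology (with $M \in \cK(X)$ arbitrary, $V\sub Y$ open) can, at each of its points $\gamma$, be approximated from below by a finite intersection of sets $\lfloor K,U\rfloor$ with $K\in\cL$. Since each $\lfloor K,U\rfloor$ with $K\in\cL$ is already open in the compact-open topology, this will show that these sets generate the compact-open topology (i.e., their finite intersections form a basis).

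Given $M\in\cK(X)$, $V\sub Y$ open, and $\gamma\in\lfloor M,V\rfloor$, I would observe that $V':=\gamma^{-1}(V)$ is open in $X$ and contains the compact set~$M$. The hypothesis on $\cL$ then yields $n\in\N$ and $K_1,\ldots,K_n\in\cL$ with
\[
M\;\sub\; K_1\cup\cdots\cup K_n\;\sub\; \gamma^{-1}(V).
\]
For each $i$, $K_i\sub \gamma^{-1}(V)$ means $\gamma(K_i)\sub V$, so $\gamma\in\bigcap_{i=1}^n\lfloor K_i,V\rfloor$. Moreover, if $\tau\in\bigcap_{i=1}^n\lfloor K_i,V\rfloor$, then $\tau(M)\sub\tau(K_1\cup\cdots\cup K_n)=\bigcup_{i=1}^n\tau(K_i)\sub V$, so $\tau\in\lfloor M,V\rfloor$. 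Therefore
\[
\gamma\;\in\;\bigcap_{i=1}^n\lfloor K_i,V\rfloor\;\sub\;\lfloor M,V\rfloor,
\]
with every $\lfloor K_i,V\rfloor$ belonging to the stated collection.

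Consequently, every subbasic open set $\lfloor M,V\rfloor$ is the union over $\gamma\in\lfloor M,V\rfloor$ of the corresponding finite intersections, and hence lies in the topology generated by the sets $\lfloor K,U\rfloor$ with $K\in\cL$ and $U\sub Y$ open. Since each of these sets is also open in the compact-open topology, the two topologies coincide, proving the claim.

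There is no real obstacle here: the only ingredient is the hypothesis, which is used exactly once on the compact set $M$ and the open set $\gamma^{-1}(V)$. The only point worth flagging is that one needs finite \emph{intersections} (rather than a single set $\lfloor K,U\rfloor$) because the cover of $M$ by elements of $\cL$ provided by the hypothesis is finite but not necessarily a single set; this is precisely the reason the conclusion is phrased in terms of a basis generated by sets of the stated form rather than claiming each individual $\lfloor K,U\rfloor$ is already a basic neighbourhood of its points in an arbitrary compact-open open set.
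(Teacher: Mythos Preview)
Your argument is correct and essentially identical to the paper's: both take $\gamma\in\lfloor M,V\rfloor$, set $V':=\gamma^{-1}(V)$, apply the hypothesis to obtain $K_1,\ldots,K_n\in\cL$ with $M\sub\bigcup K_i\sub V'$, and conclude $\gamma\in\bigcap_i\lfloor K_i,V\rfloor\sub\lfloor M,V\rfloor$. Your closing remark about needing finite intersections is apt; the paper's statement and proof likewise only yield a subbasis (and it is used as such later in the paper).
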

\begin{proof}
Given a compact subset $K\sub X$ and open subset
$U\sub Y$, let\linebreak
$\gamma\in \lfloor K,U\rfloor$.
Then $V:=\gamma^{-1}(U)$
is an open subset of $X$ that contains~$K$,
whence we can find $K_1,\ldots,K_n\in \cL$ as described in the hypotheses.
Then $\gamma\in \bigcap_{i=1}^n \lfloor K_i,U\rfloor\sub \lfloor K,U\rfloor$.
The assertion follows.
\end{proof}
\begin{prop}\label{ctsexp}
Let $X$, $Y$ and $Z$ be Hausdorff topological spaces.
If $f\colon X\times Y\to Z$ is continuous, then also
\begin{equation}\label{deffv}
f^\vee\colon X\to C(Y,Z), \quad f^\vee(x):=f(x,\sbull)
\end{equation}
is continuous. Moreover, the map
\begin{equation}\label{expiso}
\Phi \colon C(X\times Y,Z)\to C(X,C(Y,Z)),\quad f\mto f^\vee
\end{equation}
is a topological embedding.
If $Y$ is locally compact or $X\times Y$ is a k-space,
then $\Phi$ is a homeomorphism.
\end{prop}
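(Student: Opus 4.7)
My plan is to split the proof into three stages: continuity of $f^\vee$, the topological embedding property of $\Phi$, and surjectivity of $\Phi$ under the extra hypotheses.

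First, I would prove that $f^\vee$ is continuous by means of the tube lemma. For a subbasic open set $\lfloor K,U\rfloor$ in $C(Y,Z)$ with $K\sub Y$ compact and $U\sub Z$ open, the preimage under $f^\vee$ equals $\{x\in X\colon \{x\}\times K\sub f^{-1}(U)\}$. Given $x_0$ in this set, the tube lemma (applied to the compact $K\sub Y$ and the open $f^{-1}(U)\supseteq\{x_0\}\times K$) yields an open $W\ni x_0$ in $X$ with $W\times K\sub f^{-1}(U)$, so the preimage is open. Injectivity of $\Phi$ follows from the identity $f(x,y)=f^\vee(x)(y)$. To check continuity of $\Phi$, I would apply Lemma~\ref{subbaca} twice: a subbasis of $C(X,C(Y,Z))$ consists of the sets $\lfloor K_X,\lfloor K_Y,U\rfloor\rfloor$ with $K_X,K_Y$ compact and $U$ open, and a direct computation gives
\[
\Phi^{-1}\bigl(\lfloor K_X,\lfloor K_Y,U\rfloor\rfloor\bigr)\;=\;\lfloor K_X\times K_Y,U\rfloor,
\]
which is open in $C(X\times Y,Z)$ since $K_X\times K_Y$ is compact.

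Next, I would upgrade $\Phi$ to a topological embedding by showing that the compact-open topology on $C(X\times Y,Z)$ coincides with the initial topology with respect to $\Phi$. In view of the computation above, this reduces via Lemma~\ref{fewercp} applied with $\cL$ the set of compact rectangles $K_X\times K_Y\sub X\times Y$ to the approximation claim: for every compact $K\sub X\times Y$ and every open $V\sub X\times Y$ with $K\sub V$, there exist finitely many compact rectangles $K_1^X\times K_1^Y,\ldots,K_n^X\times K_n^Y$ contained in $V$ whose union covers $K$. To establish the claim, I would first cover $K$ by open rectangles $A_i\times B_i\sub V$ (possible since $V$ is open in the product topology), extract a finite subcover by compactness of $K$, and then invoke the shrinking lemma for finite open covers of a normal space --- applicable because the compact Hausdorff space $K$ is normal --- to obtain closed subsets $F_i\sub K\cap (A_i\times B_i)$ with $\bigcup_i F_i=K$. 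The $F_i$ are compact, and then $K_i^X:=\pi_X(F_i)$, $K_i^Y:=\pi_Y(F_i)$ are compact subsets of $A_i,B_i$, respectively, with $F_i\sub K_i^X\times K_i^Y\sub A_i\times B_i\sub V$, supplying the required compact rectangles and completing the embedding argument.

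Finally, to establish surjectivity of $\Phi$ under either extra hypothesis, given $g\in C(X,C(Y,Z))$ I would define $g^\wedge\colon X\times Y\to Z$ by $g^\wedge(x,y):=g(x)(y)$ and check that $g^\wedge$ is continuous. If $Y$ is locally compact, this is immediate from the factorisation $g^\wedge=\ev\circ(g\times\id_Y)$ together with continuity of evaluation (Lemma~\ref{laeval}). If instead $X\times Y$ is a k-space, it suffices to show that $g^\wedge|_K$ is continuous for each compact $K\sub X\times Y$. Setting $A:=\pi_X(K)$ and $B:=\pi_Y(K)$ --- both compact --- the composition of $g|_A\colon A\to C(Y,Z)$ with the continuous restriction map $C(Y,Z)\to C(B,Z)$ (Remark~\ref{resop}) yields a continuous map $h\colon A\to C(B,Z)$; since $B$ is compact and hence locally compact, the previous case applied to $h$ shows that $h^\wedge\colon A\times B\to Z$ is continuous, and $h^\wedge$ agrees with $g^\wedge$ on $A\times B\supseteq K$. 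In both cases $\Phi(g^\wedge)=g$, so $\Phi$ is surjective and, combined with the embedding property, a homeomorphism.

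The main obstacle I anticipate is the embedding step: the naive attempt to cover a compact $K\sub X\times Y$ by compact rectangles inside a prescribed open neighbourhood founders on the absence of regularity in $X$ and $Y$, and it is essential to exploit the normality of the compact Hausdorff space $K$ itself in order to refine the open rectangle cover to a closed --- hence compact --- one before projecting. Once this approximation lemma is in hand, Lemma~\ref{fewercp} converts it into the desired identification of topologies, and the remaining work is routine bookkeeping together with two applications of the continuity of evaluation on a locally compact domain.
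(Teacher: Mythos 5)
Your proof is correct and follows essentially the same route as the paper: the tube (Wallace) lemma for continuity of $f^\vee$, the identity $\Phi^{-1}(\lfloor K_X,\lfloor K_Y,U\rfloor\rfloor)=\lfloor K_X\times K_Y,U\rfloor$ combined with Lemma~\ref{fewercp} for the compact rectangles $K_X\times K_Y$ to get the embedding, and the same two surjectivity arguments (evaluation for locally compact $Y$; restriction to $\pi_X(K)\times\pi_Y(K)$ and the compact case when $X\times Y$ is a k-space). The only divergence is your verification of the rectangle-approximation claim via normality of the compact set $K$ and the shrinking lemma, where the paper instead takes compact neighbourhoods inside the compact projections $\pi_X(K)$, $\pi_Y(K)$ using their local compactness; both arguments are valid.
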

\begin{proof}
Let $\gamma\in C(X\times Y,Z)$.
To see that $\gamma^\vee$ is continuous, let $K\sub Y$ be compact and $U\sub Z$ be an open set.
If $x\in X$ such that $\gamma^\vee(x)\in \lfloor K,U\rfloor$,
then $\gamma(\{x\}\times K)\sub U$
and thus the product $\{x\}\times K$ of compact sets is contained in the open set
$\gamma^{-1}(U)$.
By the Wallace Lemma \cite[5.12]{Kel},
there is an open subset $V\sub X$ with $\{x\}\sub V$
and $V \times K\sub \gamma^{-1}(U)$.
Then $V\sub (\gamma^\vee)^{-1}(\lfloor K,U\rfloor)$,
showing that  $(\gamma^\vee)^{-1}(\lfloor K,U\rfloor)$
is a neighbourhood of~$x$ and hence open (as $x$ was arbitrary).
Thus $\gamma^\vee$ is continuous.\\[2.4mm]
To see that $\Phi$ is continuous, recall that
the sets $\lfloor L,U\rfloor$, with $L\in \cK(Y)$ and open sets $U\sub Z$,
form a subbasis of the compact-open topology on $C(Y,Z)$.
Hence,
by Lemma~\ref{subbaca},
the sets $\lfloor K,\lfloor L,U\rfloor\rfloor$
form a subbasis of the compact-open topology on $C(X,C(Y,Z))$,
for $L$ and $U$ as before and $K\in \cK(X)$.
We claim that
\begin{equation}\label{hlp}
\Phi^{-1}(\lfloor K,\lfloor L,U\rfloor\rfloor)=
\lfloor K\times L, U\rfloor\,.
\end{equation}
If this is true, then $\Phi$ is continuous.
To prove the claim,
let $\gamma\in C(X\times Y,Z)$.
Then $\Phi(\gamma)\in
\lfloor K,\lfloor L,U\rfloor\rfloor$ $\aeq$
$\Phi(\gamma)(K)\sub \lfloor L,U\rfloor$
$\aeq$ $(\forall x\in K)$
$\gamma(x,\sbull)\in \lfloor L,U\rfloor$
$\aeq$ $(\forall x\in K)$ $(\forall y\in L)$
$\gamma(x,y)\in U$
$\aeq$ $\gamma\in \lfloor K\times L,U\rfloor$,
establishing~(\ref{hlp}).\\[2.4mm]
Because $\Phi$ is (obviously) injective,
it will be open onto its image if it takes
open sets in a subbasis to relatively open sets.
Let $\cL\sub \cK(X\times Y)$ be the set of all products $K\times L$,
where $K\in\cK(X)$ and $L\in \cK(Y)$.
We claim that $\cL$ satisfies
the hypotheses of Lemma~\ref{fewercp}
(for the function space $C(X\times Y,Z)$).
If this is so, then the sets $\lfloor K\times L,U\rfloor$
with $K\in \cK(X)$ and $L\in \cK(Y)$
form a subbasis for the compact-open topology on $C(X\times Y,Z)$,
and now~(\ref{hlp}) shows that $\Phi$ is open onto its image
(and hence a topological embedding).\\[2.4mm]
To verify the claim,
let $\pi_1\colon X\times Y\to X$ and
$\pi_2\colon X\times Y\to Y$ be the projection onto the first and second component, respectively.
If $M\sub X\times Y$ is compact and $V\sub X\times Y$ an open subset
such that $K\sub V$, then $M\sub M_1\times M_2$,
where $M_1:=\pi_1(M)$ and $M_2:=\pi_2(M)$ are compact.
For each $(x,y)\in M$,
there is an open neighbourhhod $U_{x,y}\sub X$ of $x$ and
and open neighbourhood $V_{x,y}\sub Y$ of~$y$ such that
$U_{x,y}\times V_{x,y}\sub V$.
Because $M_1$ and $M_2$ are locally compact, there exist
compact neighbourhoods $K_{x,y}\sub M_1$ of $x$
and $L_{x,y}\sub M_2$ of~$y$
such that $K_{x,y}\sub U_{x,y}$ and $L_{x,y}\sub V_{x,y}$.
Then $K_{x,y}\times L_{x,y}
\sub U_{x,y}\times V_{x,y}\sub V$.
By compactness, $K$ is covered by finitely many of the sets
$K_{x,y}\times L_{x,y}$. Thus all hypotheses
of Lemma~\ref{fewercp} are indeed satisfied.\\[2.4mm]
Now assume that $Y$ is locally compact.
Let $\eta\in C(X,C(Y,Z))$.
Because the evaluation map $\ve\colon C(Y,Z)\times Y\to Z$
is continuous by Lemma~\ref{laeval},
the map $\gamma:=\eta^\wedge:= \ve\circ (\eta\times \id_Y)\colon X\times Y\to Z$,
$(x,y)\mto \eta(x)(y)$ is continuous.
Since $\Phi(\gamma)=(\eta^\wedge)^\vee=\eta$,
we see that $\Phi$ is surjective and hence (being an embedding) a homeomorphism.\\[2.4mm]
Finally, assume that $X\times Y$ is a k-space.
Again, we only need to show that $\Phi$ is surjective.
Let $\eta\in C(X,C(Y,Z))$
and define $\gamma:=\eta^\wedge\colon X\times Y\to Z$, $(x,y)\mto \eta(x)(y)$.
If we can show that~$\gamma$ is continuous, then $\Phi(\gamma)=\eta$
(as required).
Because $X\times Y$ is a k-space, it suffices to show
that $\gamma|_K$ is continuous for each compact subset
$K\sub X\times Y$.
This will follow if $\gamma|_{K_1\times K_2}$
is continuous for all compact subsets $K_1\sub X$ and $K_2\sub Y$
(given $K$ as before, $K$ is contained in the compact set $K_1\times K_2$
with $K_1 :=\pi_1(K)$, $K_2:= \pi_2(K)$).
We now use that the restriction map
$\rho\colon C(Y,Z)\to C(K_2,Z)$
is continuous (see Remark~\ref{resop})
and hence also the map
\[
\zeta := \rho\circ \eta|_{K_1} \colon K_1\to C(K_2,Z).
\]
Since $K_2$ is compact,
$\zeta^\wedge\colon K_1\times K_2\to Z$ is continuous
(by the preceding part of the proof).
Because $\gamma|_{K_1\times K_2}=\zeta^\wedge$,
the continuity of $\gamma$
follows.
\end{proof}
\begin{rem}
Given Hausdorff topological spaces $X$, $Y$, $Z$ and a map
$\eta\colon X\to C(Y,Z)$, define
\[
\eta^\wedge\colon X\times Y\to Z,\quad (x,y)\mto \eta(x)(y).
\]
The preceding proposition entails:
If $\eta^\wedge$ is continuous, then $\eta=(\eta^\wedge)^\vee$
is continuous.
If $Y$ is locally compact or $X\times Y$ is a k-space,
then $\eta=(\eta^\wedge)^\vee$ is continuous if and only if
$\eta^\wedge$ is continuous.
\end{rem}
\begin{la}\label{tocon}
Let $X$ and $Y$ be Hausdorff topological spaces.
For $y\in Y$, let $c_y\colon X\to Y$ be the constant map $x\mto y$.
Then the map
\[
c\colon Y\to C(X,Y),\quad y\mto c_y
\]
is continuous
and $($if $X\not=\emptyset)$
in fact a topological embedding.
\end{la}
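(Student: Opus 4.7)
The continuity of $c$ can be verified directly on the subbasis from Lemma~\ref{subbaca}. For any compact $K \subseteq X$ and open $U \subseteq Y$, one has
\[
c^{-1}(\lfloor K,U\rfloor) \;=\; \{y\in Y \colon c_y(K)\subseteq U\} \;=\; \begin{cases} U & \text{if } K\neq\emptyset,\\ Y & \text{if } K=\emptyset,\end{cases}
\]
which is open in either case. Hence $c$ is continuous.

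For the embedding claim, assume $X\neq\emptyset$ and pick any $x_0\in X$. The key observation is that $\ev_{x_0}\circ c = \id_Y$: for every $y\in Y$, $(\ev_{x_0}\circ c)(y) = c_y(x_0) = y$. This identity immediately gives injectivity of $c$, and since $\ev_{x_0}\colon C(X,Y)\to Y$ is continuous (as recorded right after the definition of the compact-open topology), it exhibits a continuous left inverse to~$c$. Consequently the corestriction $c\colon Y \to c(Y)$ is a continuous bijection whose inverse is the restriction $\ev_{x_0}|_{c(Y)}$, which is continuous. Thus $c$ is a homeomorphism onto its image, i.e.\ a topological embedding.

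There is no substantial obstacle; the only point to keep in mind is the trivial case $K=\emptyset$ when verifying continuity, and the need to assume $X\neq\emptyset$ to be able to exhibit the evaluation map at some point as a continuous left inverse (if $X=\emptyset$, then $C(X,Y)$ is a one-point space, so the statement about being an embedding would fail unless $Y$ is itself a singleton).
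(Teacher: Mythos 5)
Your proof is correct and follows essentially the same route as the paper: continuity is checked on the subbasic sets $\lfloor K,U\rfloor$ (with the two cases $K\neq\emptyset$ and $K=\emptyset$), and the embedding property is obtained by picking $x_0\in X$ and using that $\ev_{x_0}\circ c=\id_Y$ with $\ev_{x_0}$ continuous. No gaps; the extra remark about the failure of the embedding claim when $X=\emptyset$ is a harmless aside.
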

\begin{proof}
If $K\sub X$ is compact and $U\sub Y$ an open set,
then $c^{-1}(\lfloor K, U\rfloor)=U$ (if $K\not=\emptyset$)
and $c^{-1}(\lfloor K, U\rfloor)=Y$ (if $K=\emptyset$).
In either case, the preimage is open, entailing that $c$ is continuous.
If $X$ is not empty, we pick $x\in X$.
Then the point evaluation $\ev_x\colon C(X,Y)\to Y$, $\gamma\mto \gamma(x)$
is continuous and $\ev_x\circ c=\id_Y$,
entailing that $c$ is a topological embedding.
\end{proof}
By the next lemma, so-called pushforwards
are continuous.
\begin{la}
Let $X$, $Y$ and $Z$ be Hausdorff topological spaces
and\linebreak
$f\colon X\times Y\to Z$ be continuous.
Then also the following map is continuous:
\[
f_*\colon C(X,Y)\to C(X,Z),\quad \gamma\mto f\circ (\id_X,\gamma).
\]
\end{la}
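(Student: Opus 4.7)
The plan is to factor $f_*$ as a composition of two maps whose continuity has already been established earlier in the appendix, thereby avoiding any subbasis chase. Specifically, write
\[
f_* \;=\; C(X,f) \circ j,
\]
where $j \colon C(X,Y) \to C(X, X\times Y)$ sends $\gamma$ to the map $x \mapsto (x,\gamma(x))$, and $C(X,f) \colon C(X, X\times Y) \to C(X,Z)$ is the superposition operator induced by $f$. That $f \circ (\id_X,\gamma)$ indeed equals $C(X,f)(j(\gamma))$ is immediate from the definitions.

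First I would note that $C(X,f)$ is continuous by Lemma~\ref{covsuppo}, so it remains only to show that $j$ is continuous. To see this, I would invoke Lemma~\ref{cotopprod} with the two-factor product $X\times Y$: it identifies $C(X, X\times Y)$ with $C(X,X) \times C(X,Y)$ (as topological spaces) via $\eta \mapsto (\pr_1\circ\eta, \pr_2\circ\eta)$. Under this homeomorphism, $j$ corresponds to the map
\[
C(X,Y) \to C(X,X) \times C(X,Y), \qquad \gamma \mapsto (\id_X,\gamma).
\]
This map is continuous because its first component is the constant map with value $\id_X$, and its second component is the identity on $C(X,Y)$; both are continuous, so the map into the product is continuous. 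Hence $j$ is continuous, and composing with $C(X,f)$ yields the continuity of $f_*$.

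There is no serious obstacle here, since every ingredient has been proved earlier; the only point requiring any care is recognizing that $j$ factors through the product decomposition of $C(X, X\times Y)$, after which the constant-plus-identity observation finishes the argument. I note in passing that a direct subbasis argument is also possible (covering a compact $K\subseteq X$ by finitely many compact $K_i$ on each of which $\gamma(K_i)$ is trapped in a small open set $V_i\subseteq Y$ with $U_i\times V_i\subseteq f^{-1}(W)$, and taking $N := \bigcap_i \lfloor K_i, V_i\rfloor$), but the factorization above is cleaner and reuses the machinery of the appendix.
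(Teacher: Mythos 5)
Your proposal is correct and is essentially the paper's own argument: the paper likewise writes $f_*(\gamma)=C(X,f)(\id_X,\gamma)$ using the identification $C(X,X\times Y)\cong C(X,X)\times C(X,Y)$ from Lemma~\ref{cotopprod} and the continuity of $C(X,f)$ from Lemma~\ref{covsuppo}. You merely make explicit the (trivial) continuity of $\gamma\mapsto(\id_X,\gamma)$, which the paper leaves implicit.
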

Thus $f_*(\gamma)(x)=f(x,\gamma(x))$.\\[2.4mm]
\begin{proof}
Identifying $C(X,X)\times C(X,Y)$ with $C(X,X\times Y)$ as in
Lemm~\ref{cotopprod},
we can write
$f_*(\gamma)=C(X,f)(\id_X,\gamma)$.
Since $C(X,f)$ is continuous by Lemma~\ref{covsuppo},
the continuity of $f_*$ follows.
\end{proof}
\noindent
We also have two versions with parameters.
\begin{la}\label{pushpar}
Let $X$, $Y$, $Z$ and $P$ be Hausdorff topological spaces
and\linebreak
$f\colon P\times X\times Y\to Z$ be a continuous map.
For $p\in P$, abbreviate $f_p  :=f(p,\sbull)\colon X\times Y\to Z$.
Then also the following map is continuous:
\[
P\times C(X,Y)\to C(X,Z),\quad (p,\gamma)\mto f_p\circ (\id_X,\gamma).
\]
\end{la}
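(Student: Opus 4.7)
The plan is to reduce to the previous (non-parametric) lemma by absorbing the parameter $p$ into the function space through the continuous ``constant function'' embedding $c\colon P\to C(X,P)$ supplied by Lemma~\ref{tocon}. Concretely, I would define the auxiliary map
\[
T\colon P\times C(X,Y)\to C(X,P\times X\times Y),\qquad T(p,\gamma)(x):=(p,x,\gamma(x)),
\]
and then factor the map in question as $C(X,f)\circ T$, where $C(X,f)\colon C(X,P\times X\times Y)\to C(X,Z)$ is continuous by Lemma~\ref{covsuppo} (since $f$ is continuous).

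To establish continuity of $T$, I would identify $C(X,P\times X\times Y)$ with the product $C(X,P)\times C(X,X)\times C(X,Y)$ via the natural homeomorphism of Lemma~\ref{cotopprod}. Under this identification, $T(p,\gamma)$ corresponds to the triple $(c_p,\id_X,\gamma)$, where $c_p\in C(X,P)$ is the constant function with value~$p$. Continuity of $T$ then reduces to continuity of the three coordinate maps: $p\mapsto c_p$ is continuous by Lemma~\ref{tocon}, $\gamma\mapsto\id_X$ is trivially continuous (constant), and $(p,\gamma)\mapsto\gamma$ is just the second projection. Composing with $C(X,f)$ yields
\[
(C(X,f)\circ T)(p,\gamma)(x)=f(p,x,\gamma(x))=\bigl(f_p\circ(\id_X,\gamma)\bigr)(x),
\]
which is precisely the map of the lemma; thus it is continuous.

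There is no real obstacle here: the argument is essentially the parametric upgrade of the preceding lemma, and the only genuine point is the observation that the parameter can be made into a ``constant'' input to the function space without destroying continuity, which is exactly the content of Lemma~\ref{tocon}. In particular, this approach does not require any local compactness hypothesis on $X$, $Y$, or $P$, matching the generality of the non-parametric version.
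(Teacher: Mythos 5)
Your argument is correct and is essentially the paper's own proof: both factor the map as $C(X,f)$ applied to $(c_p,\id_X,\gamma)$ via the identification $C(X,P\times X\times Y)\cong C(X,P)\times C(X,X)\times C(X,Y)$ of Lemma~\ref{cotopprod}, using Lemma~\ref{tocon} for $p\mapsto c_p$ and Lemma~\ref{covsuppo} for $C(X,f)$. Nothing further is needed.
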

\begin{proof}
Let $c\colon P\to C(X,P)$, $p\mto c_p$ be the continuous map discussed in
Lemma~\ref{tocon}.
The map $C(X,f)\colon C(X,P\times X\times Y)\to C(X,Z)$ is continuous
(by Lemma~\ref{covsuppo}).
Identifying $C(X,P)\times C(X,X)\times C(X,Y)$ with\linebreak
$C(X,P\times X\times Y)$,
we have $f_p\circ (\id_X,\gamma)=C(X,f)(c_p,\id_X,\gamma)$,
which is continuous in $(p,\gamma)$.
\end{proof}
\begin{la}\label{parcov}
Let $X$, $Y$, $Z$ and $P$ be Hausdorff topological spaces
and\linebreak
$f\colon P\times Y\to Z$ be a continuous map.
For $p\in P$, set $f_p:=f(p,\sbull)\colon Y\to Z$.
Then also the following map is continuous:
\begin{equation}\label{phi}
\psi\colon P\times C(X,Y)\to C(X,Z),\quad (p,\gamma)\mto f_p\circ \gamma.
\end{equation}
\end{la}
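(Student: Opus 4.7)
\begin{proof}
The plan is to reduce the statement directly to Lemma~\ref{pushpar} by padding $f$ with a dummy $X$-variable. Concretely, define
\[
\tilde{f}\colon P\times X\times Y\to Z,\quad \tilde{f}(p,x,y):=f(p,y).
\]
This map is continuous, being the composition of $f$ with the continuous projection $P\times X\times Y\to P\times Y$, $(p,x,y)\mto(p,y)$. For $p\in P$, write $\tilde{f}_p:=\tilde{f}(p,\sbull)\colon X\times Y\to Z$.

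Apply Lemma~\ref{pushpar} to $\tilde{f}$: the map
\[
\tilde{\psi}\colon P\times C(X,Y)\to C(X,Z),\quad (p,\gamma)\mto \tilde{f}_p\circ(\id_X,\gamma)
\]
is continuous. For each $x\in X$ we compute
\[
\bigl(\tilde{f}_p\circ(\id_X,\gamma)\bigr)(x)=\tilde{f}_p(x,\gamma(x))=\tilde{f}(p,x,\gamma(x))=f(p,\gamma(x))=(f_p\circ\gamma)(x),
\]
so $\tilde{\psi}=\psi$, which proves the lemma.
\end{proof}

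The only potential obstacle is bookkeeping: making sure the ``dummy'' variable is introduced in a position compatible with the signature required by Lemma~\ref{pushpar} (namely $P\times X\times Y\to Z$, with the $X$-factor in the middle so that $(\id_X,\gamma)$ lands in the right space). Once $\tilde{f}$ is set up this way, continuity is immediate from composition with the projection, and the equality $\tilde{\psi}=\psi$ is a one-line computation, so no genuine difficulty arises.
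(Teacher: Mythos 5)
Your proof is correct and is essentially the paper's own argument: both introduce the auxiliary map $(p,x,y)\mapsto f(p,y)$ on $P\times X\times Y$ and invoke Lemma~\ref{pushpar}, with your version merely spelling out the continuity of the padded map and the pointwise identification $\tilde{f}_p\circ(\id_X,\gamma)=f_p\circ\gamma$.
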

\begin{proof}
Since $g\colon P\times X\times Y\to Z$, $g(p,x,y):=f(p,y)$
is continuous, so is
$\psi\colon P\times C(X,Y)\to C(X,Z)$, $(p,\gamma)\mto g_p\circ (\id_X,\gamma)$,
by Lemma~\ref{pushpar}.
\end{proof}
\begin{la}\label{sammelsu}
If $X$ is a Hausdorff topological space and $G$ a topological group,
then the following holds:
\begin{itemize}
\item[{\rm(a)}]
$C(X,G)$ is a topological group with respect to the pointwise
group\linebreak
operations.
\item[{\rm(b)}]
Let $\cL$ be a set of compact subsets of $X$ such that each $K\in \cK(X)$
is contained in some $L\in \cL$.
Also, let $\cU$ be a basis of open identity neighbourhoods in~$G$.
For $\gamma\in C(X,G)$,
the sets $\gamma\cdot \lfloor K, U\rfloor$
then form a basis of open neighbourhood of $\gamma$ in $C(X,G)$
$($for $K\in \cL$ and $U\in \cU$ in a basis $\cU)$,
and so do the sets
$\lfloor K, U\rfloor\cdot \gamma$.
The compact-open topology therefore coincides with the topology of
uniform convergence on compact sets,
both with respect to the left and also the right uniformity on~$G$.
\item[{\rm(c)}]
If $X$ is hemicompact and $G$ is metrizable, then
$C(X,G)$ is metrizable.
\item[{\rm(d)}]
If $X$ is a k-space and $G$ is complete,
then $C(X,G)$ is complete.
\item[{\rm(e)}]
If $X$ is a k-space and $G$ is sequentially complete,
then $C(X,G)$ is\linebreak
sequentially complete.
\item[{\rm(f)}]
If $E$ is a topological vector space over
a topological field $\K$, then the pointwise operations make
$C(X,E)$ a topological $\K$-vector space.
Moreover, $C(X,\K)$ is a topological $\K$-algebra and
$C(X,E)$ is a topological $C(X,\K)$-module.
\item[{\rm(g)}]
If $(\K,|.|)$ is an ultrametric field and
$E$ a locally convex topological $\K$-vector space,
then also
$C(X,E)$ is locally convex.
\item[{\rm(h)}]
If $(\K,|.|)$ is an ultrametric field,
$E$ an ultrametric normed space
and $X$ is compact, then also
$C(X,E)$ admits an ultrametric norm defining its topology.
\end{itemize}
\end{la}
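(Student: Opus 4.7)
The plan is to treat the eight items in sequence, with the structural work concentrated in~(a) and~(b); once~(b) identifies the compact-open topology on~$C(X,G)$ with uniform convergence on compacta (in either the left or the right uniformity of~$G$), items~(c)--(h) follow by fairly standard arguments. For~(a), identify $C(X,G)\times C(X,G)$ with $C(X,G\times G)$ via Lemma~\ref{cotopprod} and push multiplication and inversion forward using Lemma~\ref{covsuppo}. For~(b), since~$C(X,G)$ is a topological group by~(a), the translations $\gamma\cdot(\sbull)$ and $(\sbull)\cdot\gamma$ are homeomorphisms, so it suffices to produce the claimed basis at the constant map $e_G$ with value the identity of~$G$. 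Any finite intersection $\bigcap_i\lfloor K'_i,V_i\rfloor$ of subbasic neighbourhoods of~$e_G$ is absorbed by a single set $\lfloor L,U\rfloor$ obtained by choosing $L\in\cL$ with $\bigcup_iK'_i\sub L$ and $U\in\cU$ with $U\sub\bigcap_iV_i$, and rewriting the condition $\gamma^{-1}\eta\in\lfloor K,U\rfloor$ pointwise exhibits these sets as the basic entourages of uniform convergence on $\cL$-sets.

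Parts~(c)--(f) now follow quickly. In~(c), hemicompactness combined with first-countability of~$G$ yields, via~(b), a countable basis of identity neighbourhoods in~$C(X,G)$; a first-countable Hausdorff topological group is metrisable by the Birkhoff--Kakutani theorem. For~(d), given a Cauchy net $(\gamma_a)$ in~$C(X,G)$, the pointwise evaluations are Cauchy in~$G$ and so produce a pointwise limit~$\gamma$; the Cauchy condition from~(b) rephrases as uniform Cauchyness on each compact $K\sub X$, making~$\gamma|_K$ the uniform limit of continuous functions and hence continuous. Because~$X$ is a k-space, $\gamma$ is then globally continuous, and uniform convergence on compacta is exactly convergence in the compact-open topology; part~(e) is the same argument with sequences. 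For~(f), addition on~$C(X,E)$ comes from Lemma~\ref{cotopprod} together with Lemma~\ref{covsuppo} applied to addition on~$E$, while scalar multiplication is a direct instance of Lemma~\ref{parcov} with $P:=\K$ and $f\colon\K\times E\to E$ scalar multiplication; the algebra structure on~$C(X,\K)$ and the module structure on $C(X,E)$ are produced in the same way.

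For~(g), a pointwise verification shows that $\lfloor K,U\rfloor$ is an $\bO$-submodule of $C(X,E)$ whenever $U\sub E$ is one, so if $\cU$ is a basis of open $\bO$-submodules at $0\in E$ then~(b) delivers a basis of $0$-neighbourhoods in~$C(X,E)$ consisting of open $\bO$-submodules. For~(h), when $X$ is compact set $\|\gamma\|_\infty:=\sup_{x\in X}\|\gamma(x)\|$, which is finite because $\gamma(X)$ is compact; the ultrametric inequality on~$E$ passes to the supremum, and applying~(b) with $\cL:=\{X\}$ and the standard $\ve$-balls at $0\in E$ shows that $\|\sbull\|_\infty$ induces exactly the compact-open topology. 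The main obstacle is~(b): the remaining items are essentially direct corollaries of the identification of the compact-open topology with uniform convergence on compacta, but that identification must be pushed carefully through the details of the left and right uniformities on a general, possibly non-abelian, topological group.
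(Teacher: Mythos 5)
Your proposal is correct and follows essentially the same route as the paper: (a) via the identifications $C(X,G\times G)\isom C(X,G)\times C(X,G)$ and superposition operators, (b) by absorbing finite intersections of subbasic identity neighbourhoods into a single $\lfloor L,U\rfloor$ and translating, (c) via Birkhoff--Kakutani, (d)/(e) by pointwise limits plus uniform convergence on compacta and the k-space property, (f) via Lemma~\ref{parcov} and pushforwards, and (g)/(h) exactly as in the paper. The only place where the paper is more careful is the completeness argument in (d), where passing from the Cauchy condition to uniform convergence toward the pointwise limit is spelled out with the $\overline{V}\sub U$ and $U^{-1}UU\sub W$ estimates, but your sketch contains the same idea.
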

\begin{proof}
(a) Since $G$ is a topological group, the group multiplication\linebreak
$\mu\colon G\times G\to G$, $(x,y)\mto xy$ and the inversion map $\iota\colon G\to G$, $x\mto x^{-1}$
are continuous. Identifying $C(X,G)\times C(X,G)$ with $C(X,G\times G)$ (as in Lemma~\ref{cotopprod}),
the group multiplication of $C(X,G)$ is the mapping
$C(X,\mu)\colon$ $C(X,G\times G)\to C(X,G)$,
which is continuous by Lemma~\ref{covsuppo}.
The group inversion is the map $C(X,\iota)\colon C(X,G)\to C(X,G)$
and hence continuous as well. Thus $C(X,G)$ is a topological group.

(b) If $K_1,\ldots, K_n\in \cK(X)$ and $U_1,\ldots, U_n\sub G$ are open identity neighbourhoods,
we find $U\in \cU$ such that $U\sub U_1\cap\cdots\cap U_n$,
and $K\in \cL$ such that $K_1\cup\cdots\cup K_n\sub K$.
Then $\bigcap_{j=1}^n\lfloor K_j,U_j\rfloor\supseteq \lfloor K,U\rfloor$.
Therefore the sets $\lfloor K,U\rfloor$ with $K\in \cL$ and $U\in \cU$ form a basis
of open identity neighbourhoods for $C(X,G)$.
Since left and right translations in the topological group $C(X,G)$ are homeomorphisms,
the remainder of (b) follows.

(c) Let $K_1\sub K_2\sub \cdots$ be an ascending sequence of compact subsets of~$X$, with union $X$,
such that each compact subset of $X$ is contained in some $K_n$.
Also, let $U_1\supseteq U_2\supseteq\cdots$ be a descending sequence of open identity neighbourhoods in $G$
which gives a basis for the filter of identity neighbourhoods.
By~(b), the sets $\lfloor K_n, U_n\rfloor$, for $n\in \N$, provide a countable basis
of identity neighbourhoods in $C(X,G)$.
As a consequence, the topological group
$C(X,G)$ is metrizable \cite[Theorem 8.3]{HaR}.

(d) Let $(\gamma_a)_{a\in A}$ be a Cauchy net in $C(X,G)$.
For each $x\in X$, the point evaluation $\ev_x\colon C(X,G)\to G$ is a continuous homomorphism.
Hence $(\gamma_a(x))_{a\in A}$ is a Cauchy net in~$G$
and hence convergent to some element $\gamma(x)\in G$
(as  $G$ is a assumed complete).
For each compact set $K\sub X$, the restriction map $\rho_K\colon C(X,G)\to C(K,G)$ is a continuous
homomorphism (cf.\ Remark~\ref{resop}), whence $(\gamma_a|_K)_{a\in A}$
is a Cauchy net in $C(K,G)$.
We claim that $C(K,G)$ is complete. If this is true,
then $\gamma_a|_K\to \gamma_K$
for some continuous map $\gamma_K\in C(K,G)$.
Since $\gamma(x)=\gamma_K(x)$ for each $x\in K$, we see that $\gamma|_K=\gamma_K$
is continuous. Because $X$ is a k-space, this implies that~$\gamma$
is continuous. If $K\in \cK(X)$ and $U\sub G$ is an identity neighbourhood,
then $(\gamma_K)^{-1}\gamma_a|_K\in \lfloor K,U\rfloor$ inside $C(K,G)$
for sufficiently large~$a$ (see (b))
and hence $\gamma^{-1}\gamma_a\in \lfloor K,U\rfloor$ inside $C(X,G)$,
showing that $\gamma_a\to \gamma$.

To prove the claim, we may assume that $X=K$ is compact.
Let $U\sub G$ be an open identity neighbourhood
and $V\sub U$ be an open identity neighbourhood
with closure $\overline{V}\sub U$.
There is $a\in A$ such that $\gamma_b^{-1}\gamma_c\in \lfloor K, V\rfloor$
for all $b,c\geq a$ in $A$. Thus, for all $x\in K$, $\gamma_b(x)^{-1}\gamma_c(x)\in V$.
Passing to the limit in $c$, we obtain $\gamma_b(x)^{-1}\gamma(x)\in \overline{V}\sub U$
and thus
\begin{equation}\label{sireu}
\gamma(x)\in \gamma_b(x)U, \quad \mbox{for all $\,x\in K$ and $b\geq a$.}
\end{equation}
If $W$ is any identity neighbourhood in $G$,
we can choose $U$ from before so small that $U^{-1}UU\sub W$.
By continuity of $\gamma_a$, each $x\in K$ has a neighbourhood $L\sub K$
such that $\gamma_a(y)^{-1}\gamma_a(y)\in U$ for all $y\in L$.
Combining this with~(\ref{sireu}), we see that
\[
\gamma^{-1}(y)\gamma(x)\in U^{-1}\gamma_a(y)^{-1}\gamma_a(x)U\sub U^{-1}UU\sub W
\]
for all $y\in L$. Thus $\gamma$ is continuous at~$x$ and hence continuous.
Finally, we have $\gamma_b^{-1}\gamma\in \lfloor K,U\rfloor$ for all $b\geq a$,
by (\ref{sireu}). Hence $\gamma_b\to \gamma$ in $C(K,G)$.

(e) Proceed as in (d), replacing the Cauchy net with a Cauchy sequence.

(f) By (a), $C(X,E)$ is a topological group.
Let $\sigma\colon \K\times E\to E$, $(z,v)\mto zv$ be multiplication with scalars.
Then the map $\K\times C(X,E)\to C(X,E)$, $(z,\gamma)\mto \sigma(z,\sbull) \circ \gamma$
is continuous (by Lemma~\ref{parcov}).
As this is the multiplication by scalars in $C(X,E)$,
the latter is a topological vector space.
Let $\mu\colon \K\times \K\to\K$ be the multiplication
in the field $\K$. Identifying $C(X,\K)\times C(X,\K)$
with $C(X,\K\times\K)$,
the algebra multiplication of $C(X,\K)$ is the map
$C(X,\mu)\colon C(X,\K\times\K)\to C(X,\K)$,
which is continuous by Lemma~\ref{covsuppo}.
Hence $C(X,\K)$ is a topological algebra.
Identifying $C(X,\K)\times C(X,E)$
with $C(X,\K\times E)$,
the $C(X,\K)$-module multiplication on $C(X,E)$ is the map
$C(X,\sigma)\colon C(X,\K\times E)\to C(X, E)$,
which is continuous by Lemma~\ref{covsuppo}.
Hence $C(X,E)$ is a topological $C(X,\K)$-module.

(g) Let $\bO:=\{z\in \K\colon |z|\leq 1\}$ be the valuation ring of $\K$.
By hypothesis, the open $\bO$-submodules $U\sub E$ form a basis
of $0$-neighbourhoods in $E$.
For each $K\in \cK(X)$,
the set $\lfloor K,U\rfloor$ then is an open $0$-neighbourhood
in $C(X,E)$ and an $\bO$-module.
Part\,(b) implies that these sets form a basis of $0$-neighbourhoods
in $C(X,E)$. Thus $C(X,E)$ is locally convex.

(h) If the ultrametric norm $\|.\|$ defines the topology of~$E$, then the ultrametric norm
$\|.\|_\infty\colon C(X,E)\to [0,\infty[$, $\|\gamma\|_\infty:=\sup\{\|\gamma(x)\|\colon x\in X\}$
defines the topology of $C(X,E)$,
as is clear from (b) and the observation that $\lfloor X, B^E_r(0)\rfloor =
B^{C(X,E)}_r(0)$ for each $r>0$.\vspace{-6mm}
\end{proof}
%
%
%
%
%
%\section{Details to be built in}
%
%
%\begin{numba}\label{miniforw}
%Assume that $W_1, W_2, \ldots,W_k\sub \R^n$
%are products of open subsets of $\K$,
%such that $U\cap W_1$ is cartesian.
%Then also $U\cap W_1\cap W_2$
%is cartesian, by (\ref{hlpone}).
%A simple induction on $k$ now shows
%that also $U\cap W_1\cap\cdots\cap W_k$ is cartesian.
%\end{numba}
%
%\begin{numba}
%To see that $U^{>\beta<}$ is dense in $U^{<\beta>}$,
%IST ES NICHT!!!!! IDEE WAR FALSCH!!!
%let $x=(x^{(1)},\ldots, x^{(n)})\in U^{<\beta>}$
%and $V\sub \K^{n+|\beta|}$ be an open neighbourhood
%of~$x$. We may assume that $V=\prod_{j=1}^n\prod_{i=0}^{\beta_j}V^{(j)}_i$
%with open subsets $V^{(j)}_i\sub \K$.
%For each $I=(i_1,\ldots, i_n)\in \{0,\ldots,\beta_1\}\times\cdots\times\{0,\ldots, \beta_n\}$,
%there exist open neighbourhoods $W_{I,1},\ldots, W_{I,n}\sub \K$
%of $x^{(1)}_{i_1},\ldots, x^{(n)}_{i_n}$,
%respectively, such that $U\cap W_I$ is cartesian,
%where $W_I:=W_{I,1}\times \cdots\times W_{I,n}$.
%Let $\Phi:=\{x^{(j)}_i\colon j\in \{1,\ldots, n\}, \, i\in \{0,1,\ldots, \beta_j\}\,\}\sub \K$.
%For $z\in \Phi$, let $Q_z$ be the intersection
%of all $W_{I,j}$ with
%those $I=(i_1,\ldots, i_n)$
%and $j\in \{1,\ldots, n\}$
%for which $z=x^{(j)}_{i_j}$.
%Then $Q_I:=Q_{x^{(1)}_{i_1}}\times\cdots\times Q_{x^{(n)}_{i_n}}$
%is an open neighbourhood of
%$(x^{(1)}_{i_1},\ldots, x^{(n)}_{i_n})$ in $\K^n$,
%such that $U\cap Q_I$ is cartesian
%for all $I=(i_1,\ldots, i_n)$.
%WEITER
%\end{numba}
%
%
%
%
%
%
{\footnotesize

{\bf Helge Gl\"{o}ckner}, Universit\"{a}t Paderborn,
Institut f\"{u}r Mathematik,\\
Warburger Str.\ 100,
33098 Paderborn, Germany. \,Email: {\tt glockner@math.upb.de}}\vfill

\begin{thebibliography}{MM}\itemsep+.5pt\vspace{-.5mm}
%
%
\bibitem{Ama}
Amann, H., 
``Gew\"{o}hnliche Differentialgleichungen,''
de Gruyter, Berlin, 1983.
%
% 
\bibitem{Alz}
Alzaareer, H.,
\emph{Lie groups of mappings on non-compact spaces and manifolds},
Ph.D.-thesis in preparation, University of Paderborn
%
%
\bibitem{AaS}
Alzaareer, H. and A. Schmeding,
\emph{Differentiable mappings on products with different degrees of differentiability in the two factors},
preprint, {\tt arXiv:1208.6510v1}
%
%
\bibitem{ArS}
Araujo, J.. and Schikhof, W.\,H.,
\emph{The Weierstrass-Stone approximation theorem for $p$-adic $C^n$-functions},
Ann.\ Math\. Blaise Pascal {\bf 1} (1994), 61--74.
%
%
\bibitem{BGN}
Bertram, W., H. Gl\"{o}ckner and K.-H. Neeb,
\emph{Differential calculus over general base fields and rings},
Expo.\ Math.\ \textbf{22} (2004),
213--282.
%
%
%\bibitem{Bil}
%Biller, H., \emph{The exponential law for smooth functions},
%manuscript, TU Darmstadt, July 2002.
%
%
\bibitem{BTG}
Bourbaki, N., ``General Topology, Part 2,''
Addison-Wesley, Reading
and Hermann, Paris, 1966.
%
%
\bibitem{DSm}
De Smedt, S.
\emph{$p$-adic continuously differentiable functions of
several variables}, Collect.\ Math.\ \textbf{45}
(1994), 137--152.
%
%
%\bibitem{DS2}
%De Smedt, S.,
%\emph{Local invertibility of non-archimedian
%vector-valued functions},
%Ann.\ Math.\ Blaise Pascal \textbf{5} (1998),
%13--23.
%
%
\bibitem{Eng} Engelking, R.,
``General Topology,'' Heldermann Verlag,
1989.
%
%
%\bibitem{FST}
%Franklin, S.\,P. and B.\,V.\,S. Thomas, \emph{A survey of $k_\omega$-spaces},
%Topol.\ Proc.\ {\bf 2} (1977), 111--124. 
%
%
\bibitem{FaK}
Fr\"{o}licher, A. and A. Kriegl, ``Linear Spaces and Differentiation Theory,''
John Wiley, Chichester, 1988.
%
%
\bibitem{GCX}
Gl\"{o}ckner, H., \emph{Lie group structures on quotient groups
and universal complexifications for infinite-dimensional Lie groups},
J. Funct.\ Anal.\ {\bf 194} (2002), 43--59.
%
%
\bibitem{CMP}
Gl\"{o}ckner, H.,
\emph{Comparison of some notions of $C^k$-maps in multi-variable non-archimedian analysis},
Bull.\ Belg.\ Math.\ Soc.\ Simon Stevin {\bf 14} (2007), 877--904. 
%
%
\bibitem{DIF}
Gl\"{o}ckner, H.,
\emph{Patched locally convex spaces, almost local mappings,
and the diffeomorphism groups of non-compact manifolds},
manuscript, 2002.
%
%
\bibitem{ZOO}
Gl\"{o}ckner, H.,
\emph{Lie groups over non-discrete topological fields},
preprint, arXiv:math.GR/0408008\,.
%
%
\bibitem{IM2}
Gl\"{o}ckner, H.,
\emph{Finite order differentiability properties, fixed points
and implicit functions over valued fields},
preprint, arXiv:math.FA/0511218\,.
%
%
%\bibitem{GaN}
%Gl\"{o}ckner, H. and K.-H. Neeb,
%``Infinite-Dimensional Lie Groups.
%Volume~I: Basic Theory and Main Examples,''
%book in preparation.
%
%
\bibitem{HaR}
Hewitt, E. and K.\,A. Ross, 
``Abstract Harmonic Analysis I,''
Springer, ${}^2$1979.
%
%
\bibitem{Kel}
Kelley, L., ``General Topology,'' Springer, New York, 1975.
%
%
\bibitem{KaM} Kriegl, A. and P.\,W. Michor,
``The Convenient Setting of Global Ana\-lysis,''
AMS, Providence, 1997.
%
%
\bibitem{Mon}
Monna, A.\,F., ``Analyse non-archim\'{e}dienne,''
Springer-Verlag, 1970.
%
%
\bibitem{NDI}
Nagel, E.,
 ``Fractional Non-Archimedean Differentiability,''
 Doctoral Dissertation, Universit\"{a}t M\"unster, 2011
(see
{\tt http://miami.uni-muenster.de/servlets/ DerivateServlet/Derivate-5905/diss\_{}nagel\_{}enno.pdf})
%
%
\bibitem{Nag}
Nagel, E., \emph{Fractional non-Archimedean calculus in many variables},
to appear in
$p$-Adic Numbers, Ultrametric Analysis and Applications
(see {\tt
http://www.math.jussieu.fr/$\wt{\;}$nagel/publications/FracMany.pdf})
%
% 
\bibitem{Sch}
Schikhof, W.\,H.,
``Ultrametric Calculus,''
Cambridge University Press, 1984.
%
%
\bibitem{PSn}
Schneider, P.,
``Nonarchimedean Functional Analysis,''
Springer, Berlin, 2002.
%
%
%\bibitem{Sei}
%Seip, U., ``Kompakt erzeugte Vektorr\"{a}ume und Analysis,'' Lecture Notes in Math.\ {\bf 273},
%Springer, Berlin, 1972.
%
%
%\bibitem{Ste}
%Steenrod, N.\,E.,
%\emph{A convenient category of topological spaces},
%Michigan Math.\ J. {\bf 14} (1967), 133--152. 
%
%
%\bibitem{Tho}
%Thomas, E.\,G.\,F., \emph{Calculus in locally convex spaces}, Preprint W-9604,
%University of Groningen, Department of Mathematics, 1996.
%
\bibitem{vRo}
van Rooij, A.\,C.\,M.,
``Non-Archimedean Functional Analysis,''
Marcel Dekker, New York, 1978.
%
%
\end{thebibliography}
\end{document}